\title{Constant mean curvature $n$-noids in hyperbolic space}
\author{Thomas Raujouan}
\newtheorem{proposition}{Proposition}
\newtheorem{theorem}{Theorem}
\newtheorem{lemma}{Lemma}
\newtheorem{definition}{Definition}
\newtheorem{corollary}{Corollary}
\newtheorem{remark}{Remark}
\newtheorem{claim}{Claim}
\DeclareFontFamily{U}{mathx}{\hyphenchar\font45}
\DeclareFontShape{U}{mathx}{m}{n}{
	<5> <6> <7> <8> <9> <10>
	<10.95> <12> <14.4> <17.28> <20.74> <24.88>
	mathx10
}{}
\DeclareSymbolFont{mathx}{U}{mathx}{m}{n}
\DeclareMathAccent{\widecheck}{0}{mathx}{"71}
\DeclareMathAccent{\widetilde}{0}{mathx}{"72}
\DeclareMathAccent{\widebar}{0}{mathx}{"73}
\DeclareMathAccent{\widevec}{0}{mathx}{"74}
\DeclareMathAccent{\widehat}{0}{mathx}{"70}
\newcommand{\Hh}{\mathbb{H}}
\newcommand{\Cc}{\mathbb{C}}
\newcommand{\Rr}{\mathbb{R}}
\newcommand{\Zz}{\mathbb{Z}}
\newcommand{\Ss}{\mathbb{S}}
\newcommand{\Aa}{\mathbb{A}}
\newcommand{\Nn}{\mathbb{N}}
\newcommand{\Dd}{\mathbb{D}}
\newcommand{\Bb}{\mathbb{B}}
\newcommand{\wW}{\mathcal{W}}
\newcommand{\mM}{\mathcal{M}}
\newcommand{\hH}{\mathcal{H}}
\newcommand{\dD}{\mathcal{D}}
\newcommand{\cC}{\mathcal{C}}
\newcommand{\Oo}{\mathcal{O}}
\newcommand{\lL}{\mathcal{L}}
\newcommand{\bB}{\mathcal{B}}
\newcommand{\aA}{\mathcal{A}}
\newcommand{\pP}{\mathcal{P}}
\newcommand{\tT}{\mathcal{T}}
\newcommand{\la}{\lambda}
\newcommand{\I}{\mathrm{I}}
\newcommand{\varx}{\mathbf{x}}
\newcommand{\Sigmatilde}{\widetilde{\Sigma}}
\newcommand{\Iwa}{\mathop \mathrm{Iwa}}
\newcommand{\Uni}{\mathop \mathrm{Uni}}
\newcommand{\Pos}{\mathop \mathrm{Pos}}
\newcommand{\Tub}{\mathop \mathrm{Tub}}
\newcommand{\Sym}{ \mathrm{Sym}}
\newcommand{\Nor}{\mathrm{Nor}}
\newcommand{\pol}{\mathrm{pol}}
\newcommand{\Pol}{\mathrm{Pol}}
\newcommand{\arcoth}{\mathop \mathrm{arcoth}}
\newcommand{\conj}[1]{\overline{#1}}
\newcommand{\scal}[2]{\left\langle #1, #2 \right\rangle}
\newcommand{\trace}{\mathop \mathrm{tr}}
\newcommand{\distH}[2]{d_{\Hh^3}\left(#1,#2\right)}
\newcommand{\norm}[1]{\left\Vert#1\right\Vert}
\newcommand{\function}[5]{
	\begin{array}{ccccc}
		#1 & : & #2 & \longrightarrow & #3 \\
		& & #4 & \longmapsto & #5 \\
	\end{array}
}
\renewcommand{\Re}{\mathop \mathrm{Re}}
\renewcommand{\Im}{\mathop \mathrm{Im}}
\newcommand{\abs}[1]{\left|#1\right|}
\newcommand{\Mat}{\mathrm{Mat}}
\newcommand{\tendsto}[1]{\underset{#1}{\longrightarrow}}
\newcommand{\supp}[1]{\underset{#1}{\sup}}
\newcommand{\Res}{\mathop\mathrm{Res}}
\newcommand{\Deck}{\mathrm{Deck}}
\newcommand{\geod}{\mathrm{geod}}
\renewcommand{\sl}{\mathfrak{sl}}
\newcommand{\slfrak}{\mathfrak{sl}}
\newcommand{\su}{\mathfrak{su}}
\newcommand{\SL}{\mathrm{SL}}
\newcommand{\SU}{\mathrm{SU}}
\newcommand{\LSUdeux}{\Lambda \SU(2)}
\newcommand{\LsldeuxC}{\Lambda \sl(2,\Cc)}
\newcommand{\LsldeuxCR}{\LsldeuxC_R}
\newcommand{\LsldeuxCrho}{\LsldeuxC_\rho}
\newcommand{\LSLdeuxCR}{\Lambda \SL(2,\Cc)_R}
\newcommand{\LSLdeuxCrho}{\Lambda \SL(2,\Cc)_\rho}
\newcommand{\LSLdeuxC}{\Lambda \SL(2,\Cc)}
\newcommand{\LSUdeuxR}{\Lambda \SU(2)_R}
\newcommand{\LSUdeuxrho}{\Lambda \SU(2)_\rho}
\newcommand{\LplusSLdeuxCR}{\Lambda_+\SL(2,\Cc)_R}
\newcommand{\LplusSLdeuxCrho}{\Lambda_+\SL(2,\Cc)_\rho}
\newcommand{\LplusRSLdeuxC}{\Lambda_+^\Rr\SL(2,\Cc)}
\newcommand{\LsuR}{\Lambda \su(2)_R}
\newcommand{\Lsurho}{\Lambda \su(2)_\rho}
\newcommand{\SLdeuxCplusplus}{\SL\left(2,\Cc\right)^{++}}
\newcommand{\wWRgeqzero}{\wW_R^{\geq 0}}
\begin{document}

\maketitle

\begin{abstract}
Using the DPW method, we construct genus zero Alexandrov-embedded constant mean curvature (greater than one) surfaces with any number of Delaunay ends in hyperbolic space.
\end{abstract}

\section*{Introduction}

In \cite{dpw}, Dorfmeister, Pedit and Wu introduced a loop group method (the DPW method) for constructing harmonic maps from a Riemann surface into a symmetric space. 
As a consequence, their method provides a Weierstrass-type representation of constant mean curvature surfaces (CMC) in Euclidean space $\Rr^3$, three-dimensional sphere $\Ss^3$, or hyperbolic space $\Hh^3$.
Many examples have been constructed (see for example \cite{newcmc,dw,kkrs,dik,heller1,heller2}). Among them, Traizet \cite{nnoids,minoids} showed how the DPW method in $\Rr^3$ can construct genus zero $n$-noids with Delaunay ends (as Kapouleas did with partial differential equations techniques in \cite{kapouleas})
and glue half-Delaunay ends to minimal surfaces (as did Mazzeo and Pacard in \cite{pacard}, also with PDE techniques). 
A natural question is wether these constructions can be carried out in $\Hh^3$. Although properly embedded CMC annuli of mean curvature $H>1$ in $\Hh^3$ are well-known since the work of Korevaar, Kusner, Meeks and Solomon \cite{meeks}, no construction similar to \cite{kapouleas} or \cite{pacard} can be found in the literature.
This paper uses the DPW method in $\Hh^3$ to construct these surfaces. The two resulting theorems are as follows.

\begin{theorem}\label{theoremConstructionNnoids}
	Given a point $p\in\Hh^3$, $n\geq 3$ distinct unit vectors $u_1, \cdots, u_n$ in the tangent space of $\Hh^3$ at $p$ and $n$ non-zero real weights $\tau_1,\cdots,\tau_n$ satisfying the balancing condition
	\begin{equation}\label{eqBalancingNnoids}
	\sum_{i=1}^{n}\tau_i u_i = 0
	\end{equation}
	and given $H>1$, there exists a smooth $1$-parameter family of CMC $H$ surfaces $\left(M_t\right)_{0<t<T}$ with genus zero, $n$ Delaunay ends and the following properties:
	\begin{enumerate}
		\item Denoting by $w_{i,t}$ the weight of the $i$-th Delaunay end,
		\begin{equation*}
		\lim\limits_{t\to 0} \frac{w_{i,t}}{t} = \tau_i.
		\end{equation*}
		\item Denoting by $\Delta_{i,t}$ the axis of the $i$-th Delaunay end, $\Delta_{i,t}$ converges to the oriented geodesic through the point $p$ in the direction of $u_i$.
		\item If all the weights $\tau_i$ are positive, then $M_t$ is Alexandrov-embeddedd.
		\item If all the weights $\tau_i$ are positive and if for all $i\neq j\in[1,n]$, the angle $\theta_{ij}$ between $u_i$ and $u_j$ satisfies
		\begin{equation}\label{eqAnglesNnoid}
		\left| \sin\frac{\theta_{ij}}{2} \right|>\frac{\sqrt{H^2-1}}{2H},
		\end{equation}
		then $M_t$ is embedded.
	\end{enumerate}
\end{theorem}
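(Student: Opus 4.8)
The plan is to produce each $M_t$ by the DPW method on the $n$-punctured sphere $\Sigma = \Cc P^1 \setminus \{q_1,\dots,q_n\}$, treating the puncture positions $q_i = q_i(t)$ and the residues of the potential as parameters, and to reduce the construction to a monodromy (period) problem solved by the implicit function theorem near $t = 0$; the four assertions are then extracted from the Delaunay asymptotics of the ends. After an isometry of $\Hh^3$ I assume $p$ is the centre of the ball model and that the identification $\Cc P^1 \simeq \partial_\infty\Hh^3$ sends $q_i(0)$ to the forward endpoint of the oriented geodesic through $p$ in the direction $u_i$. First I would choose a family of holomorphic DPW potentials $\xi_t = A_t(z,\lambda)\,dz$ on $\Sigma$, holomorphic in $z$ away from the $q_i$, with a simple pole at each $q_i$ and $\Res_{q_i}\xi_t = t\,R_i(t,\lambda)$, where $R_i(0,\cdot)$ is the residue of a model Delaunay potential of weight $\tau_i$ and axis direction $u_i$, and such that at $t=0$ the potential degenerates to a sum of mutually decoupled model pieces. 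Since a meromorphic $\slfrak(2,\Cc)$-valued $1$-form on $\Cc P^1$ with only simple poles has residues summing to zero, requiring $\xi_t$ to be such a form forces $\sum_{i=1}^{n}R_i(t,\cdot)=0$; under the standard dictionary relating the residue $R_i(0,\cdot)$ to its weight $\tau_i$ and axis direction $u_i$, the $t=0$ instance of this is exactly the balancing condition \eqref{eqBalancingNnoids}, so that hypothesis is precisely what makes a globally defined potential with the prescribed asymptotics available. The remaining coefficients of the $R_i$, the positions $q_i(t)$, and a loop $C_t\in\LSLdeuxCrho$ meant to conjugate the monodromy into the unitary subgroup are kept as unknowns, smooth in $t$ and normalised at $t=0$.

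The heart of the argument is the monodromy problem. Solving $d\Phi_t = \Phi_t\,\xi_t$ on the universal cover of $\Sigma$ with a fixed initial condition gives, for a loop around $q_i$, a monodromy $\mM_{i,t}\in\LSLdeuxCrho$; I want each $\mM_{i,t}$ to have the conjugacy type of a Delaunay monodromy (so the $i$-th end is Delaunay) and the representation $\pi_1(\Sigma)\to\LSLdeuxCrho$ to be simultaneously conjugate into $\LSUdeuxrho$ (so the DPW output descends to $\Sigma$). At $t=0$ the decoupled potential has automatically unitarisable monodromies; for $t>0$, because $\pi_1(\Sigma)$ is free on $n-1\geq 2$ generators, simultaneous unitarisability is a genuine system of equations, which together with the Delaunay normal form of each $\mM_{i,t}$ I would encode as the vanishing of a real-analytic map $\fF(t,\cdot)=0$ between suitable Banach spaces, after dividing its components by the appropriate powers of $t$ so that $\fF(0,\cdot)$ is well defined and vanishes at the normalised parameters. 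The crucial point is that the partial differential of $\fF$ in the parameters is an isomorphism onto the obstruction space at $t=0$; establishing this is where the hypotheses $n\geq 3$, $\tau_i\neq 0$ and the non-degeneracy of the balanced configuration $\sum\tau_i u_i=0$ are used, and I expect it to be the main difficulty. Granting it, the implicit function theorem yields, for $0<t<T$, parameters making the representation unitarisable; conjugating by $C_t$ and performing the Iwasawa decomposition $C_t\Phi_t = F_t B_t$, the Sym--Bobenko formula for $\Hh^3$ applied to $F_t$ produces a conformal CMC $H$ immersion $f_t:\Sigma\to\Hh^3$, of genus zero with $n$ ends by construction.

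Near $q_i$, after a gauge transformation $\xi_t$ is an exponentially small perturbation of a Delaunay potential, so by the asymptotics of DPW Delaunay ends the $i$-th end of $M_t$ is asymptotic to a Delaunay surface, with weight $w_{i,t}$ and axis $\Delta_{i,t}$ read from the eigendata of $\mM_{i,t}$. From $\Res_{q_i}\xi_t = t\tau_i(\dots)+O(t^2)$ and the continuous dependence of the Delaunay weight on the residue one obtains $\lim_{t\to 0} w_{i,t}/t = \tau_i$, which is assertion (1); and since the residue data at $q_i$ encodes the direction $u_i$ while the family is normalised around $p$, the axis $\Delta_{i,t}$ converges to the oriented geodesic through $p$ in the direction $u_i$, which is assertion (2). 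The vanishing of the weights $w_{i,t}$ forces $M_t$ to converge, as $t\to 0$, to the degenerate configuration consisting of a geodesic sphere of radius $\arcoth H$ centred at $p$ together with $n$ chains of mutually tangent round spheres of that same radius emanating along the rays $u_i$.

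For assertion (3), when all $\tau_i>0$ these limiting chains are unduloidal rather than nodoidal, the degenerate configuration bounds the union of a ball at $p$ with $n$ solid half-tubes, and for $t$ small $f_t$ extends to an immersion of the corresponding $3$-manifold $W_t$ into $\Hh^3$ with $\partial W_t = M_t$, obtained by thickening the unduloidal ends; hence $M_t$ is Alexandrov-embedded. For assertion (4), the sphere externally tangent to the central one along $u_i$ is centred at hyperbolic distance $2\arcoth H$ from $p$, and a hyperbolic law of cosines together with the identity $\cosh(\arcoth H)=H/\sqrt{H^2-1}$ shows that this sphere and the one along $u_j$ are disjoint precisely when $|\sin(\theta_{ij}/2)|>\tfrac{\sqrt{H^2-1}}{2H}$; thus \eqref{eqAnglesNnoid} makes the degenerate configuration embedded, and together with a uniform-in-$t$ bound on the neck sizes coming from $w_{i,t}\to 0$ this makes $f_t$ injective for small $t$, so $M_t$ is embedded.
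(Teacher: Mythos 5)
Your outline follows the same high-level strategy as the paper: a DPW construction on the punctured sphere, parameters closed up by the implicit function theorem near $t=0$, and the four assertions extracted from Delaunay asymptotics of the ends. Two of your choices differ in detail but are workable in spirit: the paper's potential is not a Fuchsian one with simple poles and residues $tR_i$, but rather the off-diagonal form
$\xi_{t,\varx}=\begin{pmatrix}0&\la^{-1}dz\\ \beta_t(\la)\omega_\varx&0\end{pmatrix}$
with $\omega_\varx$ carrying double poles, and the balancing identity $\sum\tau_i u_i=0$ arises not from ``residues of a meromorphic $1$-form sum to zero'' but from requiring regularity of the gauged potential at $z=\infty$, which produces the relation $\sum a_i v_\Ss(p_i)=0$ at $t=0$.

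The genuine gap is in the single sentence ``by the asymptotics of DPW Delaunay ends the $i$-th end of $M_t$ is asymptotic to a Delaunay surface.'' This is precisely the hard content of the paper and it is not available off the shelf. What is needed for assertions (3) and (4) is not merely that each end is asymptotically Delaunay for each fixed $t$, but that on a \emph{fixed} punctured disk $D_{\epsilon'}^*$, independent of $t$, the immersion $f_t$ stays at distance $O(|t||z|^{1-\delta})$ from an exact Delaunay immersion $f_t^\dD$ with convergent normals, so that for $t$ small $f_t(D_{\epsilon'}^*)$ is a normal graph inside the embedded tube around $f_t^\dD$ and hence embedded on a domain not shrinking with $t$. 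In $\Rr^3$ this is the Kilian--Rossman--Schmitt $z^AP$ analysis; in $\Hh^3$ the Fr\"obenius method hits two \emph{unavoidable} resonance points at $\la=e^{\pm q}$ (where $4\det A_t(\la)$ is an integer), and the $\Rr^3$ argument fails. The paper's Theorem~\ref{theoremPerturbedDelaunay} and its proof (Propositions~\ref{propC3}--\ref{propzAP2}, extending the Fr\"obenius method to loop-group-valued systems, plus the Gr\"onwall and periodicity estimates of Section~\ref{sectionDressedDelaunayFrames}) is exactly this input. Without an argument replacing it, your claim that $f_t$ extends to a proper immersion of a solid $3$-manifold $W_t$ with $\partial W_t=M_t$ has no uniform-in-$t$ control of the ends on which to base the cut-and-fill construction, and assertion (4) (injectivity of the limit configuration plus uniform control propagating to injectivity of $f_t$) similarly lacks its key ingredient. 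You should either cite a result providing this uniform Delaunay approximation in $\Hh^3$, or acknowledge that it must be proved, and in particular explain how to handle the resonances at $\la=e^{\pm q}$.

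As a smaller point, assertion (1) in your proposal is derived from $\Res_{q_i}\xi_t=t\tau_i(\dots)+O(t^2)$ and continuity of the weight-in-residue map. In the paper the weight comes out as $w_{i,t}=8\pi t a_{i,t}/\sqrt{H^2-1}$ with $a_{i,0}=\tau_i$, so agreement with the statement of Theorem~\ref{theoremConstructionNnoids} requires a normalisation of $t$, which you should make explicit. Your geometric computation for assertion (4) -- the external tangent sphere lies at distance $2q$, and $\cosh q = H/\sqrt{H^2-1}$ gives the threshold $|\sin(\theta_{ij}/2)|>\sqrt{H^2-1}/(2H)$ -- matches the paper's Lemma in Section 3.3 and is correct.
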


\begin{theorem}\label{theoremConstructionMinoids}
	Let $M_0\subset\Rr^3$ be a non-degenerate minimal $n$-noid with $n\geq 3$ and let $H>1$. There exists a smooth family of CMC $H$ surfaces $\left(M_t\right)_{0<|t|<T}$ in $\Hh^3$ such that
	\begin{enumerate}
		\item The surfaces $M_t$ have genus zero and $n$ Delaunay ends.
		\item After a suitable blow-up, $M_t$ converges to $M_0$ as $t$ tends to $0$.
		\item If $M_0$ is Alexandrov-embedded, then all the ends of $M_t$ are of unduloidal type if $t>0$ and of nodoidal type if $t<0$. Moreover, $M_t$ is Alexandrov-embedded if $t>0$.
	\end{enumerate}
\end{theorem}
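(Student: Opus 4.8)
The plan is to run the DPW machinery adapted to $\Hh^3$, mirroring Traizet's Euclidean construction in \cite{minoids}. The starting data is the non-degenerate minimal $n$-noid $M_0$: after a suitable rescaling, one encodes each of its catenoidal ends by a Delaunay-type DPW potential on a sphere with $n$ punctures, with residues depending holomorphically on a small parameter $t$ and on positions and weights that we allow to vary. Concretely, I would take a potential of the form $\xi_t = \xi_0 + \sum_{i} \frac{A_i(t,\cdot)}{z - p_i(t)}\,\frac{dz}{\lambda}$ plus regular terms, where at $t=0$ the residues degenerate so that the DPW construction reproduces (after blow-up by $1/t$ or similar) the minimal surface $M_0$, exploiting the classical fact that minimal surfaces in $\Rr^3$ arise as the blow-up limit of CMC $H>1$ surfaces in $\Hh^3$ (the ambient space looks Euclidean at small scales). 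The monodromy of the associated $\Phi_t$ around each puncture must be unitarizable and asymptotic to a Delaunay monodromy; this is what forces the residue eigenvalues to lie on the correct curve and produces the Delaunay ends claimed in item (1).

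The core of the argument is an implicit function theorem. One sets up a smooth map whose zero locus corresponds to potentials whose monodromy representation is simultaneously unitarizable (so the resulting immersion into $\Hh^3$ is well-defined on the punctured sphere) and whose ends close up to embedded Delaunay ends. The linearization of this map at $t=0$ is, up to the blow-up identification, exactly the Jacobi operator / period map of the minimal $n$-noid $M_0$; its invertibility is precisely the non-degeneracy hypothesis on $M_0$. Hence for $|t|$ small one solves for the free parameters (puncture positions, residue data, Iwasawa corrections) as smooth functions of $t$, obtaining the family $(M_t)_{0<|t|<T}$ and item (2). The sign of $t$ enters through the sign of $H^2-1$ versus the Euclidean limit: for $t>0$ the Delaunay parameter stays on the unduloidal branch, for $t<0$ it crosses onto the nodoidal branch, giving the dichotomy in item (3).

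For the last sentence of item (3), I would transport the Alexandrov-embeddedness of $M_0$ to $M_t$. Since $M_t$ converges to $M_0$ after blow-up and each end is an embedded unduloidal Delaunay end (for $t>0$), one builds an immersion of a handlebody bounded by $M_t$ by patching: on the compact core the immersion is a small perturbation of the one realizing Alexandrov-embeddedness of $M_0$ (which persists under $C^1$-small perturbation by openness of immersions and properness), and on each end one uses the standard fact that an embedded unduloidal Delaunay end bounds an embedded solid tube in $\Hh^3$. Compatibility of the two pieces on the overlap follows from the convergence, and one checks that the glued map is an immersion of the handlebody. The same patching fails for $t<0$ because nodoidal ends are not embedded, consistent with the statement only claiming Alexandrov-embeddedness for $t>0$.

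The main obstacle I anticipate is the invertibility of the linearized operator and the identification of its kernel/cokernel with the Jacobi fields of $M_0$: one must carefully match the DPW period/monodromy map's differential at $t=0$ with the classical period map of the minimal $n$-noid, keeping track of the $\lambda$-dependence and the Iwasawa splitting, and show that the non-degeneracy of $M_0$ — originally a statement about bounded Jacobi fields — is exactly the surjectivity needed. A secondary technical point is controlling the $t\to 0$ asymptotics of the Delaunay weights uniformly in order to get the claimed convergence and the unduloidal/nodoidal dichotomy; this requires precise expansions of the Iwasawa decomposition near the degenerate potential, analogous to but more delicate than the $\Rr^3$ case because of the noncompact isometry group $\mathrm{PSL}(2,\Cc)$ of $\Hh^3$.
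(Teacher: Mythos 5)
Your outline captures the broad DPW-plus-IFT strategy and the blow-up identification of the linearized period problem with the non-degeneracy of $M_0$, which is indeed how the paper (following Traizet's $\Rr^3$ work) sets up the construction. However, there is a genuine gap at the heart of items (1) and (3): you say the monodromy must be ``unitarizable and asymptotic to a Delaunay monodromy'' and conclude that the ends are Delaunay ends, but asymptotic monodromy alone does not give you that. The paper's main technical input is precisely Theorem~\ref{theoremPerturbedDelaunay}, which shows that a $\cC^2$ perturbation of Delaunay DPW data (with the monodromy problem solved) produces an immersion on a \emph{fixed, $t$-independent} punctured disk that converges in hyperbolic distance to an honest Delaunay immersion, together with convergence of the normals. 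Establishing this in $\Hh^3$ requires a Fröbenius/$z^A P$ normal form of the holomorphic frame, and — unlike the $\Rr^3$ situation of Kilian--Rossman--Schmitt — the $\Hh^3$ Sym--Bobenko evaluation at $\la=e^{\pm q}$ introduces two unavoidable resonances, which forces the Fröbenius method to be run in the loop-group-valued setting (Proposition~\ref{propC3}, Propositions~\ref{propzAP1}--\ref{propzAP2}). Your proposal has no substitute for this step, and without it you cannot conclude that the ends are embedded on a uniform neighborhood, which is exactly what the Alexandrov-embeddedness argument needs.

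This gap propagates into your argument for the last part of item (3). You write that Alexandrov-embeddedness of $M_0$ ``persists under $C^1$-small perturbation by openness of immersions and properness'' on the compact core, and that each end bounds a solid tube. But the $C^1$ convergence is only on compact subsets, and ``small perturbation'' is uncontrolled on the ends that escape to infinity; to patch the filling you must know, uniformly in $t$, that the surface restricted to each end annulus $D^*(p_{i,t},\epsilon)$ is a normal graph over an embedded Delaunay end lying in its tubular neighborhood (this is Section~\ref{sectionEmbeddedness}, using Lemma~\ref{lemmaTubularRadius} on the tubular radius $r_t\sim t$ and Lemma~\ref{lemmaCompNormalesDelaunay} on the normals). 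The paper also uses a trick you do not mention and that sidesteps an ambient-geometry issue you raise: it conjugates the whole immersion by $\exp_{\I_2}^{-1}$ to land in $\Rr^3$ (no longer CMC, but Alexandrov-embeddedness is preserved) so that Traizet's Euclidean filling argument from \cite{minoids}, which never uses the CMC condition, applies verbatim once one checks $\frac{1}{t}\widehat{f}_t\to\psi$. That observation is what makes the adaptation to $\Hh^3$ cheap; your direct ``patching in $\Hh^3$'' would have to redo that geometry from scratch.

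Two smaller remarks. First, the potential you wrote down, $\xi_t=\xi_0+\sum_i A_i(t,\cdot)(z-p_i)^{-1}\la^{-1}dz+\cdots$, is not the one the paper uses: the paper encodes the Weierstrass data of $M_0$ directly, with $\xi_{t,\varx}$ having lower-left entry $d_zg_\varx$ and upper-right $\la^{-1}\beta_t(\la)\omega_\varx$, where $\beta_t(\la)=\frac{t(\la-e^q)(\la-e^{-q})}{4\sinh q}$; the factor $(\la-e^{-q})$ is what automatically kills the second monodromy equation at $\la=e^{-q}$. Second, your unduloidal/nodoidal dichotomy comes out correctly, but the mechanism in the paper is simply that the weight is $w_{i,t}=2\pi t\,\alpha_{i,t}$ with $\alpha_{i,0}$ equal (up to sign fixed by orientation) to the flux of the catenoidal end, so Alexandrov-embeddedness of $M_0$ makes $\alpha_{i,0}>0$ and the sign of $w_{i,t}$ is just the sign of $t$; no branch-crossing argument is needed.
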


Following the proofs of \cite{nnoids,minoids} gives an effective strategy to construct the desired CMC surfaces $M_t$. This is done in Sections \ref{sectionNnoids} and \ref{sectionMinoids}. However, showing that $M_t$ is Alexandrov-embedded  requires a precise knowledge of its ends. This is the purpose of the main theorem  (Section \ref{sectionPerturbedDelaunayImmersions}, Theorem \ref{theoremPerturbedDelaunay}). We consider a family of holomorphic perturbations of the data giving rise via the DPW method to a half-Delaunay embedding $f_0: \Dd^*\subset \Cc\longrightarrow \Hh^3$ and show that the perturbed induced surfaces $f_t(\Dd^*)$ are also embedded. Note that the domain on which the perturbed immersions are defined does not depend on the parameter $t$, which is stronger than $f_t$ having an embedded end, and is critical for showing that the surfaces $M_t$ are Alexandrov-embedded. The essential hypothesis on the perturbations is that they do not occasion a period problem on the domain $\Dd^*$ (which is not simply connected). The proof relies on the Fr\"obenius method for linear differential systems with regular singular points. Although this idea has been used in $\Rr^3$ by Kilian, Rossman, Schmitt \cite{krs} and \cite{raujouan}, the case of $\Hh^3$ generates two extra resonance points that are unavoidable and make their results inapplicable. Our solution is to extend the Fr\"obenius method to loop-group-valued differential systems.

\begin{figure}
	\centering
	\includegraphics[width=7cm]{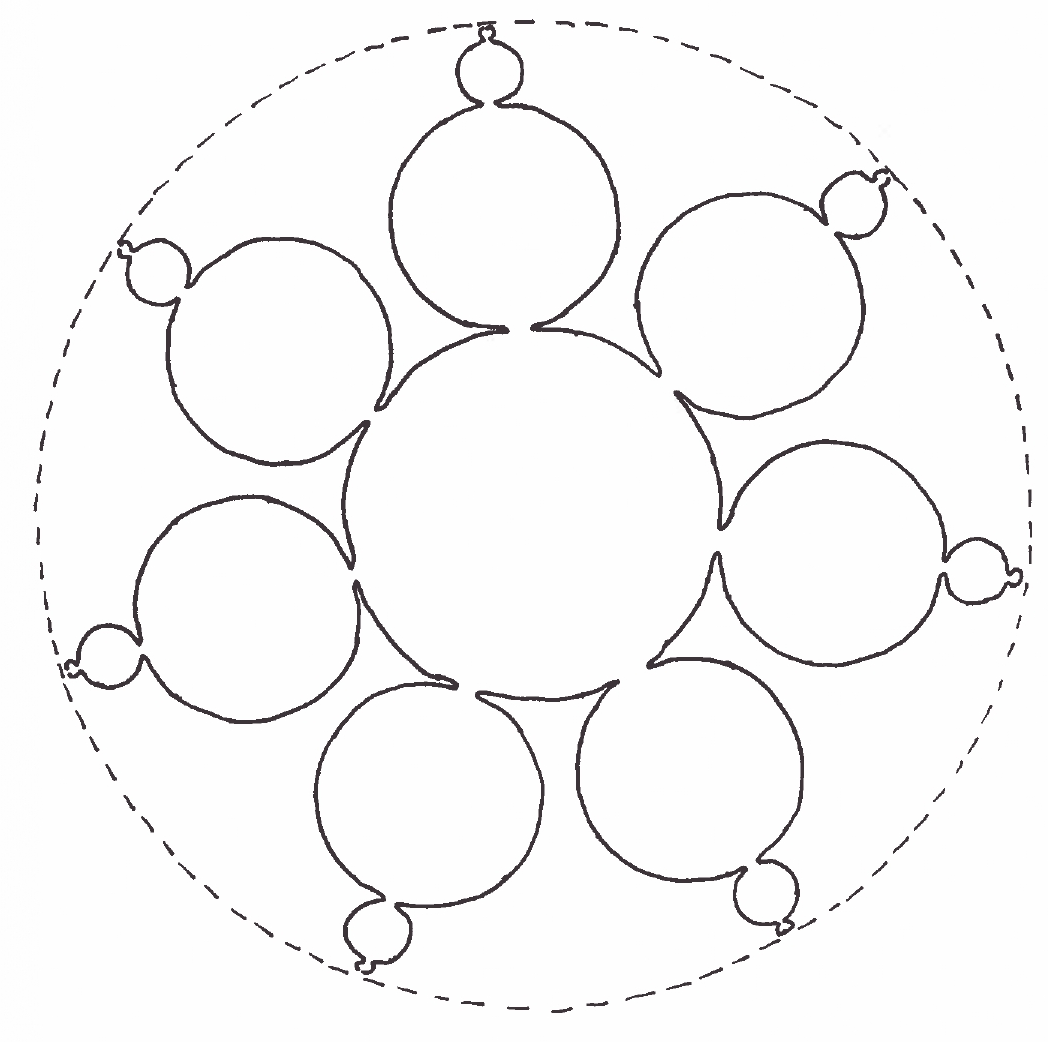}
	\caption{\textit{Theorem \ref{theoremConstructionNnoids} ensures the existence of $n$-noids with small necks. For $H>1$ small enough ($H\simeq1.5$ on the picture), there exists embedded $n$-noids with more than six ends.}}
\end{figure}

\section{Delaunay surfaces  in $\Hh^3$ via the DPW method}\label{sectionNotations}

This Section fixes the notation and recalls the DPW method in $\Hh^3$.

\subsection{Hyperbolic space}

\paragraph{Matrix model.}
Let $\Rr^{1,3}$ denote the space $\Rr^4$ with the Lorentzian metric $\scal{x}{x} = -x_0^2+x_1^2+x_2^2+x_3^2$. Hyperbolic space is the subset $\Hh^3 $ of vectors $x\in\Rr^{1,3}$ such that $\scal{x}{x} = -1$ and $x_0>0$, with the metric induced by $\Rr^{1,3}$. The DPW method constructs CMC immersions into a matrix model of $\Hh^3$. Consider the identification
\begin{equation*}
x=(x_0,x_1,x_2,x_3)\in\Rr^{1,3} \simeq X = \begin{pmatrix}
x_0+x_3 & x_1+ix_2\\
x_1-ix_2 & x_0 - x_3
\end{pmatrix}\in \hH_2 
\end{equation*}
where $\hH_2:=\{M\in \mM(2,\Cc)\mid M^*=M \}$ denotes the Hermitian matrices.
In this model, $\scal{X}{X} = -\det X$ and $\Hh^3$ is identified with the set $\hH_2^{++}\cap \SL(2,\Cc)$ of Hermitian positive definite matrices with determinant $1$. This fact enjoins us to write
\begin{equation*}
	\Hh^3 = \left\{ F{F}^* \mid F\in\SL(2,\Cc) \right\}.
\end{equation*}
Setting
\begin{equation}\label{eqPauliMatrices}
\sigma_1 = \begin{pmatrix}
0& 1\\
1 & 0
\end{pmatrix}, \qquad \sigma_2 = \begin{pmatrix}
0& i\\
-i & 0
\end{pmatrix}, \qquad \sigma_3 = \begin{pmatrix}
1& 0\\
0 & -1
\end{pmatrix},
\end{equation}
gives us an orthonormal basis $\left(\sigma_1,\sigma_2,\sigma_3\right)$ of the tangent space $T_{\I_2}\Hh^3$ of $\Hh^3$ at the identity matrix. We choose the orientation of $\Hh^3$ induced by this basis.

\paragraph{Rigid motions.}

In the matrix model of $\Hh^3$, $\SL(2,\Cc)$ acts as rigid motions: for all $p\in\Hh^3$ and $A\in\SL(2,\Cc)$, this action is denoted by $$A\cdot p := Ap{A}^*\in\Hh^3.$$ This action extends to tangent spaces: for all $v\in T_p\Hh^3$,  $A\cdot v := Av{A}^*\in T_{A\cdot p}\Hh^3$. The DPW method takes advantage of this fact and contructs immersions in $\Hh^3$ with the moving frame method.

\paragraph{Geodesics.}
Let $p\in\Hh^3$ and $v\in UT_p\Hh^3$. Define the map
\begin{equation}\label{eqExpMapGamma}
	\function{\geod(p,v)}{\Rr}{\Hh^3}{t}{p\cosh t + v \sinh t.}
\end{equation}
Then $\geod(p,v)$ is the unit speed geodesic through $p$ in the direction $v$.
The action of $\SL(2,\Cc)$ extends to oriented geodesics via:
\begin{equation*}
	A\cdot \geod(p,v) := \geod(A\cdot p, A\cdot v).
\end{equation*}

\paragraph{Parallel transport.}

Let $p,q\in\Hh^3$ and $v\in T_p\Hh^3$. We denote the result of parallel transporting $v$ from $p$ to $q$ along the geodesic of $\Hh^3$ joining $p$ to $q$ by $\Gamma_p^qv \in T_q\Hh^3$. The parallel transport of vectors from the identity matrix is easy to compute with Proposition \ref{propParallTranspI2}.

\begin{proposition}\label{propParallTranspI2}
	For all $p\in\Hh^3$ and $v\in T_{\I_2}\Hh^3$, there exists a unique $S\in\hH_2^{++}\cap\SL(2,\Cc)$ such that $p=S\cdot \I_2$. Moreover, $ \Gamma_{\I_2}^p v = S\cdot v$.
\end{proposition}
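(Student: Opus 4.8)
\emph{Proof proposal.} I would treat the existence and uniqueness of $S$ first. If $S$ is Hermitian then $S^*=S$, so $S\cdot\I_2=S\I_2 S^*=S^2$, and the assertion amounts to saying that $p$ has a unique positive definite square root lying in $\hH_2^{++}\cap\SL(2,\Cc)$. Since $p\in\hH_2^{++}\cap\SL(2,\Cc)$ is Hermitian positive definite, the spectral theorem provides a unique positive definite Hermitian matrix $S=p^{1/2}$ with $S^2=p$; moreover $\det S=\sqrt{\det p}=1$, so $S\in\hH_2^{++}\cap\SL(2,\Cc)$, and any positive definite solution of $T^2=p$ coincides with $p^{1/2}$.

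Next I would make the geodesic joining $\I_2$ to $p$ explicit. The case $p=\I_2$ is trivial (then $S=\I_2$ and $\Gamma_{\I_2}^{\I_2}$ is the identity), so assume $p\neq\I_2$. The Hermitian logarithm $\log S$ is traceless, since $\trace\log S=\log\det S=0$, and Hermitian, hence $\log S\in T_{\I_2}\Hh^3=\Rr\sigma_1\oplus\Rr\sigma_2\oplus\Rr\sigma_3$; write $\log S=\ell\,v_0$ with $\ell>0$ and $v_0\in T_{\I_2}\Hh^3$ a unit vector. A unit traceless $2\times2$ matrix satisfies $v_0^2=\I_2$ by Cayley--Hamilton (using $\det v_0=-\scal{v_0}{v_0}=-1$), so $\exp(s v_0)=\cosh(s)\,\I_2+\sinh(s)\,v_0=\geod(\I_2,v_0)(s)$. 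Hence $\gamma(s):=\exp(s v_0)$ is the unit-speed geodesic of $\Hh^3$ through $\I_2$ in the direction $v_0$, with $\gamma(\ell)=S$ and $\gamma(2\ell)=S^2=p$; this is the geodesic along which the parallel transport from $\I_2$ to $p$ is taken.

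Then, given $v\in T_{\I_2}\Hh^3$, I would verify that the vector field $V(s):=\exp(\tfrac{s}{2}v_0)\,v\,\exp(\tfrac{s}{2}v_0)$ along $\gamma$ is parallel with $V(0)=v$. It is Hermitian and $\trace\!\big(\gamma(s)^{-1}V(s)\big)=\trace v=0$, so $V(s)\in T_{\gamma(s)}\Hh^3$. Differentiating, $V'(s)=\tfrac12\exp(\tfrac{s}{2}v_0)\big(v_0 v+v v_0\big)\exp(\tfrac{s}{2}v_0)$, and since $v_0 v+v v_0=\trace(v_0 v)\,\I_2$ for traceless $2\times2$ matrices, this equals $\tfrac12\trace(v_0 v)\,\gamma(s)$, which is normal to $\Hh^3$ at $\gamma(s)$. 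Therefore $\nabla_{\gamma'}V=0$, and evaluating at $s=2\ell$ yields $\Gamma_{\I_2}^p v=V(2\ell)=\exp(\ell v_0)\,v\,\exp(\ell v_0)=SvS=SvS^*=S\cdot v$.

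The only genuinely delicate point is identifying $\gamma$ with the geodesic of $\Hh^3$ joining $\I_2$ to $p$, which relies on $\exp$ restricting to a bijection from traceless Hermitian matrices onto $\hH_2^{++}\cap\SL(2,\Cc)$ (so that $v_0$ and $\ell$ exist) and on uniqueness of geodesics in $\Hh^3$; everything else is $2\times2$ matrix algebra. If one would rather avoid the parallel-transport differential equation, an alternative argument notes that $S=\gamma(\ell)$ is the midpoint of the geodesic from $\I_2$ to $p$, that the geodesic symmetry of $\Hh^3$ at $S$ is $q\mapsto Sq^{-1}S$ (conjugate the inversion $q\mapsto q^{-1}$, which is the symmetry at $\I_2$, by the isometry $q\mapsto S^{1/2}qS^{1/2}$), and that in a symmetric space the differential at an endpoint of the geodesic symmetry about the midpoint equals minus the parallel transport between the endpoints; since this differential sends $v$ to $-SvS$, one again obtains $\Gamma_{\I_2}^p v=SvS=S\cdot v$.
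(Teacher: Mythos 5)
Your proof is correct, and it takes a genuinely different route from the paper's. The paper reduces to the case of diagonal $S$ by writing $S=QDQ^{-1}$ with $Q\in\SU(2)$ and invoking equivariance of parallel transport under the isometry action $A\cdot\Gamma_p^q v=\Gamma_{A\cdot p}^{A\cdot q}A\cdot v$; in the diagonal case it computes componentwise in the basis $(\sigma_1,\sigma_2,\sigma_3)$ and takes the tangential projection of $\frac{d}{dt}v(t)$. You instead work directly with an arbitrary Hermitian $S=\exp(\ell v_0)$ and show that $V(s)=\exp(\tfrac{s}{2}v_0)\,v\,\exp(\tfrac{s}{2}v_0)$ is parallel along $\gamma(s)=\exp(sv_0)$ by computing $V'(s)$ and using the anticommutator identity $v_0v+vv_0=\trace(v_0v)\,\I_2$ for traceless $2\times 2$ matrices, so that $V'(s)$ is a multiple of the position vector $\gamma(s)$ and hence normal to $\Hh^3$. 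This avoids the diagonal/non-diagonal case split at the modest cost of one extra algebraic identity; the two arguments are otherwise comparable in length and rigor. One small point worth making explicit: you infer $V(s)\in T_{\gamma(s)}\Hh^3$ from $\trace\bigl(\gamma(s)^{-1}V(s)\bigr)=0$; this does work because $\det\gamma=1$ gives $\gamma^{-1}=(\trace\gamma)\I_2-\gamma$ and hence $\trace(\gamma^{-1}V)=-2\scal{V}{\gamma}$, but it is cleaner to observe directly, as the paper does, that $V(s)=A(s)\cdot v$ with $A(s)=\exp(\tfrac{s}{2}v_0)$ and $A(s)\cdot\I_2=\gamma(s)$, so $V(s)$ is tangent because the $\SL(2,\Cc)$-action is by isometries. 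Your alternative argument via the geodesic symmetry $q\mapsto Sq^{-1}S$ at the midpoint is also valid and is the standard symmetric-space proof, though it relies on background the paper does not develop.
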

\begin{proof}
	The point $p$ is in $\Hh^3$ identified with $\hH_2^{++}\cap \SL(2,\Cc)$. Define $S$ as the unique square root of $p$ in $\hH_2^{++}\cap \SL(2,\Cc)$. Then $p= S\cdot \I_2$. Define for $t\in[0,1]$:
	\begin{equation*}
	S(t) := \exp\left(t \log S\right), \qquad \gamma(t) := S(t)\cdot \I_2, \qquad v(t) := S(t)\cdot v.
	\end{equation*}
	Then $v(t)\in T_{\gamma(t)}\Hh^3$ because
	\begin{equation*}
	\scal{v(t)}{\gamma(t)} = \scal{S(t)\cdot \I_2}{S(t)\cdot v} = \scal{\I_2}{v} = 0
	\end{equation*}
	and $S\cdot v = v(1)\in T_p\Hh^3$.
	
	Suppose that $S$ is diagonal. Then
	\begin{equation*}
	S(t) = \begin{pmatrix}
	e^{\frac{at}{2}} & 0\\
	0 & e^{\frac{-at}{2}}
	\end{pmatrix} \qquad (a\in\Rr)
	\end{equation*}
	and using Equations \eqref{eqPauliMatrices} and \eqref{eqExpMapGamma},
	\begin{equation*}
	\gamma(t) = \begin{pmatrix}
	e^{at} & 0\\
	0 & e^{-at}
	\end{pmatrix} = \geod(\I_2,\sigma_3)(at)
	\end{equation*}
	is a geodesic curve. Write $v=v^1\sigma_1+v^2\sigma_2+v^3\sigma_3$ and compute $S(t)\cdot \sigma_i$ to find
	\begin{equation*}
	v(t) = v^1\sigma_1 + v^2\sigma_2 + v^3\begin{pmatrix}
	e^{at} & 0\\
	0 & -e^{-at}
	\end{pmatrix}.
	\end{equation*}
	Compute in $\Rr^{1,3}$
	\begin{equation*}
	\frac{Dv(t)}{dt} = \left(\frac{dv(t)}{dt}\right)^T = av^3\left(\gamma(t)\right)^T = 0
	\end{equation*}
	to see that $v(t)$ is the parallel transport of $v$ along the geodesic $\gamma$.
	
	If $S$ is not diagonal, write $S = QDQ^{-1}$ where $Q\in\SU(2)$ and $D\in\hH_2^{++}\cap \SL(2,\Cc)$ is diagonal. Then,
	\begin{equation*}
	S\cdot v = Q\cdot \left( D\cdot \left( Q^{-1}\cdot v \right) \right) = Q\cdot \Gamma_{\I_2}^{D\cdot \I_2}(Q^{-1}\cdot v).
	\end{equation*}
	But for all $A\in\SL(2,\Cc)$, $p,q\in\Hh^3$ and $v\in T_p\Hh^3$,
	\begin{equation*}
	A\cdot \Gamma_p^qv = \Gamma_{A\cdot p}^{A\cdot q}A\cdot v 
	\end{equation*}
	and thus
	\begin{equation*}
	S\cdot v = \Gamma_{\I_2}^pv.
	\end{equation*}
\end{proof}

\subsection{The DPW method for CMC $H>1$ surfaces in $\Hh^3$}\label{sectionDPWMethod}

\paragraph{Loop groups.}\label{sectionLoopGroups} In the DPW method, a whole family of surfaces is constructed, depending on a spectral parameter $\la$. This parameter will always be in one of the following subsets of $\Cc$ ($\rho>1$):
\begin{equation*}
	\Ss^1 = \left\{ \la\in\Cc\mid|\la|=1 \right\}, \quad \Aa_\rho = \left\{ \la\in\Cc \mid \rho^{-1}<|\la|<\rho \right\}, \quad \Dd_\rho = \left\{ \la\in\Cc\mid |\la|<\rho \right\}.
\end{equation*}
Any smooth map $f:\Ss^1\longrightarrow \mM(2,\Cc)$ can be decomposed into its Fourier series
\begin{equation*}
f(\la) = \sum_{i\in\Zz}f_i\la^i.
\end{equation*}
Let $|\cdot|$ denote a norm on $\mM(2,\Cc)$. Fix some $\rho>1$ and consider 
\begin{equation*}
\norm{f}_\rho := \sum_{i\in\Zz} |f_i|\rho^{|i|}.
\end{equation*}
Let $G$ be a Lie group or algebra of $\mM(2,\Cc)$. We define
\begin{itemize}
	\item $\Lambda G$ as the set of smooth functions $f:\Ss^1 \longrightarrow G$.
	\item $\Lambda G_\rho\subset \Lambda G$ as the set of functions $f$ such that $\norm{f}_{\rho}$ is finite. If $G$ is a group (or an algebra) then $(\Lambda G_\rho,\norm{\cdot}_{\rho})$ is a Banach Lie group (or algebra).
	\item  $\Lambda G_\rho^{\geq 0}\subset\Lambda G_\rho$ as the set of functions $f$ such that $f_i=0$ for all $i<0$.
	\item $\Lambda_+ G_\rho\subset \Lambda G_\rho^{\geq 0}$ as the set of functions such that $f_0$ is upper-triangular.
	\item  $\Lambda_+^\Rr \SL(2,\Cc)_\rho\subset \Lambda_+ \SL(2,\Cc)_\rho$ as the set of functions that have positive elements on the diagonal.
\end{itemize}
We also define $\Lambda\Cc$ as the set of smooth maps from $\Ss^1$ to $\Cc$, and $\Lambda\Cc_\rho$ and $\Lambda \Cc_\rho^{\geq 0}$ as above.
Note that every function of $\Lambda G_\rho$ holomorphically extends to $\Aa_\rho$ and that every function of $\Lambda G_\rho^{\geq 0}$ holomorphically extends to $\Dd_\rho$.

We will use the Fr\"obenius norm on $\mM(2,\Cc)$:
\begin{equation*}
\left| A \right| := \left(\sum_{i,j} |a_{ij}|^2\right)^{\frac{1}{2}}.
\end{equation*}
Recall that this norm is sub-multiplicative. Therefore, the norm $\norm{\cdot}_{\rho}$ is sub-multiplicative. Moreover,
\begin{equation*}
\forall A \in \LSLdeuxCrho, \quad \norm{A^{-1}}_\rho = \norm{A}_\rho,
\end{equation*}
\begin{equation*}
\forall A\in\Lambda\mM(2,\Cc)_\rho,\quad \forall \la\in\Aa_{\rho},\quad \left|A(\la)\right|\leq\norm{A}_{\rho}.
\end{equation*}

The DPW method relies on the Iwasawa decomposition. The following theorem is proved in \cite{minoids}.
\begin{theorem}
	The multiplication map $\LSUdeuxrho\times \LplusRSLdeuxC_\rho \longmapsto \LSLdeuxCrho$ is a smooth diffeomorphism between Banach manifolds. Its inverse map is called ``Iwasawa decomposition'' and is denoted for $\Phi\in\LSLdeuxCrho$:
	\begin{equation*}
		\Iwa(\Phi) = \left( \Uni(\Phi), \Pos(\Phi) \right).
	\end{equation*}
\end{theorem}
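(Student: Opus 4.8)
The plan is to follow the classical three-step proof of a loop-group Iwasawa decomposition: a Lie-algebra splitting promoted to a local diffeomorphism near the identity, a holomorphic rigidity argument for injectivity, and an open--closed (equivalently Wiener--Hopf) argument for surjectivity. Write $\mu\colon\LSUdeuxrho\times\LplusRSLdeuxC_\rho\longrightarrow\LSLdeuxCrho$ for the (smooth) multiplication map $(u,b)\mapsto ub$.

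First I would establish the topological direct sum of Banach spaces
\begin{equation*}
\LsldeuxCrho=\Lsurho\oplus\Lambda_+^\Rr\sl(2,\Cc)_\rho,
\end{equation*}
where the second summand consists of the loops $\sum_{i\geq0}\xi_i\la^i$ whose coefficient $\xi_0$ is upper triangular with real diagonal. Given $\xi=\sum_i\xi_i\la^i$, the $\Lsurho$-component is forced to share the negative Fourier modes of $\xi$; the reality condition $\transconj{\eta_i}=-\eta_{-i}$ then fixes its positive modes, and a direct $2\times2$ computation splits $\xi_0$ uniquely into a skew-Hermitian part and an upper-triangular real-diagonal part. Each output Fourier coefficient is a fixed bounded linear combination of at most two input coefficients carrying the same weight $\rho^{|i|}$, so both projections are bounded for $\norm{\cdot}_\rho$ and the splitting holds. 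Consequently $d\mu_{(\I_2,\I_2)}(\eta,\beta)=\eta+\beta$ is a Banach-space isomorphism; since $\mu(u_0u,bb_0)=u_0\,\mu(u,b)\,b_0$ and left/right translations in $\LSLdeuxCrho$ are diffeomorphisms, $d\mu$ is an isomorphism at every point, so by the inverse function theorem in Banach manifolds $\mu$ is a local diffeomorphism; in particular its image is open.

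Next, global injectivity. If $u_1b_1=u_2b_2$ then $\Psi:=u_2^{-1}u_1=b_2b_1^{-1}$ lies in $\LSUdeuxrho\cap\LplusRSLdeuxC_\rho$: it extends holomorphically to $\Dd_\rho$, takes values in $\SU(2)$ on $\Ss^1$, and $\Psi(0)$ is upper triangular with positive diagonal. Then all entries of $\Psi$ and of $\Psi^{-1}=\mathrm{adj}\,\Psi$ are holomorphic and bounded by $1$ on $\overline{\Dd}$, while $\Psi^{-1}=\Psi^{*}$ on $\Ss^1$; matching entries and using the Schwarz-type reflection $\la\mapsto\overline{g(1/\bar\la)}$ glues each entry of $\Psi$ to a bounded holomorphic function on $\Cc\cup\{\infty\}$, hence a constant. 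A constant loop in $\SU(2)$ that is upper triangular with positive diagonal is $\I_2$, so $(u_1,b_1)=(u_2,b_2)$.

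The main obstacle is surjectivity. Since the image of $\mu$ is open and $\LSLdeuxCrho$ is connected, it suffices to show the image is closed: if $\Phi_n=u_nb_n\to\Phi$, one wants $\norm{b_n}_\rho$ bounded and a subsequence with $b_n\to b$; then $u_n=\Phi_nb_n^{-1}\to\Phi b^{-1}=:u$, the limit lies in $\LSUdeuxrho\times\LplusRSLdeuxC_\rho$ (positivity of the diagonal of $b(0)$ survives because $\det b(0)=1$ forbids a zero there), and $\Phi=ub$. The difficulty is that a unitary loop has no a priori bound on its $\rho$-norm, so $b_n$ cannot be controlled by a soft argument; one must exhibit the positive factor as a continuous function of $\Phi$. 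Equivalently, encoding the requirement $uu^{*}=\I_2$ on $\Ss^1$ as the factorization $b^{*}b=\Phi^{*}\Phi$ of the positive Hermitian loop $\Phi^{*}\Phi$, one invokes existence, uniqueness and continuity of this Wiener--Hopf (Birkhoff) factorization, which is global precisely because a positive loop has vanishing partial indices; a convenient route is to first perform the smooth $\Ss^1$-Iwasawa decomposition of Pressley--Segal and then upgrade the regularity of both factors from $\Ss^1$ to $\Aa_\rho$ using that $\Phi$ itself extends. This is the technical heart, carried out in \cite{minoids}. Once $\mu$ is a bijective local diffeomorphism it is a diffeomorphism, and one sets $\Iwa=\mu^{-1}$, $\Uni=\mathrm{pr}_1\circ\Iwa$, $\Pos=\mathrm{pr}_2\circ\Iwa$.
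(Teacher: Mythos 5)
The paper does not actually prove this theorem; the sentence preceding the statement reads ``The following theorem is proved in \cite{minoids},'' and nothing more is said. So there is no paper proof to compare against line by line. Judged on its own merits, your outline is the correct and standard strategy for loop-group Iwasawa decompositions, and it coincides with the structure Traizet follows in the cited reference: the Lie-algebra splitting $\LsldeuxCrho=\Lsurho\oplus\Lambda_+^\Rr\sl(2,\Cc)_\rho$ (your coefficientwise computation and the boundedness of the two projections in $\norm{\cdot}_\rho$ are fine, since $\transconj{\xi_{-i}}$ has the same Frobenius norm and weight $\rho^{\lvert i\rvert}$ as $\xi_{-i}$), the inverse function theorem plus equivariance under left/right translation to get a local diffeomorphism everywhere, and the reflection argument $\la\mapsto\overline{g(1/\bar\la)}$ together with Liouville to kill the intersection $\LSUdeuxrho\cap\LplusRSLdeuxC_\rho$ down to $\{\I_2\}$, which gives global injectivity.

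The honest caveat, which you flag yourself, is that surjectivity is where all the work lies and you hand it off to \cite{minoids}: one needs the positive factor to be a continuous function of $\Phi$ (equivalently, a Wiener--Hopf/Birkhoff factorization of the positive Hermitian loop $\Phi^{*}\Phi$ with vanishing partial indices, or the Pressley--Segal decomposition on $\Ss^1$ followed by a regularity upgrade to $\Aa_\rho$), because a unitary loop carries no a priori $\rho$-norm bound and the soft open-plus-closed argument cannot produce the needed compactness. Since the paper itself cites \cite{minoids} wholesale, your proposal is an acceptable and more informative summary, but it is not a self-contained proof; if you intend it as one, the gap is precisely the continuity of the Birkhoff factorization $\Phi^{*}\Phi=b^{*}b$ in the $\rho$-topology, which you should either prove or cite as the essential input rather than fold into an ``open--closed'' paragraph.
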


\paragraph{The ingredients.}\label{sectionIngredients} Let $H>1$, $q=\arcoth H>0$ and $\rho>e^q$. The DPW method takes for input data:
\begin{itemize}
	\item A Riemann surface $\Sigma$.
	\item A holomorphic $1$-form on $\Sigma$ with values in $\LsldeuxCrho$ of the following form:
	\begin{equation*}\label{eqPotentielXialphabetagamma}
		\xi = \begin{pmatrix}
		\alpha & \la^{-1}\beta \\
		\gamma & -\alpha
		\end{pmatrix}
	\end{equation*}
	where $\alpha$, $\beta$, $\gamma$ are holomorphic $1$-forms on $\Sigma$ with values in $\Lambda\Cc_{\rho}^{\geq 0}$. The $1$-form $\xi$ is called ``the potential''.
	\item A base point $z_0\in\Sigma$.
	\item An initial condition $\phi\in\LSLdeuxCrho$.
\end{itemize}

\paragraph{The recipe.} The DPW method consists in the following steps:
\begin{enumerate}
	\item Let $\widetilde{z}_0$ be any point above $z_0$ in the universal cover $\widetilde{\Sigma}$ of $\Sigma$. Solve on $\widetilde{\Sigma}$ the following Cauchy problem:
	\begin{equation}\label{eqCauchyProblem}
		\left\{
		\begin{array}{l}
		d\Phi = \Phi \xi \\
		\Phi(\widetilde{z}_0) = \phi.
		\end{array}
		\right.
	\end{equation}
	Then $\Phi: \Sigmatilde \longrightarrow \LSLdeuxCrho$ is called ``the holomorphic frame''.
	\item Compute pointwise on $\Sigmatilde$ the Iwasawa decomposition of $\Phi$:  $$(F(z),B(z)) := \Iwa \Phi(z).$$ The unitary part $F$ of this decomposition is called ``the unitary frame''.
	\item Define $f: \Sigmatilde \longrightarrow \Hh^3$ via the Sym-Bobenko formula:
	\begin{equation*}
		f(z) = F(z,e^{-q}){F(z,e^{-q})}^*  =: \Sym_q F(z) 
	\end{equation*}
	where $F(z,\la_0):=F(z)(\la_0)$.
\end{enumerate}
Then $f$ is a CMC $H>1$ ($H=\coth q$) conformal immersion from $\widetilde{\Sigma}$ to $\Hh^3$. Its Gauss map (in the direction of the mean curvature vector) is given by
\begin{equation*}
N(z) = F(z,e^{-q})\sigma_3{F(z,e^{-q})}^* =: \Nor_qF(z)
\end{equation*}
where $\sigma_3$ is defined in \eqref{eqPauliMatrices}. The differential of $f$ is given by
\begin{equation}\label{eqdf}
df(z) = 2\sinh(q)b(z)^2 F(z,e^{-q}) \begin{pmatrix}
0 & \beta(z,0)\\
\conj{\beta(z,0)} & 0
\end{pmatrix}  F(z,e^{-q})^*
\end{equation}
where $b(z)>0$ is the upper-left entry of $B(z)\mid_{\la=0}$. The metric of $f$ is given by
\begin{equation*}
	ds_f(z) = 2\sinh(q)b(z)^2 \left|\beta(z,0)\right|
\end{equation*}
and its Hopf differential reads
\begin{equation}\label{eqHopf}
	-2\beta(z,0)\gamma(z,0)\sinh q ~ dz^2.
\end{equation}

\begin{remark}
	The results of this paper hold for any $H>1$. We thus fix now $H>1$ and $q=\arcoth H$.
\end{remark}

\paragraph{Rigid motions.} Let $H\in \LSLdeuxCrho$ and define the new holomorphic frame $\widetilde{\Phi} = H\Phi$ with unitary part $\widetilde{F}$ and induced immersion $\widetilde{f} = \Sym_q \widetilde{F}$. If $H\in\LSUdeuxrho$, then $\widetilde{F} = HF$ and $\widetilde{\Phi}$ gives rise to the same immersion as $\Phi$ up to an isometry of $\Hh^3$:
\begin{equation*}
	\widetilde{f}(z) = H(e^{-q})\cdot f(z).
\end{equation*} 
If $H\notin \LSUdeuxrho$, this transformation is called a ``dressing'' and may change the surface.

\paragraph{Gauging.}
Let $G : \Sigmatilde\longrightarrow\LplusSLdeuxCrho$ and define the new potential:
\begin{equation*}
	\widehat{\xi} = \xi\cdot G := G^{-1}\xi G + G^{-1}dG.
\end{equation*}
The potential $\widehat{\xi}$ is a DPW potential and this operation is called ``gauging''. The data $\left(\Sigma,\xi,z_0,\phi\right)$ and $\left( \Sigma,\widehat{\xi},z_0,\phi\times G(z_0) \right)$ give rise to the same immersion.

\paragraph{The monodromy problem.} Since the immersion $f$ is only defined on the universal cover $\Sigmatilde$, one might ask for conditions ensuring that it descends to a well-defined immersion on $\Sigma$.
For any deck transformation $\tau\in\Deck\left(\widetilde{\Sigma}/\Sigma\right)$, define the monodromy of $\Phi$ with respect to $\tau$ as:
\begin{equation*}
\mM_\tau(\Phi)  := \Phi(\tau(z))\Phi(z)^{-1} \in \LSLdeuxCrho.
\end{equation*}
This map is independent of $z\in\widetilde{\Sigma}$. The standard sufficient conditions for the immersion $f$ to be well-defined on $\Sigma$ is the following set of equations, called the monodromy problem in $\Hh^3$:
\begin{equation}\label{eqMonodromyProblem}
\forall \tau\in\Deck\left(\widetilde{\Sigma}/\Sigma\right),\quad
\left\{
\begin{array}{l}
\mM_\tau(\Phi)\in\LSUdeuxrho,\\
\mM_\tau(\Phi)(e^{-q}) = \pm\I_2.
\end{array}
\right.
\end{equation}
Use the point $\widetilde{z}_0$ defined in step 1 of the DPW method to identify the fundamental group $\pi_1(\Sigma,z_0)$ with $\Deck(\widetilde\Sigma / \Sigma)$. Let $\left\{ \gamma_i \right\}_{i\in I}$ be a set of generators of  $\pi_1\left(\Sigma,z_0\right)$.  Then the problem \eqref{eqMonodromyProblem} is equivalent to
\begin{equation}\label{eqMonodromyProblemLoop}
\forall i\in I, \quad 
\left\{
\begin{array}{l}
\mM_{\gamma_i}(\Phi)\in\LSUdeuxrho,\\
\mM_{\gamma_i}(\Phi)(e^{-q}) = \pm\I_2.
\end{array}
\right.
\end{equation}

\paragraph{Example: the standard sphere.}
The DPW method can produce spherical immersions of $\Sigma = \Cc\cup\left\{\infty \right\}$ with the potential
\begin{equation*}
\xi_\Ss(z,\la) = \begin{pmatrix}
0 & \la^{-1}dz \\
0 & 0
\end{pmatrix}
\end{equation*}
and initial condition $\Phi_\Ss(0,\la) = \I_2$. 
The potential is not regular at $z=\infty$ because it has a double pole there. However, the immersion will be regular at this point because $\xi_\Ss$ is gauge-equivalent to a regular potential at $z=\infty$. Indeed, consider on $\Cc^*$ the gauge
\begin{equation*}
	G(z,\la) = \begin{pmatrix}
	z&0\\
	-\la & \frac{1}{z}
	\end{pmatrix}.
\end{equation*}
The gauged potential is then
\begin{equation*}
	\xi_\Ss\cdot G (z,\la) = \begin{pmatrix}
	0 & \la^{-1}z^{-2}dz \\
	0&0
	\end{pmatrix}
\end{equation*}
which is regular at $z=\infty$. 
The holomorphic frame is
\begin{equation}\label{eqPhi_Ss}
\Phi_\Ss(z,\la) = \begin{pmatrix}
1 & \la^{-1}z \\
0 & 1
\end{pmatrix}
\end{equation}
and its unitary factor is 
\begin{equation*}
F_\Ss(z,\la) = \frac{1}{\sqrt{1+|z|^2}}\begin{pmatrix}
1 & \la^{-1}z\\
-\la \conj{z} & 1
\end{pmatrix}.
\end{equation*}
The induced CMC-$H$ immersion is
\begin{equation*}
f_\Ss(z) = \frac{1}{1+|z|^2}\begin{pmatrix}
1+ e^{2q}|z|^2 & 2z\sinh q \\
2\conj{z}\sinh q & 1+e^{-2q}|z|^2
\end{pmatrix}.
\end{equation*}
It is not easy to see that $f_\Ss(\Sigma)$ is a sphere because it is not centered at $\I_2$. To solve this problem, notice that $F_\Ss(z,e^{-q}) = R(q)\widetilde{F}_\Ss(z)R(q)^{-1}$ where
\begin{equation}\label{eqR(q)}
R(q) := \begin{pmatrix}
e^{\frac{q}{2}} & 0\\
0 & e^{\frac{-q}{2}}
\end{pmatrix}\in\SL(2,\Cc)
\end{equation}
and
\begin{equation*}
	\widetilde{F}_\Ss(z) := \frac{1}{\sqrt{1+|z|^2}}\begin{pmatrix}
	1 & z\\
	-\conj{z} & 1
	\end{pmatrix}\in\SU(2).
\end{equation*}
Apply an isometry by setting 
\begin{equation*}\label{eqftildeSs}
	\widetilde{f}_\Ss(z) := R(q)^{-1}\cdot f_\Ss(z)
\end{equation*}
and compute
\begin{equation*}
\widetilde{f}_\Ss(z) = \frac{1}{1+|z|^2}\begin{pmatrix}
e^{-q}+ e^{q}|z|^2 & 2z\sinh q \\
2\conj{z}\sinh q & e^{q} + e^{-q}|z|^2
\end{pmatrix} = (\cosh q)\I_2 + \frac{\sinh q}{1+|z|^2} \begin{pmatrix}
|z|^2-1 & 2z \\
2\conj{z} & 1-|z|^2
\end{pmatrix}
\end{equation*}
i.e.
\begin{equation}\label{eqftilde_Ss=gammav_Ss}
	\widetilde{f}_\Ss(z)= \geod(\I_2,v_\Ss(z))(q)
\end{equation}
with $\geod$ defined in \eqref{eqExpMapGamma} and where in the basis $(\sigma_1,\sigma_2,\sigma_3)$ of $T_{\I_2}\Hh^3$,
\begin{equation}\label{eqVecteurvS}
v_\Ss(z) := \frac{1}{1+|z|^2}\left( 2\Re z, 2 \Im z, |z|^2-1 \right)
\end{equation}
describes a sphere of radius one in the tangent space of $\Hh^3$ at $\I_2$ (it is the inverse stereographic projection from the north pole). Hence, $\widetilde{f}_\Ss(\Sigma)$ is a sphere centered at $\I_2$ of hyperbolic radius $q$ and $f_\Ss\left(\Sigma\right)$ is a sphere of same radius centered at $\geod(\I_2, \sigma_3)(q)$.

One can compute the normal map of $f_\Ss$: 
\begin{equation*}
	N_\Ss(z) := \Nor_q F_\Ss(z) = R(q)\cdot \widetilde{N}_\Ss(z)
\end{equation*}
where
\begin{align*}
	\widetilde{N}_\Ss(z) &:= \Nor_q\left(\widetilde{F}_\Ss(z)\right) = \frac{1}{1+|z|^2}\begin{pmatrix}
	e^{-q}- e^{q}|z|^2 & -2z\cosh q \\
	-2\conj{z}\cosh q & e^{-q}|z|^2 - e^{q}
	\end{pmatrix}\\
	&= -(\sinh q)  \I_2 - (\cosh q)  v_\Ss(z) =   -\dot{\geod}\left( \I_2,v_\Ss(z) \right)(q).
\end{align*}
Note that this implies that the normal map $\Nor_q$ is oriented by the mean curvature vector.

\subsection{Delaunay surfaces}\label{sectionDelaunaySurfaces}

\paragraph{The data.} 

Let $\Sigma=\Cc^*$, $\xi_{r,s}(z,\la) = A_{r,s}(\la)z^{-1}dz$ where
\begin{equation}\label{eqArs}
	A_{r,s}(\la) := \begin{pmatrix}
	0 & r\la^{-1}+s\\
	r\la+s & 0
	\end{pmatrix}, \qquad r,s\in\Rr, \quad \la\in\Ss^1,
\end{equation}
and initial condition $\Phi_{r,s}(1) = \I_2$. With these data, the holomorphic frame reads 
\begin{equation*}
	\Phi_{r,s}(z) = z^{A_{r,s}}.
\end{equation*}
The unitary frame $F_{r,s}$ is not explicit, but the DPW method states that the map $f_{r,s}=\Sym_q(F_{r,s})$ is a CMC $H$ immersion from the universal cover $\widetilde{\Cc^*}$ of $\Cc^*$ into $\Hh^3$.

\paragraph{Monodromy.} Computing the monodromy along $\gamma(\theta) = e^{i\theta}$ for $\theta\in [0,2\pi]$ gives
\begin{equation*}
\mM\left(\Phi_{r,s}\right):= \mM_\gamma\left(\Phi_{r,s}\right) = \exp\left(2i\pi A_{r,s}\right).
\end{equation*}
Recall that $r,s\in\Rr$ to see that $iA_{r,s}\in\Lsurho$, and thus $\mM\left(\Phi_{r,s}\right)\in\LSUdeuxrho$: the first equation of \eqref{eqMonodromyProblemLoop} is satisfied. To solve the second one, we look forward to finding $r$ and $s$ such that $A_{r,s}(e^{-q})^2=\frac{1}{4}\I_2$, which will imply that $\mM\left(\Phi_{r,s}\right)(e^{-q})=-\I_2$. This condition is equivalent to
\begin{equation}\label{eqrs}
	r^2+s^2+2rs\cosh q = \frac{1}{4}.
\end{equation}
Seeing this equation as a polynomial in $r$ and computing its discriminant ($1+4s^2\sinh^2q>0$) ensures the existence of an infinite number of solutions: given a couple $(r,s)\in\Rr^2$ solution to \eqref{eqrs}, $f_{r,s}$ is a well-defined CMC $H$ immersion from $\Cc^*$ into $\Hh^3$.

\paragraph{Surface of revolution.} 

Let $(r,s)\in\Rr^2$ satisfying \eqref{eqrs} and let $\theta\in\Rr$. Then,
\begin{equation*}
	\Phi_{r,s}\left(e^{i\theta}z\right) = \exp\left(i\theta A_{r,s}\right)\Phi_{r,s}(z).
\end{equation*}
Using $iA_{r,s}\in\Lsurho$ and diagonalising $A_{r,s}(e^{-q})$ gives
\begin{align*}
	f_{r,s}\left(e^{i\theta} z\right) &= \exp\left(i\theta A_{r,s}(e^{-q})\right)\cdot f_{r,s}(z)\\
	&=\left( H_{r,s}\exp\left(i\theta D\right)H_{r,s}^{-1}\right)\cdot f_{r,s}(z)
\end{align*}
where
\begin{equation*}
	H_{r,s} = \frac{1}{\sqrt{2}}\begin{pmatrix}
	1 & -2\left(re^{-q}+s\right)\\
	2\left(re^q+s\right) & 1
	\end{pmatrix}, \qquad D = \begin{pmatrix}
	\frac{1}{2}& 0\\0 & \frac{-1}{2}
	\end{pmatrix}.
\end{equation*}
Noting that $\exp\left(i\theta D\right)$ acts as a rotation of angle $\theta$ around the axis $\geod(\I_2,\sigma_3)$ and that $H_{r,s}$ acts as an isometry of $\Hh^3$ independant of $\theta$ shows that $\exp\left(i\theta A_{r,s}(e^{-q})\right)$ acts as a rotation around the axis $H_{r,s}\cdot \geod(\I_2,\sigma_3)$ and that $f_{r,s}$ is CMC $H>1$ immersion of revolution of $\Cc^*$ into $\Hh^3$ and by definition (as in \cite{meeks}) a Delaunay immersion.

\paragraph{The weight as a parameter.}

For a fixed $H>1$, CMC $H$ Delaunay surfaces in $\Hh^3$ form a family parametrised by the weight. This weight can be computed in the DPW framework: given a solution $(r,s)$ of \eqref{eqrs}, the weight $w$ of the Delaunay surface induced by the DPW data $(\Cc^*,\xi_{r,s},1,\I_2)$ reads
\begin{equation}\label{eqPoidsDelaunayPure}
w = 2\pi \times 4 rs\sinh q
\end{equation}
(see \cite{loopgroups} for details). 

\begin{lemma}
	Writing $t:=\frac{w}{2\pi}$ and assuming $t\neq 0$, Equations \eqref{eqrs} and \eqref{eqPoidsDelaunayPure} imply that
	\begin{equation}\label{eqSystemers}
		\left\{
		\begin{array}{l}
		t\leq T_1,\\
		r^2= \frac{1}{8}\left( 1-2Ht \pm 2 \sqrt{T_1-t}\sqrt{T_2-t} \right),\\
		s^2 = \frac{1}{8}\left( 1-2Ht \pm 2\sqrt{T_1-t}\sqrt{T_2-t} \right)
		\end{array}
		\right.
	\end{equation}
	with
	\begin{equation*}
		T_1 = \frac{\tanh\frac{q}{2}}{2}<\frac{1}{2\tanh\frac{q}{2}}=T_2.
	\end{equation*}
\end{lemma}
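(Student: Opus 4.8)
The plan is to recast the two hypotheses as symmetric functions of $r^2$ and $s^2$ and then solve a quadratic. First, \eqref{eqPoidsDelaunayPure} reads $rs = t/(4\sinh q)$; substituting this into \eqref{eqrs} and using $H=\coth q$ gives
\[
r^2+s^2 \;=\; \tfrac14 - 2rs\cosh q \;=\; \tfrac14\left(1-2Ht\right),
\]
while squaring and using the identity $1/\sinh^2 q = \coth^2 q - 1 = H^2-1$ gives $r^2 s^2 = \tfrac1{16}t^2(H^2-1)$. Hence $r^2$ and $s^2$ are the two roots of $X^2 - \tfrac14(1-2Ht)X + \tfrac1{16}t^2(H^2-1)=0$, that is,
\[
\{r^2,s^2\} \;=\; \left\{\, \tfrac18\left(1-2Ht \;\pm\; \sqrt{(1-2Ht)^2 - 4t^2(H^2-1)}\,\right) \right\}.
\]

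Next I would simplify the quantity under the square root. Expanding and using $H^2-1$ once more, $(1-2Ht)^2 - 4t^2(H^2-1) = 1 - 4Ht + 4t^2 = 4\left(t^2 - Ht + \tfrac14\right)$, so it equals $4(T_1-t)(T_2-t)$ where $T_1,T_2$ are the roots of $X^2 - HX + \tfrac14$, i.e. $T_1+T_2 = H$ and $T_1T_2 = \tfrac14$. To identify these with the values in the statement I would invoke the half-angle identities $\tanh\tfrac q2 + \coth\tfrac q2 = 2\coth q = 2H$ and $\tanh\tfrac q2 \cdot \coth\tfrac q2 = 1$, which show that $\tfrac12\tanh\tfrac q2$ and $\tfrac12\coth\tfrac q2 = \tfrac{1}{2\tanh(q/2)}$ are exactly the two roots; since $q>0$ forces $\tanh\tfrac q2 \in (0,1)$, we get $T_1 = \tfrac12\tanh\tfrac q2 < \tfrac{1}{2\tanh(q/2)} = T_2$. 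Substituting $\sqrt{(1-2Ht)^2 - 4t^2(H^2-1)} = 2\sqrt{T_1-t}\,\sqrt{T_2-t}$ into the two roots above yields the formulas for $r^2$ and $s^2$ in \eqref{eqSystemers}.

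It remains to prove $t \leq T_1$. Since $r$ and $s$ are real, $r^2+s^2 = \tfrac14(1-2Ht)\geq 0$, hence $t \leq \tfrac{1}{2H} = \tfrac12\tanh q$; moreover the quantity $4(T_1-t)(T_2-t)$ under the square root is nonnegative (indeed it equals $16(r^2-s^2)^2$), so $t\leq T_1$ or $t\geq T_2$. I would then rule out the case $t\geq T_2$ by checking $\tfrac12\tanh q < T_2$, which is equivalent to $\tanh q\cdot\tanh\tfrac q2 < 1$ — true since both factors lie in $(0,1)$ for $q>0$. This forces $t \leq T_1$ and finishes the proof. I do not anticipate a serious obstacle: the only slightly delicate points are the algebraic collapse of the discriminant to $4t^2 - 4Ht + 1$ (which hinges on $H^2-1 = 1/\sinh^2 q$) and the careful bookkeeping of the half-angle identities and of the two square-root branches; everything else is a routine application of the quadratic formula.
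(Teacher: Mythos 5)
Your proof is correct and lands on exactly the same quadratic as the paper, but it gets there by a genuinely cleaner route. The paper eliminates $s$ using $s=t/(4r\sinh q)$ (which first requires the observation that $r\neq 0$), obtains a quartic in $r$, treats it as a quadratic in $r^2$, solves for $r^2$, and then invokes symmetry to assert the same formula for $s^2$. You instead read off the two elementary symmetric functions $r^2+s^2 = \tfrac14(1-2Ht)$ and $r^2s^2 = \tfrac1{16}t^2(H^2-1)$ directly from the hypotheses and use Vi\`ete's relations, which identifies $r^2,s^2$ as the two roots of a single quadratic with no case distinction and no appeal to symmetry at the end. You also spell out the identification of $T_1,T_2$ via the half-angle identities $\tanh\tfrac q2 + \coth\tfrac q2 = 2\coth q$ and $\tanh\tfrac q2\cdot\coth\tfrac q2 = 1$, which the paper leaves implicit (``because $H=\coth q$''). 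Finally, for the inequality $t\leq T_1$, you first derive $t\leq\tfrac1{2H}$ from $r^2+s^2\geq 0$, which is the correct bound (the paper writes $t\leq\tfrac H2$, apparently a typo for $\tfrac1{2H}$, though either suffices since both are $<T_2$), and then rule out $t\geq T_2$ by the observation $\tanh q\cdot\tanh\tfrac q2 < 1$. In short: same quadratic, same conclusion, but your symmetric-function derivation is tighter and fills in details the paper glosses over.
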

\begin{proof}
	First, note that \eqref{eqrs} and \eqref{eqPoidsDelaunayPure} imply
	\begin{equation*}
		r^2 + s^2 =\frac{1}{4}\left( 1 - 2t\coth q \right) = \frac{1}{4}(1-2Ht)
	\end{equation*}
	and thus 
	\begin{equation}\label{eqt<T2}
		t\leq \frac{H}{2}<T_2.
	\end{equation} 
	If $r=0$, then $t=0$. Thus, $r\neq 0$ and
	\begin{equation*}
		s = \frac{t}{4r\sinh q}.
	\end{equation*}
	Equation \eqref{eqrs} is then equivalent to
	\begin{equation*}
	r^2 + \frac{t^2}{16r^2\sinh^2 q} +\frac{Ht}{2} = \frac{1}{4} \iff r^4 - \frac{1-2Ht}{4}r^2 + \frac{t^2}{16\sinh^2q} = 0.
	\end{equation*}
	Using $\coth q = H$, the discriminant of this quadratic polynomial in $r^2$ is
	\begin{equation*}
	\Delta(t) = \frac{1}{16}\left( 1-4Ht+4t^2 \right)
	\end{equation*}
	which in turn is a quadratic polynomial in $t$ with discriminant
	\begin{equation*}
	\widetilde{\Delta} = \frac{H^2-1}{16}>0
	\end{equation*}
	because $H>1$.
	Thus,
	\begin{equation*}
	\Delta(t) = \frac{(T_1-t)(T_2-t)}{4}
	\end{equation*}
	because $H=\coth q$. Using \eqref{eqt<T2}, $\Delta(t)\geq 0$ if, and only if $t\leq T_1$ and
	\begin{equation*}
	r^2 = \frac{1}{8}\left( 1-2Ht \pm 2\sqrt{(T_1-t)(T_2-t)} \right).
	\end{equation*}
	By symmetry of Equations \eqref{eqrs} and \eqref{eqPoidsDelaunayPure}, $s^2$ is as in \eqref{eqSystemers}.
\end{proof}

We consider the two continuous parametrisations of $r$ and $s$ for $t\in(-\infty,T_1)$ such that $(r,s)$ satisfies Equations \eqref{eqrs} and \eqref{eqPoidsDelaunayPure} with $w=2\pi t$:
\begin{equation}\label{eqrst}
	\left\{
	\begin{array}{l}
	r(t) := \frac{\pm 1}{2\sqrt{2}}\left(1-2Ht + 2\sqrt{T_1-t}\sqrt{ T_2-t } \right)^{\frac{1}{2}},\\
	s(t) := \frac{t}{4r(t)\sinh q}.
	\end{array}
	\right.
\end{equation}

Choosing the parametrisation satisfying $r>s$ maps the unit circle of $\Cc^*$ onto a parallel circle of maximal radius, called a ``bulge'' of the Delaunay surface. As $t$ tends to $0$, the immersions tend towards a parametrisation of a sphere on every compact subset of $\Cc^*$, which is why we call this family of immersions ``the spherical family''. When $r<s$, the unit circle of $\Cc^*$ is mapped onto a parallel circle of minimal radius, called a ``neck'' of the Delaunay surface. As $t$ tends to $0$, the immersions degenerate into a point on every compact subset of $\Cc^*$. Nevertheless, we call this family the ``catenoidal family'' because applying a blowup to the immersions makes them converge towards a catenoidal immersion of $\Rr^3$ on every
compact subset of $\Cc^*$ (see Section \ref{sectionBlowUp} for more details). In both cases, the weight of the induced surfaces is given by $w = 2\pi t$.

\section{Perturbed Delaunay immersions}\label{sectionPerturbedDelaunayImmersions}

In this section, we study the immersions induced by a perturbation of Delaunay DPW data with small non-vanishing weights in a neighbourhood of  $z=0$. Our results are the same wether we choose the spherical or the catenoidal family of immersions. We thus drop the index $r,s$ in the Delaunay DPW data and replace it by a small value of $t=4rs\sinh q$ in a neighbourhood of $t=0$ such that
\begin{equation*}
	t<T_{\max}:=\frac{\tanh \frac{q}{2}}{2}.
\end{equation*}
For all $\epsilon>0$, we denote
\begin{equation*}
	D_\epsilon := \left\{ z\in\Cc \mid |z|<\epsilon \right\}, \qquad  D_\epsilon^* := D_\epsilon \backslash\left\{0\right\}.
\end{equation*}

\begin{definition}[Perturbed Delaunay potential]
	\label{defPerturbedDelaunayPotential}
	Let $\rho>e^q$, $0<T<T_{\max}$ and $\epsilon>0$. A perturbed Delaunay potential is a continuous one-parameter family $(\xi_t)_{t\in (-T,T)}$ of DPW potentials defined for $(t,z)\in(-T,T)\times D_{\epsilon}^*$ by
	\begin{align*}
	\xi_t(z) = A_tz^{-1}dz + C_t(z)dz
	\end{align*}
	where $A_t\in\Lambda\slfrak(2,\Cc)_{\rho}$ is a Delaunay residue as in \eqref{eqArs} satisfying \eqref{eqrst} and $C_t(z)\in\slfrak(2,\Cc)_{\rho}$ is $\cC^1$ with respect to $(t,z)$, holomorphic with respect to $z$ for all $t$ and satisfies $C_0(z)=0$ for all $z$.
\end{definition}

\begin{theorem}\label{theoremPerturbedDelaunay}
	Let $\rho>e^q$, $0<T<T_{\max}$, $\epsilon>0$ and $\xi_t$ be a perturbed Delaunay potential $\cC^2$ with respect to $(t,z)$. Let $\Phi_t$ be a holomorphic frame associated to $\xi_t$ for all $t$ via the DPW method. Suppose that the family of initial conditions $\phi_t$ is $\cC^2$ with respect to $t$, with $\phi_0=\widetilde z_0^{A_0}$, and that the monodromy problem \eqref{eqMonodromyProblemLoop} is solved for all $t\in (-T,T)$.
	Let $f_t=\Sym_q\left(\Uni \Phi_t\right)$. Then,
	\begin{enumerate}
		\item For all $\delta>0$, there exist $0<\epsilon'<\epsilon$, $T'>0$ and $C>0$ such that for all $z\in D_{\epsilon'}^*$ and $t\in(-T',T')\backslash\{0\}$,
		\begin{align*}
		\distH{f_t(z)}{f_t^\dD(z)}\leq C|t||z|^{1-\delta}
		\end{align*}
		where $f_t^\dD$ is a Delaunay immersion of weight $2\pi t$.
		\item There exist $T'>0$ and $\epsilon'>0$ such that for all $0<t<T'$, $f_t$ is an embedding of $D_{\epsilon'}^*$.
		\item The limit axis as $t$ tends to $0$ of the Delaunay immersion $f_t^\dD$ oriented towards the end at $z=0$ is given by:
		\begin{equation*}
		\frac{1}{\sqrt{2}}\begin{pmatrix}
		1 & -e^q\\
		e^{-q} & 1
		\end{pmatrix} \cdot \geod\left(\I_2,-\sigma_3\right)\quad \text{in the spherical family ($r>s$)},
		\end{equation*}
		\begin{equation*}
		\geod\left(\I_2,-\sigma_1\right)\quad \text{in the catenoidal family ($r<s$)}.
		\end{equation*}
	\end{enumerate}
\end{theorem}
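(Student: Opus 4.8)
## Proof proposal for Theorem \ref{theoremPerturbedDelaunay}

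The plan is to treat the perturbed system $d\Phi_t = \Phi_t\xi_t$ as a linear ODE on $D_\epsilon^*$ with a regular singular point at $z=0$, and to exploit the fact that the ``frozen'' system $d\Phi = \Phi A_t z^{-1}dz$ has the explicit solution $z^{A_t}$. Concretely, I would gauge/conjugate by $z^{A_t}$ and write $\Phi_t(z) = z^{A_t} Y_t(z)$ (or rather study $\Psi_t := z^{-A_t}\Phi_t$), so that $Y_t$ solves a system whose singularity at $0$ is governed by the adjoint action of $A_t$ on the perturbation term $C_t$. This is the loop-group Fröbenius setup alluded to in the introduction: the eigenvalues of $\mathrm{ad}(2i\pi A_t(\la))$ are the resonance data, and because $A_t(\la)$ has eigenvalues $\pm\mu_t(\la)$ with $\mu_t(\la)\to \pm\tfrac12$ as $t\to0$, the resonances cluster near $0,\pm1$ — the two ``extra resonance points'' for $\Hh^3$. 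I would isolate the genuinely resonant Fourier/spectral modes, solve those finitely many scalar channels by hand (producing possible $\log z$ terms, which in fact must be absent because the monodromy problem is solved — this is where the no-period hypothesis is used), and handle the remaining modes by a contraction argument in the Banach space $\Lambda\slfrak(2,\Cc)_\rho$ with a weighted sup-norm $\sup_{z}|z|^{-1+\delta}\|\cdot\|_\rho$. The output of this step is: $\Phi_t(z) = z^{A_t}\bigl(P_t + O(|t||z|^{1-\delta})\bigr)$ on $D_{\epsilon'}^*$, uniformly in $t$, where $P_t\in\LplusSLdeuxCrho$-type factor is $\cC^1$ in $t$ with $P_0 = \phi_0$-compatible, and the error constant is uniform.

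Granting that expansion, part (1) follows by comparing $\Phi_t$ with the holomorphic frame $\Phi_t^\dD(z) = z^{A_t}$ of the genuine Delaunay immersion of weight $2\pi t$ (after absorbing $P_t$ into the base point and an ambient isometry, using the ``rigid motions'' and ``gauging'' paragraphs): the two holomorphic frames differ multiplicatively by $\I_2 + O(|t||z|^{1-\delta})$, and since Iwasawa decomposition is smooth (the Iwasawa theorem quoted from \cite{minoids}) and $\Sym_q$ is Lipschitz on bounded sets, the hyperbolic distance $\distH{f_t(z)}{f_t^\dD(z)}$ inherits the same bound $C|t||z|^{1-\delta}$. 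One must check the constant stays uniform as $t\to0$ and $z\to0$, which is automatic because the error term carries the explicit factor $|t|$ and the $|z|^{1-\delta}$ weight is exactly what the weighted-norm contraction delivers.

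For part (2), once $f_t$ is $C|t||z|^{1-\delta}$-close to a genuine Delaunay embedding $f_t^\dD$ on $D_{\epsilon'}^*$, I would invoke embeddedness of the Delaunay end (in the unduloidal/positive-weight regime the half-Delaunay surface is embedded on a punctured disk, cf. \cite{meeks}) together with the fact that the perturbation is a graph over $f_t^\dD$ of size $o(\text{neck size})$: the neck of $f_t^\dD$ has size bounded below by a constant times $|t|^{?}$ in the relevant family, but more robustly, the normal component of $f_t - f_t^\dD$ is controlled by the $C^1$-closeness of the frames, so $f_t$ is a normal graph of $C^1$-small height over an embedded surface and hence embedded, after shrinking $T'$ and $\epsilon'$. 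The hypothesis $0<t<T'$ is used precisely to stay in the embedded (unduloidal) Delaunay regime.

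For part (3), the limit axis is read off from the $t\to0$ limit of the Delaunay immersion $f_t^\dD$. In the ``surface of revolution'' paragraph the axis of $f_{r,s}$ is $H_{r,s}\cdot\geod(\I_2,\sigma_3)$; as $t\to0$ one has $r(t)\to$ a nonzero limit and $s(t)\to0$ in the spherical family ($r>s$), so $H_{r,s}\to \tfrac{1}{\sqrt2}\begin{pmatrix}1 & -2r_0e^{-q}\\ 2r_0 e^{q} & 1\end{pmatrix}$ with $2r_0^2=1$, giving the stated matrix $\tfrac{1}{\sqrt2}\begin{pmatrix}1 & -e^q\\ e^{-q} & 1\end{pmatrix}$; the orientation towards the end at $z=0$ flips $\sigma_3$ to $-\sigma_3$. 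In the catenoidal family ($r<s$) one has instead $r(t)\to0$, $s(t)\to \tfrac{1}{2}$, and after the blow-up normalization the axis direction degenerates to $\geod(\I_2,-\sigma_1)$; I would verify this by computing $\lim_{t\to0} H_{r(t),s(t)}\cdot\geod(\I_2,\sigma_3)$ with the appropriate rescaling and checking the orientation. This part is a direct computation from \eqref{eqrst} and the explicit $H_{r,s}$, so no real obstacle.

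The main obstacle is the loop-group Fröbenius argument in the first step: one must show that the two unavoidable resonances near $\la$-values where $\mathrm{ad}(2i\pi A_t(\la))$ has integer eigenvalues do not produce logarithmic terms, and that the contraction estimate is uniform across $t$ including the degenerating limit $t\to0$ (where $A_t$ itself is $t$-dependent, so the ``model'' operator moves). Handling the resonant modes requires using the solved monodromy problem \eqref{eqMonodromyProblemLoop} to kill the would-be $\log$ coefficients — this is the ``no period problem on $\Dd^*$'' hypothesis — and then a careful bookkeeping of which Fourier-in-$\la$ coefficients are resonant, uniformly in $t$ near $0$.
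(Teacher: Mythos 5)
Your high-level plan (extract a Fr\"obenius form $\Phi_t = z^{A_t}(\text{something})$, kill resonances with the monodromy hypothesis, pass through Iwasawa, then compare with a genuine Delaunay end) is the paper's plan, and your identification of the $\la$-resonance mechanism at $e^{\pm q}$ is correct. But the passage from holomorphic-frame closeness to immersion closeness in part (1) contains a real gap that the paper spends most of Sections~\ref{sectionzAP}--\ref{sectionConvergenceImmersions} closing, and you skip it.

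\textbf{The gap.} You claim that because $\Phi_t = \Phi_t^\dD P_t$ with $P_t - \I_2 = O(|t||z|^{1-\delta})$ and Iwasawa is ``Lipschitz on bounded sets'', the unitary frames (hence $f_t$ and $f_t^\dD$) are $O(|t||z|^{1-\delta})$-close. This does not follow: the frames $\Phi_t$, $\Phi_t^\dD$ are unbounded as $z\to 0$ (they grow like $|z|^{\pm\mu_t}$ with $\mu_t\approx\tfrac12$), so no Lipschitz bound on a fixed bounded set applies. The correct computation is $\Uni(\Phi_t) = F_t^\dD\cdot\Uni\!\left(B_t^\dD\,P_t\,(B_t^\dD)^{-1}\right)$ with $B_t^\dD = \Pos(\Phi_t^\dD)$, so the perturbation is amplified by $\norm{B_t^\dD(z)}^2\sim|z|^{-1-\text{small}}$. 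With your $|z|^{1-\delta}$ bound on $P_t-\I_2$ the amplified error is $|z|^{-\text{small}}$, which does not tend to $0$. The paper fixes this by first proving a sharper bound $\norm{\widetilde P_t(z)-\I_2}\leq C|t||z|^{2}$ (Proposition~\ref{propzAP2}, obtained by an explicit extra gauge $G_t=\exp(g_tz)$ and a change of coordinates $h_t$ that removes the linear-in-$z$ term of $P_t$), and then proving $\norm{B_t^\dD(z)}\leq C|z|^{-\frac12-2\delta}$. The latter rests on a separate hard estimate you do not mention at all: $\norm{F_t^\dD(z)}\leq C|z|^{-\delta}$ uniformly in small $t$ (Proposition~\ref{propConvUnitFrame}), proved by restricting $F_t^\dD$ to a meridian, integrating the Lax pair over one period $S_t\to\infty$ of the profile curve (using the exact period integrals of Proposition~\ref{propJleliLopez}), applying Gr\"onwall on a single period, and then iterating via the periodicity matrix.

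\textbf{Two secondary issues.} First, to even speak of ``the Delaunay immersion $f_t^\dD$ of weight $2\pi t$'' one must show that $\Sym_q(\Uni(M_tz^{A_t}))$ \emph{is} a surface of revolution even though $M_t\neq\I_2$ is not unitary. The paper does this via Lemma~\ref{lemmaUniComPointwise}: a polar-type decomposition $M_t = U_tK_t$ with $U_t$ unitary, $K_t$ positive, and $[K_t,A_t]=0$, so $K_t$ passes through $z^{A_t}$ and the rotational symmetry survives. You gesture at ``absorbing $P_t$ into the base point and an ambient isometry'' but this decomposition is the precise mechanism and it is not automatic. Second, for part (2) the embeddedness argument cannot be merely ``small normal graph over an embedded surface'': the tubular radius $r_t$ of the Delaunay surface shrinks like $t$ (Lemma~\ref{lemmaTubularRadius}), so one must verify that the $C|t||z|^{1-\delta}$ distance bound and a comparable bound on the angle between normals (Proposition~\ref{propConvNormales}, using Gauss--Bonnet/Heron area control of geodesic triangles and Lemma~\ref{lemmaCompNormalesDelaunay}) beat $\alpha r_t$ for a fixed $\alpha<1$, and then run a covering-map degree argument to conclude the projection is one-sheeted. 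The explicit $|t|$ prefactor you obtained is exactly what makes this work, so your instinct is right, but the normal-map comparison and the covering argument are genuine steps that need to appear.

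On part (3), note that the paper does not read the limit axis off $H_{r,s}$ from Section~\ref{sectionDelaunaySurfaces}; it gauges the limit potential $\xi_0$ to the explicit spherical potential $\xi_\Ss$ and identifies the unitary frame with $HF_\Ss$ where $H=H_0(\la)$ from Proposition~\ref{propDiagA}, evaluating at $\la=e^{-q}$. Your $H_{r,s}$ route gives $\frac{1}{\sqrt2}\left(\begin{smallmatrix}1 & -e^{-q}\\ e^{q} & 1\end{smallmatrix}\right)$ at $t=0$ (since $r\to\tfrac12$, $s\to0$, not $r\to\tfrac{1}{\sqrt2}$ as you wrote), which does not visibly match the stated $\frac{1}{\sqrt2}\left(\begin{smallmatrix}1 & -e^{q}\\ e^{-q} & 1\end{smallmatrix}\right)$; you would still have to reconcile the two change-of-basis conventions. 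The catenoidal case also cannot be read off $H_{r,s}$ directly because the immersion degenerates to a point; the paper invokes the blow-up of Proposition~\ref{propBlowupCatenoid}.
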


Let $\xi_t$ and $\Phi_t$ as in Theorem \ref{theoremPerturbedDelaunay} with $\rho$, $T$ and $\epsilon$ fixed. This Section is dedicated to the proof of Theorem \ref{theoremPerturbedDelaunay}. 

The $\cC^2$-regularity of $\xi_t$ essentially means that $C_t(z)$ is $\cC^2$ with respect to $(t,z)$. Together with the $\cC^2$-regularity of $\phi_t$, it implies that $\Phi_t$ is $\cC^2$ with respect to $(t,z)$. Thus, $\mM(\Phi_t)$ is also $\cC^2$ with respect to $t$. These regularities and the fact that there exists a solution $\Phi_t$ solving the monodromy problem are used in Section \ref{sectionPropertyXit} to deduce an essential piece of information about the potential $\xi_t$ (Proposition \ref{propC3}).
This step then allows us to write in Section \ref{sectionzAP} the holomorphic frame $\Phi_t$ in a $Mz^AP$ form given by the Fr\"obenius method (Proposition \ref{propzAP1}), and to gauge this expression, in order to gain an order of convergence with respect to $z$ (Proposition \ref{propzAP2}). During this process, the holomorphic frame will loose one order of regularity with respect to $t$, which is why Theorem \ref{theoremPerturbedDelaunay} asks for a $\cC^2$-regularity of the data. Section \ref{sectionDressedDelaunayFrames} is devoted to the study of dressed Delaunay frames $Mz^A$ in order to ensure that the immersions $f_t$ will converge to Delaunay immersions as $t$ tends to $0$, and to estimate the growth of their unitary part around the end at $z=0$. Section \ref{sectionConvergenceImmersions} proves that these immersions do converge, which is the first point of Theorem \ref{theoremPerturbedDelaunay}. Before proving the embeddedness in Section \ref{sectionEmbeddedness}, Section \ref{sectionConvergenceNormales} is devoted to the convergence of the normal maps. Finally, we compute the limit axes in Section \ref{sectionLimitAxis}.

\subsection{A property of $\xi_t$}\label{sectionPropertyXit}

We begin by diagonalising $A_t$ in a unitary basis (Proposition \ref{propDiagA}) in order simplify the computations in Proposition \ref{propC3}, in which we use the Fr\"obenius method for a fixed value of $\la=e^{\pm q}$. This will ensure the existence of the $\cC^1$ map $P_1\in\LSLdeuxCrho$ that will be used in Section \ref{sectionzAP} to define the factor $P$ in the $Mz^AP$ form of $\Phi_t$.

\begin{proposition}\label{propDiagA}
	There exist $e^q<R<\rho$ and $0<T'<T$ such that for all $t\in(-T',T')$, $A_t = H_tD_tH_t^{-1}$ with $H_t\in\LSUdeuxR$ and $iD_t\in\LsuR$. Moreover, $H_t$ and $D_t$ are smooth with respect to $t$.
\end{proposition}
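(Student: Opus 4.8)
The plan is to diagonalize the Delaunay residue $A_t$ explicitly and then invoke smoothness of eigendata for a matrix family with simple eigenvalues. First I recall that $A_t(\la) = A_{r(t),s(t)}(\la)$ is the off-diagonal matrix
\[
A_t(\la) = \begin{pmatrix} 0 & r(t)\la^{-1}+s(t) \\ r(t)\la + s(t) & 0 \end{pmatrix},
\]
whose eigenvalues are $\pm\mu_t(\la)$ with $\mu_t(\la)^2 = \left(r(t)\la^{-1}+s(t)\right)\left(r(t)\la+s(t)\right) = r(t)^2 + s(t)^2 + r(t)s(t)(\la+\la^{-1})$. The monodromy condition \eqref{eqrs} at $t=0$ forces $r(0)^2+s(0)^2 + 2r(0)s(0)\cosh q = \frac14$, and since the weight $t=4r s\sinh q$ vanishes at $t=0$ we have $r(0)s(0)=0$; in the spherical family ($r>s$) this gives $s(0)=0$, $r(0)=\pm\tfrac12$, so $\mu_0(\la)^2 = \tfrac14\la^{\mp1}\cdot\la^{\pm1}\cdot$(appropriate sign) $=\tfrac14$, hence $\mu_0(\la)\equiv\tfrac12$ is constant and nonzero on all of $\Ss^1$; the catenoidal case is symmetric. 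By continuity in $t$ (since $r(t),s(t)$ are smooth for $t\in(-T_{\max},T_{\max})$ by \eqref{eqrst}), after shrinking $T$ to some $T'$ we keep $\mu_t(\la)\neq 0$ for all $\la\in\Ss^1$ and all $|t|<T'$; moreover $\mu_t(\la)^2$ stays bounded away from $0$, so the two eigenvalues $\pm\mu_t$ remain distinct.

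Next I extend this to an annulus. Since $r,s$ are real and $r(t)s(t)\to 0$, the quantity $\mu_t(\la)^2 = r^2+s^2+rs(\la+\la^{-1})$ is, for each fixed $t$, a Laurent polynomial in $\la$ that is close (uniformly for $|t|<T'$) to the constant $\tfrac14$ on a neighbourhood of $\Ss^1$; hence there is $R>e^q$ with $R<\rho$ such that $\mu_t(\la)^2$ stays in a disc around $\tfrac14$ avoiding $0$ for all $\la\in\Aa_R$ and all $|t|<T'$ (after further shrinking $T'$ if needed). Choosing the branch $\mu_t = $ the square root near $\tfrac12$, this $\mu_t$ is holomorphic in $\la\in\Aa_R$, real-analytic — in fact smooth — in $t$, and so $\mu_t\in\Lambda\Cc_R$ with $iD_t := \operatorname{diag}(i\mu_t,-i\mu_t)$; one checks $\mu_t$ is real on $\Ss^1$ (it is a positive square root of a positive real-analytic function of $\la$ on $\Ss^1$, since $r^2+s^2+2rs\cos\theta\geq(|r|-|s|)^2\geq0$ and is strictly positive near $t=0$), so $D_t\in\Lambda\su(2)_R$ after multiplication by $i$, i.e. $iD_t\in\LsuR$.

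It remains to produce the conjugator $H_t\in\LSUdeuxR$. The eigenvector of $A_t(\la)$ for $+\mu_t(\la)$ is proportional to $\bigl(r\la^{-1}+s,\ \mu_t\bigr)^{t}$ and for $-\mu_t$ to $\bigl(r\la^{-1}+s,\ -\mu_t\bigr)^{t}$; since $A_t$ is off-diagonal with real-on-$\Ss^1$ entries, $iA_t\in\Lsurho$ (noted already in the excerpt), so $A_t$ is, on $\Ss^1$, $i$ times a skew-Hermitian matrix, hence Hermitian times $i$... more directly: $A_t(\la)^* = A_t(\la)$ for $\la\in\Ss^1$ when $r,s\in\Rr$, wait — one has $\overline{A_t(\la)}^t = A_t(\bar\la^{-1})$, and on $\Ss^1$ that is $A_t(\la)$, so $A_t$ is Hermitian on $\Ss^1$; thus it is unitarily diagonalizable on $\Ss^1$ with an orthonormal eigenbasis, and I take $H_t(\la)$ to be the matrix of normalized eigenvectors, fixing the phase by requiring, say, the first column to have positive-real first entry divided by a suitable normalizing scalar (or by a Gram–Schmidt normalization that depends smoothly on the data). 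Because the eigenvalue $+\mu_t(\la)$ is simple and depends smoothly on $(t,\la)$ and holomorphically on $\la$, the spectral projection $\Pi_t(\la) = \tfrac{1}{2\pi i}\oint (\zeta - A_t(\la))^{-1}\,d\zeta$ around $+\mu_t(\la)$ is smooth in $(t,\la)$ and holomorphic in $\la\in\Aa_R$; normalizing a nonvanishing column of $\Pi_t$ and its orthogonal complement yields $H_t\in\Lambda\SU(2)$, and the $\Lambda\SU(2)_R$ membership (finiteness of $\norm{\cdot}_R$) follows since all ingredients extend holomorphically to $\Aa_R$ and are continuous on its closure. Smoothness of $H_t,D_t$ in $t$ is inherited from smoothness of $r(t),s(t)$ and of the holomorphic functional calculus. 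The main obstacle is the bookkeeping at $\la$-values where a chosen eigenvector component vanishes — one must cover $\Aa_R$ by finitely many sub-annuli/sectors on each of which a fixed component of the eigenprojection is nonzero, normalize there, and glue; the simplicity of the spectrum (guaranteed by $\mu_t$ bounded away from $0$) makes the glued object well-defined and smooth, but this is the step requiring care.
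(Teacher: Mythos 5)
Your approach is sound in its core idea — show $\mu_t^2 = -\det A_t$ stays near $\tfrac14$ on an annulus $\Aa_R$ with $e^q<R<\rho$, take the square-root branch near $\tfrac12$, and conjugate by a unitary eigenvector matrix — which is exactly how the paper proceeds. But the final step, where you build $H_t$ via spectral projections and normalize with a patchwork of sub-annuli, is both more elaborate than necessary and leaves a loose end that the paper closes by simply writing down the conjugator. Explicitly, the paper sets
\begin{equation*}
H_t(\la) = \frac{1}{\sqrt{2}}\begin{pmatrix}
1 & \dfrac{-(r\la^{-1}+s)}{\mu_t(\la)}\\[2mm]
\dfrac{r\la+s}{\mu_t(\la)} & 1
\end{pmatrix},
\end{equation*}
whose columns are precisely the eigenvectors you identified, $(1,\ (r\la+s)/\mu_t)^{T}$ for $+\mu_t$ and $(-(r\la^{-1}+s)/\mu_t,\ 1)^{T}$ for $-\mu_t$. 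The ``obstacle'' you flag does not occur: on $\Ss^1$ one has $\overline{r\la^{-1}+s} = r\la+s$, so $\lvert r\la^{-1}+s\rvert^2 = (r\la^{-1}+s)(r\la+s) = \mu_t^2$, which means each column has pointwise norm $\sqrt{2}$ and no component ever vanishes; there is nothing to glue. Moreover this observation is what makes $H_t$ holomorphic on $\Aa_R$: a naive Gram--Schmidt normalization divides by $\sqrt{\lvert\cdot\rvert^2+\lvert\cdot\rvert^2}$, which involves conjugation and would not extend holomorphically, whereas the identity above lets you divide by the genuinely holomorphic, nonvanishing scalar $\mu_t$, and the factor $\tfrac{1}{\sqrt{2}}$ is just a constant. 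Since $\mu_t$ is bounded away from zero on $\Aa_R$ for $\lvert t\rvert<T'$, $H_t\in\LSUdeuxR$ and $iD_t=\mathrm{diag}(i\mu_t,-i\mu_t)\in\LsuR$ are immediate, with smoothness in $t$ inherited from $r(t), s(t)$. In short: your proof would work, but replace the spectral-projection-and-patching construction with the explicit eigenvector matrix you already computed, and the ``step requiring care'' vanishes.
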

\begin{proof}
	For all $\la\in\Ss^1$,
	\begin{align}\label{eqDetA}
		-\det A_t (\la) &= \frac{1}{4} + \frac{t\la^{-1}(\la-e^q)(\la-e^{-q})}{4\sinh q}\\
		&= \frac{1}{4} + \frac{t}{4\sinh q}\left( \la+\la^{-1} - 2 \cosh q \right) \in \Rr.\nonumber
	\end{align}
	Extending this determinant as a holomorphic function on $\Aa_{{\rho}}$, there exists $T'>0$ such that
	\begin{equation*}
	\left|-\det A_t(\la) - \frac{1}{4}\right| <\frac{1}{4} \quad \forall (t,\la)\in(-T',T')\times \Aa_{{\rho}}
	\end{equation*}
	With this choice of $T'$, the function $\mu_t: \Aa_{{\rho}}\longrightarrow \Cc$ defined as the positive-real-part square root of $(-\det A_t)$ is holomorphic on $\Aa_{\rho}$ and real-valued on $\Ss^1$. Note that $\mu_t$ is also the positive-real-part eigenvalue of $A_t$ and thus $A_t=H_tD_tH_t^{-1}$ with
	\begin{equation}\label{eqHtDt}
	H_t(\la) = \frac{1}{\sqrt{2}}\begin{pmatrix}
	1 & \frac{-(r\la^{-1}+s)}{\mu_t(\la)}\\
	\frac{r\la+s}{\mu_t(\la)} & 1
	\end{pmatrix},\qquad D_t(\la) = \begin{pmatrix}
	\mu_t(\la) & 0\\
	0 & -\mu_t(\la)
	\end{pmatrix}.
	\end{equation}
	Let $e^q<R<\rho$. For all $t\in(-T',T')$, $\mu_t\in\Lambda\Cc_{R}$ and the map $t\mapsto \mu_t$ is smooth on $(-T',T')$. Moreover, $H_t\in\LSUdeux_{R}$, $iD_t\in\Lambda\su(2)_R$ and these functions are smooth with respect to $t$.
\end{proof}

\begin{remark}
	The bound $t<T'$ ensures that that $4 \det A_t(\la)$ is an integer only for $t=0$ and $\la=e^{\pm q}$. These points make the Fr\"obenius system resonant, but they are precisely the points that bear an extra piece of information due to the hypotheses on $\mM(\Phi_t)(e^q)$ and $\Phi_0$. Allowing the parameter $t$ to leave the interval $(-T',T')$ would bring other resonance points and make Section \ref{sectionzAP} invalid. This is why Theorem \ref{theoremPerturbedDelaunay} does not state that the end of the immersion $f_t$ is a Delaunay end for all $t$.
\end{remark}

\begin{remark}
	At $t=0$, the change of basis $H_t$ in the diagonalisation of $A_t$ takes different values wether $r>s$ (spherical family) or $r<s$ (catenoidal family). One has:
	\begin{equation}\label{eqH0spherical}
	H_0(\la) = \frac{1}{\sqrt{2}}\begin{pmatrix}
	1 & -\la^{-1}\\
	\la & 1
	\end{pmatrix} \qquad \text{in the spherical case},
	\end{equation}
	\begin{equation}\label{eqH0catenoidal}
	H_0(\la) = \frac{1}{\sqrt{2}}\begin{pmatrix}
	1 & -1\\
	1 & 1
	\end{pmatrix} \qquad \text{in the catenoidal case}.
	\end{equation}
	In both cases, $\mu_0 = \frac{1}{2}$, and thus, $D_0$ is the same.
\end{remark}

\paragraph{A basis of $\Lambda\mM(2,\Cc)_{\rho}$.}

Let $R$ and $T'$ given by Proposition \ref{propDiagA}. Identify $\Lambda\mM(2,\Cc)_{{\rho}}$ with the free $\Lambda\Cc_\rho$-module $\mM(2,\Lambda\Cc_{{\rho}})$ and  define for all $t\in(-T',T')$ the basis
\begin{equation*}
\bB_t = H_t\left(E_{1},E_{2},E_{3},E_{4}\right)H_t^{-1}=:\left(X_{t,1},X_{t,2},X_{t,3},X_{t,4}\right)
\end{equation*}
where 
\begin{equation*}
	E_1 = \begin{pmatrix}
	1&0\\
	0&0
	\end{pmatrix},\quad E_2 = \begin{pmatrix}
	0&1\\
	0&0
	\end{pmatrix}, \quad E_3 = \begin{pmatrix}
	0&0\\
	1&0
	\end{pmatrix}, \quad E_4 = \begin{pmatrix}
	0&0\\
	0&1
	\end{pmatrix}.
\end{equation*}
For all $t\in(-T',T')$, write
\begin{equation}\label{eqCt0}
	C_t(0) = \begin{pmatrix}
	tc_1(t) & \la^{-1}tc_2(t)\\
	tc_3(t) & -tc1(t)
	\end{pmatrix} = \sum_{j=1}^{4}t\widehat{c}_j(t)X_{t,j}.
\end{equation}
The functions $c_j, \widehat{c}_j$ are $\cC^1$ with respect to $t\in(-T',T')$ and take values in $\Lambda\Cc_{{R}}$. Moreover, the functions $c_i(t)$ holomorphically extend to $\Dd_\rho$.

\begin{proposition}\label{propC3}
	There exists a continuous function $\widetilde{c}_3: (-T',T')\longrightarrow\Lambda\Cc_{{R}}$ such that for all $\la\in\Ss^1$ and $t\in(-T',T')$,
	\begin{equation*}
		\widehat{c}_3(t) = t(\la-e^q)(\la-e^{-q})\widetilde{c}_3(t).
	\end{equation*}
\end{proposition}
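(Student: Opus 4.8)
The plan is to reduce the statement to two facts and then perform a routine division: (i) $\widehat c_3(t)(e^{\pm q})=0$ for \emph{every} $t\in(-T',T')$, and (ii) $\widehat c_3(0)\equiv 0$ in $\Lambda\Cc_R$. Granting these, the loop $\widehat c_3(t)$ is $\cC^1$ in $t$ and extends holomorphically to $\Aa_\rho$, being built from the entries $c_j(t)\in\Lambda\Cc_{\rho}^{\geq 0}$ and from $H_t^{\pm 1}$, which are holomorphic on $\Aa_\rho$ because Proposition \ref{propDiagA} produces $\mu_t$ holomorphic and non-vanishing there. By (i), the quotient $g_t:=\widehat c_3(t)\big/\bigl((\la-e^q)(\la-e^{-q})\bigr)$ is holomorphic on $\Aa_\rho$, hence lies in $\Lambda\Cc_R$, and $t\mapsto g_t$ is $\cC^1$ (dividing a loop by a fixed factor with prescribed zeros is a bounded operation between loop algebras on slightly shrunk annuli); by (ii), $g_0\equiv 0$, so $\widetilde c_3(t):=g_t/t=\int_0^1 g_t'(st)\,ds$ is continuous and $\widehat c_3(t)=t(\la-e^q)(\la-e^{-q})\widetilde c_3(t)$, as desired. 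The recurring structural input is that \eqref{eqDetA} gives $-\det A_t(e^{\pm q})=\tfrac14$ for all $t$, so $D_t(e^{\pm q})=\tfrac12\sigma_3$ and $e^{2i\pi A_t(e^{\pm q})}=-\I_2$, and that at $t=0$ this holds for all $\la$, so $A_0(\la)=H_0(\la)(\tfrac12\sigma_3)H_0(\la)^{-1}$, $\mu_0\equiv\tfrac12$ and $\mM_\gamma(\Phi_0)\equiv-\I_2$, where $\gamma$ denotes the generator of $\pi_1(D_\epsilon^*)$.

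For (i), I would first pin the monodromy. Since $\phi_0=\widetilde z_0^{A_0}$ and $C_0=0$ give $\Phi_0=z^{A_0}$, we have $\mM_\gamma(\Phi_0)(e^{-q})=e^{2i\pi A_0(e^{-q})}=-\I_2$; as $t\mapsto\mM_\gamma(\Phi_t)(e^{-q})$ is continuous and valued in $\{\pm\I_2\}$ on the connected interval where \eqref{eqMonodromyProblemLoop} holds, it is constantly $-\I_2$, and the reality $\mM_\gamma(\Phi_t)\in\LSUdeuxrho$ forces $\mM_\gamma(\Phi_t)(e^{q})=\bigl(\mM_\gamma(\Phi_t)(e^{-q})^{-1}\bigr)^*=-\I_2$ as well. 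Now fix $t$ and $\la_0\in\{e^{\pm q}\}$ and set $M_t(z):=\Phi_t(z,\la_0)\,z^{-A_t(\la_0)}$; a direct computation gives $M_t'=M_t\,\omega_t$ with $\omega_t(z):=z^{A_t(\la_0)}C_t(z,\la_0)z^{-A_t(\la_0)}$. Because $e^{2i\pi A_t(\la_0)}=-\I_2$ is central, $\omega_t$ is single-valued on $D_\epsilon^*$, and conjugating the Taylor expansion of $C_t(\cdot,\la_0)$ by $z^{A_t(\la_0)}=H_t(\la_0)\,\mathrm{diag}(z^{1/2},z^{-1/2})\,H_t(\la_0)^{-1}$ shows that the $z^{-1}$-coefficient of $\omega_t$ equals $t\,\widehat c_3(t)(\la_0)\,X_{t,3}(\la_0)$ and that $\omega_t$ is otherwise holomorphic at $0$. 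On the other hand $(\gamma z)^{-A_t(\la_0)}=-z^{-A_t(\la_0)}$, so $M_t(\gamma z)=-\mM_\gamma(\Phi_t)(\la_0)\,M_t(z)=M_t(z)$: $M_t$ is single-valued. Since $X_{t,3}(\la_0)$ is a nonzero nilpotent, the Fröbenius theory at the regular singular point $z=0$ of $M_t'=M_t\omega_t$—whose monodromy is conjugate to the exponential of the residue of $\omega_t$—forces that residue to vanish, i.e.\ $t\,\widehat c_3(t)(\la_0)=0$; thus $\widehat c_3(t)(\la_0)=0$ for $t\neq 0$, and for $t=0$ by continuity.

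For (ii), I would differentiate the monodromy at $t=0$. With $\dot\Phi_0:=\partial_t\Phi_t|_{t=0}$ and $u(z):=\dot\Phi_0(z)\,z^{-A_0}$, differentiating the Cauchy problem yields $du=z^{A_0}\dot\xi_0\,z^{-A_0}$ where $\dot\xi_0=\dot A_0\,z^{-1}dz+\dot C_0(z)\,dz$. Since $(\gamma z)^{A_0}=-z^{A_0}$, this $1$-form is single-valued, so $u(\gamma z)=u(z)+2i\pi\,\Res_{z=0}\bigl(z^{A_0}\dot\xi_0\,z^{-A_0}/dz\bigr)$; expanding $\dot A_0$ and $\dot C_0$ in the basis $\bB_0$ and using $\mu_0\equiv\tfrac12$, this residue is $\delta_1 X_{0,1}+\delta_4 X_{0,4}+\widehat c_3(0)\,X_{0,3}$, which equals $2\delta_1 A_0+\widehat c_3(0)\,X_{0,3}$ because $\delta_1+\delta_4=\trace\dot A_0=0$ and $X_{0,1}-X_{0,4}=H_0\sigma_3 H_0^{-1}=2A_0$. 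Then $\partial_t\mM_\gamma(\Phi_t)|_{t=0}=u(\gamma z)e^{2i\pi A_0}-e^{2i\pi A_0}u(z)=-2i\pi\bigl(2\delta_1 A_0+\widehat c_3(0)\,X_{0,3}\bigr)$, using $e^{2i\pi A_0}\equiv -\I_2$. On the other hand, differentiating the identities $\mM_\gamma(\Phi_t)^*\mM_\gamma(\Phi_t)=\I_2$ and $\det\mM_\gamma(\Phi_t)=1$ (valid on $\Ss^1$) at $t=0$, where $\mM_\gamma(\Phi_0)=-\I_2$, shows $\partial_t\mM_\gamma(\Phi_t)|_{t=0}$ is skew-Hermitian and traceless on $\Ss^1$; hence so is $2\delta_1 A_0+\widehat c_3(0)\,X_{0,3}$. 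But on $\Ss^1$ one has $A_0^*=A_0$ (because $iA_0\in\Lsurho$) and $X_{0,3}^*=H_0 E_2 H_0^{-1}=X_{0,2}$ (because $H_0\in\LSUdeuxR$), while $A_0,X_{0,2},X_{0,3}$ are pointwise linearly independent; comparing this combination with its adjoint forces the $X_{0,3}$-coefficient to vanish, i.e.\ $\widehat c_3(0)\equiv 0$.

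I expect (ii) to be the main obstacle. The factor $t$ in the statement is \emph{not} produced by the value part $\mM_\gamma(\Phi_t)(e^{\pm q})=\pm\I_2$ of the monodromy problem—which only yields (i) and hence the factor $(\la-e^q)(\la-e^{-q})$—but by the unitarity part $\mM_\gamma(\Phi_t)\in\LSUdeuxrho$, read at first order in $t$. The two delicate ingredients are the identification of $\Res u$ with $2\delta_1 A_0+\widehat c_3(0)X_{0,3}$ and the observation that $X_{0,3}$ is not skew-Hermitian on $\Ss^1$—its adjoint being the \emph{independent} direction $X_{0,2}$—so a skew-Hermitian combination can only have zero coefficient along $X_{0,3}$. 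Both ultimately rest on the fact that $e^{2i\pi A_t(e^{\pm q})}=-\I_2$ for all $t$ (and $e^{2i\pi A_0(\la)}\equiv-\I_2$): these are precisely the unavoidable resonance points that make the Fröbenius system degenerate, which is also why Theorem \ref{theoremPerturbedDelaunay} must restrict $t$ to $(-T',T')$.
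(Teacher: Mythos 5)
Your proof is correct and follows the same two-step reduction as the paper: first establish that $\widehat{c}_3(t)(e^{\pm q})=0$ for all $t$ and that $\widehat{c}_3(0)\equiv 0$, then perform the holomorphic division by $(\la-e^q)(\la-e^{-q})$ and by $t$. The differences are in the technical execution of both steps. For the vanishing at $\la_0=e^{\pm q}$, you factor out $z^{A_t(\la_0)}$, so that $M_t(z)=\Phi_t(z,\la_0)z^{-A_t(\la_0)}$ is a single-valued solution of a simple-pole ODE whose residue $R=t\,\widehat{c}_3(t)(\la_0)X_{t,3}(\la_0)$ is nilpotent, and conclude $R=0$ from triviality of the monodromy; the paper instead applies the Fr\"obenius normal form $Mz^Bz^{A_t(\la_0)}P(z)$ at the resonant residue $A_t(\la_0)$ (eigenvalue gap $1$), shows $B=0$ from $\mM_\gamma(\Phi_t)(\la_0)=-\I_2$, and reads $\widehat{c}_3=0$ off the recursion at the degenerate weight $1-2\mu_t(\la_0)=0$. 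Your version is an equivalent, slightly cleaner repackaging, but the parenthetical claim ``whose monodromy is conjugate to the exponential of the residue'' is not a general fact of Fr\"obenius theory; you should add that your residue's eigenvalues are both $0$, so their difference is $0\notin\Zz_{>0}$, the singular point is non-resonant, and only then is the monodromy of any fundamental solution conjugate to $\exp(2i\pi R)$ (which, $R$ being nilpotent, forces $R=0$). For $\widehat{c}_3(0)\equiv 0$ you keep the $\dot A_0$ contribution in the residue and identify $\delta_1 X_{0,1}+\delta_4 X_{0,4}=2\delta_1 A_0$, then compare the whole expression with its adjoint; the paper instead subtracts off $\partial_t\mM(z^{A_t})|_{t=0}\in\Lsurho$ and works with the $\dot C_0$ part alone. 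The two are equivalent. One small slip there: $\partial_t\mM_\gamma(\Phi_t)|_{t=0}=-2i\pi\bigl(2\delta_1 A_0+\widehat{c}_3(0)X_{0,3}\bigr)$ being skew-Hermitian on $\Ss^1$ means the bracketed quantity is \emph{Hermitian}, not skew-Hermitian (the prefactor $-2i\pi$ is purely imaginary); your adjoint-comparison gives $\widehat{c}_3(0)=0$ regardless because $X_{0,3}^*=X_{0,2}$ is linearly independent of $X_{0,3}$, but the wording should be corrected.
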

\begin{proof}
	It suffices to show that $\widehat{c}_3(0)=0$ and that the holomorphic extension of $\widehat{c}_3(t)$ satisfies $\widehat{c}_3(t,e^{\pm q}) = 0$ for all $t$.
	
	To show that $\widehat{c}_3(0)=0$, recall that the monodromy problem \eqref{eqMonodromyProblemLoop} is solved for all $t$ and note that $\mM(\Phi_0) = -\I_2$, which implies that, as a function of $t$, the derivative of $\mM(\Phi_t)$ at $t=0$ is in $\Lsurho$. On the other hand, Proposition 4 in Appendix B of \cite{raujouan} ensures that
	\begin{equation*}
		\frac{d\mM(\Phi_t)}{dt}\mid_{t=0} = \left( \int_{\gamma} \Phi_0 \frac{d\xi_t}{dt}\mid_{t=0} \Phi_0^{-1} \right)\times \mM(\Phi_0)
	\end{equation*}
	where $\gamma$ is a generator of $\pi_1(D_\epsilon^*,z_0)$. Expanding  the right-hand side gives
	\begin{equation*}
		-\int_{\gamma} z^{A_0}\frac{d A_t}{dt}\mid_{t=0} z^{-A_0}z^{-1}dz - \int_{\gamma} z^{A_0}\frac{d C_t(z)}{dt}\mid_{t=0} z^{-A_0}dz \in\Lsurho.
	\end{equation*} 
	Using Proposition 4 of \cite{raujouan} once again, note that the first term is the derivative of $\mM(z^{A_t})$ at $t=0$, which is in $\Lsurho$ because $\mM(z^{A_t})\in\LSUdeuxrho$ and $\mM(z^{A_0}) = -\I_2$. Therefore, the second term is also in $\Lsurho$.
	Diagonalising $A_0$ with Proposition \ref{propDiagA} and using $H_0\in\Lambda\SU(2)_{{R}}$ gives
	\begin{equation*}
		2i\pi\Res_{z=0} \left( z^{D_0}H_0^{-1}\frac{d}{dt}C_t(z)\mid_{t=0}H_0z^{-D_0} \right)\in \Lambda\su(2)_{{R}}.
	\end{equation*}
	But using Equation \eqref{eqCt0},
	\begin{align*}
	z^{D_0}H_0^{-1}\frac{d}{dt}C_t(z)\mid_{t=0}H_0z^{-D_0} &= z^{D_0}H_0^{-1}\left(\sum_{j=1}^{4}\widehat{c}_j(0)X_{0,j}\right)H_0z^{-D_0}\\
	&= \sum_{j=1}^{4}\widehat{c}_j(0)z^{D_0}E_jz^{-D_0} \\
	&= \begin{pmatrix}
	\widehat{c}_1(0) & z\widehat{c}_2(0)\\
	z^{-1}\widehat{c}_3(0) & \widehat{c}_4(0)
	\end{pmatrix}.
	\end{align*}
	Thus,
	\begin{equation*}
		2i\pi\begin{pmatrix}
		0 & 0\\
		\widehat{c}_3(0) & 0
		\end{pmatrix}\in\Lambda\su(2)_{{R}}
	\end{equation*}
	which gives $\widehat{c}_3(0)=0$.
	
	Let $\la_0\in\{e^q,e^{-q}\}$ and $t\neq 0$. Using the Fr\"obenius method (Theorem 4.11 of \cite{teschl} and Lemma 11.4 of \cite{taylor}) at the resonant point $\la_0$ ensures the existence of $\epsilon'>0$, $B,M\in\mM(2,\Cc)$ and a holomorphic map $P: D_{\epsilon'}\longrightarrow\mM(2,\Cc)$ such that for all $z\in D_{\epsilon'}^*$,
	\begin{equation*}
		\left\{
		\begin{array}{l}
		\Phi_t(z,\la_0) = Mz^Bz^{A_t(\la_0)}P(z),\\
		B^2=0,\\
		P(0) = \I_2,\\
		\left[A_t(\la_0),d_zP(0)\right] + d_zP(0) = C_t(0,\la_0) - B.
		\end{array}
		\right.
	\end{equation*}
	Compute the monodromy of $\Phi_t$ at $\la=\la_0$:
	\begin{equation*}
		\mM(\Phi_t)(\la_0) = M\exp(2i\pi B)z^B\exp(2i\pi A_t(\la_0))z^{-B}M^{-1} = -M\exp(2i\pi B)M^{-1}.
	\end{equation*}
	Since the monodromy problem \eqref{eqMonodromyProblemLoop} is solved, this quantity equals $-\I_2$. Use $B^2=0$ to show that $B=0$ and thus,
	\begin{equation*}
	\left\{
	\begin{array}{l}
	\Phi_t(z,\la_0) = Mz^{A_t(\la_0)}P(z),\\
	P(0) = \I_2,\\
	\left[A_t(\la_0),d_zP(0)\right] + d_zP(0) = C_t(0,\la_0).
	\end{array}
	\right.
	\end{equation*}
	Diagonalise $A_t(\la_0)$ with Proposition \ref{propDiagA} and write $d_zP(0)=\sum p_jX_{t,j}$ to get for all $j\in[1,4]$
	\begin{equation*}
		p_j\left([D_t(\la_0),E_j] + E_j\right) = t\widehat{c}_j(t,\la_0)E_j.
	\end{equation*}
	In particular, using $\mu_t(\la_0)=1/2$,
	\begin{equation*}
		t\widehat{c}_3(t,\la_0) = p_3\left([D_t(\la_0),E_3] + E_3\right)=0.
	\end{equation*}
\end{proof}

Note that with the help of Equations \eqref{eqCt0} and \eqref{eqH0spherical} or \eqref{eqH0catenoidal}, and one can compute the series expansion of $\widehat{c}_3(0)$:
\begin{equation*}
	\widehat{c}_3(0) = \frac{-\la^{-1}}{2}c_2(0,0) + \Oo(\la^0) \quad \text{if}\quad r<s,
\end{equation*}
\begin{equation*}
	\widehat{c}_3(0) = \frac{1}{2} c_3(0,0) + \Oo(\la) \quad \text{if} \quad r>s.
\end{equation*}
Hence,
\begin{equation}\label{eqsc2+rc3=0}
	sc_2(t,0)+rc_3(t,0) \tendsto{t\to 0} 0.
\end{equation}

The following map will be useful in the next section:
\begin{equation}\label{eqP1}
t\in(-T',T')\longmapsto  P^1(t) := t\widehat{c}_1(t)X_{t,1} + \frac{t\widehat{c}_2(t)}{1+2\mu_t}X_{t,2} + \frac{t\widehat{c}_3(t)}{1-2\mu_t}X_{t,3} + t\widehat{c}_4(t)X_{t,4}.
\end{equation}
For all $t$, Proposition \ref{propC3} ensures that the map $P^1(t,\la)$ holomorphically extends to $\Aa_R$. Taking a smaller value of $R$ if necessary, $P^1(t)\in \Lambda\mM(2,\Cc)_R$ for all $t$. Moreover,
\begin{equation*}
\trace P^1(t) = t\widehat c_1 (t) + t\widehat c_4(t) = \trace C_t(0) = 0.
\end{equation*}
Thus, $P^1\in \cC^1((-T',T'),\LsldeuxCR)$.

\subsection{The $z^AP$ form of $\Phi_t$}\label{sectionzAP}

The map $P^1$ defined above allows us to use the Fr\"obenius method in a loop group framework and in the non-resonant case, that is, for all $t$ (Proposition \ref{propzAP1}). The techniques used in \cite{raujouan} will then apply in order to gauge the $Mz^AP$ form and gain an order on $z$ (Proposition \ref{propzAP2}).

\begin{proposition}\label{propzAP1}
	There exists $\epsilon'>0$ such that for all $t\in(-T',T')$ there exist $M_t\in\Lambda\SL(2,\Cc)_{R}$ and a holomorphic map $P_t: D_{\epsilon'}\longrightarrow \LSLdeuxCR$ such that for all $z\in D_{\epsilon'}^*$,
	\begin{equation*}
		\Phi_t(z) = M_tz^{A_t}P_t(z).
	\end{equation*}
	Moreover, $M_t$ is $\cC^1$ with respect to $t$, $M_0=\I_2$, $P_t(z)$ is $\cC^1$ with respect to $(t,z)$, $P_0(z) = \I_2$ for all $z$ and $P_t(0)=\I_2$ for all $t$.
\end{proposition}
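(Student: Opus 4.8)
The plan is to set up the Fröbenius method for the loop-group-valued differential system $d\Phi_t = \Phi_t \xi_t$ near the regular singular point $z=0$, exactly in the non-resonant form. Recall that $\xi_t = A_t z^{-1}dz + C_t(z)\,dz$, so conjugating $\Phi_t = M_t z^{A_t} P_t$ turns the equation into $z\,dP_t = [A_t,P_t]\,dz + z P_t C_t(z) \cdot(\text{factors})$; more precisely, writing $\Phi_t = M_t z^{A_t} P_t$ and differentiating, one gets
\begin{equation*}
z\,d_zP_t = -A_t P_t + z^{-A_t} A_t z^{A_t} P_t + z\, z^{-A_t}\bigl(z^{A_t}P_t\, C_t(z)\bigr)\dots
\end{equation*}
so I will instead conjugate by $z^{A_t}$ cleanly: setting $\Psi_t(z) := z^{-A_t}\Phi_t(z)$ does not quite work because $\Phi_t$ acts on the right, so I work with the gauge-style substitution $P_t := z^{-A_t} M_t^{-1}\Phi_t$ and derive the ODE $z\,d_zP_t = [A_t, P_t] + z\,(z^{-A_t}C_t(z)z^{A_t})\,P_t$ after absorbing the constant $M_t$ (to be determined). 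The operator $\mathrm{ad}_{A_t}$ acting on $\Lambda\slfrak(2,\Cc)_R$-valued (more generally $\Lambda\mathfrak{gl}(2,\Cc)_R$) power series in $z$ has, on the $k$-th Fourier-in-$z$ coefficient, the form $k\,\mathrm{Id} - \mathrm{ad}_{A_t}$, which is invertible precisely when $k$ is not an eigenvalue of $\mathrm{ad}_{A_t}$; the eigenvalues of $\mathrm{ad}_{A_t(\la)}$ are $0, 0, 2\mu_t(\la), -2\mu_t(\la)$ by Proposition \ref{propDiagA}, and by the Remark these equal an integer (namely $\pm 1$) only when $t=0$ and $\la = e^{\pm q}$.

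The key point — and the reason $P^1(t)$ was constructed in \eqref{eqP1} — is that at the potentially resonant coefficient $k=1$, the obstruction to solving $[A_t,P_t^{(1)}] - P_t^{(1)} = -C_t(0)$ lies in the $X_{t,3}$-direction with coefficient $\frac{t\widehat c_3(t)}{1-2\mu_t}$, and Proposition \ref{propC3} shows $t\widehat c_3(t) = t^2(\la-e^q)(\la-e^{-q})\widetilde c_3(t)$ vanishes to exactly the right order to cancel the zero of $1-2\mu_t$ at $\la=e^{\pm q}$ (where $\mu_t=1/2$) and at $t=0$. Hence $P^1(t) \in \cC^1((-T',T'),\LsldeuxCR)$ is a genuine Banach-space-valued first coefficient. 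So the plan is: first I reduce the system to the $P_t$-ODE; then I solve for the coefficients of $P_t(z) = \sum_{k\ge 0} P_t^{(k)} z^k$ recursively, with $P_t^{(0)} = \I_2$ forced by $P_t(0)=\I_2$, $P_t^{(1)} = P^1(t)/t$ — wait, here one must be careful about the normalization of $M_t$; I expect $M_t$ is chosen so that the expansion starts cleanly, and the recursion $(k\,\mathrm{Id}-\mathrm{ad}_{A_t})P_t^{(k)} = (\text{polynomial in earlier }P_t^{(j)}, C_t\text{-coefficients})$ determines all higher $P_t^{(k)}$ uniquely for $k\ge 2$ since those operators are uniformly invertible on $(-T',T')$ (after shrinking $R$). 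Convergence of the series on some fixed $D_{\epsilon'}$ uniformly in $t$ follows from the standard majorant/Cauchy-estimate argument for regular singular points, using sub-multiplicativity of $\norm{\cdot}_R$ and the uniform invertibility bound $\norm{(k\,\mathrm{Id}-\mathrm{ad}_{A_t})^{-1}} \le C/k$ for $k\ge 2$; this gives $P_t: D_{\epsilon'}\to\LSLdeuxCR$ holomorphic (the $\SL$-valuedness coming from $\trace$ considerations: $\trace P^1(t)=0$ as noted, and more generally the system preserves the property that $\det\Phi_t = z^{\trace A_t}\cdot(\dots) = $ constant $=1$, forcing $\det P_t \equiv 1$).

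For the stated regularity and limiting behaviour: $\cC^1$-dependence of $P_t^{(k)}$ on $t$ is inherited coefficient-by-coefficient from the $\cC^1$-dependence of $A_t$, $C_t$, $\mu_t$, $H_t$ and $P^1(t)$, and the uniform convergence of the $t$-derivative series (same majorant argument applied to $\partial_t$) upgrades this to $P_t \in \cC^1((-T',T')\times D_{\epsilon'})$; similarly $\cC^1$ in $z$ is automatic from holomorphy. For $M_t$: the recursion produces a formal solution $z^{A_t}P_t$ of $d\Phi=\Phi\xi_t$, and $M_t$ is defined by matching the initial condition, $M_t := \Phi_t(z_1) P_t(z_1)^{-1} z_1^{-A_t}$ for any fixed base point $z_1$, or rather by analytic continuation from $\widetilde z_0$; its $\cC^1$-dependence on $t$ follows from that of $\Phi_t$ (given in the hypotheses) and $P_t$. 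Finally $M_0 = \I_2$ and $P_0(z)=\I_2$ because when $t=0$ we have $C_0\equiv 0$, so $\xi_0 = A_0 z^{-1}dz$ and $\Phi_0(z) = \phi_0 \,\widetilde z_0^{-A_0} z^{A_0} = z^{A_0}$ by the hypothesis $\phi_0 = \widetilde z_0^{A_0}$, which is already in $z^{A_0}$-form with trivial $M_0$ and $P_0$; uniqueness of the Fröbenius decomposition then pins these down. The main obstacle I anticipate is not any single computation but the bookkeeping of the uniform-in-$t$ estimates near the near-resonant coefficient $k=1$: one must verify that $(\mathrm{Id}-\mathrm{ad}_{A_t})^{-1}$, suitably interpreted, does not blow up as $t\to 0$ and $\la\to e^{\pm q}$, and this is exactly where Proposition \ref{propC3} is used to show the right-hand side carries the compensating vanishing factor $(\la-e^q)(\la-e^{-q})$ so that $P^1(t)$ stays bounded in $\LsldeuxCR$ — making the loop-group Fröbenius method go through in the non-resonant regime for every $t\in(-T',T')$.
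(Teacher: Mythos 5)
Your proposal matches the paper's proof in all essentials: it sets up the same Fr\"obenius recursion via the operators $\lL_{t,k}(X)=[A_t,X]+kX$ (your $(k\,\mathrm{Id}-\mathrm{ad}_{A_t})$ carries a sign slip, and the intermediate ODE for $P_t$ should read $z\,d_zP_t=-[A_t,P_t]+zP_tC_t$ rather than the conjugated left-multiplication form you wrote, but since the spectrum of $\pm\mathrm{ad}_{A_t}$ is the same the invertibility argument is unaffected), observes uniform invertibility of $\lL_{t,k}$ for $k\geq 2$, and, crucially, invokes Proposition \ref{propC3} to build the $k=1$ coefficient $P^1(t)$ of \eqref{eqP1} so that the near-resonance along $\{t=0\}\cup\{\la=e^{\pm q}\}$ is compensated by the factor $t(\la-e^q)(\la-e^{-q})$ in $\widehat c_3$. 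The majorant convergence, the $\mathrm{SL}(2,\Cc)$-valuedness of $P_t$ via $\det$, the determination of $M_t$ from the initial condition, and the identities $M_0=\I_2$, $P_0\equiv\I_2$ deduced from $C_0\equiv 0$ and $\phi_0=\widetilde z_0^{A_0}$ are all as in the paper, which simply leaves these routine steps implicit under the heading ``the Fr\"obenius method ensures.''
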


\begin{proof}
	For all $k\in \Nn^*$ and $t\in(-T',T')$, define the linear map
	\begin{equation*}
	\function{\lL_{t,k}}{\Lambda\mM(2,\Cc)_{{\rho}}}{\Lambda\mM(2,\Cc)_{{\rho}}}{X}{[A_t,X] + kX.}
	\end{equation*} 
	Use the bases $\bB_t$ and restrict $\lL_{t,k}$ to $\Lambda\mM(2,\Cc)_R$ to get
	\begin{equation*}\label{eqDiagLlk}
	\Mat_{\bB_t}\lL_{t,k} = \begin{pmatrix}
	k & 0 &0&0\\
	0&k+2\mu_t&0&0\\
	0&0&k-2\mu_t&0\\
	0&0&0&k
	\end{pmatrix}.
	\end{equation*}
	Note that
	\begin{equation*}
	\det \lL_{t,k} = k^2(k^2-4\mu_t^2).
	\end{equation*}
	Thus, for all $k\geq 2$, $\det \lL_{t,k}$ is an invertible element of $\Lambda\Cc_{{R}}$, which implies that $\lL_{t,k}$ is invertible for all $t\in(-T',T')$ and $k\geq 2$. 
	
	Write 
	\begin{equation*}
	C_t(z) = \sum_{k\in\Nn}C_{t,k}z^k.
	\end{equation*}
	With $P^0 := \I_2$ and $P^1$ as in Equation \eqref{eqP1}, define for all $k\geq 1$:
	\begin{equation*}
	P^{k+1}(t) := \lL_{t,k+1}^{-1}\left( \sum_{i+j=k}P^i(t)C_{t,j} \right).
	\end{equation*}
	so that the sequence $(P^k)_{k\in\Nn}\subset\cC^1\left( (-T',T'),\Lambda\slfrak(2,\Cc)_{{\rho}} \right)$ satisfies the following recursive system for all $t\in(-T',T')$:
	\begin{equation*}
	\left\{
	\begin{array}{l}
	P^0(t)=\I_2,\\
	\lL_{t,k+1}(P^{k+1}(t)) = \sum_{i+j=k}P^i(t)C_{t,j}.
	\end{array}
	\right.
	\end{equation*}
	With $P_t(z):= \sum P^k(t)z^k$, the Fr\"obenius method ensures that there exists $\epsilon'>0$ such that for all $z\in D_{\epsilon'}^*$ and $t\in(-T',T')$,
	\begin{equation*}
	\Phi_t(z,\la)= M_tz^{A_t}P_t(z)
	\end{equation*}
	where $M_t\in\Lambda\SL(2,\Cc)_{{R}}$ is $\cC^1$ with respect to $t$, $P_t(z)$ is $\cC^1$ with respect to $t$ and $z$, and for all $t$, $P_t: D_{\epsilon'}\longrightarrow\Lambda\SL(2,\Cc)_{{R}}$ is holomorphic and satisfies $P_t(0)=\I_2$. Moreover, note that $P_0(z)=\I_2$ for all $z\in D_{\epsilon'}$ and thus $M_0=\I_2$.
\end{proof}

\begin{proposition}\label{propzAP2}
	There exists ${\epsilon'}>0$ such that for all $t\in(-T',T')$ there exist an admissible gauge $G_t: D_{{\epsilon}}\longrightarrow \LplusSLdeuxCR$, a change of coordinates $h_t: D_{{\epsilon'}}\longrightarrow D_{\epsilon}$, a holomorphic map $\widetilde{P}_t: D_{{\epsilon'}}\longrightarrow \LSLdeuxCR$ and $\widetilde{M}_t\in\LSLdeuxCR$ such that for all $z\in D_{{\epsilon'}}^*$,
	\begin{equation*}
		h_t^*\left( \Phi_t G_t \right)(z) = \widetilde{M}_tz^{A_t}\widetilde{P}_t(z).
	\end{equation*}
	Moreover, $\widetilde{M}_t$ is $\cC^1$ with respect to $t$, $\widetilde{M}_0=\I_2$ and there exists a uniform $C>0$ such that for all $t$ and $z$,
	\begin{equation*}
		\norm{\widetilde{P}_t(z)-\I_2}_{\rho} \leq C|t||z|^2.
	\end{equation*}
\end{proposition}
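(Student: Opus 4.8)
The plan is to follow the strategy of \cite{raujouan}, now in the loop group setting: starting from the $M_tz^{A_t}P_t$ form of Proposition \ref{propzAP1}, in which $P_t(z)=\I_2+P^1(t)z+\Oo(z^2)$ with $P^1(t)$ given by \eqref{eqP1} (and $P^1(0)=0$, so $P^1(t)=\Oo(|t|)$), I want to apply an admissible gauge $G_t$ and a holomorphic change of coordinates $h_t$ that kill this linear term. Since gauging by an element of $\LplusSLdeuxCR$ and precomposing with a biholomorphism of a disc fixing $0$ change neither the induced surface nor the class of DPW potentials, it suffices to exhibit $G_t$ (defined on $D_\epsilon$) and $h_t\colon D_{\epsilon'}\to D_\epsilon$ such that the transformed potential satisfies $h_t^*(\xi_t\cdot G_t)=A_tz^{-1}dz+\widetilde C_t(z)\,dz$ with $\widetilde C_t$ holomorphic in $z$, $\cC^1$ in $t$, with $\widetilde C_t\equiv 0$ at $t=0$ and $\widetilde C_t(z)=\Oo(|t||z|)$; feeding this potential into the Fr\"obenius construction of Proposition \ref{propzAP1} then produces the desired form $\widetilde M_tz^{A_t}\widetilde P_t$ with $\widetilde M_t=M_t$ (hence $\cC^1$ in $t$ and $\widetilde M_0=\I_2$), and $\widetilde P_t(z)=g_t(z)^{A_t}P_t(h_t(z))G_t(h_t(z))$ where $h_t(z)=zg_t(z)$ and $g_t(0)=1$.

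The construction of $G_t$ and $h_t$ is the crux. Writing $G_t(z)=\I_2+G^1_tz+\Oo(z^2)$, the $z^0$-coefficient of $\xi_t\cdot G_t$ equals $C_{t,0}+\lL_{t,1}(G^1_t)$ with $\lL_{t,1}(X)=[A_t,X]+X$ as in Proposition \ref{propzAP1}, so one wants $\lL_{t,1}(G^1_t)=-C_{t,0}$. The obstruction is that $\lL_{t,1}$ is not invertible exactly at $(t,\lambda)=(0,e^{\pm q})$, where $\det\lL_{t,1}=1-4\mu_t^2$ vanishes --- and this is precisely where Proposition \ref{propC3} enters: decomposing $C_{t,0}=\sum_j t\widehat c_j(t)X_{t,j}$ in the basis $\bB_t$ of \eqref{eqCt0}, the dangerous component $t\widehat c_3(t)$ is divisible by $(\lambda-e^q)(\lambda-e^{-q})$, which by \eqref{eqDetA} equals $1-4\mu_t^2$ up to an invertible factor, so $\lL_{t,1}^{-1}(C_{t,0})$ is a well-defined element of $\Lambda\slfrak(2,\Cc)_R$, namely $-P^1(t)$. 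However $G^1_t:=-P^1(t)$ need not be the linear term of an admissible gauge: its negative-$\lambda$ part and the strictly lower-triangular part of its $\lambda^0$-value lie outside the Lie algebra of $\LplusSLdeuxCR$. I would absorb this leftover with the reparametrization: $h_t(z)=z(1+\ell_1(t)z+\Oo(z^2))$ contributes an extra $\ell_1(t)A_t$ to the $z^1$-coefficient of $\widetilde P_t$, so that this coefficient becomes $\ell_1(t)A_t+P^1(t)+G^1_t$; using the DPW form of $\xi_t$, the explicit diagonalisation \eqref{eqHtDt}, the formula \eqref{eqP1} and Proposition \ref{propC3}, one checks that there is a unique choice of $\ell_1(t)\in\Cc$, $\cC^1$ in $t$ and vanishing at $t=0$, for which $-(P^1(t)+\ell_1(t)A_t)$ \emph{is} the linear term of an admissible gauge; set $G^1_t$ to be this, extend $G_t$ to an admissible gauge on $D_\epsilon$ and $h_t$ to a biholomorphism $D_{\epsilon'}\to D_\epsilon$, both $\cC^1$ in $t$ with $G_0=\I_2$ and $h_0=\mathrm{id}$. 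This achieves the reduction: the $z^0$-coefficient of the new potential vanishes, and since $C_t$, $G_t$, $h_t$ are $\cC^1$ in $t$ and trivial at $t=0$, so is $\widetilde C_t$, whence $\widetilde C_t(z)=\Oo(|t||z|)$.

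It remains to estimate $\widetilde P_t$. Applied to the new potential $A_tz^{-1}dz+\widetilde C_t(z)dz$, whose $z^0$-coefficient now vanishes, the Fr\"obenius recursion of Proposition \ref{propzAP1} is non-resonant: $\widetilde P^0(t)=\I_2$, $\widetilde P^1(t)=\lL_{t,1}^{-1}(0)=0$, and $\widetilde P^k(t)=\lL_{t,k}^{-1}\bigl(\sum_{i+j=k-1}\widetilde P^i(t)\widetilde C_{t,j}\bigr)$ for $k\geq2$, with $\lL_{t,k}$ invertible and $\norm{\lL_{t,k}^{-1}}=\Oo(1/k)$ uniformly on $(-T',T')$. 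Since every term of the recursion carries at least one factor $\widetilde C_{t,j}=\Oo(|t|)$, an induction gives $\norm{\widetilde P^k(t)}_\rho\leq|t|\,C_0C_1^{\,k}$ for uniform constants $C_0,C_1$; hence, choosing $\epsilon'<1/(2C_1)$, one gets $\norm{\widetilde P_t(z)-\I_2}_\rho=\norm{\sum_{k\geq2}\widetilde P^k(t)z^k}_\rho\leq C|t||z|^2$ for $|z|<\epsilon'$. Together with $\widetilde M_t=M_t$ this is the statement. The main obstacle is the middle paragraph, namely verifying that the non-admissible part of $P^1(t)$ is exactly the $\ell_1(t)A_t$ that one scalar reparametrization parameter can produce --- the point where the precise expression \eqref{eqP1}, the DPW form of the perturbation, and the vanishing supplied by Proposition \ref{propC3} are all indispensable; the surrounding bookkeeping (extending $G^1_t$ and $\ell_1(t)$ to genuine $G_t$ and $h_t$ with the stated domains and regularity, and the uniform Fr\"obenius estimate) is routine.
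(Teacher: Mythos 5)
Your proposal follows the same route as the paper's proof. The paper, like you, expands $P^1(t)$ via \eqref{eqP1} and the DPW structure \eqref{eqCt0}, observes that the only non-admissible parts of $-P^1(t)$ are the $\lambda^{-1}$ coefficient of the upper-right entry and the $\lambda^{0}$ coefficient of the lower-left entry, and then shows (after the ``tedious but simple computation'' you allude to) that both are killed by a single scalar multiple $p_t A_t$, with $p_t = 2\sinh q\bigl(sc_2(t,0)+rc_3(t,0)\bigr)$; this matches your $\ell_1(t)=-p_t$. The paper's concrete choices $G_t=\exp(g_t z)$ with $g_t=p_tA_t-P^1(t)$ and $h_t(z)=z/(1+p_t z)$ realize exactly the ``routine extensions'' you defer, and the vanishing at $t=0$ is checked via \eqref{eqsc2+rc3=0} as you note. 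Your non-resonant Fr\"obenius estimate at the end is also the paper's (implicit) conclusion.
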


\begin{proof}
	The proof goes as in Section 3.3 of \cite{raujouan}. Expand $P^1(t)$ given by Equation \eqref{eqP1} as a series to get (this is a tedious but simple computation):
	\begin{equation*}
	P_1(t,\la) = \begin{pmatrix}
	0 & \frac{stc_2(t,0)+rtc_3(t,0)}{2s}\la^{-1} \\
	\frac{stc_2(t,0)+rtc_3(t,0)}{2r}   & 0
	\end{pmatrix} + \begin{pmatrix}
	\Oo(\la^0) & \Oo(\la^0)\\
	\Oo(\la) & \Oo(\la^0)
	\end{pmatrix}.
	\end{equation*}
	Define
	\begin{equation*}
	p_t:= {2\sinh q(sc_2(t,0)+rc_3(t,0))}
	\end{equation*}
	so that
	\begin{equation*}
	g_t := p_tA_t- P^1(t)\in\Lambda_+\slfrak(2,\Cc)_{R}
	\end{equation*}
	and recall Equation \eqref{eqsc2+rc3=0} together with $P_0 = \I_2$ to show that $g_0=0$. Thus,
	\begin{equation*}
	G_t:=\exp\left( g_tz \right)\in\Lambda_+\SL(2,\Cc)_{R}
	\end{equation*}
	is an admissible gauge.
	Let $\epsilon'<|p_t|^{-1}$ for all $t\in(-T',T')$. Define
	\begin{equation*}
	\function{h_t}{D_{\epsilon'}}{D_\epsilon}{z}{\frac{z}{1+p_tz}.}
	\end{equation*}
	Then,
	\begin{equation*}
	\widetilde{\xi}_t:=h_t^*\left(\xi_t\cdot G_t\right) = A_t z^{-1}dz + \widetilde{C}_t(z)dz
	\end{equation*}
	is a perturbed Delaunay potential as in Definition \ref{defPerturbedDelaunayPotential} such that $\widetilde{C}_t(0)=0$ for all $t\in (-T',T')$.
	The holomorphic frame
	\begin{equation*}
	\widetilde{\Phi}_t := h_t^*\left(\Phi_tG_t\right)
	\end{equation*}
	satisfies $d\widetilde{\Phi}_t = \widetilde{\Phi}_t\widetilde{\xi}_t$. With $\widetilde{C}_t(0)=0$, one can apply the Fr\"obenius method on $\widetilde{\xi}_t$ to get 
	\begin{equation*}
	\widetilde{\Phi}_t(z) = \widetilde{M}_tz^{A_t}\widetilde{P}_t(z)
	\end{equation*}
	with $\widetilde{M}_0=\I_2$ and 
	\begin{equation*}
	\norm{\widetilde{P}_t(z) - \I_2}_{{R}} \leq C|t||z|^2.
	\end{equation*}
\end{proof}

\paragraph{Conclusion.}

The new frame $\widetilde{\Phi}_t$ is associated to a perturbed Delaunay potential $(\widetilde{\xi}_t)_{t\in(-T',T')}$, defined for $z\in D_{\epsilon'}^*$, with values in $\LsldeuxCR$ and of the form
\begin{equation*}
	\widetilde{\xi}_t(z) = A_tz^{-1}dz + \widehat{C}_t(z)zdz
\end{equation*}
Note that $\widetilde{C}_t(z)\in\slfrak(2,\Cc)_{R}$ is now $\cC^1$ with respect to $(t,z)$. The monodromy problem \eqref{eqMonodromyProblemLoop} is solved for $\widetilde \Phi_t$ and for any $\widetilde z_0$ in the universal cover $\widetilde D_{\epsilon'}^*$, $\widetilde \Phi_0(\widetilde z_0) = \widetilde z_0^{A_0}$. Moreover, writing $\widetilde{f}_t:=\Sym_q (\Uni \widetilde{\Phi}_t)$ and ${f}_t:=\Sym_q (\Uni {\Phi}_t)$, then $\widetilde{f}_t=h_t^*f_t$ with $h_0(z)=z$. Hence in order to prove Theorem \ref{theoremPerturbedDelaunay} it suffices to prove the following proposition.

\begin{proposition}\label{propThmSimplifie}
	Let $\rho>e^q$, $0<T<T_{\max}$, $\epsilon>0$ and $\xi_t$ be a perturbed Delaunay potential as in Definition \ref{defPerturbedDelaunayPotential}. Let $\Phi_t$ be a holomorphic frame associated to $\xi_t$ for all $t$ via the DPW method. Suppose that the monodromy problem \eqref{eqMonodromyProblemLoop} is solved for all $t\in (-T,T)$ and that
	\begin{equation*}
		\Phi_t(z) = M_tz^{A_t}P_t(z)
	\end{equation*}
	where $M_t\in\LSLdeuxCrho$ is $\cC^1$ with respect to $t$, satisfies $M_0=\I_2$, and $P_t:D_\epsilon\longrightarrow\LSLdeuxCrho$ is a holomorphic map such that for all $t$ and $z$,
	\begin{equation*}
		\norm{P_t(z) - \I_2}_\rho \leq C|t||z|^2
	\end{equation*}
	where $C>0$ is a uniform constant.
	Let $f_t=\Sym_q\left(\Uni \Phi_t\right)$. Then the three points of Theorem \ref{theoremPerturbedDelaunay} hold for $f_t$.
\end{proposition}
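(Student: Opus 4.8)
The plan is to compare the perturbed holomorphic frame $\Phi_t(z) = M_t z^{A_t} P_t(z)$ with the \emph{dressed Delaunay frame} $\Phi_t^\dD(z) := M_t z^{A_t}$, which is precisely the holomorphic frame of the Delaunay potential $\xi_{r,s}$ (with $t = 4rs\sinh q$) pre-multiplied by the constant loop $M_t$. Since $M_t \in \LSLdeuxCrho$, the surface $f_t^\dD := \Sym_q(\Uni \Phi_t^\dD)$ is a Delaunay immersion of weight $2\pi t$ up to a dressing by $M_t$; but $M_0 = \I_2$ and $M_t$ is $\cC^1$ in $t$, so for $t$ small this dressing is a small perturbation. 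The essential analytic input is already isolated in the hypothesis $\norm{P_t(z) - \I_2}_\rho \leq C|t||z|^2$: this says the perturbed frame and the dressed Delaunay frame differ multiplicatively by $\I_2 + \Oo(|t||z|^2)$ on all of $\Aa_\rho$, uniformly. So the strategy for all three points is: first understand the dressed Delaunay frames $M_t z^{A_t}$ well enough (this is the content of the section on dressed Delaunay frames referenced in the excerpt), then propagate the $\Oo(|t||z|^2)$ perturbation through the Iwasawa decomposition, the Sym--Bobenko formula, and the normal-map formula.

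For point (1), the distance estimate: I would write $\Iwa(\Phi_t) = (F_t, B_t)$ and $\Iwa(\Phi_t^\dD) = (F_t^\dD, B_t^\dD)$. Using that Iwasawa decomposition is a smooth (hence locally Lipschitz) diffeomorphism between Banach manifolds, and that $\Phi_t (\Phi_t^\dD)^{-1} = M_t z^{A_t} P_t(z) z^{-A_t} M_t^{-1}$, I need a bound on $\norm{z^{A_t} (P_t(z) - \I_2) z^{-A_t}}_{\rho'}$ for some $e^q < \rho' < \rho$. Here one loses a bit: $z^{A_t}$ has operator norm growing like $|z|^{-2\mu_t(\lambda)}$ on circles $|\lambda| = \rho'$, and $2\mu_t(\rho') > 1$ strictly only because $\rho' > e^q$ — wait, in fact $2\mu_0(\lambda) = |...|$; the point is that conjugation by $z^{A_t}$ costs at most $|z|^{-\eta}$ for $\eta$ as close to $1$ as one wishes by shrinking $\rho'$ toward $e^{\pm q}$, which is exactly where the $\delta$ in the statement comes from. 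Combining with the $|z|^2$ gives $\norm{\Phi_t(\Phi_t^\dD)^{-1} - \I_2}_{\rho'} \leq C|t||z|^{2-\eta} \leq C|t||z|^{1-\delta}$ after absorbing constants and using $|z|<1$. Since the hyperbolic distance $\distH{f_t(z)}{f_t^\dD(z)}$ is controlled by the Banach-space distance between the unitary frames evaluated at $\lambda = e^{-q} \in \Aa_{\rho'}$ (via $\abs{F_t(z,e^{-q}) - F_t^\dD(z,e^{-q})}$ and the fact that $\Sym_q$ is locally Lipschitz from $\SU(2)$ into $\Hh^3$), and the Iwasawa map transfers the multiplicative estimate on $\Phi_t (\Phi_t^\dD)^{-1}$ to one on $F_t (F_t^\dD)^{-1}$, point (1) follows.

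For point (3), the limit axis: the axis of the Delaunay immersion $f_t^\dD$ is the axis of the rotation by which $\exp(i\theta A_t(e^{-q}))$ acts, dressed by $M_t$; from the surface-of-revolution computation in the Delaunay section this axis is $M_t H_t(e^{-q}) \cdot \geod(\I_2, \pm\sigma_3)$ (the sign/orientation determined by which end, $z = 0$, one looks at). As $t \to 0$, $M_t \to \I_2$ and $H_t(e^{-q}) \to H_0(e^{-q})$, whose two values (spherical vs.\ catenoidal) are given by \eqref{eqH0spherical} and \eqref{eqH0catenoidal} evaluated at $\lambda = e^{-q}$; this reproduces exactly the two matrices in the statement (noting $\frac{1}{\sqrt 2}\begin{pmatrix} 1 & -e^{q} \\ e^{-q} & 1\end{pmatrix}$ comes from \eqref{eqH0spherical} at $\lambda=e^{-q}$, and $\frac{1}{\sqrt 2}\begin{pmatrix}1&-1\\1&1\end{pmatrix} \cdot \geod(\I_2,-\sigma_3) = \geod(\I_2,-\sigma_1)$ since that matrix conjugates $\sigma_3$ to $\sigma_1$). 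The orientation ``towards the end at $z=0$'' fixes the $-\sigma_3$ rather than $+\sigma_3$. One also has to check that the end of $f_t$ itself (not just of $f_t^\dD$) is asymptotic to the same axis, but that is a consequence of point (1): the Hausdorff distance between the ends goes to zero.

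Point (2), embeddedness of $f_t$ on a \emph{fixed} punctured disk $D_{\epsilon'}^*$ for all small $t>0$, is the main obstacle. For a genuine Delaunay surface the end is embedded (this is classical, e.g.\ via the explicit profile curve, and in the spherical family amounts to the unduloid being embedded), so $f_t^\dD$ is an embedding of $D_{\epsilon'}^*$ for $\epsilon'$ small and $t$ small; the difficulty is that point (1) only gives a pointwise distance bound $C|t||z|^{1-\delta}$, which degenerates as $z \to 0$ relative to the \emph{size} of the Delaunay end near $z=0$, so one cannot naively conclude $f_t$ is embedded from $f_t^\dD$ being embedded plus a small perturbation. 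The resolution I would pursue: rescale. Near $z=0$, pass to the coordinate $w = \log z$ (half-cylinder $\Re w < \log\epsilon'$), in which the Delaunay end becomes periodic and of bounded geometry; in this coordinate the perturbation estimate becomes $\distH{f_t(w)}{f_t^\dD(w)} \leq C|t| e^{(1-\delta)\Re w}$, which is now \emph{uniformly} small on the whole half-cylinder $\{\Re w \leq \log \epsilon'\}$ (it even decays as $\Re w \to -\infty$). Since $f_t^\dD$ restricted to this half-cylinder is a uniformly-embedded periodic-type end with a definite embeddedness margin (independent of $t$, as $t\to0$ it converges smoothly on compacta to a half-sphere or half-catenoid end, both embedded with margin), a $\cC^1$-small perturbation remains embedded: one checks (i) local injectivity via the normal map converging (point addressed in the normal-map section) so $f_t$ is a local diffeomorphism with normal close to that of $f_t^\dD$, and (ii) no large-scale self-intersections via a tubular-neighbourhood argument — any self-intersection of $f_t$ would force two points of $f_t^\dD$ within $C|t|$, contradicting the uniform embeddedness margin of the Delaunay end for $t$ small. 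The genuine subtlety is making ``uniform embeddedness margin as $t\to0$'' precise for the Delaunay ends in both families and checking it is \emph{not} destroyed at the neck in the catenoidal family — but in fact point (2) is only asserted for $0 < t < T'$ and we may use whichever of the two families is convenient, so this is manageable. I expect the bookkeeping in the cylinder coordinate and the quantification of the embeddedness margin to be where the real work lies.
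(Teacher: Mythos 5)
Your overall plan --- compare $\Phi_t$ with the dressed Delaunay frame $\Phi_t^\dD = M_t z^{A_t}$, then propagate the $\Oo(|t||z|^2)$ bound on $P_t-\I_2$ through Iwasawa, Sym--Bobenko and the normal map --- is the paper's plan. But the sketch contains two genuine gaps at precisely the points where the paper does real work.

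First, the sentence ``the Iwasawa map transfers the multiplicative estimate on $\Phi_t(\Phi_t^\dD)^{-1}$ to one on $F_t(F_t^\dD)^{-1}$'' conceals the central difficulty. Both $\Phi_t(z)$ and $\Phi_t^\dD(z)$ diverge as $z\to 0$, so local Lipschitz continuity of $\Iwa$ at an interior point gives you nothing: being close multiplicatively does not imply the unitary parts are close, because the unitary part of a product is not a product of unitary parts. Moreover $\Phi_t(\Phi_t^\dD)^{-1}$ is \emph{not} the quantity whose Iwasawa decomposition produces $(F_t^\dD)^{-1}F_t$. The quantity you need is
\begin{equation*}
\widetilde\Phi_t := B_t^\dD\,(\Phi_t^\dD)^{-1}\Phi_t\,(B_t^\dD)^{-1} = B_t^\dD\,P_t\,(B_t^\dD)^{-1} = (F_t^\dD)^{-1}F_t\cdot B_t(B_t^\dD)^{-1},
\end{equation*}
whose Iwasawa decomposition \emph{is} $\bigl((F_t^\dD)^{-1}F_t,\,B_t(B_t^\dD)^{-1}\bigr)$; then smoothness of $\Iwa$ at $\I_2$ applies. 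The price of conjugating by $B_t^\dD$ is $\norm{B_t^\dD(z)}_R^2\lesssim|z|^{-1-4\delta}$, and controlling this requires the bound $\norm{F_t^\dD(z)}_R\lesssim|z|^{-\delta}$ (the paper's Proposition \ref{propConvUnitFrame}), itself obtained by a Gr\"onwall estimate over a period of the profile curve using the integral identities of Appendix \ref{appendixJleliLopez} and a control of the periodicity matrix. Your conjugation by $z^{A_t}$ alone gives the same numerology but does not connect to the unitary parts. Relatedly, the claim that ``$\Sym_q$ is locally Lipschitz from $\SU(2)$'' is off: $F_t(z,e^{-q})\in\SL(2,\Cc)$, not $\SU(2)$, so you need the estimate of the paper's Lemma \ref{lemmaTraceAA*} and Corollary \ref{corollarydH3}, which give $\distH{f_1}{f_2}\leq C|F_2^{-1}F_1-\I_2|$.

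Second, you describe $f_t^\dD$ as ``a Delaunay immersion of weight $2\pi t$ \emph{up to a dressing by $M_t$}.'' For the embeddedness argument (point 2) you need it to be an \emph{actual} Delaunay surface of revolution, with quantifiable tubular radius $r_t\sim t$; ``close to one by a small dressing'' is not sufficient. The paper proves this via Lemma \ref{lemmaUniComPointwise} and Corollary \ref{corUnitCom}: solvability of the monodromy problem forces the polar decomposition $M_t=U_tK_t$ with $U_t\in\LSUdeux_R$ and $[K_t,A_t]=0$, so that $M_tz^{A_t}=U_tz^{A_t}K_t$ and $f_t^\dD$ is a genuine Delaunay immersion (only isometrically moved by $U_t$). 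This is a concrete step you would need to add. Your treatments of points (2) and (3) otherwise track the paper's strategy (tubular neighbourhood with radius $r_t\sim t$, projection $\pi_t\circ f_t$ as a covering map of degree one via normal-map comparison and a winding-number homotopy; and for the axis, convergence of $H_t(e^{-q})$ in the spherical case, blow-up in the catenoidal case since the surface degenerates to a point).
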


Thus, we now reset the values of $\rho$, $T$ and $\epsilon$ and suppose that we are given a perturbed Delaunay frame $\xi_t$ and a holomorphic frame $\Phi_t$ associated to it and satisfying the hypotheses of Proposition \ref{propThmSimplifie}.

\subsection{Dressed Delaunay frames}\label{sectionDressedDelaunayFrames}

In this section we study dressed Delaunay frames arising from the DPW data $(\widetilde{\Cc}^*,\xi_t^\dD,1,M_t)$, where $\widetilde{\Cc}^*$ is the universal cover of $\Cc^*$ and 
\begin{equation*}
	\xi_t^\dD(z) := A_tz^{-1}dz
\end{equation*}
with $A_t$ as in \eqref{eqArs} satisfying \eqref{eqrst}, and $M_t$ as in Proposition \ref{propzAP2}. The induced holomorphic frame is
\begin{equation*}
	\Phi_t^\dD(z) = M_tz^{A_t}.
\end{equation*}
Note that the fact that the monodromy problem \eqref{eqMonodromyProblemLoop} is solved for $\Phi_t$ implies that it is solved for $\Phi_t^\dD$ because $P_t$ is holomorphic on $D_{\epsilon}$.
Let $\widetilde{D}_1^*$ be the universal cover of $D_1^*$ and let
\begin{equation*}
	F_t^\dD := \Uni \Phi_t^\dD, \qquad B_t^\dD := \Pos \Phi_t^\dD, \qquad f_t^\dD := \Sym_q F_t^\dD.
\end{equation*}
In this section, our goal is to prove the following proposition.
\begin{proposition}\label{propConvUnitFrame}
	The immersion $f_t^\dD$ is a CMC $H$ Delaunay immersion of weight $2\pi t$ for $|t|$ small enough. Moreover, for all $\delta>0$ and $e^q<R<\rho$ there exists $C,T'>0$ such that
	\begin{equation*}
	\norm{F_t^\dD(z)}_{{R}} \leq C|z|^{-\delta}.
	\end{equation*}
	for all $(t,z)\in(-T',T')\times \widetilde{D}_1^*$.
\end{proposition}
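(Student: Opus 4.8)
\textbf{Proof strategy for Proposition \ref{propConvUnitFrame}.}

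The plan is to prove the two assertions separately. For the first one, I would argue that $\Phi_t^\dD(z) = M_t z^{A_t}$ differs from the pure Delaunay frame $z^{A_t}$ only by left-multiplication by the fixed loop $M_t$. Since $M_t\in\LSLdeuxCR$, the DPW philosophy (see the \emph{Rigid motions} and \emph{Dressing} paragraphs) tells us that $\Phi_t^\dD$ is a dressing of the pure Delaunay frame. The key point is that $M_0=\I_2$, so for $|t|$ small the loop $M_t$ is $\cC^0$-close to the identity; combined with the fact that the monodromy problem is solved for $\Phi_t^\dD$ (which follows from it being solved for $\Phi_t$ together with $P_t$ being holomorphic on the disc $D_\epsilon$, hence contributing no monodromy), one deduces that $f_t^\dD$ is a well-defined immersion of $\Cc^*$. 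That it is in fact a Delaunay surface of weight $2\pi t$ should follow because dressing a Delaunay frame by a loop that is trivial at $\lambda=e^{-q}$ (which $M_t$ is, up to the sign ambiguity already absorbed in the monodromy condition) preserves the fact that $f_t^\dD(e^{i\theta}z)$ is obtained from $f_t^\dD(z)$ by a one-parameter group of isometries — exactly the argument in the \emph{Surface of revolution} paragraph, now conjugated by $M_t$ — so the surface is CMC $H$ of revolution, i.e.\ Delaunay, with weight given by Equation \eqref{eqPoidsDelaunayPure}, i.e.\ $2\pi t$. One should double-check that $M_t$ being merely close to $\SU(2)$ rather than unitary does not spoil the revolution property; this is where a small argument is needed, presumably that $M_t(e^{-q})$ being $\pm\I_2$ on the relevant fibre (or at least that the monodromy is a genuine rotation) suffices.

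For the growth estimate on $F_t^\dD$, I would start from $\Phi_t^\dD(z) = M_t z^{A_t}$ and take the Iwasawa decomposition. Since $\norm{F_t^\dD}_R = \norm{(F_t^\dD)^{-1}}_R$ and $F_t^\dD = \Phi_t^\dD (B_t^\dD)^{-1}$, one has $\norm{F_t^\dD}_R \leq \norm{M_t}_R \norm{z^{A_t}}_R \norm{(B_t^\dD)^{-1}}_R$. The factor $\norm{M_t}_R$ is bounded uniformly for $t$ in a compact neighbourhood of $0$. The heart of the matter is controlling $\norm{z^{A_t}}_R$: diagonalising $A_t = H_t D_t H_t^{-1}$ via Proposition \ref{propDiagA} with $H_t\in\LSUdeuxR$ and $iD_t\in\LsuR$, we get $z^{A_t} = H_t z^{D_t} H_t^{-1}$ with $H_t$ uniformly bounded, and $z^{D_t} = \mathrm{diag}(z^{\mu_t}, z^{-\mu_t})$. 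For $\lambda\in\Aa_R$ the eigenvalue $\mu_t(\lambda)$ is a complex number close to $1/2$ (by the discriminant computation \eqref{eqDetA}, $\mu_t\to 1/2$ uniformly as $t\to 0$), so $|z^{\pm\mu_t(\lambda)}| = |z|^{\pm\Re\mu_t(\lambda)} \leq |z|^{-(1/2+o(1))}$ on the universal cover, where the argument of $z$ contributes a bounded oscillatory factor $e^{-\arg(z)\Im\mu_t}$ that stays controlled once we work on $\widetilde D_1^*$ with, say, $\arg(z)$ in a fixed fundamental-domain-type range — but actually since $F_t^\dD$ is defined on the universal cover one must be slightly more careful and use the monodromy to reduce to bounded argument. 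Taking $T'$ small enough that $\Re\mu_t(\lambda)< 1/2 + \delta$ for all $\lambda\in\Aa_R$ and all $|t|<T'$ gives $\norm{z^{A_t}}_R \leq C|z|^{-1/2-\delta}$, which is even better than claimed.

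The remaining and \textbf{main obstacle} is bounding $\norm{(B_t^\dD)^{-1}}_R = \norm{B_t^\dD}_R$, i.e.\ the positive part of the Iwasawa decomposition of $M_t z^{A_t}$. Here the naive triangle-inequality bound is not enough because $B_t^\dD$ can a priori blow up as $z\to 0$. The right approach is to use that the immersion $f_t^\dD$ is a genuine Delaunay immersion (the first part of the proposition), so its geometry is explicitly understood: the conformal factor is $ds_{f_t^\dD}(z) = 2\sinh(q)\, b_t^\dD(z)^2 |\beta_t^\dD(z,0)|$ with $\beta_t^\dD(z,0) = s\, z^{-1}$ (the $\lambda^0$-part of the Delaunay residue entry), and the metric of a Delaunay surface in the conformal coordinate $z$ on $\Cc^*$ is a known bounded-geometry object — in particular $b_t^\dD(z)$, the upper-left entry of $B_t^\dD|_{\lambda=0}$, has controlled growth, of order $|z|^{-1/2\pm o(1)}$ near the end, matching $z^{A_t}$. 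More robustly, one can invoke the general principle (used in \cite{raujouan,minoids}) that for Delaunay-type frames the Iwasawa factors satisfy $\norm{B_t^\dD(z)}_R \leq C|z|^{-\Re\mu_t}$ uniformly, because $B_t^\dD$ solves an ODE with the same $A_t$-type singularity; alternatively, conjugate everything by $H_t$ and reduce to the scalar/diagonal model where the estimate is explicit. Once both $\norm{z^{A_t}}_R$ and $\norm{B_t^\dD}_R$ are bounded by $C|z|^{-1/2-\delta/2}$ with uniform constants, multiplying gives $\norm{F_t^\dD}_R \leq C|z|^{-1-\delta}$; to reach the sharper stated bound $C|z|^{-\delta}$ one uses that the two $|z|^{-\mu_t}$-type growths in $z^{A_t}$ and in $B_t^\dD$ must cancel (since $F_t^\dD$ is unitary-valued on $\Ss^1$, it has \emph{no} exponential growth there, and the holomorphic extension to $\Aa_R$ only degrades this by the controlled gap $\Re\mu_t(\lambda) - 1/2 = O(|t|) + O(\text{dist}(\lambda,\Ss^1))$, which is $<\delta$ for $T'$ and $R$ suitably chosen). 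This cancellation argument — making precise that the unitarity on the circle forces $\norm{F_t^\dD(z)}_R$ to grow only like $|z|^{-\delta}$ rather than $|z|^{-1-\delta}$ — is the technical crux and is presumably handled exactly as the corresponding estimate in \cite{raujouan}.
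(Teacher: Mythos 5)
Your proposal takes a fundamentally different route from the paper at both stages, and at each stage there is a genuine gap.

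\textbf{Surface of revolution.} You correctly sense that replacing $z^{A_t}$ by $M_t z^{A_t}$ with $M_t$ merely close to (but not in) $\LSUdeuxR$ is the issue, but the fix you sketch does not work: $M_t(e^{-q})$ is \emph{not} $\pm\I_2$ in general, and ``the monodromy being a genuine rotation'' by itself does not make the frame $F_t^\dD(e^{i\theta}z)$ equal to $R(\theta)F_t^\dD(z)$ for a $\theta$-independent unitary loop conjugating $\exp(i\theta A_t)$. The missing ingredient is a polar-decomposition statement: writing $M_t = U_t K_t$ with $U_t$ unitary and $K_t$ Hermitian positive, the unitarity of $\mM(\Phi_t^\dD) = M_t \exp(2i\pi A_t)M_t^{-1}$ forces $K_t$ to \emph{commute} with $A_t$ (pointwise on $\Ss^1$, hence as loops), which gives $\Phi_t^\dD = U_t z^{A_t}K_t$ and makes $R_t(\theta) := U_t\exp(i\theta A_t)U_t^{-1}\in\LSUdeuxR$. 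This is Lemma \ref{lemmaUniComPointwise} and Corollary \ref{corUnitCom} in the paper; without it the revolution property (and hence the identification of $f_t^\dD$ as Delaunay) is unjustified.

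\textbf{Growth estimate.} The chain $\norm{F_t^\dD}_R \le \norm{M_t}_R\,\norm{z^{A_t}}_R\,\norm{(B_t^\dD)^{-1}}_R$ cannot yield the stated exponent $-\delta$. Each of $\norm{z^{A_t}}_R$ and $\norm{(B_t^\dD)^{-1}}_R$ grows like $|z|^{-1/2-\delta}$, so the product bound can only give $|z|^{-1-2\delta}$; the ``cancellation'' you appeal to is destroyed the moment you apply submultiplicativity, and there is no way to recover it after the fact. (Note also that the paper actually derives the bound on $B_t^\dD$ \emph{from} the bound on $F_t^\dD$, the opposite direction to yours, so your argument is circular as well.) The paper's mechanism is entirely different and does not factor through $B_t^\dD$: use the revolution property to reduce to a meridian, write the Lax-pair ODE $d\widehat F_t^\dD = \widehat F_t^\dD\widehat W_t\,dx$, show via the explicit Delaunay-period integrals (Proposition \ref{propJleliLopez}) that $\int_1^{e^{S_t}}\norm{\widehat W_t}_R\,dx$ is uniformly bounded, apply Gr\"onwall to get a period-independent bound $\norm{\widehat F_t^\dD}_R\le C$ over one period, control the periodicity matrix $\Gamma_t$, and iterate: $\norm{\widehat F_t^\dD(x)}_R\le C^{k+1}$ for $x\in[e^{-(k+1)S_t},e^{-kS_t}]$, so the growth rate is $\delta_t=\log C/S_t\to 0$ because the period $S_t\to\infty$. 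It is precisely the divergence of $S_t$, not any cancellation in norms, that yields $|z|^{-\delta}$ for arbitrarily small $\delta$.
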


\paragraph{Delaunay immersion.}

We will need the following lemma, inspired by \cite{schmitt}.
\begin{lemma}\label{lemmaUniComPointwise}
	Let $M\in\SL(2,\Cc)$ and $\aA\in\su(2)$ such that
	\begin{equation}\label{eqLemmaPolar}
	M\exp(\aA)M^{-1}\in\SU(2).
	\end{equation}
	Then there exist $U\in\SU(2)$ and $K\in\SL(2,\Cc)$ such that $M=UK$ and $\left[ K,\aA \right]=0$.
\end{lemma}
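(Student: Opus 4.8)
The plan is to exploit the polar decomposition of $M$ and to show that the positive-definite part commutes with $\aA$. First I would write $M = UP$ with $U \in \SU(2)$ and $P \in \hH_2^{++}\cap\SL(2,\Cc)$ the (unique) polar decomposition of $M$; equivalently $P = (M^*M)^{1/2}$. Plugging this into hypothesis \eqref{eqLemmaPolar} and using that $U$ is unitary, the condition $M\exp(\aA)M^{-1}\in\SU(2)$ becomes $P\exp(\aA)P^{-1}\in\SU(2)$, i.e.\ $P\exp(\aA)P^{-1}$ equals its own inverse-transpose-conjugate. Taking the unitarity condition $\bigl(P\exp(\aA)P^{-1}\bigr)^* = \bigl(P\exp(\aA)P^{-1}\bigr)^{-1}$ and using $\aA^* = -\aA$ (since $\aA\in\su(2)$) together with $P^*=P$, I would massage this into the identity $P^2 \exp(\aA) P^{-2} = \exp(\aA)$, that is, $P^2$ commutes with $\exp(\aA)$.

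The next step is to upgrade commutation with $\exp(\aA)$ and with $P^2$ to commutation with $\aA$ and with $P$. Since $\aA\in\su(2)$, either $\aA=0$ (trivial) or $\aA$ has two distinct purely imaginary eigenvalues, so $\exp(\aA)$ has two distinct eigenvalues unless $\exp(\aA)=\pm\I_2$; in the generic case the centralizer of $\exp(\aA)$ in $\Mat(2,\Cc)$ coincides with the centralizer of $\aA$ (both equal the algebra of matrices diagonal in the eigenbasis of $\aA$), so $P^2$ commutes with $\aA$. If $\exp(\aA)=\pm\I_2$ then $\exp(\tfrac12\aA)\exp(\tfrac12\aA)=\pm\I_2$ forces, by looking at eigenvalues, that either $\aA=0$ or $\aA$ already has the required form; in any case one checks $P^2$ commutes with $\aA$ directly or the statement is vacuous. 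Then, because $P^2$ is positive definite, $P$ is a polynomial in $P^2$ (its unique positive square root, obtainable via functional calculus), hence $P$ itself commutes with $\aA$. Setting $K := P$ gives $M = UK$ with $[K,\aA]=0$ and $K\in\hH_2^{++}\cap\SL(2,\Cc)\subset\SL(2,\Cc)$, as desired.

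I would also handle the degenerate eigenvalue case cleanly by diagonalising: choose $Q\in\SU(2)$ with $Q^{-1}\aA Q = \mathrm{diag}(i\theta,-i\theta)$, $\theta\in\Rr$. Replacing $M$ by $MQ$ reduces to the case where $\aA$ is diagonal; the unitarity condition then says the matrix with entries $m_{jk}$ scaled by $e^{i(\lambda_j-\lambda_k)\theta}$ is unitary, and comparing with unitarity of $M^*M$-type relations forces the off-diagonal structure of $P$ to vanish when $\theta\notin\pi\Zz$, i.e.\ $P$ is diagonal, hence commutes with $\aA$. When $\theta\in\pi\Zz$ we have $\exp(\aA)=\pm\I_2$ and the conclusion is immediate with $K=P$ and $[K,\aA]$ possibly nonzero only if $\aA\neq0$, which forces $\theta\notin\pi\Zz$ unless $\aA=0$ — so the only real content is the generic case.

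The main obstacle is the passage from ``$P^2$ commutes with $\exp(\aA)$'' to ``$P$ commutes with $\aA$'': one must be careful that $\exp$ is not injective on $\su(2)$, so commuting with $\exp(\aA)$ is a priori weaker than commuting with $\aA$, and one must separately argue that the positive square root $P$ of $P^2$ inherits the commutation (this is where positive-definiteness is essential, via the spectral theorem / the fact that $P$ is a limit of polynomials in $P^2$). Once that algebraic point is settled the rest is bookkeeping with the polar decomposition.
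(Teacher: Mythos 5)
Your proof follows essentially the same route as the paper: both take $K=(M^*M)^{1/2}$, $U=MK^{-1}$, and reduce the problem to showing that the positive-definite factor commutes with $\aA$. The execution is slightly different — the paper diagonalizes $K=QDQ^{-1}$ and does an explicit $2\times 2$ computation, whereas you derive $P^2\exp(\aA)P^{-2}=\exp(\aA)$ directly from the unitarity of $P\exp(\aA)P^{-1}$ together with $P^*=P$ and $\aA^*=-\aA$, and then upgrade from $P^2$ to $P$ by observing that the positive square root is a limit of polynomials in $P^2$. That last upgrade step is correct and is a clean way to avoid the paper's eigenbasis computation.

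The gap is in your treatment of the degenerate case $\exp(\aA)=\pm\I_2$. You assert that ``$\aA\neq0$ forces $\theta\notin\pi\Zz$,'' but this is false: an element $\aA\in\su(2)$ with eigenvalues $\pm i\pi$ is nonzero and satisfies $\exp(\aA)=-\I_2$. In that situation the hypothesis \eqref{eqLemmaPolar} is vacuous — it holds for \emph{every} $M\in\SL(2,\Cc)$ — while the conclusion is a genuine constraint: since $\aA$ has distinct eigenvalues, $[K,\aA]=0$ forces $K$ to be diagonal in an eigenbasis $Q$ of $\aA$, hence $K^*K=M^*M$ forces $Q^{-1}(M^*M)Q$ to be diagonal, which fails for generic $M$. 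So the lemma, read literally, is simply false there, and no choice of $K$ (not just $K=P$) can rescue it. For what it is worth, the paper's own proof silently has the same gap: the claim ``if $q=0$ then $Q^{-1}\aA Q$ is diagonal'' fails precisely when $\exp(Q^{-1}\aA Q)=\pm\I_2$ with $\aA\neq 0$, since then $q=\frac{\sin\mu}{\mu}\beta=0$ holds because $\sin\mu=0$, not because $\beta=0$. In the place where the lemma is used (Corollary~\ref{corUnitCom}), these degenerate values of $\la$ are isolated for $t\neq0$ and the commutation $[K_t,\aA_t]=0$ extends there by continuity, while at $t=0$ one has $M_0=\I_2$ so $K_0=\I_2$ trivially. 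Your main argument is therefore sound on exactly the same scope as the paper's; the final paragraph attempting to dismiss the degenerate case should be replaced by an explicit hypothesis $\exp(\aA)\neq\pm\I_2$ (or $\aA=0$), with the degenerate values handled by the surrounding continuity argument.
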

\begin{proof}
	Let
	\begin{equation*}
	K = \sqrt{{M}^*M}, \qquad U=MK^{-1}
	\end{equation*}
	be a polar decomposition of $M$. The matrix $K$ is then hermitian and positive-definite because $\det M \neq 0$. Moreover, $U\in\SU(2)$ and Equation \eqref{eqLemmaPolar} is then equivalent to
	\begin{equation}\label{eqLemmaPolar2}
	K\exp\left( \aA \right)K^{-1} \in\SU(2).
	\end{equation}
	Let us diagonalise $K = QDQ^{-1}$ where $Q\in\SU(2)$ and 
	\begin{equation*}
	D=\begin{pmatrix}
	x&0\\0&x^{-1}
	\end{pmatrix}, \quad x>0.
	\end{equation*}
	Hence Equation \eqref{eqLemmaPolar2} now reads
	\begin{equation}\label{eqLemmaPolar3}
	D\exp\left( Q^{-1}\aA Q \right)D^{-1}\in\SU(2).
	\end{equation}
	But $Q\in\SU(2)$ and $\aA\in\su(2)$, so $Q^{-1}\aA Q \in\su(2)$ and $\exp\left( Q^{-1}\aA Q \right)\in\SU(2)$. Let us write
	\begin{equation*}
	\exp\left( Q^{-1}\aA Q \right) = \begin{pmatrix}
	p & -\conj{q} \\ q & \conj{p}
	\end{pmatrix}, \quad |p|^2+|q|^2 = 1
	\end{equation*}
	so that Equation \eqref{eqLemmaPolar3} is now equivalent to
	\begin{equation*}
	x=1 \text{ or } q=0.
	\end{equation*}
	If $x=1$ then $K=\I_2$ and $\left[ K,\aA \right]=0$. If $q=0$ then $Q^{-1}\aA Q$ is diagonal and $\aA$ is $Q$-diagonalisable. Thus $K$ and $\aA$ are simultaneously diagonalisable and $\left[ K,\aA \right]=0$.
\end{proof}

\begin{corollary}\label{corUnitCom}
	There exists $T'>0$ such that for all $t\in (-T',T')$,
	\begin{equation*}
		\Phi_t(z) = U_tz^{A_t}K_t
	\end{equation*}
	where $U_t\in\LSUdeux_{R}$ and $K_t\in \LSLdeuxC_{R}$ for any $e^q<R<\rho$.
\end{corollary}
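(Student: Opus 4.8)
The plan is to apply Lemma \ref{lemmaUniComPointwise} pointwise in $\lambda\in\Ss^1$ to the monodromy relation satisfied by $\Phi_t$, and then to upgrade the pointwise decomposition $M=UK$ into a genuine loop-group decomposition with control in the $\norm{\cdot}_R$-norm. First I would recall from Proposition \ref{propzAP1} that $\Phi_t(z)=M_tz^{A_t}P_t(z)$ with $M_t\in\LSLdeuxC_R$; since $P_t$ is holomorphic on $D_{\epsilon'}$, the monodromy of $\Phi_t$ along $\gamma(\theta)=e^{i\theta}$ equals the monodromy of $M_tz^{A_t}$, namely $M_t\exp(2i\pi A_t)M_t^{-1}$. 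The hypothesis that the monodromy problem \eqref{eqMonodromyProblemLoop} is solved says precisely that $M_t(\lambda)\exp(2i\pi A_t(\lambda))M_t(\lambda)^{-1}\in\SU(2)$ for every $\lambda\in\Ss^1$. Setting $\aA=\aA_t(\lambda):=2i\pi A_t(\lambda)$, note that $\aA\in\su(2)$ exactly because $iA_t\in\Lsurho$ (as remarked in the Delaunay monodromy computation). Lemma \ref{lemmaUniComPointwise} then produces, for each $\lambda$, a polar decomposition $M_t(\lambda)=U_t(\lambda)K_t(\lambda)$ with $U_t(\lambda)\in\SU(2)$, $K_t(\lambda)$ hermitian positive-definite of determinant one, and $\left[K_t(\lambda),A_t(\lambda)\right]=0$.

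Next I would argue that $\lambda\mapsto U_t(\lambda)$ and $\lambda\mapsto K_t(\lambda)$ are in the right loop classes. The polar decomposition is explicit: $K_t(\lambda)=\sqrt{M_t(\lambda)^*M_t(\lambda)}$ and $U_t(\lambda)=M_t(\lambda)K_t(\lambda)^{-1}$. Since $M_t\in\LSLdeuxC_R$ extends holomorphically and invertibly to the annulus $\Aa_R$, the map $\lambda\mapsto M_t(\lambda)^*M_t(\lambda)$ is a smooth loop into hermitian positive-definite $\SL(2,\Cc)$-matrices, and its positive square root is again a smooth loop of the same type; one has to check it lies in $\Lambda\mM(2,\Cc)_R$, which follows because the square-root map on $\SL(2,\Cc)^{++}$ is real-analytic and the loop is bounded away from singular matrices, so composition preserves the $\norm{\cdot}_R$-class (this is the analogue of the square-root argument already used in Proposition \ref{propParallTranspI2} and in Proposition \ref{propDiagA}). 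Then $U_t=M_tK_t^{-1}\in\LSLdeuxC_R$ takes values in $\SU(2)$ on $\Ss^1$, hence $U_t\in\LSUdeux_R$, and likewise $K_t\in\LSLdeuxC_R$. This gives $\Phi_t(z)=M_tz^{A_t}P_t(z)=U_tK_tz^{A_t}P_t(z)$; using $[K_t,A_t]=0$ (hence $K_tz^{A_t}=z^{A_t}K_t$) rewrite this as $U_tz^{A_t}(K_tP_t(z))$. If one wants the exact form $U_tz^{A_t}K_t$ stated, one absorbs the holomorphic factor: since $P_t(0)=\I_2$ and $P_t$ is holomorphic and near the identity, $K_t(z):=K_tP_t(z)$ is still a holomorphic family in $\LSLdeuxC_R$; but in fact the statement as written presumably refers to the Delaunay-type frame where $P_t$ is trivial, so I would apply the argument directly to $\Phi_t^\dD(z)=M_tz^{A_t}$ and obtain $\Phi_t^\dD(z)=U_tz^{A_t}K_t$ on the nose.

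Finally I would address uniformity in $t$: the interval $(-T',T')$ comes from intersecting the $T'$ of Proposition \ref{propDiagA}, the $T'$ of Proposition \ref{propzAP1}, and possibly a further shrinking to keep $M_t$ uniformly close to $\I_2$ (recall $M_0=\I_2$), which guarantees $M_t^*M_t$ stays in a fixed compact neighbourhood of $\I_2$ in $\SL(2,\Cc)^{++}$ on which the square root is uniformly analytic, hence $U_t,K_t$ lie in $\LSUdeux_R,\LSLdeuxC_R$ with uniform bounds. The main obstacle I anticipate is the loop-group upgrade rather than the algebra: verifying that the pointwise-defined square root $K_t(\lambda)$ genuinely lies in the Banach loop algebra $\Lambda\mM(2,\Cc)_R$ (not merely in $\Lambda\mM(2,\Cc)$) and depends continuously/$\cC^1$ on $t$; this is handled by expressing the square root via a holomorphic functional calculus (Cauchy integral over a contour encircling the spectrum of $M_t(\lambda)^*M_t(\lambda)$, uniformly in $\lambda\in\Aa_R$ and $t$) and invoking that $\Lambda\mM(2,\Cc)_R$ is a Banach algebra closed under such holomorphic functional calculus. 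The commutation $[K_t,A_t]=0$, which is the crux for turning a polar decomposition into a usable frame factorization, is given for free by Lemma \ref{lemmaUniComPointwise} and needs no further work.
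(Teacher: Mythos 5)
Your proposal matches the paper's proof essentially step for step: both apply Lemma \ref{lemmaUniComPointwise} pointwise to the monodromy relation $\mM(\Phi_t^\dD) = M_t\exp(2i\pi A_t)M_t^{-1}\in\LSUdeux_R$, set $K_t=\sqrt{M_t^*M_t}$ and $U_t=M_tK_t^{-1}$, upgrade the pointwise decomposition to the Banach loop group using $M_t\to\I_2$ (the paper via the observation that $M\mapsto\exp(\tfrac{1}{2}\log M^*M)$ is a diffeomorphism near $\I_2$ in $\LSLdeuxC_\rho$, you via functional calculus on a fixed compact neighbourhood of $\I_2$ — equivalent routes here), and finally use $[K_t,A_t]=0$ to commute $K_t$ past $z^{A_t}$. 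You also correctly diagnosed that the statement's ``$\Phi_t$'' is a typo for the dressed Delaunay frame $\Phi_t^\dD = M_tz^{A_t}$, which is exactly what the paper's own proof establishes and what is used afterward.
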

\begin{proof}
	Write $\mM\left( \Phi_t^\dD \right) = M_t\exp\left( \aA_t \right)M_t^{-1}$ with $\aA_t := 2i\pi A_t\in \Lsurho$ continuous on $(-T,T)$. The map 
	\begin{equation*}
	M\longmapsto \sqrt{M^*M} = \exp\left(\frac{1}{2}\log M^*M\right)
	\end{equation*}
	is a diffeomorphism from a neighbourhood of $\I_2\in\LSLdeuxC_{{\rho}}$ to another neighbourhood of $\I_2$. Using the convergence of $M_t$ towards $\I_2$ as $t$ tends to $0$, this allows us to use Lemma \ref{lemmaUniComPointwise} pointwise on $\Aa_{{\rho}}$ and thus construct $K_t:=\sqrt{M^*M}\in\LSLdeuxC_{R}$ for all $t\in(-T',T')$ and any $e^q<R<\rho$. Let $U_t := M_tK_t^{-1}\in\LSLdeuxC_{R}$ and compute $U_tU_t^*$ to show that $U_t\in\LSUdeux_{R}$. Use Lemma \ref{lemmaUniComPointwise} to show that $\left[ K_t(\la), \aA_t(\la) \right]=0$ for all $\la\in\Ss^1$. Hence $\left[ K_t,\aA_t \right]=0$ and thus $\Phi_t^\dD = U_tz^{A_t}K_t$.
\end{proof}

Returning to the proof of Proposition \ref{propConvUnitFrame}, let $\theta\in\Rr$, $z\in\Cc^*$ and $e^q<R<\rho$. Apply Corollary \ref{corUnitCom} to get
\begin{equation*}
	\Phi_t^\dD(e^{i\theta}z) = U_t\exp(i\theta A_t)U_t^{-1}\Phi_t^\dD(z)
\end{equation*}
and note that $U_t\in\LSUdeuxR$, $iA_t\in\LsuR$ imply
\begin{equation}\label{eqRt}
	R_t(\theta) :=  U_t\exp(i\theta A_t)U_t^{-1} \in \LSUdeuxR.
\end{equation}
Hence
\begin{equation*}
	F_t^\dD(e^{i\theta z}) = R_t(\theta)F_t^\dD(z)
\end{equation*}
and
\begin{equation*}
f_t^\dD(e^{i\theta}z) =R_t(\theta,e^{-q})\cdot f_t^\dD(z).
\end{equation*}
Use Section \ref{sectionDelaunaySurfaces} and note that $U_t$ does not depend on $\theta$ to see that $f_t^\dD$ is a CMC immersion of revolution and hence a Delaunay immersion. Its weight can be read on its Hopf diffferential, which in turn can be read on the potential $\xi_t^\dD$ (see Equation \eqref{eqHopf}). Thus, $f_t^\dD$ is a CMC $H$ Delaunay immersion of weight $2\pi t$, which proves the first point of Proposition \ref{propConvUnitFrame}.

\paragraph{Restricting to a meridian.}

Note that for all $t\in(-T',T')$ and $z\in\Cc^*$,
\begin{equation*}\label{eqFtDmeridien}
\norm{F_t^\dD(z)}_{{R}} \leq C \norm{F_t^\dD(|z|)}_{{R}}
\end{equation*}
where
\begin{equation*}
C = \sup\left\{ \norm{R_t(\theta)}_{{R}} \mid (t,\theta)\in (-T',T')\times [0,2\pi] \right\}
\end{equation*}
depends only on $R$. We thus restrict $F_t^\dD$ to $\Rr_+^*$ with $\widehat{F}_t^\dD (x) := F_t^\dD(|z|)$ ($x=|z|$).

\paragraph{Gr\"onwall over a period.} 

Recalling the Lax Pair associated to $F_t^\dD$ (see Appendix C in \cite{raujouan}), the restricted map $\widehat{F}_t^\dD$ satisfies $d\widehat{F}_t^\dD = \widehat{F}_t^\dD \widehat{W}_tdx$ with 
\begin{equation*}
\widehat{W}_t(x,\la) = \frac{1}{x}\begin{pmatrix}
0 & \la^{-1}rb^2(x)-sb^{-2}(x)\\
sb^{-2}(x) - \la r b^2(x) & 0
\end{pmatrix}
\end{equation*}
where $b(x)$ is the upper-left entry of $B_t^\dD(x)\mid_{\la=0}$. 
Recall Section \ref{sectionDPWMethod} and define
\begin{equation*}
	g_t(x) = 2\sinh q |r|b(x)^2x^{-1}
\end{equation*}
so that the metric of $f_t^\dD$ reads $g_t(x)|dz|$. Let $\widetilde{f}_t^\dD:=\exp^*f_t^\dD$. Then the metric of $\widetilde{f}_t^\dD$ satisfies
\begin{equation*}
d\widetilde{s}^2 = 4r^2(\sinh q)^2b^4(e^u)(du^2+d\theta^2)
\end{equation*}
at the point $u+i\theta = \log z$. Using Proposition \ref{propJleliLopez} of Appendix \ref{appendixJleliLopez} gives
\begin{equation*}
\int_{0}^{ S_t}2|r|b^2(e^u)du = \pi \quad \text{and} \quad  \int_{0}^{S_t}\frac{du}{2\sinh q|r|b^2(e^u)} = \frac{\pi}{|t|}
\end{equation*}
where $S_t>0$ is the period of the profile curve of $\widetilde{f}_t$. Thus,
\begin{equation*}
\int_{1}^{e^{S_t}}|rb^2(x)x^{-1}|dx = \frac{\pi}{2} = \int_{1}^{e^{S_t}}|sb^{-2}(x)x^{-1}|dx.
\end{equation*}
Using
\begin{equation*}
\norm{\widehat{W}_t(x)}_{{R}} = \sqrt{2}\abs{sb^{-2}(x)x^{-1}} + 2R\abs{rb^2(x)x^{-1}},
\end{equation*}
we deduce
\begin{equation}\label{eqIntegraleW}
\int_{1}^{e^{S_t}}\norm{\widehat{W}_t(x)}_{R} dx = \frac{\pi}{2}(2R+\sqrt{2}) <C
\end{equation}
where $C>0$ is a constant depending only on $\rho$ and $T$.
Applying Gr\"onwall's lemma to the inequality
\begin{equation*}
\norm{\widehat{F}_t^\dD(x)}_{{R}} \leq \norm{\widehat{F}_t^\dD(1)}_{{R}} + \int_{1}^{x}\norm{\widehat{F}_t^\dD(u)}_{{R}}\norm{\widehat{W}_t(u)}_{{R}} |du|
\end{equation*}
gives
\begin{equation*}
\norm{\widehat{F}_t^\dD(x)}_{{R}} \leq \norm{\widehat{F}_t^\dD(1)}_{{R}}\exp\left( \int_{1}^{x}\norm{\widehat{W}_t(u)}_{{R}} |du| \right).
\end{equation*}
Use Equation \eqref{eqIntegraleW} together with the fact that $\widetilde{F}_0^\dD(0)=F_0^\dD(1)=\I_2$ and the continuity of Iwasawa decomposition to get $C,T>0$ such that for all $t\in(-T',T')$ and $x\in [1,e^{S_t}]$
\begin{equation}\label{eqFtDPeriode}
\norm{\widehat{F}_t^\dD(x)}_{{R}} \leq C.
\end{equation}

\paragraph{Control over the periodicity matrix.}

Let $t\in(-T',T')$ and $\Gamma_t := \widehat{F}_t^\dD(xe^{S_t})\widehat{F}_t^\dD(x)^{-1}\in\LSUdeux_{{R}}$ for all $x>0$. The periodicity matrix $\Gamma_t$ does not depend on $x$ because $\widehat{W}_t(xe^{S_t}) = \widehat{W}_t(x)$ (by periodicity of the metric in the $\log$ coordinate).  Moreover,
\begin{equation*}
\norm{\Gamma_t}_{{R}} = \norm{\widehat{F}_t^\dD(e^{S_t})\widehat{F}_t^\dD(1)^{-1}}_{{R}} \leq \norm{\widehat{F}_t^\dD(e^{S_t})}_{{R}} \norm{\widehat{F}_t^\dD(1)}_{{R}},
\end{equation*}
and using Equation \eqref{eqFtDPeriode},
\begin{equation}\label{eqPeriode}
\norm{\Gamma_t}_{{R}} \leq C.
\end{equation}

\paragraph{Conclusion.}

Let $x<1$. Then there exist $k\in\Nn^*$ and $\zeta \in \left[1,e^{S_t}\right)$ such that $x=\zeta e^{-kS_t}$. Thus, using Equations \eqref{eqFtDPeriode} and \eqref{eqPeriode},
\begin{equation*}
\norm{\widehat{F}_t^\dD(x)}_{{R}} \leq \norm{\Gamma_t^{-k}}_{{R}} \norm{\widehat{F}_t^\dD(\zeta)}_{{R}} \leq C^{k+1}.
\end{equation*}
Writing 
\begin{equation*}
k = \frac{\log \zeta}{S_t}-\frac{\log x}{S_t},
\end{equation*}
one gets
\begin{equation*}
C^k = \exp\left(\frac{\log \zeta}{S_t}\log C\right)\exp\left( \frac{-\log C}{S_t}\log x \right) \leq Cx^{-\delta_t}
\end{equation*}
where $\delta_t = \frac{\log C}{S_t}$ does not depend on $x$ and tends to $0$ as $t$ tends to $0$ (because $S_t\tendsto{t\to 0}+\infty$). Returning back to $F_t^\dD$, we showed that for all $\delta>0$ there exist $T'>0$ and $C>0$  such that for all $t\in(-T',T')$ and $0<|z|<1$,
\begin{equation*}
\norm{F_t^\dD(z)}_{{R}} \leq C|z|^{-\delta}
\end{equation*}
and Proposition \ref{propConvUnitFrame} is proved.

\subsection{Convergence of the immersions}\label{sectionConvergenceImmersions}
In this section, we prove the first point of Theorem \ref{theoremPerturbedDelaunay}: the convergence of the immersions $f_t$ towards the immersions $f_t^\dD$. Our proof relies on the Iwasawa decomposition being a diffeomorphism in a neighbourhood of $\I_2$.

\paragraph{Behaviour of the Delaunay positive part.}
Let $z\in\widetilde{D}_1^*$. The Delaunay positive part satisfies
\begin{equation*}
	\norm{B_t^\dD(z)}_{{\rho}} = \norm{F_t^\dD(z)^{-1}M_tz^{A_t}}_{{\rho}} \leq \norm{F_t^\dD(z)}_{{\rho}} \norm{M_t}_{{\rho}} \norm{z^{A_t}}_{{\rho}}.
\end{equation*}
Diagonalise $A_t=H_tD_tH_t^{-1}$ as in Proposition \ref{propDiagA}. 
By continuity of $H_t$ and $M_t$, and according to Proposition \ref{propConvUnitFrame}, there exists $C,T'>0$ such that for all $t\in(-T',T')$
\begin{equation*}
	\norm{B_t^\dD(z)}_{{R}} \leq C|z|^{-\delta}\norm{z^{-\mu_t}}_{{R}}.
\end{equation*}
Recall Equation \eqref{eqDetA} and extend $\mu_t^2=-\det A_t$ to $\Aa_{\widetilde{R}}$ with $\rho>\widetilde{R}>R$. One can thus assume that for $t\in(-T',T')$ and $\la\in\Aa_{\widetilde{R}}$,
\begin{equation*}
	\left|\mu_t(\la)\right|<\frac{1}{2}+\delta,
\end{equation*}
which implies that
\begin{equation*}
	\left|z^{-\mu_t(\la)}\right| \leq |z|^{\frac{-1}{2}-\delta}.
\end{equation*}
This gives us the following estimate in the $\Lambda\Cc_{{R}}$ norm (using Cauchy formula and $\widetilde{R}>R$):
\begin{equation*}
	\norm{z^{-\mu_t}}_{{R}}\leq C|z|^{\frac{-1}{2}-\delta}
\end{equation*}
and
\begin{equation}\label{eqBtD}
	\norm{B_t^\dD(z)}_{{R}} \leq  C|z|^{\frac{-1}{2}-2\delta}.
\end{equation}

\paragraph{Behaviour of a holomorphic frame.} Let 
\begin{equation*}
	\widetilde{\Phi}_t := B_t^\dD \left(\Phi_t^\dD\right)^{-1}\Phi_t \left(B_t^\dD\right)^{-1}.
\end{equation*}
Recall Proposition \ref{propzAP2} and use Equation \eqref{eqBtD} to get for all $t\in(-T',T')$ and $z\in D_\epsilon^*$:
\begin{equation*}
\norm{\widetilde{\Phi}_t(z) - \I_2}_{{R}} = \norm{B_t^\dD(z)\left( P_t(z) - \I_2 \right)B_t^\dD(z)^{-1}}_{{R}} \leq C|t||z|^{1-4\delta}. 
\end{equation*}

\paragraph{Behaviour of the perturbed immersion.}

Note that
\begin{align*}
\widetilde{\Phi}_t &= B_t^\dD \left(\Phi_t^\dD\right)^{-1}\Phi_t \left(B_t^\dD\right)^{-1} \\
&= \left(F_t^\dD\right)^{-1}F_t\times B_t\left(B_t^\dD\right)^{-1}
\end{align*}
and recall that Iwasawa decomposition is differentiable at the identity to get
\begin{equation*}
\norm{F_t^\dD(z)^{-1}F_t(z) - \I_2}_{{R}} = \norm{\Uni \widetilde{\Phi}_t(z) - \Uni \I_2}_{{R}} \leq C|t||z|^{1-4\delta}.
\end{equation*}
The map
\begin{equation*}
\widetilde{F}_t(z) := F_t^\dD(z,e^{-q})^{-1}F_t(z,e^{-q}) \in\SL(2,\Cc)
\end{equation*}
satisfies
\begin{equation}\label{eqFtilde-I2}
\left|\widetilde{F}_t(z) - \I_2 \right| \leq \norm{F_t^\dD(z)^{-1}F_t(z) - \I_2}_{{R}} \leq C|t||z|^{1-4\delta}.
\end{equation}

\begin{lemma}\label{lemmaTraceAA*}
	There exists a neighbourhood $V\subset \SL(2,\Cc)$ of $\I_2$ and $C>0$ such that for all $A\in\SL(2,\Cc)$,
	\begin{equation*}
		A\in V \implies \left\vert \trace\left(A{A}^*\right) - 2 \right\vert \leq C\left|A-\I_2\right|^2.
	\end{equation*}
\end{lemma}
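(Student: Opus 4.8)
The claim is a purely algebraic (indeed, almost ``second-order Taylor'') estimate for the function $A\mapsto \trace(AA^*)$ near $A=\I_2$ on $\SL(2,\Cc)$, so I would prove it by a direct computation, writing $A=\I_2+E$ and expanding. Concretely, set $E:=A-\I_2$, so that $A^*=\I_2+E^*$, and compute
\begin{equation*}
AA^* = \I_2 + E + E^* + EE^*.
\end{equation*}
Taking traces and using $\trace \I_2 = 2$ gives
\begin{equation*}
\trace(AA^*) - 2 = \trace(E) + \trace(E^*) + \trace(EE^*) = 2\Re\trace(E) + |E|^2,
\end{equation*}
where I used that $\trace(EE^*)=\sum_{i,j}|e_{ij}|^2 = |E|^2$ is the squared Fr\"obenius norm and that $\trace(E^*)=\overline{\trace(E)}$. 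Thus the whole matter reduces to controlling $\Re\trace(E)$ by $|E|^2$, using only the constraint $\det A = 1$.

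\textbf{Key step: estimating $\Re\trace(E)$ via $\det A=1$.} For a $2\times2$ matrix, $\det(\I_2+E) = 1 + \trace(E) + \det(E)$, so the condition $\det A = 1$ is exactly $\trace(E) = -\det(E)$. Hence $|\trace(E)| = |\det(E)| \le |E|^2$ (the determinant of a $2\times 2$ matrix is bounded by the product of the norms of its two columns, hence by $|E|^2$; alternatively $|\det E|\le \tfrac12(|\trace E|^2 + |E|^2)$ plus a cruder bound — but the column-norm bound is cleanest). In particular $|\Re\trace(E)|\le |E|^2$ with no smallness assumption needed, so combining with the display above,
\begin{equation*}
\left| \trace(AA^*) - 2 \right| \le 2|\Re\trace(E)| + |E|^2 \le 3|E|^2 = 3\,|A-\I_2|^2,
\end{equation*}
which gives the statement with $C=3$ and $V=\SL(2,\Cc)$ (no neighbourhood restriction is actually required, though stating it with a neighbourhood is harmless and matches how the lemma is invoked later). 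I would present it with the neighbourhood $V$ as stated, since that is all that is needed downstream.

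\textbf{Main obstacle.} Honestly there is no real obstacle here — the only thing to be a little careful about is the inequality $|\det E|\le |E|^2$ for the chosen norm. Since the paper fixes $|\cdot|$ to be the Fr\"obenius norm, this follows from $|\det E| = |e_{11}e_{22}-e_{12}e_{21}| \le |e_{11}||e_{22}| + |e_{12}||e_{21}| \le \tfrac12(|e_{11}|^2+|e_{22}|^2+|e_{12}|^2+|e_{21}|^2) \le |E|^2$ by the AM–GM inequality, which is elementary. So the proof is a three-line computation; the ``hard part'' is merely bookkeeping the identity $\det(\I_2+E)=1+\trace E + \det E$ and recognizing $\trace(EE^*)=|E|^2$.
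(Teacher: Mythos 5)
Your proof is correct, and it takes a genuinely different and more elementary route than the paper's. The paper uses the exponential map $\exp: U\subset\slfrak(2,\Cc)\to V\subset\SL(2,\Cc)$ as a local chart, writes $a=\log A$, and applies a second-order Taylor estimate to $f(X)=\trace(XX^*)$, observing that $df(\I_2)\cdot a = \trace(a+a^*)=0$ because $a$ is trace-free — so the first-order term vanishes and one is left with an $O(|a|^2)$ remainder. The trace-free condition on $\slfrak(2,\Cc)$ is exactly where the constraint $\det A=1$ enters, in linearised form. Your version instead expands $A=\I_2+E$ directly, obtains the exact identity $\trace(AA^*)-2 = 2\Re\trace(E)+|E|^2$, and kills the linear term using the full (not linearised) constraint $\det(\I_2+E)=1$, i.e. $\trace E = -\det E$, bounded by $|E|^2$. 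This buys you an explicit constant ($C=3$, in fact $C=2$ with the tighter $|\det E|\le\tfrac12|E|^2$), no neighbourhood restriction, and no appeal to differentiability of $\exp$ or Taylor remainder estimates; the paper's approach is shorter to write but gives a non-explicit constant from compactness. Both are valid for the lemma's downstream use, where only the neighbourhood version is needed.
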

\begin{proof}
	Consider $\exp: U\subset\slfrak(2,\Cc)  \longrightarrow V\subset \SL(2,\Cc)$ as a local chart of $\SL(2,\Cc)$ around $\I_2$. Let $A\in V$. Write
	\begin{equation*}
		\function{f}{\SL(2,\Cc)}{\Rr}{X}{\trace \left(X{X}^*\right)}
	\end{equation*}
	and $a=\log A\in U$ to get
	\begin{equation*}
		\left\vert f(A) - f(\I_2) \right\vert \leq df(\I_2)\cdot a + C|a|^2.
	\end{equation*}
	Notice that for all $a\in\slfrak(2,\Cc)$,
	\begin{equation*}
		df(\I_2)\cdot a = \trace(a+{a}^*) = 0
	\end{equation*}
	to end the proof.
\end{proof}

\begin{corollary}\label{corollarydH3}
	There exists a neighbourhood $V\subset \SL(2,\Cc)$ of $\I_2$ and $C>0$ such that for all $F_1,F_2\in\SL(2,\Cc)$, 
	\begin{equation*}
	F_2^{-1}F_1 \in V \implies  \distH{f_1}{f_2}< C\left| F_2^{-1}F_1 - \I_2 \right|,
	\end{equation*}
	where $f_i = F_i\cdot \I_2 \in\Hh^3$.
\end{corollary}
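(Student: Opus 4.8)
The statement to prove is Corollary \ref{corollarydH3}: if $F_2^{-1}F_1\in V$ for a suitable neighbourhood $V$ of $\I_2$, then the hyperbolic distance between $f_1=F_1\cdot\I_2$ and $f_2=F_2\cdot\I_2$ is controlled by $|F_2^{-1}F_1-\I_2|$. The plan is to reduce the distance in $\Hh^3$ to an algebraic quantity computable with the trace, then invoke Lemma \ref{lemmaTraceAA*}. First I would use the fact that $\SL(2,\Cc)$ acts by isometries on $\Hh^3$: applying $F_2^{-1}$ to both points, one has $\distH{f_1}{f_2} = \distH{F_2^{-1}\cdot f_1}{F_2^{-1}\cdot f_2} = \distH{(F_2^{-1}F_1)\cdot\I_2}{\I_2}$. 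So it suffices to bound $\distH{A\cdot\I_2}{\I_2}$ by $C|A-\I_2|$ for $A=F_2^{-1}F_1$ near $\I_2$.

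Next I would write down the explicit formula for the hyperbolic distance to $\I_2$ in the matrix model. For $X\in\hH_2^{++}\cap\SL(2,\Cc)$ identified with a point of $\Hh^3$, the Lorentzian inner product gives $\scal{X}{\I_2} = -\tfrac12\trace X$ (using the polarization of $\scal{X}{X}=-\det X$ and $\det\I_2 = 1$), so that $\cosh\distH{X}{\I_2} = \tfrac12\trace X$. Applying this to $X = A\cdot\I_2 = AA^*$ yields
\begin{equation*}
\cosh\distH{A\cdot\I_2}{\I_2} = \tfrac12\trace\left(AA^*\right).
\end{equation*}
Now Lemma \ref{lemmaTraceAA*} provides, for $A$ in a neighbourhood $V$ of $\I_2$, the bound $\left|\trace(AA^*)-2\right|\leq C|A-\I_2|^2$, hence $\cosh\distH{A\cdot\I_2}{\I_2} - 1 \leq \tfrac{C}{2}|A-\I_2|^2$.

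Finally I would convert the bound on $\cosh\distH{\cdot}{\cdot}-1$ into a bound on the distance itself. Since $\cosh d - 1 = 2\sinh^2(d/2) \geq \tfrac12 d^2$ for $d\geq 0$ (indeed $\sinh(d/2)\geq d/2$), we get $d^2 \leq 2(\cosh d - 1) \leq C|A-\I_2|^2$, so $\distH{A\cdot\I_2}{\I_2}\leq \sqrt{C}\,|A-\I_2|$, i.e. linear control with a slightly adjusted constant. Substituting back $A=F_2^{-1}F_1$ and shrinking $V$ if necessary to make the above estimates valid gives the claim. I expect no serious obstacle here: the only mildly delicate points are getting the constant $\tfrac12\trace$ identity right (it follows from writing $\scal{X}{Y} = -\tfrac12(\det(X+Y)-\det X-\det Y)$ and specializing $Y=\I_2$) and checking that the elementary inequality $\cosh d - 1\geq \tfrac12 d^2$ holds on all of $[0,\infty)$, which it does; the rest is just combining Lemma \ref{lemmaTraceAA*} with isometry-invariance of the distance.
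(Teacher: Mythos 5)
Your proof is correct and takes essentially the same route as the paper: both reduce $\distH{f_1}{f_2}$ to $\cosh^{-1}\bigl(\tfrac12\trace(\widetilde{F}\widetilde{F}^*)\bigr)$ with $\widetilde{F}=F_2^{-1}F_1$, then apply Lemma \ref{lemmaTraceAA*} and convert the quadratic trace bound into a linear distance bound. The only (cosmetic) differences are that you first move $f_2$ to $\I_2$ by isometry invariance rather than computing $\scal{f_1}{f_2}$ directly from polarization, and you use the global inequality $\cosh d - 1\geq d^2/2$ where the paper invokes $\cosh^{-1}(1+x)\sim\sqrt{2x}$.
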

\begin{proof}
	Just remark that
	\begin{equation*}
	\distH{f_1}{f_2} = \cosh^{-1}\left( -\scal{f_1}{f_2} \right) = \cosh^{-1}\left( \frac{1}{2}\trace(f_2^{-1}f_1) \right)
	\end{equation*}
	and that
	\begin{equation*}
	\trace \left( f_2^{-1}f_1 \right) = \trace\left( \left(F_2{F_2}^*\right)^{-1}F_1{F_1}^* \right) = \trace  \widetilde{F}{\widetilde{F}}^* 
	\end{equation*}
	where $\widetilde{F} = F_2^{-1}F_1$. Apply Lemma \ref{lemmaTraceAA*} and use $\cosh^{-1}(1+x)\sim \sqrt{2x} $ as $x \to 0$ to end the proof.
\end{proof}

Without loss of generality, we can suppose from \eqref{eqFtilde-I2} that $C|t||z|^{1-4\delta}$ is small enough for $\widetilde{F}_t(z)$ to be in $V$ for all $t$ and $z$. Apply Corollary \ref{corollarydH3} to end the proof of the first point in Theorem \ref{theoremPerturbedDelaunay}:
\begin{equation*}\label{eqDistftDft}
	\distH{f_t(z)}{f_t^\dD(z)}\leq C|t||z|^{1-4\delta}.
\end{equation*}

\subsection{Convergence of the normal maps}\label{sectionConvergenceNormales}

Before starting the proof of the second point of Theorem \ref{theoremPerturbedDelaunay}, we will need to compare the normal maps of our immersions.
Let $N_t := \Nor_qF_t$ and $N_t^\dD := \Nor_qF_t^\dD$ be the normal maps associated to the immersions $f_t$ and $f_t^\dD$. This section is devoted to the proof of the following proposition.
\begin{proposition}\label{propConvNormales}
	For all $\delta>0$ there exist $\epsilon',T',C>0$ such that for all $t\in(-T',T')$ and $z\in D_\epsilon^*$,
	\begin{equation*}
	\norm{\Gamma_{f_t(z)}^{f_t^\dD(z)}N_t(z) - N_t^\dD(z)}_{T\Hh^3} \leq C|t||z|^{1-\delta}.
	\end{equation*}
\end{proposition}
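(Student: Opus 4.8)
The plan is to reduce the comparison of normal maps to the comparison of unitary frames, exactly as the first point of Theorem \ref{theoremPerturbedDelaunay} reduced the comparison of the immersions themselves. Recall $N_t = \Nor_q F_t = F_t(\cdot,e^{-q})\sigma_3 F_t(\cdot,e^{-q})^*$ and similarly $N_t^\dD = F_t^\dD(\cdot,e^{-q})\sigma_3 F_t^\dD(\cdot,e^{-q})^*$. Parallel transport along the geodesic joining $f_t^\dD(z)$ to $f_t(z)$ is implemented, via Proposition \ref{propParallTranspI2} and the equivariance $A\cdot\Gamma_p^q v = \Gamma_{A\cdot p}^{A\cdot q}A\cdot v$, by the action of the positive-definite square root $S_t(z)\in\hH_2^{++}\cap\SL(2,\Cc)$ of $f_t^\dD(z)^{-1}\cdot f_t(z)$ pulled back appropriately; concretely, writing $\widetilde F_t(z) = F_t^\dD(z,e^{-q})^{-1}F_t(z,e^{-q})$ as in Section \ref{sectionConvergenceImmersions}, one has $\Gamma_{f_t(z)}^{f_t^\dD(z)}N_t(z) = F_t^\dD(z,e^{-q})\cdot\bigl( S\cdot(\widetilde F_t(z)\cdot\sigma_3)\bigr)$ where $S=\sqrt{(\widetilde F_t\widetilde F_t^*)^{-1}}$, while $N_t^\dD(z) = F_t^\dD(z,e^{-q})\cdot\sigma_3$. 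Since the $\SL(2,\Cc)$-action on tangent vectors is an isometry, estimating $\norm{\Gamma_{f_t(z)}^{f_t^\dD(z)}N_t(z) - N_t^\dD(z)}_{T\Hh^3}$ amounts to estimating $\bigl| S\cdot(\widetilde F_t(z)\cdot\sigma_3) - \sigma_3\bigr|$ in $T_{\I_2}\Hh^3$.

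**Key steps.** First I would record from \eqref{eqFtilde-I2} that $|\widetilde F_t(z)-\I_2|\leq C|t||z|^{1-4\delta}$, hence $\widetilde F_t(z)\widetilde F_t(z)^* = \I_2 + O(|t||z|^{1-4\delta})$ and therefore $S = \I_2 + O(|t||z|^{1-4\delta})$ by smoothness of the positive square root near $\I_2$ (the same map used in Corollary \ref{corUnitCom}). Second, a short Lipschitz estimate for the smooth map $(A,v)\mapsto A\cdot v = AvA^*$ near $(\I_2,\sigma_3)$: since $\widetilde F_t(z)\cdot\sigma_3 = \sigma_3 + O(|\widetilde F_t(z)-\I_2|)$ and $S\cdot(\cdot) = \mathrm{Id} + O(|S-\I_2|)$, composing gives $S\cdot(\widetilde F_t(z)\cdot\sigma_3) = \sigma_3 + O(|t||z|^{1-4\delta})$. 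Third, I would note that conjugation by $F_t^\dD(z,e^{-q})$ is an isometry of $T\Hh^3$, so it does not affect the norm, yielding the claimed bound with $\delta$ replaced by $4\delta$ — and since $\delta>0$ is arbitrary, relabelling gives the statement. Care must be taken that $\epsilon',T'$ are chosen (shrinking those from Section \ref{sectionConvergenceImmersions} if needed) so that $\widetilde F_t(z)$ stays in the neighbourhood of $\I_2$ on which the square root is smooth and the Lipschitz constants are uniform.

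**Main obstacle.** The only genuinely delicate point is bookkeeping the parallel transport: one must verify that $\Gamma_{f_t(z)}^{f_t^\dD(z)}$, which is a priori defined geometrically along the hyperbolic geodesic, is correctly represented by the algebraic formula above, i.e. that $\Gamma_p^q$ with $p=f_t(z)$, $q=f_t^\dD(z)$ equals $F_t^\dD(z,e^{-q})\cdot\bigl(S\cdot (F_t^\dD(z,e^{-q})^{-1}\cdot v)\bigr)$ for $v\in T_p\Hh^3$. This follows by transporting the whole picture to $\I_2$ via $F_t^\dD(z,e^{-q})^{-1}$ using the equivariance $A\cdot\Gamma_p^q v = \Gamma_{A\cdot p}^{A\cdot q}A\cdot v$, reducing to parallel transport from $\widetilde F_t(z)\cdot\I_2$ back to $\I_2$, which by Proposition \ref{propParallTranspI2} is exactly the action of the positive square root of $(\widetilde F_t(z)\widetilde F_t(z)^*)^{-1}$. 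Once that identification is in place the estimates are routine perturbation bounds, so I expect this step to consume most of the writing while the analytic content is light.
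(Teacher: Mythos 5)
Your proposal is correct but follows a genuinely different route from the paper's proof, and it is arguably the cleaner of the two. The paper proves the Proposition by (i) applying Lemma~\ref{lemmaTriangIneq} to replace the direct transport $\Gamma_{f_t}^{f_t^\dD}$ by the composite $\Gamma_{\I_2}^{f_t^\dD}\Gamma_{f_t}^{\I_2}$ at the cost of a holonomy term bounded by the hyperbolic area of the triangle $(\I_2,f_t(z),f_t^\dD(z))$, which it controls via Heron's formula; (ii) using Lemma~\ref{lemmaPolarSymNor} to identify $\Gamma_{f_t}^{\I_2}N_t=Q_t\cdot\sigma_3$ and $\Gamma_{f_t^\dD}^{\I_2}N_t^\dD=Q_t^\dD\cdot\sigma_3$ with $Q_t=\Pol_2(F_t(\cdot,e^{-q}))$, $Q_t^\dD=\Pol_2(F_t^\dD(\cdot,e^{-q}))$; and (iii) invoking Corollary~\ref{corQ2v-Q1v}, whose Lipschitz constant for $\Pol_2$ grows like $\norm{F_t^\dD}_\rho^2$ (because Proposition~\ref{propDiffPolar2} gives $\norm{d\Pol_2(A)}\leq|A|$), so an extra $|z|^{-2\delta}$ is paid via Proposition~\ref{propConvUnitFrame}. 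You instead conjugate both vectors by the isometry $F_t^\dD(z,e^{-q})^{-1}\in\SL(2,\Cc)$, which sends $f_t^\dD(z)\mapsto\I_2$, $f_t(z)\mapsto\widetilde F_t(z)\widetilde F_t(z)^*$, $N_t^\dD\mapsto\sigma_3$ and $N_t\mapsto\widetilde F_t\cdot\sigma_3$; then Proposition~\ref{propParallTranspI2} converts $\Gamma_{\widetilde F_t\widetilde F_t^*}^{\I_2}$ into the action of $(\widetilde F_t\widetilde F_t^*)^{-1/2}$, and the whole estimate becomes the local smoothness of the polar decomposition at $\I_2$ applied to $\widetilde F_t(z)$, which by \eqref{eqFtilde-I2} is already within $O(|t||z|^{1-4\delta})$ of $\I_2$. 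In effect you observe that $(\widetilde F_t\widetilde F_t^*)^{-1/2}\widetilde F_t = \Pol_2(\widetilde F_t)$ is a single unitary matrix near $\I_2$, so you never need the triangle holonomy nor the global Lipschitz bound of Corollary~\ref{corQ2v-Q1v}. This sidesteps Lemma~\ref{lemmaTriangIneq}, Heron's formula, and Corollary~\ref{corQ2v-Q1v} entirely, and even yields a marginally sharper exponent; the one step you must (and do) flag carefully is the equivariance identity $A\cdot\Gamma_p^q v=\Gamma_{A\cdot p}^{A\cdot q}(A\cdot v)$, which the paper states in the proof of Proposition~\ref{propParallTranspI2} and which justifies the reduction to the identity.
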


The following lemma measures the lack of euclideanity in the parallel transportation of unitary vectors.
\begin{lemma}\label{lemmaTriangIneq}
	Let $a,b,c\in\Hh^3$, $v_a\in T_a\Hh^3$ and $v_b\in T_b\Hh^3$ both unitary. Let $\aA$ be the hyperbolic area of the triangle $(a,b,c)$. Then
	\begin{equation*}
		\norm{\Gamma_a^bv_a - v_b}_{T_b\Hh^3} \leq \aA + \norm{\Gamma_a^cv_a - \Gamma_b^cv_b}_{T_c\Hh^3}.
	\end{equation*}
\end{lemma}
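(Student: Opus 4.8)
The plan is to reduce the estimate to a comparison of parallel transports to a common point $c$ and control the error by the hyperbolic area of the comparison triangle. First I would observe that $\Gamma_a^b v_a - v_b$ and $\Gamma_a^c v_a - \Gamma_b^c v_b$ live in different tangent spaces, so the natural thing is to transport everything to $T_c\Hh^3$: write
\begin{equation*}
\Gamma_b^c\left(\Gamma_a^b v_a - v_b\right) = \Gamma_b^c\Gamma_a^b v_a - \Gamma_b^c v_b = \left(\Gamma_b^c\Gamma_a^b v_a - \Gamma_a^c v_a\right) + \left(\Gamma_a^c v_a - \Gamma_b^c v_b\right).
\end{equation*}
Since parallel transport along geodesics is a linear isometry, $\norm{\Gamma_a^b v_a - v_b}_{T_b\Hh^3} = \norm{\Gamma_b^c\left(\Gamma_a^b v_a - v_b\right)}_{T_c\Hh^3}$, and the triangle inequality in $T_c\Hh^3$ then bounds this by
\begin{equation*}
\norm{\Gamma_b^c\Gamma_a^b v_a - \Gamma_a^c v_a}_{T_c\Hh^3} + \norm{\Gamma_a^c v_a - \Gamma_b^c v_b}_{T_c\Hh^3}.
\end{equation*}
So the claim follows once I show that the first term, which measures the failure of parallel transport to be path-independent around the geodesic triangle $(a,b,c)$, is bounded by the hyperbolic area $\aA$.

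For that holonomy term I would invoke the fact that in a surface (or more precisely, restricting attention to the totally geodesic plane containing $a,b,c$) the holonomy of the Levi-Civita connection around a closed loop is rotation by an angle equal to the integral of the Gauss curvature over the enclosed region; here the curvature is $-1$, so the holonomy of $\Hh^3$ restricted to this geodesic triangle is a rotation of $T_c\Hh^3$ (fixing the normal to the plane) by an angle of absolute value exactly $\aA$. Since $v_a$ is unitary, $\Gamma_a^c v_a$ is unitary, and $\Gamma_b^c\Gamma_a^b v_a$ is the image of $\Gamma_a^c v_a$ under this rotation (composed with the identity on the normal component, which is untouched because all three transports are along geodesics in the plane — strictly one should first decompose $v_a$ into a component in the geodesic plane and a component normal to it, the latter being transported without holonomy). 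The Euclidean distance between a unit vector and its rotation by angle $\aA$ is $2\sin(\aA/2) \le \aA$, giving exactly the bound $\norm{\Gamma_b^c\Gamma_a^b v_a - \Gamma_a^c v_a}_{T_c\Hh^3} \le \aA$.

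The main obstacle, and the one point needing care, is the geometric setup: $a$, $b$, $c$ need not be coplanar in the sense that the three connecting geodesics need not lie in one totally geodesic plane if the ``triangle $(a,b,c)$'' is interpreted loosely — but in fact any three points of $\Hh^3$ do span a unique totally geodesic $\Hh^2$, and ``the hyperbolic area of the triangle $(a,b,c)$'' should be read as the area inside that $\Hh^2$, so the holonomy computation is legitimate. One must also be careful that the component of $v_a$ normal to this plane transports with no holonomy (it is carried to the normal of the plane at each point, which is parallel), so it contributes nothing to the first error term; only the in-plane component sees the curvature. Modulo writing out this decomposition, the argument is just: transport to a common tangent space, triangle inequality, and Gauss–Bonnet holonomy with $\abs{2\sin(\aA/2)} \le \aA$.
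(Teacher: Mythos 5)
Your proposal is correct and takes essentially the same route as the paper: transport to a common tangent space, apply the triangle inequality, and bound the holonomy term by the area via Gauss--Bonnet (the paper transports to $T_a\Hh^3$ via $\Gamma_c^a\Gamma_b^c$ while you transport to $T_c\Hh^3$ via $\Gamma_b^c$, but since parallel transport is an isometry the two groupings give identical terms). You spell out the holonomy bound in more detail than the paper (in-plane/normal decomposition, rotation angle equals enclosed area, $2\sin(\aA/2)\le\aA$), which is the content the paper compresses into the phrase ``Gauss--Bonnet formula in $\Hh^2$.''
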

\begin{proof}
	Just use the triangular inequality and Gauss-Bonnet formula in $\Hh^2$ to write:
	\begin{align*}
		\norm{\Gamma_a^bv_a - v_b}_{T_b\Hh^3} &= \norm{\Gamma_c^a \Gamma _b^c \Gamma_a^b v_a - \Gamma_c^a \Gamma_b^c v_b}_{T_a\Hh^3} \\
		&\leq \norm{\Gamma_c^a \Gamma _b^c \Gamma_a^b v_a - v_a}_{T_a\Hh^3}  + \norm{v_a - \Gamma_c^a \Gamma_b^c v_b}_{T_a\Hh^3} \\
		&\leq \aA + \norm{\Gamma_a^cv_a - \Gamma_b^cv_b}_{T_c\Hh^3}.
	\end{align*}
\end{proof}

Lemma \ref{lemmaPolarSymNor} below clarifies how the unitary frame encodes the immersion and the normal map.

\begin{lemma}\label{lemmaPolarSymNor}
	Let $f = \Sym_q F$ and $N=\Nor_qF$. Denoting by $(S(z),Q(z))\in\hH_2^{++}\cap \SL(2,\Cc) \times \SU(2)$ the polar decomposition of $F(z,e^{-q})$,
	\begin{equation*}
		f = S^2 \quad \text{and} \quad N = \Gamma_{\I_2}^{f}(Q\cdot \sigma_3).
	\end{equation*}
\end{lemma}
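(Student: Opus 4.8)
\textbf{Proof plan for Lemma \ref{lemmaPolarSymNor}.}

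The plan is to unwind the definitions of $\Sym_q$ and $\Nor_q$ using the polar decomposition $F(z,e^{-q}) = S(z)Q(z)$ with $S\in\hH_2^{++}\cap\SL(2,\Cc)$ and $Q\in\SU(2)$. For the first identity, I would simply compute
\begin{equation*}
f = \Sym_q F = F(z,e^{-q})F(z,e^{-q})^* = S Q Q^* S^* = S S^* = S^2,
\end{equation*}
using $QQ^* = \I_2$ since $Q\in\SU(2)$, and $S^* = S$ since $S$ is Hermitian. This also shows $f = S\cdot\I_2$, so $S$ is exactly the Hermitian positive square root of $f$ appearing in Proposition \ref{propParallTranspI2}.

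For the normal map, I would start from $N = \Nor_q F = F(z,e^{-q})\sigma_3 F(z,e^{-q})^* = SQ\sigma_3 Q^* S^* = S\cdot(Q\cdot\sigma_3)$. The point is then to identify this with $\Gamma_{\I_2}^{f}(Q\cdot\sigma_3)$: since $Q\in\SU(2)$, the vector $Q\cdot\sigma_3 = Q\sigma_3 Q^*$ lies in $T_{\I_2}\Hh^3$ (it is $Q$ acting on a tangent vector at the point $Q\cdot\I_2 = \I_2$), so Proposition \ref{propParallTranspI2} applies with this vector and with $S$ the unique Hermitian positive square root of $f$, giving $\Gamma_{\I_2}^{f}(Q\cdot\sigma_3) = S\cdot(Q\cdot\sigma_3)$. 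Combining the two computations yields $N = \Gamma_{\I_2}^{f}(Q\cdot\sigma_3)$.

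I do not anticipate a serious obstacle here; the only point requiring a little care is checking that $Q\cdot\sigma_3\in T_{\I_2}\Hh^3$ so that Proposition \ref{propParallTranspI2} is genuinely applicable — this follows because $\sigma_3\in T_{\I_2}\Hh^3$ and the $\SU(2)$-action preserves $T_{\I_2}\Hh^3$ (indeed $\scal{Q\cdot\sigma_3}{\I_2} = \scal{Q\cdot\sigma_3}{Q\cdot\I_2} = \scal{\sigma_3}{\I_2} = 0$) — and that $N$ is indeed a unit vector, consistent with $\Nor_q$ being the Gauss map. One should also note that the orientation is the correct one (normal in the direction of the mean curvature vector), which was already recorded after the sphere example in Section \ref{sectionDPWMethod}.
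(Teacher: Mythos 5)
Your proposal is correct and follows exactly the paper's approach: expand $f = FF^*$ and $N = F\sigma_3 F^*$ using the polar decomposition $F(z,e^{-q}) = S(z)Q(z)$, then invoke Proposition \ref{propParallTranspI2} with $p = f = S\cdot\I_2$ and $v = Q\cdot\sigma_3$. The paper's own proof is just a terse version of the same argument; your added check that $Q\cdot\sigma_3\in T_{\I_2}\Hh^3$ is a reasonable detail to record.
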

\begin{proof}
	The formula for $f$ is straightforward after noticing that $QQ^*=\I_2$ and $S^*=S$. The formula for $N$ is a direct consequence of Proposition \ref{propParallTranspI2}.
\end{proof}

\paragraph{Proof of Proposition \ref{propConvNormales}.}
Let $\delta>0$, $t\in(-T',T')$ and $z\in D_{\epsilon'}^*$. Using Lemma \ref{lemmaTriangIneq},
\begin{equation*}
	\norm{\Gamma_{f_t(z)}^{f_t^\dD(z)}N_t(z) - N_t^\dD(z)} \leq \aA + \norm{\Gamma_{f_t(z)}^{\I_2}N_t(z) - \Gamma_{f_t^\dD(z)}^{\I_2}N_t^\dD(z)}
\end{equation*}
where $\aA$ is the hyperbolic area of the triangle $\left(\I_2,f_t(z),f_t^\dD(z)\right)$. Using Heron's formula in $\Hh^2$ (see \cite{heron}, p.66), Proposition \ref{propConvUnitFrame} and the first point of Theorem \ref{theoremPerturbedDelaunay},
\begin{equation*}
	\aA \leq  \distH{f_t(z)}{f_t^\dD(z)}\times \distH{\I_2}{f_t^\dD(z)} \leq C|t||z|^{1-\delta}.
\end{equation*}
Moreover, denoting by $Q_t$ and $Q_t^\dD$ the unitary parts of $F_t(e^q)$ and $F_t^\dD(e^q)$ in their polar decomposition and using Lemma \ref{lemmaPolarSymNor} together with Corollary \ref{corQ2v-Q1v} and Equation \eqref{eqFtilde-I2},
\begin{align*}
	\norm{\Gamma_{f_t(z)}^{\I_2}N_t(z) - \Gamma_{f_t^\dD(z)}^{\I_2}N_t^\dD(z)} &= \norm{Q_t(z)\cdot \sigma_3 - Q_t^\dD(z)\cdot \sigma_3}_{T_{\I_2}\Hh^3}\\
	&\leq C \norm{F_t^\dD(z)}_{{R}}^2\norm{F_t^\dD(z)^{-1}F_t^\dD(z)-\I_2}_{{R}}\\
	&\leq C |t||z|^{1-3\delta}.
\end{align*}

\subsection{Embeddedness}\label{sectionEmbeddedness}

In this section, we prove the second point of Theorem \ref{theoremPerturbedDelaunay}. We thus assume that $t>0$. We suppose that $C,\epsilon,T,\delta>0$ satisfy Proposition \ref{propConvNormales} and the first point of Theorem \ref{theoremPerturbedDelaunay}.

\begin{lemma}\label{lemmaDelaunayPointsEloignes}
	Let $r_t>0$ such that the tubular neighbourhood of $f_t^\dD(\Cc^*)$ with hyperbolic radius $r_t$ is embedded. There exists $T>0$ and  $0<\epsilon'<\epsilon$ such that for all $0<t<T$, $x\in\partial D_\epsilon$ and $y\in D_{\epsilon'}^*$,
	\begin{equation*}
	\distH{f_t^\dD(x)}{f_t^\dD(y)}>4r_t.
	\end{equation*}
\end{lemma}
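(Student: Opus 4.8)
The plan is to use the fact that $f_t^\dD$ is a Delaunay immersion with weight $2\pi t$ tending to $0$, so that as $t\to 0$ its ``neck size'' shrinks and points at fixed radii $|z|=\epsilon$ get pushed arbitrarily far (in hyperbolic distance) from points near the end $z=0$. Concretely, I would first recall from Section \ref{sectionDressedDelaunayFrames} (in particular the computation with Proposition \ref{propJleliLopez}) that, in the $\log z = u+i\theta$ coordinate, the profile curve of $\widetilde f_t^\dD$ is periodic with period $S_t$, and that $S_t\to +\infty$ as $t\to 0$. The hyperbolic distance along the axis between two parallel circles at ``heights'' $u_1<u_2$ is bounded below by a multiple of the number of periods $\lfloor (u_2-u_1)/S_t\rfloor$ times the per-period distance along the axis; but more simply, the distance from $f_t^\dD(x)$ to $f_t^\dD(y)$ is at least the distance between the two corresponding meridian circles minus their (bounded, indeed shrinking) diameters.

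Here is the ordering I would follow. (1) Fix $r_t$ as in the statement (the maximal embedded-tube radius of $f_t^\dD(\Cc^*)$); note $r_t$ depends only on $t$ and the Delaunay surface, and $r_t\to 0$ as $t\to 0$ since the neck degenerates. (2) Using the period $S_t\to\infty$ and the explicit control of the profile curve, show that for any fixed $\epsilon>0$ there is $\epsilon'(t)>0$ and $T>0$ such that for $0<t<T$, the $\log$-distance between the circle $|z|=\epsilon$ and the annulus $|z|<\epsilon'$ contains at least, say, $5$ full periods of the profile curve; equivalently $\log(\epsilon/\epsilon')\geq 5S_t$ can be arranged by taking $\epsilon'$ small enough relative to $t$ — but since we need $\epsilon'$ uniform for $t\in(0,T)$, instead use that $S_t\to 0$ is \emph{false}; rather $S_t\to\infty$, so one needs $\epsilon'$ fixed and $T$ small: then $\log(\epsilon/\epsilon')$ is a fixed positive number while the relevant lower bound on distance is governed by how far one travels along the axis, which for a \emph{fixed} number of periods grows because each period, traversed over a shrinking neck, covers a definite axial length. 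I would make this precise by invoking the integral identities for $S_t$ from Section \ref{sectionDressedDelaunayFrames}: the axial length per period stays bounded below, and the number of periods between radius $\epsilon$ and radius $\epsilon'$ is $\lfloor\log(\epsilon/\epsilon')/S_t\rfloor$, which tends to $0$ — so this naive approach fails and one must instead use that the \emph{neck radius} $\to 0$, hence $r_t\to 0$ faster than the axial displacement, so even a single short axial segment has length $\gg r_t$. (3) Therefore choose $\epsilon'$ and $T$ so that for $0<t<T$, any $y\in D_{\epsilon'}^*$ and $x\in\partial D_\epsilon$ the two meridian circles through $f_t^\dD(x)$ and $f_t^\dD(y)$ are separated by at least one axial ``half-period'' of length $\ell_t$ with $\ell_t/r_t\to\infty$; conclude
\begin{equation*}
\distH{f_t^\dD(x)}{f_t^\dD(y)} \geq \ell_t - (\text{diam of the two circles}) \geq \ell_t - 2r_t > 4r_t
\end{equation*}
for $t$ small. (4) Finally check that $\epsilon'$ can be taken independent of $t$ on the interval $(0,T)$ by monotonicity of the profile-curve radius near the end (the radius decreases toward the end), so that $|z|<\epsilon'$ forces the meridian to be in the ``thin'' part closest to the puncture.

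The main obstacle is the bookkeeping in step (2)–(3): making the inequality $\ell_t/r_t\to\infty$ quantitative and uniform in $t$, using only the integral identities $\int_0^{S_t} 2|r|b^2(e^u)\,du=\pi$ and $\int_0^{S_t} (2\sinh q\,|r|b^2(e^u))^{-1}\,du = \pi/|t|$ together with the degeneration of the neck. I expect one can sidestep delicate estimates by observing that $r_t\leq$ (neck radius of $f_t^\dD$) $=O(t)$ while the distance from the bulge to the end over a fixed small $|z|$-annulus is bounded below by a positive constant independent of $t$ (the surface is, away from necks, uniformly close to a string of unit spheres or to a catenoid after blow-up), so the ratio blows up. That comparison — distance $\geq c_0>0$ versus $r_t=O(t)$ — is the cleanest route, and I would organize the proof around it, with the period identities invoked only to pin down how small $\epsilon'$ and $T$ must be.
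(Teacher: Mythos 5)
Your final ``cleanest route'' --- compare a fixed lower bound $c_0>0$ on $\distH{f_t^\dD(x)}{f_t^\dD(y)}$ against $r_t=O(t)\to 0$ --- is correct only for the \emph{spherical} parametrisation ($r>s$). There $f_t^\dD$ converges on compacts of $\Cc^*$ to the spherical immersion, so the distance between the images of $\partial D_\epsilon$ and of $\overline{D_{\epsilon'}^*}$ does tend to a positive constant, while $r_t\to 0$; any $0<\epsilon'<\epsilon$ works once $T$ is small. But in the \emph{catenoidal} parametrisation ($r<s$), which is exactly the family this paper uses in Section~\ref{sectionMinoids}, the immersion degenerates into the point $\I_2$ on compacts of $\Cc^*$ (end of Section~\ref{sectionDelaunaySurfaces}), so $\distH{f_t^\dD(x)}{f_t^\dD(y)}\to 0$ as well. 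By Proposition~\ref{propBlowupCatenoid}, $\tfrac{1}{t}(f_t^\dD-\I_2)\to\psi$, a catenoidal immersion into $T_{\I_2}\Hh^3$, so the distance behaves like $t\,\|\psi(x)-\psi(y)\|$, and by Lemma~\ref{lemmaTubularRadius} $r_t\sim t$. The ratio therefore does \emph{not} blow up; it converges to $\|\psi(x)-\psi(y)\|$, and the lemma holds only after you arrange this limit to exceed $4$ by first shrinking $\epsilon'$ (e.g.\ using that the first component of $\psi(e^{u+i\theta})$ is $u$, so $\|\psi(x)-\psi(y)\|\geq\log(\epsilon/\epsilon')$), and then shrinking $T$. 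Your step (3) claim $\ell_t/r_t\to\infty$ is similarly unjustified and in fact false in the catenoidal family, where axial displacements also scale like $t$.

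The paper's proof is exactly this two-case argument: if $f_t^\dD$ does not degenerate, convergence to a spherical parametrisation gives the bound for any $\epsilon'$; if it degenerates, the blow-up to a catenoid gives the bound for $\epsilon'$ small enough. So your proposal captures the first case but has a genuine gap in the second: you need to replace ``distance $\geq c_0>0$'' by the blow-up comparison at scale $t$, and make the dependence of $\epsilon'$ on the catenoid's geometry explicit.
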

\begin{proof}
	The convergence of $f_t^\dD(\Cc^*)$ towards a chain of spheres implies that $r_t$ tends to $0$ as $t$ tends to $0$. If $f_t^\dD$ does not degenerate into a point, then it converges towards the parametrisation of a sphere, and for all $0<\epsilon'<\epsilon$ there exists $T>0$ satisfying the inequality. If $f_t^\dD$ does degenerate into a point, then a suitable blow-up makes it converge towards a catenoidal immersion on the punctured disk $D_{\epsilon}^*$ (see Section \ref{sectionBlowUp}). This implies that for $\epsilon'>0$ small enough, there exists $T>0$ satisfying the inequality. 
\end{proof}

We can now prove embeddedness with the same method than in \cite{raujouan}. Let $\dD_t:=f_t^\dD\left(\Cc^*\right)\subset \Hh^3$ be the image Delaunay surface of $f_t^\dD$. We denote by $\eta_t^\dD:\dD_t\longrightarrow T\Hh^3$ the Gauss map of $\dD_t$. We also write $\mM_t=f_t(D_{\epsilon}^*)$ and $\eta_t:\mM_t\longrightarrow T\Hh^3$. Let $r_t$ be the maximal value of $\alpha$ such that the following map is a diffeomorphism:
\begin{equation*}
	\function{\tT}{\left(-\alpha,\alpha\right)\times \dD_t}{\Tub_\alpha\dD_t\subset\Hh^3}{(s,p)}{\geod\left(p,\eta_t^\dD(p)\right)(s).}
\end{equation*}
According to Lemma \ref{lemmaTubularRadius}, the maximal tubular radius satisfies $r_t\sim t$ as $t$ tends to $0$. Using the first point of Theorem \ref{theoremPerturbedDelaunay}, we thus assume that on $D_{\epsilon}^*$,
\begin{equation*}
	\distH{f_t(z)}{f_t^\dD(z)}\leq \alpha r_t
\end{equation*}
where $\alpha<1$ is given by Lemma \ref{lemmaCompNormalesDelaunay} of Appendix \ref{appendixJleliLopez}.

Let $\pi_t$ be the projection from $\Tub_{r_t}\dD_t$ to $\dD_t$. Then the map 
\begin{equation*}
	\function{\varphi_t}{D_\epsilon^*}{\dD_t}{z}{\pi_t\circ f_t(z)}
\end{equation*}
is well-defined and satisfies
\begin{equation}\label{eqDistPhitFtD}
	\distH{\varphi_t(z)}{f_t^\dD(z)}\leq 2\alpha r_t
\end{equation}
because of the triangular inequality.

\begin{lemma}
	For $t>0$ small enough, $\varphi_t$ is a local diffeomorphism on $D_{\epsilon}^*$.
\end{lemma}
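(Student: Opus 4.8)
The goal is to show that $\varphi_t = \pi_t \circ f_t$ is a local diffeomorphism on $D_\epsilon^*$ for $t>0$ small. Since $D_\epsilon^*$ and $\dD_t$ are both two-dimensional, it suffices to show that $d\varphi_t(z)$ has full rank at every $z\in D_\epsilon^*$, equivalently that $d\varphi_t(z)$ is injective. The natural strategy is to compare $d\varphi_t$ with $df_t^\dD$: since $\pi_t$ is the nearest-point projection onto $\dD_t$ along normal geodesics in the tube $\Tub_{r_t}\dD_t$, and since $f_t$ is $C^1$-close to $f_t^\dD$ (first point of Theorem \ref{theoremPerturbedDelaunay}) with normals close as well (Proposition \ref{propConvNormales}), one expects $d\varphi_t$ to be a small perturbation of $df_t^\dD$, which is already an immersion.

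Concretely, first I would fix $z\in D_\epsilon^*$ and write $p = f_t^\dD(z)\in\dD_t$, $\widetilde p = f_t(z)$, with $\distH{p}{\widetilde p}\leq \alpha r_t$. The differential $d\pi_t$ at $\widetilde p$ is the composition of parallel-type transport back to $\dD_t$ with the inverse of (identity plus a shape-operator term of size $O(r_t)$ times the displacement); quantitatively $d\pi_t$ restricted to directions nearly tangent to $\dD_t$ is invertible with norm and inverse-norm bounded uniformly in $t$, by Lemma \ref{lemmaCompNormalesDelaunay} of the appendix and the bound $r_t\sim t$. Then I would estimate $\norm{\Gamma_{\widetilde p}^{p} df_t(z) - df_t^\dD(z)}$: this requires differentiating the distance estimate, which is not immediate from the $C^0$ estimate $\distH{f_t}{f_t^\dD}\leq C|t||z|^{1-\delta}$ alone. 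The cleanest route is to go back to the loop-group picture: from $\widetilde F_t(z) = F_t^\dD(z,e^{-q})^{-1}F_t(z,e^{-q})$, the formula \eqref{eqdf} for $df$ in terms of $F$ and $\beta$, together with $\norm{\widetilde F_t(z)-\I_2}\leq C|t||z|^{1-4\delta}$ and the corresponding estimate on the perturbation $\widetilde P_t$ of the potential (Proposition \ref{propzAP2}), yields $\norm{\Gamma_{\widetilde p}^p df_t(z) - df_t^\dD(z)} \leq C|t||z|^{1-\delta}\, ds_{f_t^\dD}(z)$, i.e. the perturbation of the differential is small relative to the metric of $f_t^\dD$.

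Combining these, $d\varphi_t(z) = d\pi_t\circ df_t(z)$ differs from (an invertible transport of) $df_t^\dD(z)$ by a term that, measured against the metric $ds_{f_t^\dD}$, is $O(|t||z|^{1-\delta}) + O(r_t) = o(1)$ uniformly in $z\in D_\epsilon^*$ as $t\to 0$. Since $df_t^\dD(z)$ is an immersion with a lower bound on its singular values relative to its own metric (it is conformal, so the ratio is controlled), $d\varphi_t(z)$ is injective for $t$ small, hence $\varphi_t$ is a local diffeomorphism. The main obstacle is the second step: upgrading the $C^0$ closeness of $f_t$ to $f_t^\dD$ into $C^1$ closeness of the differentials, normalized correctly by the (degenerating) Delaunay metric near $z=0$ — this is where one must use the loop-group structure ($\widetilde\Phi_t = (F_t^\dD)^{-1}F_t \cdot B_t(B_t^\dD)^{-1}$ close to $\I_2$, the Sym–Bobenko differential formula \eqref{eqdf}, and the sharpened potential estimate $\norm{\widetilde P_t - \I_2}\leq C|t||z|^2$) rather than any purely metric argument, and one must be careful that the uniformity holds all the way down to the puncture.
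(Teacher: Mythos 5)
Your plan is a legitimate alternative route, but it is genuinely different from the paper's, and heavier. The paper never estimates $d\varphi_t$ (or $df_t - df_t^\dD$) at all: it uses the geometric criterion that $\varphi_t=\pi_t\circ f_t$ is a local diffeomorphism at $z$ precisely when the tangent plane of $f_t$ at $z$ is transverse to the fibers of $\pi_t$, and this holds as soon as the Gauss map $N_t(z)$ is within distance $1$ of the transported normal $\Gamma_{\varphi_t(z)}^{f_t(z)}\eta_t^\dD(\varphi_t(z))$. That bound is then established by splitting via Lemma \ref{lemmaTriangIneq} into an area term, handled by the hyperbolic isoperimetric inequality (and bounded by $(6\alpha r_t)^2$ using $C^0$-closeness of $f_t$, $\varphi_t$, and $f_t^\dD$), plus a normal-comparison term bounded by Proposition \ref{propConvNormales} and Lemma \ref{lemmaCompNormalesDelaunay}. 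This is a purely $C^0$ argument at the level of the unit normals, which are separately controlled through the polar decomposition of the unitary frame in Section \ref{sectionConvergenceNormales}; no $C^1$-estimate on the immersions is required. Your approach would need exactly the $C^1$ estimate you flag as the ``main obstacle'': comparing $b_t^2\beta_t(z,0)$ to $b_t^\dD(z)^2\beta_t^\dD(z,0)$ and the frames $F_t$ to $F_t^\dD$ uniformly down to $z=0$, then showing $d\pi_t$ is uniformly invertible on near-tangent directions as $r_t\to 0$. That is plausible via the loop-group estimates you cite ($\norm{\widetilde F_t-\I_2}$, $\norm{\widetilde P_t-\I_2}\leq C|t||z|^2$, formula \eqref{eqdf}), and would give a somewhat stronger statement (actual $C^1$ control of the graph), but it is extra machinery that the paper's reduction to the normal-map criterion deliberately sidesteps. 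If you pursue your route, be careful that the injectivity of $d\varphi_t$ requires the error term, measured against the conformal factor of $f_t^\dD$, to be strictly less than the relevant lower bound on $d\pi_t\circ df_t^\dD$ \emph{relative to that same conformal factor}, uniformly as $z\to 0$ and $t\to 0$ simultaneously; the paper's argument avoids having to track that two-scale uniformity.
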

\begin{proof}
	It suffices to show that for all $z\in D_{\epsilon}^*$,
	\begin{equation}\label{eqCompNormalesDiffeo}
	\norm{\Gamma_{{\varphi}_t(z)}^{f_t(z)}\eta_t^\dD(\varphi_t(z))-N_t(z)}<1.
	\end{equation}
	Using Lemma \ref{lemmaTriangIneq} (we drop the variable $z$ to ease the notation),
	\begin{equation*}
	\norm{\Gamma_{{\varphi}_t}^{f_t}\eta_t^\dD(\varphi_t)-N_t}\leq \aA + \norm{\Gamma_{{\varphi}_t}^{f_t^\dD}\eta_t^\dD(\varphi_t)-\Gamma_{f_t}^{f_t^\dD}N_t}
	\end{equation*}
	where $\aA$ is the area of the triangle $\left( f_t,f_t^\dD,\varphi_t \right)$. Recall the isoperimetric inequality in $\Hh^2$ (see \cite{teufel1991}):
	\begin{equation*}
	\pP^2\geq 4\pi\aA + \aA^2
	\end{equation*}
	from which we deduce
	\begin{equation*}\label{eqIsoperimetric}
	\aA \leq \pP^2 \leq \left( 2\distH{f_t}{f_t^\dD} + 2\distH{\varphi_t}{f_t^\dD} \right)^2\leq \left(6\alpha r_t\right)^2
	\end{equation*}
	which uniformly tends to $0$ as $t$ tends to $0$. Using the triangular inequality and Proposition \ref{propConvNormales},
	\begin{equation*}
	\norm{\Gamma_{{\varphi}_t}^{f_t^\dD}\eta_t^\dD(\varphi_t)-\Gamma_{f_t}^{f_t^\dD}N_t} \leq \norm{\Gamma_{{\varphi}_t}^{f_t^\dD}\eta_t^\dD(\varphi_t)-N_t^\dD} + C|t||z|^{1-\delta}
	\end{equation*}
	and the second term of the right-hand side uniformly tends to $0$ as $t$ tends to $0$. Because $\alpha$ satisfies Lemma \ref{lemmaCompNormalesDelaunay} in Appendix \ref{appendixJleliLopez},
	\begin{equation*}
	\norm{\Gamma_{{\varphi}_t}^{f_t^\dD}\eta_t^\dD(\varphi_t)-N_t^\dD} < 1
	\end{equation*}
	which implies Equation \eqref{eqCompNormalesDiffeo}.
\end{proof}

Let $\epsilon'>0$ given by Lemma \ref{lemmaDelaunayPointsEloignes}. The restriction
\begin{equation*}
\function{\widetilde{\varphi}_t}{\varphi_t^{-1}\left(\varphi_t(D_{\epsilon'}^*)\right)\cap D_\epsilon^*}{\varphi_t\left( D_{\epsilon'}^* \right)}{z}{\varphi_t(z)}
\end{equation*}
is a covering map because it is a proper local diffeomorphism between locally compact spaces. To show this, proceed by contradiction as in $\Rr^3$ (see \cite{raujouan}): let $(x_i)_{i\in\Nn}\subset \varphi_t^{-1}\left(\varphi_t(D_{\epsilon'}^*)\right)\cap D_\epsilon^*$ such that $\left(\widetilde{\varphi}_t(x_i)\right)_{i\in\Nn}$ converges to $p\in\varphi_t\left(D_{\epsilon'}^*\right)$. Then $(x_i)_i$ converges to $x\in\overline{D}_\epsilon$. Using Equation \eqref{eqDistPhitFtD} and the fact that $f_t^\dD$ has an end at $0$, $x\neq 0$. If $x\in\partial D_\epsilon$, denoting $\widetilde{x}\in D_{\epsilon'}^*$ such that $\widetilde{\varphi}_t(\widetilde{x}) = p$, one has
\begin{equation*}
\distH{f_t^\dD(x) }{ f_t^\dD(\widetilde{x})} < \distH{f_t^\dD(x) }{ p} + \distH{f_t^\dD(\widetilde{x})}{\widetilde{\varphi}_t(\widetilde{x})} < 4\alpha r_t < 4r_t
\end{equation*}
which contradicts the definition of $\epsilon'$.

Let us now prove as in \cite{raujouan} that $\widetilde{\varphi}_t$ is a one-sheeted covering map. Let $\gamma : [0,1] \longrightarrow D_{\epsilon'}^*$ be a loop of winding number $1$ around $0$, $\Gamma = f_t^\dD(\gamma)$ and $\widetilde{\Gamma} = \widetilde{\varphi}_t(\gamma) \subset\dD_t$ and let us construct a homotopy between $\Gamma$ and $\widetilde{\Gamma}$. For all $s\in\left[0,1\right]$, let $\sigma_s: \left[0,1\right]\longrightarrow \Hh^3$ be a geodesic arc joining $\sigma_s(0)=\widetilde{\Gamma}(s)$ to $\sigma_s(1) = \Gamma(s)$.
For all $s,r\in[0,1]$, $\distH{\sigma_s(r) }{ \Gamma(s)} \leq \alpha r_t$
which implies that $\sigma_s(r) \in \Tub_{r_t}\dD_t$ because $\dD_t$ is complete. One can thus define the following homotopy between $\Gamma$ and $\widetilde{\Gamma}$
\begin{equation*}
\function{H}{[0,1]^2}{\dD_t}{(r,s)}{\pi_t\circ \sigma_s(r)}
\end{equation*}
where $\pi_t$ is the projection from $\Tub_{r_t}\dD_t$ to $\dD_t$. Using the fact that $f_t^\dD$ is an embedding, the degree of $\Gamma$ is one, and the degree of $\widetilde{\Gamma}$ is also one. Hence, $\widetilde{\varphi}_t$ is one-sheeted.

Finally, the map $\widetilde{\varphi}_t$ is a one-sheeted covering map and hence a diffeomorphism, so $f_t\left(D_{\epsilon'}^*\right)$ is a normal graph over $\dD_t$ contained in its embedded tubular neighbourhood and $f_t\left(D_{\epsilon'}^*\right)$ is thus embedded, which proves the second point of Theorem \ref{theoremPerturbedDelaunay}.

\subsection{Limit axis}\label{sectionLimitAxis}

In this section, we prove the third point of Theorem \ref{theoremPerturbedDelaunay} and compute the limit axis of $f_t^\dD$ as $t$ tends to $0$. Recall that $f_t^\dD = \Sym_q\left(\Uni\left(M_tz^{A_t}\right)\right)$ where $M_t$ tends to $\I_2$ as $t$ tends to $0$. Hence, the limit axis of $f_t^\dD$ and $\widetilde{f}_t^\dD := \Sym_q\left(\Uni\left(z^{A_t}\right)\right)$ are the same. Two cases can occur, wether $r> s$ or $r< s$.

\paragraph{Spherical family.}

At $t=0$, $r=\frac{1}{2}$ and $s=0$. The limit potential is thus
\begin{equation*}
\xi_0(z,\la) = \begin{pmatrix}
0 & \frac{\la^{-1}}{2} \\
\frac{\la}{2} & 0
\end{pmatrix}z^{-1} dz.
\end{equation*}
Consider the gauge
\begin{equation*}
G(z,\la) = \frac{1}{\sqrt{2z}}\begin{pmatrix}
1 & 0\\
\la & 2z
\end{pmatrix}.
\end{equation*}
The gauge potential is then
\begin{equation*}
\xi_0\cdot G (z,\la) = \begin{pmatrix}
0 & \la^{-1}dz\\
0&0
\end{pmatrix} = \xi_\Ss(z,\la)
\end{equation*}
where $\xi_\Ss$ is the spherical potential as in Section \ref{sectionDPWMethod}. Let $\widetilde{\Phi}:=z^{A_0}G$ be the gauged holomorphic frame and compute
\begin{align*}
\widetilde{\Phi}(1,\la) &=  G(1,\la) \\
&= \frac{1}{\sqrt{2}}\begin{pmatrix}
1 & 0\\
\la & 2
\end{pmatrix} = \frac{1}{\sqrt{2}}\begin{pmatrix}
1 & -\la^{-1} \\
\la & 1
\end{pmatrix}\begin{pmatrix}
1 & \la^{-1} \\
0 & 1
\end{pmatrix}\\
&= H(\la)\Phi_\Ss(1,\la)
\end{align*}
where $\Phi_\Ss$ is defined in \eqref{eqPhi_Ss} and $H=H_0$ as in \eqref{eqH0spherical}. This means that $\widetilde{\Phi} = H\Phi_\Ss$, $\Uni \widetilde{\Phi} = H F_\Ss$ and $\Sym_q(\Uni \widetilde{\Phi}) = H(e^{-q})\cdot f_\Ss$ because $H\in\LSUdeuxR$. Thus, using Equations \eqref{eqftilde_Ss=gammav_Ss} and \eqref{eqVecteurvS},
\begin{align*}
\widetilde{f}_0^\dD(\infty) &= \Sym_q(\Uni \widetilde{\Phi})(\infty) = H(e^{-q})\cdot f_\Ss(\infty) \\
&= \left(H(e^{-q})R(q)\right)\cdot \geod\left( \I_2,\sigma_3 \right)(q)\\
&= H(e^{-q})\cdot \geod\left( \I_2,\sigma_3 \right)(2q).
\end{align*}
And with the same method,
\begin{equation*}
\widetilde{f}_0^\dD(0) = H(e^{-q})\cdot \geod\left(\I_2,\sigma_3\right)(0).
\end{equation*}
This means that the limit axis of $\widetilde{f}_t^\dD$ as $t\to 0$, oriented from $z=\infty$ to $z=0$ is given in the spherical family by
\begin{equation*}
H(e^{-q})\cdot {\geod}\left(\I_2,-\sigma_3\right).
\end{equation*}

\paragraph{Catenoidal family.}

We cannot use the same method as above, as the immersion $\widetilde{f}_t^\dD$ degenerates into the point $\I_2$. Use Proposition \ref{propBlowupCatenoid} of Section \ref{sectionBlowUp} to get
\begin{equation*}
\widehat{f}:=\lim\limits_{t\to 0} \frac{1}{t} \left(f_t - \I_2\right) =  {\psi} \subset T_{\I_2}\Hh^3
\end{equation*}
where ${\psi}$ is the immersion of a catenoid of axis oriented by $-\sigma_1$ as $z\to 0$. This suffices to show that the limit axis oriented from the end at $\infty$ to the end at $0$ of the catenoidal family $\widetilde{f}_t^\dD$ converges as $t$ tends to $0$ to the oriented geodesic $ {\geod}(\I_2,-\sigma_1) $.

\section{Gluing Delaunay ends to hyperbolic spheres}\label{sectionNnoids}

In this section, we follow step by step the method Martin Traizet used in $\Rr^3$ (\cite{nnoids}) to construct CMC $H>1$ $n$-noids in $\Hh^3$ and prove Theorem \ref{theoremConstructionNnoids}. This method relies on the Implicit Function Theorem and aim to find a couple $(\xi_t,\Phi_t)$ satisfying the hypotheses of Theorem \ref{theoremPerturbedDelaunay} around each pole of an $n$-punctured sphere. More precisely, Implicit Function Theorem is used to solve the monodromy problem around each pole and to ensure that the potential is regular at $z=\infty$. The set of equations characterising this problem at $t=0$ is the same than in \cite{nnoids}, and the partial derivative with respect to the parameters is the same than in \cite{nnoids} at $t=0$. Therefore, the Implicit Function Theorem can be used exactly as in \cite{nnoids} and we do net repeat it here. Showing that the surface has Delaunay ends involves slightly different computations, but the method is the same than in \cite{nnoids}, namely, find a suitable gauge and change of coordinates around each pole of the potential in order to retrieve a perturbed Delaunay potential as in Definition \ref{defPerturbedDelaunayPotential}. One can then apply Theorem \ref{theoremPerturbedDelaunay}.  Finally, we show that the surface is Alexandrov-embedded (and embedded in some cases) by adapting the arguments of \cite{minoids} to the case of $\Hh^3$.

\subsection{The DPW data}

Let $H>1$, $q=\arcoth H$ and $\rho>e^{q}$. Let $n\geq 3$ and $u_1,\cdots,u_n$ unitary vectors of $T_{\I_2}\Hh^3$. Suppose, by applying a rotation, that $u_i\neq \pm \sigma_3$ for all $i\in\left[1,n\right]$. Let $v_\Ss:  \Cc\cup\left\{ \infty \right\} \longrightarrow \Ss^2$ defined as in Equation \eqref{eqVecteurvS} and  $\pi_i := v_\Ss^{-1}(u_i)\in\Cc^*$. Consider $3n$ parameters $a_i$, $b_i$, $p_i \in \Lambda\Cc_\rho^{\geq 0}$ assembled into a vector $\varx$ which stands in a neighbourhood of a central value $\varx_0$ so that the central values of $a_i$ and $p_i$ are $\tau_i$ and $\pi_i$. Introduce a real parameter $t$ in a neighbourhood of $0$ and define
\begin{equation*}\label{defBetatNnoids}
\beta_t(\la) := t\left( \la - e^q \right)\left(\la - e^{-q}\right).
\end{equation*}
The potential we use is
\begin{equation*}\label{defpotentialnnoids}
	\xi_{t,\varx}(z,\la) := \begin{pmatrix}
	0 & \la^{-1}dz\\
	\beta_t(\la)\omega_\varx (z,\la) & 0
	\end{pmatrix}
\end{equation*}
where
\begin{equation*}\label{defomegannoids}
\omega_\varx (z,\la) := \sum_{i=1}^{n}\left( \frac{a_i(\la)}{(z-p_i(\la))^2} + \frac{b_i(\la)}{z-p_i(\la)} \right)dz.
\end{equation*}
The initial condition is the identity matrix, taken at the point $z_0=0 \in \Omega$ where
\begin{equation*}
	\Omega = \left\{ z\in\Cc \mid \forall i\in\left[ 1,n \right], |z-\pi_i|>\epsilon \right\} 
\end{equation*}
and $\epsilon>0$ is a fixed constant such that the disks $D(\pi_i,2\epsilon)\subset\Cc$ are disjoint and do not contain $0$. Although the poles $p_1,\dots,p_n$ of the potential $\xi_{t,\varx}$ are functions of $\la$, $\xi_{t,\varx}$ is well-defined on $\Omega$ for $\varx$ sufficiently close to $\varx_0$. We thus define $\Phi_{t,\varx}$ as the solution to the Cauchy problem \eqref{eqCauchyProblem} with data $(\Omega,\xi_{t,\varx},0,\I_2)$.

The main properties of this potential are the same than in \cite{nnoids}, namely: it is a perturbation of the spherical potential $\xi_{0,\varx}$ and the factor $\left(\la - e^{-q}\right)$ in $\beta_t$ ensures that the second equation of the monodromy problem \eqref{eqMonodromyProblem} is solved. 

Let $\{\gamma_1,\cdots,\gamma_{n-1}\}$ be a set of generators of the fundamental group $\pi_1(\Omega,0)$ and define for all $i\in [1,n-1]$
\begin{equation*}
M_i(t,\varx) := \mM_{\gamma_i}(\Phi_{t,\varx}).
\end{equation*} 
Noting that
\begin{equation*}
\la\in\Ss^1 \implies \la^{-1}\left( \la - e^q \right)\left(\la - e^{-q}\right) = -2\left(\cosh q - \Re \la\right) \in\Rr,
\end{equation*}
the unitarity of the monodromy is equivalent to
\begin{equation*}
	\widetilde{M}_i(t,\varx)(\la) := \frac{\la}{\beta_t(\la)}\log M_i(t,\varx)(\la)\in \Lsurho.
\end{equation*}
Note that at $t=0$, the expression above takes the same value than in \cite{nnoids}, and so does the regularity conditions. One can thus apply 
Propositions 2 and 3 of \cite{nnoids} which we recall in Proposition \ref{propPropsnnoids}  below.
\begin{proposition}\label{propPropsnnoids}
	For t in a neighbourhood of $0$, there exists a unique smooth map $t\mapsto \varx(t) = (a_{i,t},b_{i,t},p_{i,t})_{1\leq i\leq n}\in(\wWRgeqzero)^3$ such that $\varx(0) = \varx_0$, the monodromy problem and the regularity problem are solved at $(t,\varx(t))$ and the following normalisations hold:
	\begin{equation*}
		\forall i\in[1,n-1],\qquad  \Re (a_{i,t})\mid_{\la=0} = \tau_i \quad \text{and} \quad p_{i,t}\mid_{\la=0} = \pi_i.
	\end{equation*}
	Moreover, at $t=0$, $\varx_0$ is a constant with $a_i$ real and such that
	\begin{equation*}
		b_i = \frac{-2a_i\overline{p_i}}{1+|p_i|^2}\quad \text{and} \quad \sum_{i=1}^{n}a_iv_\Ss(p_i)=0.
	\end{equation*}
\end{proposition}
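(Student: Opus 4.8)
## Proof plan for Proposition \ref{propPropsnnoids}

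The plan is to reduce the statement to an application of the Implicit Function Theorem in the loop-group Banach-space setting, exactly as in Traizet's paper \cite{nnoids}, and then verify that the ``input data'' to that argument — the zero-set equations at $t=0$ and the relevant partial differentials — coincide with those in \cite{nnoids}. First I would set up the map whose zero locus encodes both the monodromy problem and the regularity problem at $z=\infty$: write $\widetilde M_i(t,\varx)\in\Lsurho$ for the renormalised logarithm of the monodromy along $\gamma_i$ (legitimate because $\beta_t(\la)$ carries the factor $(\la-e^q)(\la-e^{-q})$, which makes $\la^{-1}\beta_t(\la)$ real on $\Ss^1$, and because $\beta_t(e^{-q})=0$ forces the second monodromy equation automatically). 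Together with the regularity condition at $z=\infty$ for the potential $\xi_{t,\varx}$ (the coefficient of $z^{-1}$ and $z^{-2}$ in $\omega_\varx$ expanded at infinity must satisfy the appropriate closing-up relations) and the normalisation equations $\Re(a_{i,t})|_{\la=0}=\tau_i$, $p_{i,t}|_{\la=0}=\pi_i$, these assemble into a smooth map
\begin{equation*}
\fF:(-\delta,\delta)\times \wW \longrightarrow \bigoplus_{i=1}^{n-1}\Lsurho \oplus (\text{regularity target}) \oplus \Cc^{?}
\end{equation*}
from a neighbourhood of $(0,\varx_0)$ in $\Rr\times(\wWRgeqzero)^3$, with $\fF(0,\varx_0)=0$.

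Next I would identify $\fF(0,\cdot)$ explicitly. At $t=0$ the potential degenerates to the spherical potential $\xi_{0,\varx}$ of the $n$-punctured sphere, whose holomorphic frame is explicit (it is the product of the local $\Phi_\Ss$-type frames, up to gauge, around each puncture). A residue computation — the $1/t$-normalised monodromy is $2\pi i$ times a residue of $\Phi_{0,\varx}\,(\text{the }\beta\text{-part})\,\Phi_{0,\varx}^{-1}$ around each $p_i$, by the same formula (Proposition 4 of Appendix B of \cite{raujouan}) used in the proof of Proposition \ref{propC3} — shows that the vanishing of $\fF(0,\varx_0)$ is equivalent to $a_i$ real, $b_i=\frac{-2a_i\conj{p_i}}{1+|p_i|^2}$, and the balancing relation $\sum_{i=1}^n a_i v_\Ss(p_i)=0$, together with the normalisations. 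This is exactly the system appearing in \cite{nnoids} (the $v_\Ss(p_i)$ are the images of the punctures under inverse stereographic projection, i.e. the limit unit directions $u_i$), which is why the balancing condition \eqref{eqBalancingNnoids} is the natural hypothesis. The $b_i$-relation is precisely the condition that the simple-pole part of $\omega_\varx$ does not contribute a spurious residue after accounting for the double-pole part; in \cite{nnoids} it is shown to be solvable for $b_i$ in terms of $a_i,p_i$.

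Then I would check that the partial differential $D_\varx \fF(0,\varx_0)$, restricted to the complement of the $b_i$-directions (which are determined), is an isomorphism onto the appropriate space. This is the non-degeneracy input: it is shown in \cite{nnoids} that the linearisation of the balancing map at a balanced configuration of $n\ge 3$ distinct directions is onto, and that the regularity equations and normalisations cut out the remaining directions transversally. Since, as noted in the excerpt, both $\fF(0,\cdot)$ and $D_\varx\fF(0,\varx_0)$ agree verbatim with the Euclidean case, I can invoke Propositions~2 and~3 of \cite{nnoids} directly rather than recomputing. The Implicit Function Theorem in Banach spaces then yields, for $t$ in a neighbourhood of $0$, a unique smooth curve $t\mapsto\varx(t)$ with $\varx(0)=\varx_0$ solving $\fF(t,\varx(t))=0$; unwinding the definition of $\fF$ gives precisely the monodromy problem, the regularity problem, and the stated normalisations, while $\fF(0,\varx_0)=0$ gives the claimed properties of the central value $\varx_0$ (constancy in $\la$, reality of $a_i$, the $b_i$-formula, and the balancing $\sum a_i v_\Ss(p_i)=0$).

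The main obstacle — or rather the main thing requiring care rather than genuine difficulty — is the bookkeeping that the map $\fF$ is well-defined and smooth near $(0,\varx_0)$ despite the poles $p_i(\la)$ being $\la$-dependent: one must check that $\xi_{t,\varx}$ and hence $\Phi_{t,\varx}$ depend smoothly on $\varx$ in the $\Lambda\,\cdot\,_\rho$ topology on $\Omega$ (which holds since $\Omega$ stays uniformly bounded away from all the $p_i(\la)$ for $\varx$ near $\varx_0$), and that passing to $\widetilde M_i$ via the $\la/\beta_t(\la)$ renormalisation is smooth across $t=0$ (this is the analogue of the argument in \cite{nnoids} and uses that $M_i(0,\varx)=\I_2$, so $\log M_i$ is well-defined and vanishes to first order in $t$). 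Once these regularity points are in place, the proposition follows from \cite{nnoids} with no new ideas, which is why the excerpt states that the argument is not repeated.
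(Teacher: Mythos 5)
Your proposal matches the paper's argument: the paper proves this proposition by observing that the monodromy/regularity equations and their $\varx$-linearisation at $t=0$ coincide verbatim with those of the Euclidean case in \cite{nnoids}, so Propositions~2 and~3 of \cite{nnoids} (i.e.\ the Implicit Function Theorem applied to the same map $\fF$) apply directly. Your elaboration of the residue computation at $t=0$, the renormalisation of the log-monodromy by $\la/\beta_t(\la)$, and the smoothness bookkeeping is consistent with what the paper is invoking implicitly.
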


Now write $\omega_t := \omega_{\varx(t)}$,  $\xi_t := \xi_{t,\varx(t)}$ and apply the DPW method to define the holomorphic frame $\Phi_t$ associated to $\xi_t$ on the universal cover $\widetilde{\Omega}$ of $\Omega$ with initial condition $\Phi_t(0) = \I_2$. Let $F_t:=\Uni \Phi_t$ and $f_t:=\Sym_q F_t$. The monodromy problem for $\Phi_t$ being solved, $f_t$ descends to a well-defined CMC $H$ immersion on $\Omega$. Use Theorem 3 and Corollary 1 of \cite{nnoids} to extend $f_t$ to $\Sigma_t := \Cc\cup\left\{\infty\right\}\backslash\left\{p_{1,t}(0),\dots ,p_{n,t}(0) \right\}$ and define $M_t=f_t(\Sigma_t)$. Moreover, with the same proof as in \cite{nnoids} (Proposition 4, point (2)), $a_{i,t}$ is a real constant with respect to $\la$ for all $i$ and $t$.

\subsection{Delaunay ends}\label{sectionDelaunayEndsNnoids}

\paragraph{Perturbed Delaunay potential.}

Let $i\in\left[1,n\right]$. We are going to gauge $\xi_t$ around its pole $p_{i,t}(0)$ and show that the gauged potential is a perturbed Delaunay potential as in Definition \ref{defPerturbedDelaunayPotential}. Let $(r,s):(-T,T)\longrightarrow \Rr^2$ be the continuous solution to (see Section \ref{sectionDelaunaySurfaces} for details)
\begin{equation*}
\left\{
\begin{array}{l}
rs = ta_{i,t},\\
r^2+s^2+2rs \cosh q = \frac{1}{4},\\
r> s.
\end{array}
\right.
\end{equation*}
For all $t$ and $\la$, define $\psi_{i,t,\la}(z):=z+p_{i,t}(\la)$ and
\begin{equation*}
	G_t(z,\la) := \begin{pmatrix}
	\frac{\sqrt{z}}{\sqrt{r+s\la}} & 0 \\
	\frac{-\la}{2\sqrt{z}\sqrt{r+s\la}} & \frac{\sqrt{r+s\la}}{\sqrt{z}}
	\end{pmatrix}.
\end{equation*}
For $T$ small enough, one can thus define on a uniform neighbourhood of $0$ the potential
\begin{equation*}
	\widetilde{\xi}_{i,t}(z,\la) := ((\psi_{i,t,\la}^*\xi_t)\cdot G_t)(z,\la) =  \begin{pmatrix}
	0 & r\la^{-1}+s\\
	\frac{\beta_t(\la)}{r+s\la}(\psi_{i,t,\la}^*\omega_t(z))z^2+\frac{\la}{4(r+s\la)} & 0
	\end{pmatrix}z^{-1}dz.
\end{equation*}
Note that by definition of $r$, $s$ and $\beta_t$,
\begin{equation*}
\left(r+s\la\right)\left( r\la+s \right) = \frac{\la}{4} + \beta_t(\la)a_{i,t}
\end{equation*}
and thus
\begin{equation*}
\widetilde{\xi}_{i,t}(z,\la) = A_t(\la)z^{-1}dz + C_t(z,\la)dz
\end{equation*}
with $A_t$ as in Equation \eqref{eqArs} satisfies Equation \eqref{eqrst} and $C_t$ as in Definition \ref{defPerturbedDelaunayPotential}. The potential $\widetilde{\xi}_{i,t}$ is thus a perturbed Delaunay potential as in Definition \ref{defPerturbedDelaunayPotential}. Moreover, using Theorem 3 of \cite{nnoids}, the induced immersion $\widetilde{f}_{i,t}$ satisfies 
\begin{equation*}
	\widetilde{f}_{i,t} = \psi_{i,t,0}^*f_t.
\end{equation*}

\paragraph{Applying Theorem \ref{theoremPerturbedDelaunay}.}

The holomorphic frame $\widetilde{\Phi}_{i,t}:=\Phi_tG_{i,t}$ associated to $\widetilde{\xi}_{i,t}$ satisfies the regularity and monodromy hypotheses of Theorem \ref{theoremPerturbedDelaunay}, but at $t=0$ and $z=1$,
\begin{equation*}
	\widetilde{\Phi}_{i,0}(1,\la) = \begin{pmatrix}
	1 & \left(1+\pi_i\right)\la^{-1}\\
	0 & 1
	\end{pmatrix}\begin{pmatrix}
	\sqrt{2} & 0\\
	\frac{-\la}{\sqrt{2}} & \frac{1}{\sqrt{2}}
	\end{pmatrix} = \frac{1}{\sqrt{2}}\begin{pmatrix}
	1-\pi_i & (1+\pi_i)\la^{-1}\\
	-\la & 1
	\end{pmatrix} =: M_i(\la),
\end{equation*}
and thus $\widetilde{\Phi}_{i,0}(z) = M_iz^{A_0}$. Recall \eqref{eqHtDt} and let
\begin{equation*}\label{eqHnnoids}
H :=H_0 = \frac{1}{\sqrt{2}}\begin{pmatrix}
1 & -\la^{-1}\\
\la & 1
\end{pmatrix}\in\LSUdeuxrho
\end{equation*}
and $Q_i := \Uni\left( M_iH\right)$.
Using Lemma 1 in \cite{raujouan}, $Q_i$ can be made explicit and one can find a change of coordinates $h$ and a gauge $G$ such that $\widehat{\Phi}_{i,t} := HQ_i^{-1}(h^* \widetilde{\Phi}_{i,t})G$ solves  $d\widehat{\Phi}_{i,t} = \widehat{\Phi}_{i,t}\widehat{\xi}_{i,t}$ where $\widehat{\xi}_{i,t}$ is a perturbed Delaunay potential and $\widehat{\Phi}_{i,0}(z) = z^{A_0}$. Explicitely,
\begin{equation*}
	Q_i(\la) = \frac{1}{\sqrt{1+|\pi_i|^2}}\begin{pmatrix}
	1 & \la^{-1}\pi_i\\
	-\la\conj{\pi}_i & 1
	\end{pmatrix}
\end{equation*}
and
\begin{equation*}
	h(z) = \frac{(1+|\pi_i|^2)z}{1-\conj{\pi}_iz},\qquad G(z,\la) = \frac{1}{\sqrt{1-\conj{\pi}_iz}}\begin{pmatrix}
	1 & 0\\
	-\la\conj{\pi}_iz & 1-\conj{\pi}_iz
	\end{pmatrix}.
\end{equation*}
One can thus apply Theorem \ref{theoremPerturbedDelaunay} on $\widehat{\xi}_{i,t}$ and $\widehat{\Phi}_{i,t}$, which proves the existence of the family $\left(M_t\right)_{0<t<T}$ of CMC $H$ surfaces of genus zero and $n$ Delaunay ends, each of weight (according to Equation \eqref{eqPoidsDelaunayPure})
\begin{equation*}
	w_{i,t} = 8\pi  rs\sinh q = \frac{8\pi t a_{i,t}}{\sqrt{H^2-1}},
\end{equation*}
which proves the first point of Theorem \ref{theoremConstructionNnoids} (after a normalisation on $t$). Let $\widehat{f}_{i,t}:=\Sym_q(\Uni \widehat{\Phi}_{i,t})$ and $\widehat{f}_{i,t}^\dD$ the Delaunay immersion given by Theorem \ref{theoremPerturbedDelaunay}.

\paragraph{Limit axis.}

In order to compute the limit axis of $f_t$ at the end around $p_{i,t}$, let $\widehat{\Delta}_{i,t}$ be the oriented axis of $\widehat{f}_{i,t}^\dD$ at $w=0$. Then, using Theorem \ref{theoremPerturbedDelaunay},
\begin{equation*}
	\widehat{\Delta}_{i,0} = H(e^{-q})\cdot \geod\left( \I_2, -\sigma_3 \right).
\end{equation*}
And using $\widehat{f}_{i,t}(w) = H(e^{-q})Q_i(e^{-q})^{-1}\cdot \left( h^*f_t(z) \right)$,
\begin{equation*}
	\widehat{\Delta}_{i,0} = H(e^{-q})Q_i(e^{-q})^{-1}\cdot\Delta_{i,0}
\end{equation*}
and thus
\begin{equation*}
	\Delta_{i,0} = Q_i(e^{-q}) \cdot \geod(\I_2,-\sigma_3).
\end{equation*}
Computing $M_iH = \Phi_\Ss(\pi_i)$ as in \eqref{eqPhi_Ss}, one has $Q_i=F_\Ss(\pi_i)$. Hence
\begin{equation*}
\Delta_{i,0} = {\geod}\left( f_\Ss(\pi_i), -N_\Ss(\pi_i) \right)
\end{equation*}
where $N_\Ss$ is the normal map associated to $\Phi_\Ss$. Using Equation \eqref{eqR(q)}, $f_\Ss(z) = R(q)\cdot \widetilde{f}_\Ss(z)$ and $N_\Ss(z) =R(q) \cdot \widetilde{N}_\Ss(z)$ where $\widetilde{N}_\Ss$ is the normal map of $\widetilde{f}_\Ss$. Using Equation \eqref{eqftilde_Ss=gammav_Ss} and the fact that $\widetilde{f}_\Ss$ is a spherical immersion gives
\begin{equation*}
\widetilde{N}_\Ss(z) = \Gamma_{\I_2}^{\widetilde{f}_\Ss(z)}\left(-v_\Ss(z)\right)
\end{equation*}
and thus
\begin{align*}
\Delta_{i,0} &= {\geod}\left( R(q)\cdot \widetilde{f}_\Ss(\pi_i), -R(q)\cdot \widetilde{N}_\Ss(\pi_i) \right)\\
&= R(q)\cdot {\geod}\left(\widetilde{f}_\Ss(\pi_i),\Gamma_{\I_2}^{\widetilde{f}_\Ss(\pi_i)}v_\Ss(\pi_i)\right)\\
&= R(q)\cdot {\geod}\left(\I_2,u_i\right).
\end{align*}
Apply the isometry given by $R(q)^{-1}$ and note that $R(q)$ does not depend on $i$ to prove point 2 of Theorem \ref{theoremConstructionNnoids}.

\subsection{Embeddedness}

We suppose that $t>0$ and that all the weights $\tau_i$ are positive, so that the ends of $f_t$ are embedded. Recall the definition of Alexandrov-embeddedness (as stated in \cite{minoids}):

\begin{definition}\label{defAlexandrovEmbedded}
	A surface $M^2\subset\mM^3$ of finite topology is Alexandrov-embedded if $M$ is properly immersed, if each end of $M$ is embedded, and if there exists a compact $3$-manifold $\overline{W}$ with boundary $\partial \overline{W} = \overline{S}$, $n$ points $p_1,\cdots,p_n\in\overline{S}$ and a proper immersion $F: W=\overline{W}\backslash\{ p_1,\cdots,p_n \}\longrightarrow \mM$ whose restriction to $S=\overline{S}\backslash\{ p_1,\cdots, p_n \}$ parametrises $M$.
\end{definition}

The following lemma is proved in \cite{minoids} in $\Rr^3$ and for surfaces with catenoidal ends, but the proof is the same in $\Hh^3$ for surfaces with any type of embedded ends. For any oriented surface $M$ with Gauss map $N$ and any $r>0$, the tubular map of $M$ with radius $r$ is defined by
\begin{equation*}
	\function{\tT}{(-r,r)\times M}{\Tub_{r}M}{(s,p)}{\geod(p,N(p))(s).}
\end{equation*}

\begin{lemma}\label{lemmaExtendingAlexandrov}
	Let $M$ be an oriented Alexandrov-embedded surface of $\Hh^3$ with $n$ embedded ends. Let $r>0$ and suppose that the tubular map of $M$ with radius $r$ is a local diffeomorphism. With the notations of Definition \ref{defAlexandrovEmbedded}, there exist a hyperbolic $3$-manifold $W'$ containing $W$ and a local isometry $F': W'\longrightarrow  \Hh^3$ extending $F$ such that the tubular neighbourhood $\Tub_rS$  is embedded in $W'$.
\end{lemma}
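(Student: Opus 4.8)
The plan is to pull back the hyperbolic metric of $\Hh^3$ through the immersion $F$, which turns $W$ into a hyperbolic $3$-manifold with boundary $S=\partial W$, and then to glue to $W$ an outward collar of width $r$ along $S$, the metric on that collar being read off the geodesics of $\Hh^3$ normal to $M$.

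First I would equip $W$ with the metric $F^{*}g_{\Hh^3}$: since $F$ is an immersion between $3$-manifolds it is a local diffeomorphism, so this is a genuine Riemannian metric of constant curvature $-1$, with respect to which $F$ is a local isometry. Let $\nu$ be the unit normal field of $S=\partial W$ pointing into $W$ and set $\widetilde{N}(x):=dF_x(\nu(x))$; this is a unit normal field along the immersed surface $M=F(S)$, equal to $\pm N$ on each component of $S$ since $dF$ maps $T_xS$ isometrically onto $T_{F(x)}M$. As $F$ is a local isometry it carries $W$-geodesics to $\Hh^3$-geodesics, so along a Fermi collar of $\partial W$ one has $F\bigl(\exp^{W}_{x}(s\nu(x))\bigr)=\geod\bigl(F(x),\widetilde{N}(x)\bigr)(s)$ for small $s\geq 0$. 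Through this identity the hypothesis that the tubular map of $M$ of radius $r$ is a local diffeomorphism becomes exactly the statement that $S$ has no focal point in $W$ within distance $r$ along $\pm\nu$.

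Next I would build $W'$ as $W\cup_{\partial W}\bigl((-r,0]\times S\bigr)$, glued by the standard collar construction identifying $\{0\}\times S$ with $\partial W$; this is a smooth Hausdorff manifold containing $W$ as a closed subset, with $\partial W$ now interior. On the collar I set $F'(s,x):=\geod\bigl(F(x),\widetilde{N}(x)\bigr)(s)$ and take $(F')^{*}g_{\Hh^3}$ as the metric there. Near $\partial W$, in the chart $(-r,\varepsilon)\times S$ obtained by gluing a one-sided Fermi collar of $W$ to the new collar, the maps $F$ (on the part $s\geq 0$) and $F'$ (on the part $s\leq 0$) assemble into the single map $(s,x)\mapsto\geod(F(x),\widetilde{N}(x))(s)$, which is smooth and, by the focal-point hypothesis, an immersion for $|s|<r$. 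Hence $F'$ is a well-defined smooth local isometry $W'\to\Hh^3$ extending $F$, and $\bigl(W',(F')^{*}g_{\Hh^3}\bigr)$ is hyperbolic.

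Finally I would check that $\Tub_rS$ is embedded in $W'$. Composing the tubular map $\tT\colon(-r,r)\times S\to W'$ of $S$ in $W'$ with the local isometry $F'$ recovers the tubular map of $M$ in $\Hh^3$, which is a local diffeomorphism; hence $\tT$ is an immersion. The outer half $(-r,0]\times S$ is tautologically embedded in $W'$ and its interior is disjoint from $W$, so any coincidence $\tT(s_1,x_1)=\tT(s_2,x_2)$ with $(s_1,x_1)\neq(s_2,x_2)$ must have $s_1,s_2\in[0,r)$; thus injectivity of $\tT$ reduces to injectivity (and well-definedness on $[0,r)\times S$) of the inward normal exponential map $E\colon[0,r)\times S\to W$, $E(s,x)=\exp^{W}_{x}(s\nu(x))$. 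This is the step I expect to be the main obstacle, and the only place where properness of $F$ and the Alexandrov-embeddedness of $M$ with embedded ends are really used: near each puncture $p_i$ the embeddedness of the corresponding end of $M$ pins down the geometry of $W$ there and forces $E$ to be injective with image running off to that end, while on the remaining region one argues as in $\Rr^3$ in \cite{minoids}, combining the fact that $E$ is a local diffeomorphism (again the focal-point hypothesis) with the properness of $F$. Granting this, $E$ is an embedding, hence so is $\tT$, and $\Tub_rS$ is embedded in $W'$, which then has all the asserted properties.
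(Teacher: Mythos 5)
The paper does not actually supply a proof of this lemma: it asserts in one sentence that the argument of \cite{minoids} (for $\Rr^3$ and catenoidal ends) ``is the same'' in $\Hh^3$ for general embedded ends, and moves on. Your reconstruction — pulling back $g_{\Hh^3}$ to make $W$ hyperbolic, gluing an outward collar $(-r,0]\times S$ along $\partial W$, extending $F$ to $F'$ via $(s,x)\mapsto\geod(F(x),\widetilde{N}(x))(s)$, checking smoothness of the glued metric in a two-sided Fermi chart and that $F'$ is an immersion from the no-focal-point hypothesis, and then reducing the embeddedness of $\Tub_rS$ to the well-definedness and injectivity of the inward normal exponential map $E\colon[0,r)\times S\to W$ — is the natural construction and is, to the best of my reading, the same route taken in the cited reference; it is also more explicit than anything the paper itself writes down. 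You correctly isolate the one nontrivial global step, namely the injectivity of $E$, and correctly observe that this is precisely where properness of $F$ and embeddedness of the ends must enter (it does not follow from the focal-radius hypothesis alone). Since you defer that step to \cite{minoids}, exactly as the paper does, the proposal neither goes further nor falls short of what the source text provides; the only caveat is that a complete proof would still have to spell out that injectivity argument, and that remains an uncashed reference in both your write-up and the paper.
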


In order to show that $M_t$ is embedded, we will use the techniques of \cite{minoids}. Thus, we begin by lifting $M_t$ to $\Rr^3$ with the exponential map at the identity $\exp_{\I_2}: \Rr^3\longrightarrow \Hh^3$. This map is a diffeomorphism, so $M_t$ is Alexandrov-embedded if, and only if its lift $\widehat{M}_t$ to $\Rr^3$ given by the immersion
\begin{equation*}
	\widehat{f}_t := \exp_{\I_2}^{-1}\circ f_t: \Sigma_t\longrightarrow\Rr^3
\end{equation*}
is Alexandrov-embedded.

Let $T,\epsilon>0$ such that $f_t$ (and hence $\widehat{f}_t$) is an embedding of $D^*(p_{i,t},\epsilon)$ for all $i\in[1,n]$ and let $f_{i,t}^\dD: \Cc\backslash\{ p_{i,t} \}\longrightarrow\Hh^3$ be the Delaunay immersion approximating $f_t$ in $D^*(p_{i,t},\epsilon)$. Let $\widehat{f}_{i,t}^\dD:=\exp_{\I_2}^{-1}\circ f_{i,t}^\dD$. Apply an isometry of $\Hh^3$ so that the limit immersion $f_0$ maps $\Sigma_0$ to a $n$-punctured geodesic spere of hyperbolic radius $q$ centered at $\I_2$. Then $\widehat{f}_0(\Sigma_0)$ is a Euclidean sphere of radius $q$ centered at the origin. Define 
\begin{equation*}
	\function{\widehat{N}_t}{\Sigma_t}{\Ss^2}{z}{d(\exp_{\I_2}^{-1})(f_t(z))N_t(z).}
\end{equation*}
At $t=0$, $\widehat{N}_0$ is the normal map of $\widehat{f}_0$ (by Gauss Lemma), but not for $t>0$ because the Euclidean metric of $\Rr^3$ is not the metric induced by $\exp_{\I_2}$.

Let 
\begin{equation*}
	\function{h_i}{\Rr^3}{\Rr}{x}{\scal{x}{-\widehat{N}_0(p_{i,0})}}
\end{equation*}
be the height function in the direction of the limit axis.

As in \cite{minoids}, one can show that

\begin{claim}
	There exist $\delta<\delta'$ and $0<\epsilon'<\epsilon$ such that for all $i\in[1,n]$ and $0<t<T$,
	\begin{equation*}
		\max_{C(p_{i,t},\epsilon)} h_i\circ \widehat{f}_t < \delta < \min_{C(p_{i,t},\epsilon')}h_i\circ\widehat{f}_t \leq \max_{C(p_{i,t},\epsilon')}h_i\circ\widehat{f}_t < \delta'.
	\end{equation*} 
\end{claim}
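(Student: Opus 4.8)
The plan is to prove the Claim by carefully tracking the convergence of $\widehat f_t$ towards the limit $n$-noid $\widehat f_0$ (a Euclidean sphere of radius $q$ centered at the origin with $n$ punctures) and the convergence of each end towards the approximating Delaunay immersion $\widehat f_{i,t}^\dD$, then reading off the behaviour of the height function $h_i$ along the two parallel circles $C(p_{i,t},\epsilon)$ and $C(p_{i,t},\epsilon')$. The key structural input is the first point of Theorem~\ref{theoremPerturbedDelaunay} (after the gauging/coordinate change producing $\widehat\xi_{i,t}$ and $\widehat\Phi_{i,t}$): on the punctured disk $D^*(p_{i,t},\epsilon)$ one has a uniform estimate $\distH{f_t(z)}{f_{i,t}^\dD(z)}\le C\,t\,|z-p_{i,t}|^{1-\delta}$, hence after applying $\exp_{\I_2}^{-1}$ (a global diffeomorphism, bi-Lipschitz on the relevant compact region) a corresponding Euclidean estimate $|\widehat f_t(z)-\widehat f_{i,t}^\dD(z)|\le C\,t\,|z-p_{i,t}|^{1-\delta}$.

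\textbf{Step 1: the outer circle.} On $C(p_{i,t},\epsilon)$ the argument is by continuity in $t$: as $t\to0$ the immersion $\widehat f_t$ converges in $\cC^1$ on the compact set $\{|z-p_{i,t}|=\epsilon\}$ (which itself converges to $\{|z-\pi_i|=\epsilon\}$) to the restriction of $\widehat f_0$, the round sphere. The point $\widehat f_0(p_{i,0})$ is the point of the sphere in the direction $\widehat N_0(p_{i,0})$ of the limit axis, i.e.\ the \emph{top} of the sphere with respect to $h_i$; the circle $\{|z-\pi_i|=\epsilon\}$ is mapped to a small parallel circle near but strictly below that top, so $\max_{|z-\pi_i|=\epsilon} h_i\circ\widehat f_0 < h_i(\widehat f_0(p_{i,0}))$, with a strict gap. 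Choosing $\delta$ strictly between these two values and $T$ small enough gives the left inequality $\max_{C(p_{i,t},\epsilon)} h_i\circ\widehat f_t<\delta$ for all $0<t<T$, uniformly in $i$ since there are finitely many ends.

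\textbf{Step 2: the inner circle.} For $C(p_{i,t},\epsilon')$ with $\epsilon'$ small one cannot use convergence to the sphere (there $\widehat f_t$ degenerates or at least the relevant scale is the Delaunay scale); instead one uses that on $D^*(p_{i,t},\epsilon)$ the immersion $\widehat f_t$ is $C\,t\,\epsilon'^{1-\delta}$-close to the Delaunay lift $\widehat f_{i,t}^\dD$, whose end at $p_{i,t}$ has axis converging to the limit geodesic computed in Theorem~\ref{theoremPerturbedDelaunay} and Section~\ref{sectionDelaunayEndsNnoids}, namely $\geod(\I_2,u_i)$ up to the fixed isometry $R(q)$. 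Along this axis $h_i$ is (up to a bounded positive factor, by Gauss' lemma at $\I_2$) an increasing proper function as $z\to p_{i,t}$. One estimates $h_i\circ\widehat f_{i,t}^\dD$ on the parallel circle at parameter $\epsilon'$ using the explicit Delaunay profile: the necks/bulges of $\widehat f_{i,t}^\dD$ advance monotonically along the axis, and for $\epsilon'$ fixed and $t$ small the circle $\{|z-p_{i,t}|=\epsilon'\}$ sits on the first period of the end, at height bounded below by some $\delta>0$ and above by some $\delta'$ independent of $t$ (using the uniform control of the profile curve and its period $S_t\to\infty$ from Appendix~\ref{appendixJleliLopez} and Proposition~\ref{propConvUnitFrame}). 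Adding the $O(t\,\epsilon'^{1-\delta})$ perturbation error does not destroy these strict inequalities once $T$ is small, which yields $\delta<\min_{C(p_{i,t},\epsilon')}h_i\circ\widehat f_t\le\max_{C(p_{i,t},\epsilon')}h_i\circ\widehat f_t<\delta'$.

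\textbf{The main obstacle} I expect is Step~2: making the bounds on $h_i\circ\widehat f_{i,t}^\dD$ along the inner circle genuinely uniform in $t$ as $t\to0$, since the Delaunay end is collapsing (in the catenoidal family the immersion degenerates to a point and one must work after a blow-up, in the spherical family the bulges accumulate). One has to choose $\epsilon'$ so that, for all small $t$, the circle $\{|z-p_{i,t}|=\epsilon'\}$ lands on a controlled portion of the end — concretely between the end's puncture and its first bulge/neck — and verify that the height there is squeezed between two $t$-independent constants $\delta,\delta'$ that are themselves $>\max_{C(p_{i,t},\epsilon)}h_i\circ\widehat f_t$. This requires the precise description of the Delaunay profile curves and their limiting behaviour (chain of spheres / catenoid) together with the axis convergence, i.e.\ exactly the quantitative content assembled in Theorem~\ref{theoremPerturbedDelaunay} and Section~\ref{sectionDelaunaySurfaces}; once those are invoked, the inequalities follow, and since $i$ ranges over a finite set one common $\delta,\delta',\epsilon',T$ works.
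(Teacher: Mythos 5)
The paper does not actually write out a proof of this claim; it defers to the analogous step in \cite{minoids} with the phrase ``As in \cite{minoids}, one can show that\dots''. So the comparison is with what the correct argument must be.

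Your Step~1 is correct and already contains the whole idea: both radii $\epsilon$ and $\epsilon'$ are chosen \emph{fixed}, so the circles $C(p_{i,t},\epsilon)$ and $C(p_{i,t},\epsilon')$ stay in a compact annulus $\{\epsilon'\le|z-p_{i,t}|\le\epsilon\}$ on which $\widehat f_t$ converges in $\cC^1$ to $\widehat f_0$ as $t\to 0$. But in Step~2 you assert that for $C(p_{i,t},\epsilon')$ ``one cannot use convergence to the sphere (there $\widehat f_t$ degenerates or at least the relevant scale is the Delaunay scale)'' and instead invoke the Delaunay approximation together with an analysis of the profile curve's period. This premise is false: for any \emph{fixed} $\epsilon'>0$ no degeneration occurs on $C(p_{i,t},\epsilon')$ --- that circle is uniformly away from the puncture, and $\widehat f_t$ converges there to $\widehat f_0$ exactly as on the outer circle. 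The Delaunay scale only becomes relevant when $|z-p_{i,t}|$ is allowed to shrink with $t$ (e.g.\ $|z-p_{i,t}|\sim e^{-kS_t}$), which never happens here.

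The simpler and correct argument treats both circles the same way. Since $\widehat f_0(\pi_i)$ is the strict maximum of $h_i$ on the sphere $\widehat f_0(\Sigma_0)$ (it lies in the direction $-\widehat N_0(p_{i,0})$, not $\widehat N_0(p_{i,0})$ as you wrote --- a minor sign slip), and since $\widehat f_0(C(\pi_i,\epsilon'))$ shrinks to that point as $\epsilon'\to0$, one can choose a fixed $\epsilon'<\epsilon$ with
\begin{equation*}
\max_{C(\pi_i,\epsilon)} h_i\circ\widehat f_0 \;<\; \min_{C(\pi_i,\epsilon')} h_i\circ\widehat f_0 \qquad\text{for every } i\in[1,n].
\end{equation*}
Pick $\delta$ strictly between these two quantities (taking the worst $i$) and $\delta'$ strictly above all $\max_{C(\pi_i,\epsilon')} h_i\circ\widehat f_0$; the uniform convergence $\widehat f_t\to\widehat f_0$ on the fixed annuli then propagates all four strict inequalities to $0<t<T$ for $T$ small, uniformly in the finitely many $i$. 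The difficulty you flag at the end --- making the Delaunay-profile bounds of Step~2 uniform as $t\to0$, because the end collapses --- is real, and it is the symptom that the wrong tool is being applied; your Step~2 should simply repeat Step~1 with $\epsilon'$ in place of $\epsilon$ and the inequality reversed.
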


Define for all $i$ and $t$:
\begin{equation*}
	\gamma_{i,t} := \left\{ z\in D^*_{p_{i,t},\epsilon} \mid h_i\circ\widehat{f}_t(z) = \delta \right\},\quad \gamma_{i,t}' := \left\{ z\in D^*_{p_{i,t},\epsilon'} \mid h_i\circ\widehat{f}_t(z) = \delta' \right\}.
\end{equation*}
From their convergence as $t$ tends to $0$,
\begin{claim}
	The regular curves $\gamma_{i,t}$ and $\gamma_{i,t}'$ are topological circles around $p_{i,t}$.
\end{claim}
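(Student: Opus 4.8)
The plan is to realize both $\gamma_{i,t}$ and $\gamma_{i,t}'$ as regular level sets of the single function $f:=h_i\circ\widehat f_t$ and to pin down their topology by comparison with the limit $\widehat f_0$, which parametrises a round Euclidean sphere. First I would identify $h_i$ geometrically: since the limit axis at the $i$-th end is $\geod(\I_2,u_i)$ (computed above), one gets $\widehat N_0(p_{i,0})=-u_i$, so $h_i$ is just the linear coordinate $x\mapsto\scal{x}{u_i}$ on $\Rr^3$. On the sphere $\widehat f_0(\Sigma_0)=S(0,q)$ its only critical points are the two poles $\pm q\,u_i$, and $q\,u_i=\widehat f_0(p_{i,0})$ is the maximum. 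Taking $\epsilon$ small enough that $\widehat f_0$ of the closed $\epsilon$-disk around $p_{i,0}$ avoids $-q\,u_i$, the function $h_i\circ\widehat f_0$ has no critical point on the punctured disk $D^*_{p_{i,0},\epsilon}$.

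Next I would upgrade the convergence $\widehat f_t\to\widehat f_0$ to the $\mathcal C^1$ topology on every compact annulus $\{\epsilon''\le|z-p_{i,0}|\le\epsilon\}$ (note $p_{i,t}\to p_{i,0}$), combining the first point of Theorem~\ref{theoremPerturbedDelaunay}, Proposition~\ref{propConvNormales} and the convergence of the induced metrics (Proposition~\ref{propConvUnitFrame} and Section~\ref{sectionDelaunaySurfaces}). On such an annulus $f$ then has no critical point for $t$ small. To confine the level sets to a compact annulus I also need that $f>\delta'>\delta$ on a whole punctured neighbourhood of $p_{i,t}$; this I would extract from the fact that the approximating Delaunay end is of spherical type with axis asymptotic to the $u_i$-direction (Section~\ref{sectionBlowUp}), so that $h_i$ grows along it. Together with the previous Claim ($f<\delta$ on $C(p_{i,t},\epsilon)$ and $\delta<f<\delta'$ on $C(p_{i,t},\epsilon')$), this places $\gamma_{i,t}$ in a compact annulus $A=\{\epsilon''\le|z-p_{i,t}|\le\epsilon\}$ and $\gamma_{i,t}'$ in a compact annulus $A'=\{\epsilon'''\le|z-p_{i,t}|\le\epsilon'\}$, on which $f$ is a submersion; hence each is a finite disjoint union of embedded circles in the interior.

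Finally I would discard all but one essential component by elementary arguments. A component bounding a disk inside the annulus would force an interior critical point of $f$ (either $f$ is constant there, or it attains an interior extremum), contradicting the absence of critical points; so every component is essential, i.e.\ encircles $p_{i,t}$. Two disjoint essential simple closed curves in an annulus cobound a compact subannulus, on which $f$ takes the same constant value ($\delta$, resp.\ $\delta'$) on both boundary circles; by the maximum principle $f$ would then be constant there, again forcing critical points. Hence there is exactly one component, and it is nonempty by the intermediate value theorem along a radial segment joining the two boundary circles where $f$ has opposite sign relative to the level (using the previous Claim). This gives a single essential embedded circle, i.e.\ a topological circle around $p_{i,t}$, for each of $\gamma_{i,t}$ and $\gamma_{i,t}'$.

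I expect the main obstacle to be the second paragraph: promoting the convergence to genuine $\mathcal C^1$-convergence of $\widehat f_t$ to $\widehat f_0$ on compact sets away from the punctures (the paper so far provides the $\mathcal C^0$/distance estimate of Theorem~\ref{theoremPerturbedDelaunay} and the normal-map estimate of Proposition~\ref{propConvNormales}, which must be assembled into $\mathcal C^1$-control of the immersion), and, even more delicate, controlling the approximating spherical Delaunay end near $p_{i,t}$ well enough to guarantee that $h_i$ has no small critical values there. Once these quantitative inputs are secured, the topological conclusion in the last paragraph is routine, exactly as in \cite{minoids} and \cite{nnoids}.
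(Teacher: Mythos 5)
Your proposal is a reasonable fleshing-out of the paper's one-line justification (``from their convergence as $t$ tends to~$0$''), which is itself shorthand for the argument in~\cite{minoids}. The identification $h_i(x)=\scal{x}{u_i}$, the observation that $h_i\circ\widehat f_0$ is a submersion on the punctured $\epsilon$-disk, and the topological endgame (essential components, uniqueness, non-emptiness by the intermediate value theorem) are all correct, and this is compatible with what the paper intends: the curves $\gamma_{i,t},\gamma'_{i,t}$ are regular level sets converging to latitude circles of the round sphere $\widehat f_0(\Sigma_0)$.

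Two of the obstacles you flag are, however, less serious than you suggest, while a third point needs a small repair. First, the $\cC^1$ (indeed $\cC^\infty$) convergence of $\widehat f_t\to\widehat f_0$ on compacts of $\Sigma_0\setminus\{p_{1,0},\dots,p_{n,0}\}$ is automatic and does not need to be assembled from Theorem~\ref{theoremPerturbedDelaunay} and Proposition~\ref{propConvNormales}: the map $t\mapsto\varx(t)$ is smooth by the Implicit Function Theorem (Proposition~\ref{propPropsnnoids}), hence $\xi_t$, $\Phi_t$, and — by smoothness of the Iwasawa decomposition — $F_t$, $B_t$ and $f_t$ depend smoothly on $(t,z)$ on such compacts; the end-estimates are only needed in the $t$-dependent region near $p_{i,t}$. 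Second, your appeal to ``the maximum principle'' is a slip since $h_i\circ\widehat f_t$ is not harmonic; but the same conclusion follows from the elementary argument you already invoked for the bounding-disk case (a non-constant function with equal constant values on both boundary circles of a compact subannulus must have an interior extremum, hence a critical point). Third, confining $\gamma'_{i,t}$ to a $t$-uniform compact annulus does require the control you point to, and the clean way to get it is to note that $\delta'$ can be chosen strictly below $q$ (since $\max_{C(p_{i,t},\epsilon')}h_i\circ\widehat f_t$ converges to $\max_{C(p_{i,0},\epsilon')}h_i\circ\widehat f_0<q$), so that the $t=0$ level sets are genuine latitude circles, and that $h_i\circ\widehat f_t^\dD$ is a submersion on the approximating Delaunay surface whose level sets are the parallel circles. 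With those inputs your argument closes.
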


Define $D_{i,t}, D_{i,t}'$ as the topological disks bounded by $\gamma_{i,t}, \gamma_{i,t}'$, and $\Delta_{i,t},\Delta_{i,t}'$ as the topological disks bounded by $\widehat{f}_t(\gamma_{i,t}), \widehat{f}_t(\gamma_{i,t}')$. Let $\aA_{i,t}:=D_{i,t}\backslash D_{i,t}'$. Then $\widehat{f}_t(\aA_{i,t})$ is a graph over the plane $\{h_i(x)=\delta\}$. Moreover, for all $z\in D_{i,t}^*$, $h_i\circ \widehat{f}_t(z)\geq \delta' > \delta$. Thus,
\begin{claim}
	The intersection $\widehat{f}_t(D_{i,t}^*)\cap \Delta_{i,t}$ is empty.
\end{claim}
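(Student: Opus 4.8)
The statement is a half-space separation. I would show that the disk $\Delta_{i,t}$ lies in the affine plane $\Pi_i:=\{x\in\Rr^3\mid h_i(x)=\delta\}$, whereas $\widehat{f}_t(D_{i,t}^*)$ is contained in the open half-space $\{h_i>\delta\}$; the two sets then cannot meet.

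That $\Delta_{i,t}\subset\Pi_i$ is built into the construction: by definition of $\gamma_{i,t}$ the curve $\widehat{f}_t(\gamma_{i,t})$ lies in $\Pi_i$, and since $\widehat{f}_t$ is an embedding of $D^*_{p_{i,t},\epsilon}$ and $\gamma_{i,t}$ is a topological circle around $p_{i,t}$ (preceding claims), $\widehat{f}_t(\gamma_{i,t})$ is a Jordan curve in $\Pi_i\cong\Rr^2$, and $\Delta_{i,t}$ is the compact planar disk it bounds there, so $h_i\equiv\delta$ on $\Delta_{i,t}$. For the complementary inclusion I would use the nesting $D_{i,t}'\subset D_{i,t}$ (for $t$ small $\gamma_{i,t}'$ and $\gamma_{i,t}$ are nested circles around $p_{i,t}$: $h_i$ is large near the end at $p_{i,t}$ and decreases along the end, so the higher level $\delta'$ is attained closer to $p_{i,t}$ than $\delta$ --- this follows from the convergence of $\widehat{f}_t$ and of the approximating Delaunay immersion $\widehat{f}_{i,t}^{\dD}$, exactly as in the preceding claims). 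Writing $D_{i,t}^*=(D_{i,t}'\setminus\{p_{i,t}\})\cup\aA_{i,t}$ with $\aA_{i,t}=D_{i,t}\setminus D_{i,t}'$: on $D_{i,t}'\setminus\{p_{i,t}\}$ the preceding claim gives $h_i\circ\widehat{f}_t\geq\delta'>\delta$; on $\aA_{i,t}$, which is connected and, lying in $D^*_{p_{i,t},\epsilon}$, disjoint from the level set $\{h_i\circ\widehat{f}_t=\delta\}=\gamma_{i,t}$, the continuous function $h_i\circ\widehat{f}_t$ never equals $\delta$, and since it equals $\delta'>\delta$ on $\gamma_{i,t}'\subset\aA_{i,t}$ it is $>\delta$ throughout $\aA_{i,t}$ (this is also exactly the statement that the graph $\widehat{f}_t(\aA_{i,t})$ lies above $\Pi_i$). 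Hence $h_i\circ\widehat{f}_t>\delta$ on all of $D_{i,t}^*$.

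Combining the two inclusions, $\widehat{f}_t(D_{i,t}^*)\subset\{h_i>\delta\}$ is disjoint from $\Delta_{i,t}\subset\Pi_i=\{h_i=\delta\}$, so $\widehat{f}_t(D_{i,t}^*)\cap\Delta_{i,t}=\emptyset$. I expect no real obstacle beyond the topological bookkeeping for the two level curves --- that they are embedded circles bounding nested disks around $p_{i,t}$ --- which is where the $t\to0$ convergence enters; the separation itself is content-free.
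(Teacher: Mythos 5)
Your proposal is correct and follows the same height-function separation the paper uses: $\Delta_{i,t}$ sits in the level plane $\{h_i=\delta\}$, while $\widehat{f}_t(D_{i,t}^*)$ sits strictly above it. The one step you should not outsource to a "preceding claim" is the bound $h_i\circ\widehat{f}_t\geq\delta'$ on $D_{i,t}'\setminus\{p_{i,t}\}$: the first numbered Claim only controls $h_i\circ\widehat{f}_t$ on the circles $C(p_{i,t},\epsilon)$ and $C(p_{i,t},\epsilon')$, not on the disk $D_{i,t}'$, and the unnumbered sentence in the paper asserting this bound is itself part of the argument for the very claim you are proving. The fix is exactly the connectedness argument you already run on $\aA_{i,t}$: $D_{i,t}'\setminus\{p_{i,t}\}$ is connected, $h_i\circ\widehat{f}_t$ omits the value $\delta'$ there (the full $\delta'$-level set in $D^*_{p_{i,t},\epsilon'}$ is $\gamma_{i,t}'=\partial D_{i,t}'$), and $h_i\circ\widehat{f}_t\to+\infty$ as $z\to p_{i,t}$ since the Delaunay end escapes along the axis direction $-\widehat{N}_0(p_{i,0})$; hence $h_i\circ\widehat{f}_t>\delta'$ throughout. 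With that made explicit the argument is complete. Incidentally, your intermediate-value route on $\aA_{i,t}$ is a slightly more elementary substitute for the paper's invocation of the graph property of $\widehat{f}_t(\aA_{i,t})$; that graph statement is really needed for the subsequent filling construction, not for this disjointness claim.
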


	Define a sequence $(R_{i,t,k})$ such that $\widehat{f}_t(D_{i,t}^*)$ transversally intersects the planes $\{ h_i(x) = R_{i,t,k} \}$. Define
\begin{equation*}
\gamma_{i,t,k} := \left\{ z\in D_{i,t}^* \mid h_i\circ\widehat{f}_{i,t}(z) = R_{i,t,k} \right\},
\end{equation*}
and the topological disks $\Delta_{i,t,k}\subset \{ h_i(x) = R_{i,t,k} \} $ bounded by $\widehat{f}_t(\gamma_{i,t,k})$. Define $\aA_{i,t,k}$ as the annuli bounded by $\gamma_{i,t}$ and $\gamma_{i,t,k}$. Define ${W}_{i,t,k}\subset \Rr^3$ as the interior of $\widehat{f}_t(\aA_{i,t,k})\cup \Delta_{i,t}\cup \Delta_{i,t,k}$ and 
\begin{equation*}
{W}_{i,t} := \bigcup_{k\in \Nn} {W}_{i,t,k}.
\end{equation*}
Hence,
\begin{claim}
	The union $\widehat{f}_t(D_{i,t}^*)\cup \Delta_{i,t}$ is the boundary of a topological punctured ball ${W}_{i,t}\subset \Rr^3$.
\end{claim}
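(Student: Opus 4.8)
The plan is to realize $W_{i,t}$ as an increasing union of topological open $3$-balls and then identify this union with the solid half-cylinder $D^2\times[0,\infty)$, which is homeomorphic to a closed ball with one boundary point deleted; this is exactly the scheme used in \cite{minoids} in $\Rr^3$, and I would check that each step survives the passage to $\Hh^3$.

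Fix $i$ and $t>0$ small, and arrange the levels so that $\delta'<R_{i,t,1}<R_{i,t,2}<\cdots$ with $R_{i,t,k}\to+\infty$. I would first check that each $\Sigma_{i,t,k}:=\widehat{f}_t(\aA_{i,t,k})\cup\Delta_{i,t}\cup\Delta_{i,t,k}$ is an embedded, piecewise-smooth $2$-sphere in $\Rr^3$. Since $\widehat{f}_t$ embeds $D_{i,t}^*$, the piece $\widehat{f}_t(\aA_{i,t,k})$ is an embedded annulus; by the preceding claim it is a graph over the plane $\{h_i=\delta\}$, and because $\aA_{i,t,k}$ is precisely the region of $D_{i,t}^*$ between the level circles $\gamma_{i,t}=\{h_i\circ\widehat{f}_t=\delta\}$ and $\gamma_{i,t,k}=\{h_i\circ\widehat{f}_t=R_{i,t,k}\}$ (and $h_i\circ\widehat{f}_t>\delta$ on all of $D_{i,t}^*$), this annulus lies in the slab $\{\delta\le h_i\le R_{i,t,k}\}$ and meets its two bounding planes exactly along $\widehat{f}_t(\gamma_{i,t})$ and $\widehat{f}_t(\gamma_{i,t,k})$. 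The planar caps $\Delta_{i,t}\subset\{h_i=\delta\}$ and $\Delta_{i,t,k}\subset\{h_i=R_{i,t,k}\}$ close off these two circles and are mutually disjoint, so $\Sigma_{i,t,k}$ is an embedded sphere. By the Jordan--Brouwer separation theorem together with the piecewise-smooth Schoenflies theorem, $\Sigma_{i,t,k}$ bounds a closed ball whose interior is $W_{i,t,k}$, so each $W_{i,t,k}$ is an open $3$-ball.

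Next I would verify the nesting and identify the frontier. The annuli satisfy $\aA_{i,t,k}\subset\aA_{i,t,k+1}$, and the ``new'' part $\widehat{f}_t(\aA_{i,t,k+1}\setminus\aA_{i,t,k})\cup\Delta_{i,t,k+1}$ lies in $\{h_i\ge R_{i,t,k}\}$, hence is disjoint from the connected open set $W_{i,t,k}\subset\{\delta<h_i<R_{i,t,k}\}$; therefore $W_{i,t,k}$ lies in one component of $\Rr^3\setminus\Sigma_{i,t,k+1}$, and since it abuts the cross-sectional disk $\mathrm{int}\,\Delta_{i,t,k}\subset W_{i,t,k+1}$, that component is $W_{i,t,k+1}$, so $W_{i,t,k}\subset W_{i,t,k+1}$. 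Thus $(W_{i,t,k})_k$ increases to $W_{i,t}=\bigcup_k W_{i,t,k}$ and $\bigcup_k\widehat{f}_t(\aA_{i,t,k})=\widehat{f}_t(D_{i,t}^*)$. Because $\widehat{f}_t=\exp_{\I_2}^{-1}\circ f_t$ and the end of $f_t$ at $p_{i,t}$ tends to the ideal boundary of $\Hh^3$, which $\exp_{\I_2}^{-1}$ sends to infinity in $\Rr^3$, the values $R_{i,t,k}$ may be taken $\to+\infty$ and the caps $\Delta_{i,t,k}\subset\{h_i=R_{i,t,k}\}$ leave every compact subset of $\Rr^3$; a routine point-set argument then gives that the topological frontier of the open set $W_{i,t}$ in $\Rr^3$ is exactly $\widehat{f}_t(D_{i,t}^*)\cup\Delta_{i,t}$.

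Finally, modeling on $D^2\times[0,\infty)=\bigcup_k D^2\times[0,k]$, I would construct by induction on $k$ homeomorphisms $\Psi_k\colon\overline{W_{i,t,k}}\to D^2\times[0,k]$ sending $\Delta_{i,t}$ to $D^2\times\{0\}$, $\widehat{f}_t(\aA_{i,t,k})$ to $\partial D^2\times[0,k]$, and $\Delta_{i,t,k}$ to $D^2\times\{k\}$, with $\Psi_{k+1}$ restricting to $\Psi_k$ on $\overline{W_{i,t,k}}$; the inductive step homeomorphs $\overline{W_{i,t,k+1}}\setminus W_{i,t,k}$ onto $D^2\times[k,k+1]$ rel $\Delta_{i,t,k}$, using a collar of that free boundary face and the Alexander trick. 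Passing to the limit yields $\overline{W_{i,t}}\cong D^2\times[0,\infty)$, a closed ball minus one boundary point, with frontier $\widehat{f}_t(D_{i,t}^*)\cup\Delta_{i,t}$; hence $W_{i,t}$ is the asserted topological punctured ball. I expect this last monotone-union step to be the main obstacle: one must make the $\Psi_k$ genuinely compatible despite the fact that $\overline{W_{i,t,k}}$ is \emph{not} contained in the interior of $\overline{W_{i,t,k+1}}$ (they share $\Delta_{i,t}$ and the lower side annulus), and one must deal with the corners of $\Sigma_{i,t,k}$ along $\gamma_{i,t}$ and $\gamma_{i,t,k}$. This is precisely the argument carried out in \cite{minoids} in the Euclidean case, and it transfers here once the inputs established above --- embeddedness of the end, the graph property of the slice annuli, and the escape-to-infinity of the end through $\exp_{\I_2}$ --- are in hand.
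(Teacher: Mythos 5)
Your proposal follows the same scheme the paper (implicitly, via its reference to \cite{minoids}) has in mind: exhaust the end by annular slices, close each slice off by two planar caps to get an embedded piecewise-smooth sphere, appeal to Jordan--Brouwer and Schoenflies to get nested open balls $W_{i,t,k}$, and then identify the increasing union with a half-infinite solid cylinder. The paper itself gives essentially no argument for this claim (it just writes the definitions and says ``Hence''), so what you have done is supply the missing details of that construction, and you have done so correctly in outline.

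Two small technical points are glossed over and deserve a line each, since they are exactly the geometric input that makes the scheme work. First, you invoke ``the preceding claim'' to say $\widehat{f}_t(\aA_{i,t,k})$ is a graph over $\{h_i=\delta\}$, but the paper's graph statement is only for the thin annulus $\aA_{i,t}=D_{i,t}\setminus D_{i,t}'$, not for all of $\aA_{i,t,k}$. You do not actually need the global graph property for embeddedness of $\Sigma_{i,t,k}$ (that comes from the embeddedness of the end), but you do need to know (a) that the level set $\{h_i\circ\widehat f_t=R_{i,t,k}\}\cap D_{i,t}^*$ is a single circle, so that $\aA_{i,t,k}$ really is an annulus lying in the slab, and (b) that the cross-sectional disk $\mathrm{int}\,\Delta_{i,t,k}$ does not meet $\widehat f_t(D_{i,t}^*)$ again, so that it sits in $W_{i,t,k+1}$ and makes the nesting go through. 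Both facts come from the $\cC^1$-closeness of $\widehat f_t$ on $D_{i,t}^*$ to the image under $\exp_{\I_2}^{-1}$ of a Delaunay surface (where the axial height is eventually a proper Morse function with circular level sets and embedded solid tube), i.e.\ from the first two parts of Theorem~\ref{theoremPerturbedDelaunay}; that is the content Traizet's Proposition~6 actually uses and that you should cite. With those two facts made explicit, the rest — Schoenflies for the piecewise-smooth spheres, the escape to infinity of $R_{i,t,k}$ via properness of the Delaunay end and the diffeomorphism $\exp_{\I_2}$, and the inductive ``collar plus Alexander trick'' identification of $\overline{W_{i,t}}$ with $D^2\times[0,\infty)$ — is a sound and complete argument.
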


The union
\begin{equation*}
\widehat{f}_t\left( \Sigma_t\backslash \left( D_{1,t}\cup\cdots D_{n,t} \right) \right) \cup \Delta_{1,t}\cup \cdots \cup \Delta_{n,t}
\end{equation*}
is the boundary of a topological ball ${W}_{0,t}\subset \Rr^3$. Take
\begin{equation*}
W_t:= W_{0,t}\cup W_{1,t}\cup\cdots\cup W_{n,t}
\end{equation*}
to show that $\widehat{M}_t$, and hence $M_t$ is Alexandrov-embedded for $t>0$ small enough.

\begin{lemma}
	Let $S\subset\Hh^3$ be a sphere of hyperbolic radius $q$ centered at $p\in\Hh^3$. Let $n\geq 2$ and $\left\{ u_i \right\}_{i\in [1,n]}\subset T_p\Hh^3$. Let $\left\{ p_i \right\}_{i\in[1,n]}$ defined by $p_i=S\cap \geod(p,u_i)(\Rr_+)$.
	For all $i\in[1,n]$, let $S_i\subset\Hh^3$ be the sphere of hyperbolic radius $q$ such that $S\cap S_i =\{ p_i\}$. For all $(i,j)\in[1,n]^2$, let $\theta_{ij}$ be the angle between $u_i$ and $u_j$.
	
	If for all $i\neq j$, 
	\begin{equation*}
	\theta_{ij} > 2\arcsin\left(\frac{1}{2\cosh q}\right)
	\end{equation*}
	then $S_i\cap S_j = \emptyset$ for all $i\neq j$.
\end{lemma}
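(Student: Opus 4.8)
The plan is to reduce the statement to the hyperbolic law of cosines applied in a single triangle. The first step is to pin down the centre of each sphere $S_i$. Since $S$ has centre $p$ and radius $q$, while $S_i$ has radius $q$ and $S\cap S_i$ is a single point, the centres of $S$ and $S_i$ must lie at hyperbolic distance exactly $2q$ from one another, with the common point the midpoint of the joining geodesic segment. As $p_i=\geod(p,u_i)(q)$, this forces the centre of $S_i$ to be $c_i:=\geod(p,u_i)(2q)$; in particular $S_i$ is uniquely determined and $\distH{p}{c_i}=2q$ for every $i$.

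The second step is to rephrase disjointness of $S_i$ and $S_j$ as a separation estimate for their centres: two metric spheres of radius $q$ in $\Hh^3$ with centres $c_i$, $c_j$ are disjoint if and only if $\distH{c_i}{c_j}>2q$ (they are tangent when this distance equals $2q$ and meet along a circle when it is smaller). Hence it suffices to prove $\distH{c_i}{c_j}>2q$ whenever $i\neq j$.

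The third step is the computation. Applying the hyperbolic law of cosines to the triangle with vertices $p$, $c_i$, $c_j$, whose two sides issuing from $p$ have length $2q$ and meet at the angle $\theta_{ij}$ between $u_i$ and $u_j$, the opposite side $\ell_{ij}:=\distH{c_i}{c_j}$ satisfies
\[
\cosh\ell_{ij}=\cosh^2(2q)-\sinh^2(2q)\cos\theta_{ij}.
\]
Using $\cosh^2(2q)-1=\sinh^2(2q)$ and $\cosh(2q)+1=2\cosh^2 q$, the inequality $\cosh\ell_{ij}>\cosh(2q)$ is seen, after dividing by $\cosh(2q)-1>0$, to be equivalent to $\cos\theta_{ij}<1-\frac{1}{2\cosh^2 q}$, hence to $\sin^2(\theta_{ij}/2)>\frac{1}{4\cosh^2 q}$, hence to $\theta_{ij}>2\arcsin\left(\frac{1}{2\cosh q}\right)$, which is precisely the hypothesis. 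Therefore $\ell_{ij}>2q$ and $S_i\cap S_j=\emptyset$.

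I do not expect a genuine obstacle here: everything is elementary hyperbolic trigonometry, and the only point requiring a moment of care is the first one, namely the observation that two radius-$q$ spheres sharing exactly one point must have their centres at distance $2q$, which is what makes $S_i$ well defined with centre $\geod(p,u_i)(2q)$. Once this is in place the remainder is a short algebraic manipulation, and (as one checks using $\cosh q=H/\sqrt{H^2-1}$) the threshold $2\arcsin\left(\frac{1}{2\cosh q}\right)$ matches the angle condition \eqref{eqAnglesNnoid} appearing in Theorem \ref{theoremConstructionNnoids}.
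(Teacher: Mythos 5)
Your proof is correct, and it takes a genuinely different route from the paper's. The paper passes to the Poincaré ball model: it uses the fact that hyperbolic metric spheres are Euclidean spheres in that model, computes the Euclidean radii $R=\tanh\frac{q}{2}$ of $S$ and $r=\frac{\tanh(q/2)}{2\cosh q-1}$ of each $S_i$, and derives the disjointness condition $(R+r)\sin\frac{\theta_{ij}}{2}\geq r$ as an elementary Euclidean plane-geometry statement about tangent circles of radius $r$ sitting on a circle of radius $R$. You instead work intrinsically: you identify the hyperbolic centre of $S_i$ as $\geod(p,u_i)(2q)$, reduce disjointness to $\distH{c_i}{c_j}>2q$, and apply the hyperbolic law of cosines to the triangle $(p,c_i,c_j)$. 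The two routes of course yield the same threshold, but they illuminate it differently: the paper's model computation makes the Euclidean picture of a chain of tangent balls concrete (which is the geometry the $n$-noid construction ultimately relies on), while your intrinsic argument avoids choosing a model, makes the role of the angle $\theta_{ij}$ and the distance $2q$ transparent from the start, and goes through verbatim in $\Hh^n$ for any $n$. The only step that deserves a word of care — that two spheres of the same radius $q$ meeting in exactly one point are externally tangent, so their centres are at distance $2q$ with the tangency point as midpoint — is indeed the keystone of your argument, and you have correctly identified and justified it (equal radii rule out internal tangency, and distinct spheres rule out distance $0$).
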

\begin{proof}
	Without loss of generality, we assume that $p=\I_2$. We use the ball model of $\Hh^3$ equipped with its metric
	\begin{equation*}
	ds_\Bb^2(x) = \frac{4ds_E^2}{\left(1-\norm{x}_E^2\right)^2}
	\end{equation*} 
	where $ds_E$ is the euclidean metric and $\norm{x}_E$ is the euclidean norm. In this model, the sphere $S$ is centered at the origin and has euclidean radius $R=\tanh \frac{q}{2}$. For all $i\in[1,n]$, the sphere $S_i$ has euclidean radius 
	\begin{equation*}
	r = \frac{1}{2}\left(\tanh\frac{3q}{2} - \tanh\frac{q}{2}\right) = \frac{\tanh \frac{q}{2}}{2\cosh q -1}.
	\end{equation*}
	Let $j\neq i$. In order to have $S_i\cap S_j = \emptyset$, one must solve
	\begin{equation*}
		(R+r)\sin\frac{\theta_{ij}}{2} \geq r,
	\end{equation*}
	which gives the expected result.
\end{proof}

In order to prove the last point of Theorem \ref{theoremConstructionNnoids}, just note that 
\begin{equation*}
	H = \coth q \implies \frac{1}{2\cosh q}=\frac{\sqrt{H^2-1}}{2H}. 
\end{equation*}
Suppose that the angle $\theta_{ij}$ between $u_i$ and $u_j$ satisfies Equation \eqref{eqAnglesNnoid} for all $i\neq j$. Then for $t>0$  small enough, the proper immersion $F_t$ given by Definition \ref{defAlexandrovEmbedded} is injective (because of the convergence towards a chain of spheres) and hence $M_t$ is embedded.

\begin{remark}
	This means for example that in hyperbolic space, one can construct embedded CMC $n$-noids with seven coplanar ends or more.
\end{remark}

\section{Gluing Delaunay ends to minimal $n$-noids}\label{sectionMinoids}

Again, this section is an adaptation of Traizet's work in \cite{minoids} applied to the proof of Theorem \ref{theoremConstructionMinoids}. We first give in Section \ref{sectionBlowUp} a blow-up result for CMC $H>1$ surfaces in Hyperbolic space. We then introduce in Section \ref{sectionDataMinoids} the DPW data giving rise to the surface $M_t$ of Theorem \ref{theoremConstructionMinoids} and prove the convergence towards the minimal $n$-noid. Finally, using the same arguments than in \cite{minoids}, we prove Alexandrov-embeddedness in Section \ref{sectionAlexEmbeddedMnoids}.

\subsection{A blow-up result}\label{sectionBlowUp}

As in $\Rr^3$ (see \cite{minoids}), the DPW method accounts for the convergence of CMC $H>1$ surfaces in $\Hh^3$ towards minimal surfaces of $\Rr^3$ (after a suitable blow-up). We work with the following Weierstrass parametrisation:
\begin{equation}\label{eqWeierstrass}
W(z) = W(z_0) + \Re \int_{z_0}^z\left( \frac{1}{2}(1-g^2)\omega, \frac{i}{2}(1+g^2)\omega, g\omega \right)
\end{equation}

\begin{proposition}\label{propBlowUpDPW}
	Let $\Sigma$ be a Riemann surface, $(\xi_t)_{t\in I}$ a family of DPW potentials on $\Sigma$ and $(\Phi_t)_{t\in I}$ a family of solutions to $d\Phi_t = \Phi_t\xi_t$ on the universal cover $\widetilde{\Sigma}$ of $\Sigma$, where $I\subset \Rr$ is a neighbourhood of $0$. Fix a base point $z_0\in\widetilde{\Sigma}$ and $\rho>e^{q} >1$.  Assume that
	\begin{enumerate}
		\item $(t,z)\mapsto \xi_t(z)$ and $t\mapsto \Phi_t(z_0)$ are $\cC^1$ maps into $\Omega^1(\Sigma,\LsldeuxCrho)$ and $\LSLdeuxCrho$ respectively.
		\item For all $t\in I$, $\Phi_t$ solves the monodromy problem \eqref{eqMonodromyProblem}.
		\item $\Phi_0(z,\la)$ is independant of $\la$:
		\begin{equation*}
		\Phi_0(z,\la) = \begin{pmatrix}
		a(z) & b(z)\\
		c(z) & d(z) 
		\end{pmatrix}.
		\end{equation*}
	\end{enumerate}
	Let $f_t = \Sym_q\left(\Uni(\Phi_t)\right): \Sigma \longrightarrow \Hh^3$ be the CMC $H=\coth q$ immersion given by the DPW method. Then, identifying $T_{\I_2}\Hh^3$ with $\Rr^3$ via the basis $(\sigma_1,\sigma_2,\sigma_3)$ defined in \eqref{eqPauliMatrices},
	\begin{equation*}
	\lim\limits_{t\to 0}\frac{1}{t}\left(f_t - \I_2\right) =  W
	\end{equation*}
	where $W$ is a (possibly branched) minimal immersion with the following Weierstrass data:
	\begin{equation*}
	g(z) = \frac{a(z)}{c(z)}, \quad \omega(z) = -4(\sinh q) c(z)^2 \frac{\partial \xi_{t,12}^{(-1)}(z)}{\partial t}\mid_{t=0}.
	\end{equation*}
	The limit is for the uniform $\cC^1$ convergence on compact subsets of $\Sigma$.
\end{proposition}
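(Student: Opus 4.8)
The plan is to differentiate the Sym--Bobenko formula in $t$ at $t=0$, where hypothesis 3 collapses the surface to the point $\I_2$, and to recognise the first-order term as the real part of the Weierstrass $1$-form of \eqref{eqWeierstrass}. I would first record the regularity that makes this differentiation legitimate: since $(t,z)\mapsto\xi_t(z)$ and $t\mapsto\Phi_t(z_0)$ are $\cC^1$ and $\LSLdeuxCrho$ is a Banach Lie group, smooth dependence on parameters for the Cauchy problem \eqref{eqCauchyProblem} shows that $(t,z)\mapsto\Phi_t(z)$ is jointly $\cC^1$ on $\widetilde{\Sigma}$; the Iwasawa decomposition and $\Sym_q$ being smooth, the same holds for $F_t:=\Uni\Phi_t$, for $b_t(z)$ (the upper-left entry of $\Pos\Phi_t(z)$ at $\la=0$) and for $f_t=\Sym_q F_t$, and by hypothesis 2 all of these descend to $\Sigma$. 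In particular the ingredients of \eqref{eqdf} depend $\cC^1$ on $(t,z)$, so $\tfrac1t\,df_t$ will admit a limit uniform on compact subsets.

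Next I would use $\la$-independence at $t=0$. Because $\xi_0=\Phi_0^{-1}d\Phi_0$ is $\la$-independent and of DPW form, its $(1,2)$-entry $\la^{-1}\beta_0$ is $\la$-independent, hence $\beta_0(z,\cdot)$ is a multiple of $\la$ and $\beta_0(z,0)=0$. Moreover the Iwasawa factors of the constant holomorphic loop $\Phi_0(z)=\left(\begin{smallmatrix}a&b\\c&d\end{smallmatrix}\right)$ are themselves constant loops --- the classical $KAN$-decomposition of $\SL(2,\Cc)$:
\begin{equation*}
F_0(z)=\frac{1}{\sqrt{|a|^2+|c|^2}}\begin{pmatrix}a&-\conj{c}\\ c&\conj{a}\end{pmatrix}\in\SU(2),\qquad b_0(z)=\sqrt{|a(z)|^2+|c(z)|^2}.
\end{equation*}
Thus $f_0=\Sym_q F_0=F_0F_0^{*}=\I_2$ for all $z$ (and $N_0=\Nor_q F_0=F_0\sigma_3F_0^{*}$ is the inverse stereographic image of $g:=a/c$ in the basis of \eqref{eqPauliMatrices}, which explains the form of the limit Gauss map); in particular $\tfrac1t(f_t-\I_2)$ is a genuine difference quotient.

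Now I would take the limit. Writing $\beta_t(z,0)=t\,\dot\beta_0(z,0)+o(t)$ with $\dot\beta_0(z,0)=\tfrac{\partial}{\partial t}\xi_{t,12}^{(-1)}(z)|_{t=0}$ evaluated at $\la=0$ (only this value enters \eqref{eqdf}), dividing \eqref{eqdf} by $t$ and letting $t\to0$ gives, uniformly on compacts,
\begin{equation*}
\frac1t\,df_t(z)\ \tendsto{t\to0}\ \eta(z):=2(\sinh q)\,b_0(z)^2\,F_0(z)\begin{pmatrix}0&\dot\beta_0(z,0)\\ \conj{\dot\beta_0(z,0)}&0\end{pmatrix}F_0(z)^{*}.
\end{equation*}
Substituting the expressions above, setting $g=a/c$ and $\omega=-4(\sinh q)\,c^2\dot\beta_0(z,0)$ so that $b_0^2=|a|^2+|c|^2$, and conjugating the off-diagonal matrix by $\left(\begin{smallmatrix}a&-\conj{c}\\ c&\conj{a}\end{smallmatrix}\right)$, a finite computation yields
\begin{equation*}
\eta=\begin{pmatrix}\Re(g\omega)&\tfrac12(\conj{\omega}-g^2\omega)\\ \tfrac12(\omega-\conj{g}^2\conj{\omega})&-\Re(g\omega)\end{pmatrix},
\end{equation*}
which is precisely the matrix corresponding, via the basis $(\sigma_1,\sigma_2,\sigma_3)$, to the $\Rr^3$-valued $1$-form $\Re\bigl(\tfrac12(1-g^2)\omega,\ \tfrac{i}{2}(1+g^2)\omega,\ g\omega\bigr)$, i.e.\ the integrand of \eqref{eqWeierstrass}.

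Finally, $\tfrac1t(f_t(z)-\I_2)=\tfrac1t\int_{z_0}^z df_t+\tfrac1t(f_t(z_0)-\I_2)$ converges to $W(z):=\tfrac{\partial}{\partial t}f_t(z_0)|_{t=0}+\int_{z_0}^z\eta$, the first term by the uniform convergence above and the second because $f_t(z_0)$ is $\cC^1$ in $t$ with $f_0(z_0)=\I_2$. Since $\eta$ is the real part of a closed holomorphic $\Cc^3$-valued $1$-form obeying the null condition $\sum(\cdot)^2=0$, with periods on $\Sigma$ vanishing (limits of $\tfrac1t$ times the vanishing periods of $df_t$), $W$ is a well-defined, possibly branched, conformal minimal immersion of $\Sigma$ with Weierstrass data $g=a/c$ and $\omega=-4(\sinh q)c^2\,\tfrac{\partial}{\partial t}\xi_{t,12}^{(-1)}|_{t=0}$, branched at the zeros of $\omega$. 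The convergence is $\cC^1$ on compact subsets because $\tfrac1t\,df_t\to\eta=dW$ uniformly on compacts, which together with the $\cC^0$ convergence just obtained forces $\cC^1$ convergence. The main obstacle is really the regularity of the first paragraph: extracting joint $\cC^1$ dependence of $\Phi_t$ --- hence of the ingredients $F_t(e^{-q})$, $b_t$, $\beta_t(z,0)$ of \eqref{eqdf} --- on $(t,z)$ from the loop-group-valued Cauchy problem and the smoothness of the Iwasawa splitting is what legitimises dividing by $t$, exchanging limit and integral, and upgrading to $\cC^1$ convergence; everything downstream is the elementary observation that a constant loop has constant Iwasawa factors with $\beta_0(z,0)=0$, together with the finite matrix identity of the third paragraph.
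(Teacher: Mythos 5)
Your proposal is correct and follows essentially the same route as the paper: establish joint $\cC^1$ dependence of $\Phi_t$, $F_t$, $B_t$ on $(t,z)$; observe that the $\la$-constant loop $\Phi_0$ has $\la$-constant Iwasawa factors forcing $f_0\equiv\I_2$ and $\beta_0(z,0)=0$; differentiate the formula \eqref{eqdf} at $t=0$ (where only the inner factor contributes since $\beta_0(z,0)=0$); and recognise the result as the real part of the Weierstrass integrand in the basis $(\sigma_1,\sigma_2,\sigma_3)$. The only difference is cosmetic — you spell out the matrix $\eta$ and the closedness/null-condition/period remark slightly more explicitly, while the paper gives the same computation directly in $\Rr^3$-coordinates — and the regularity discussion you flag as the "main obstacle" is exactly what the paper dispatches by citing the $\Rr^3$ analogue in \cite{minoids}.
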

\begin{proof}
	With the same arguments as in \cite{minoids}, $(t,z)\mapsto \Phi_t(z)$, $(t,z)\mapsto F_t(z)$ and $(t,z)\mapsto B_t(z)$ are $\cC^1$ maps into $\LSLdeuxCrho$, $\LSUdeuxrho$ and $\LplusRSLdeuxC_{\rho}$ respectively. At $t=0$, $\Phi_0$ is constant. Thus, $F_0$ and $B_0$ are constant with respect to $\la$:
	\begin{equation*}
	F_0 = \frac{1}{\sqrt{|a|^2+|c|^2}}\begin{pmatrix}
	a & -\bar{c}\\
	c & \bar{a}
	\end{pmatrix}, \quad B_0 = \frac{1}{\sqrt{|a|^2+|c|^2}} \begin{pmatrix}
	|a|^2+|c|^2 & \bar{a}b + \bar{c}d\\
	0 & 1
	\end{pmatrix}.
	\end{equation*}
	Thus, $F_0(z,e^{-q})\in\SU(2)$ and $f_0(z)$ degenerates into the identity matrix. Let $b_t := B_{t,11}\mid_{\la=0}$ and $\beta_t$ the upper-right residue at $\la=0$ of the potential $\xi_t$. Recalling Equation \eqref{eqdf},
	\begin{equation*}
	df_t(z) = 2b_t(z)^2\sinh q F_t(z,e^{-q})\begin{pmatrix}
	0 & \beta_t(z)\\
	\conj{\beta}_t(z) & 0
	\end{pmatrix}{F_t(z,e^{-q})}^*.
	\end{equation*}
	Hence $(t,z)\mapsto df_t(z)$ is a $\cC^1$ map. At $t=0$, $\xi_0 = \Phi_0^{-1}d\Phi_0$ is constant with respect to $\la$, so $\beta_0=0$ and $df_0(z) = 0$. Define $\widetilde{f}_t(z) := \frac{1}{t}\left( f_t(z) - \I_2 \right)$ for $t\neq 0$. Then $d\widetilde{f}_t(z)$ extends at $t=0$, as a continuous function of $(t,z)$ by
	\begin{align*}
	d\widetilde{f}_0 = \frac{d}{dt}df_t\mid_{t=0} &= 2\sinh q\begin{pmatrix}
	a & -\conj{c}\\
	c & \conj{a}
	\end{pmatrix}\begin{pmatrix}
	0 & \beta'\\
	\conj{\beta'} & 0
	\end{pmatrix}\begin{pmatrix}
	\conj{a} & \conj{c}\\
	-c & a
	\end{pmatrix}\\
	&= 2 \sinh q\begin{pmatrix}
	-ac\beta' - \conj{ac\beta'} & a^2\beta' - \conj{c^2\beta'}\\
	\conj{a^2\beta'} - c^2\beta' &  ac\beta' + \conj{ac\beta'}
	\end{pmatrix}
	\end{align*}
	where $\beta' = \frac{d}{dt}\beta_t\mid_{t=0}$. In $T_{\I_2}\Hh^3$, this gives
	\begin{equation*}
	d\widetilde{f}_0 = 4\sinh q\Re \left(  \frac{1}{2}\beta'(a^2-c^2), \frac{-i}{2}\beta'(a^2+c^2), -ac\beta' \right).
	\end{equation*}
	Writing $g=\frac{a}{c}$ and $\omega = -4c^2\beta' \sinh q$ gives:
	\begin{equation*}
	\widetilde{f}_0(z) = \widetilde{f}_0(z_0) + \Re  \int_{z_0}^{z} \left( \frac{1}{2}(1-g^2)\omega, \frac{i}{2}(1+g^2)\omega, g\omega \right).
	\end{equation*}
\end{proof}

As a useful example for Proposition \ref{propBlowUpDPW}, one can show the convergence of Delaunay surfaces in $\Hh^3$ towards a minimal catenoid.

\begin{proposition}\label{propBlowupCatenoid}
	Let $q>0$, $A_t = A_{r,s}$ as in \eqref{eqArs} with $r\leq s$ satisfying \eqref{eqrst}. Let $\Phi_t(z):=z^{A_t}$ and $f_t := \Sym_q\left(\Uni \Phi_t\right)$. Then 
	\begin{equation*}
	\widetilde{f} := \lim\limits_{t\to 0}\frac{1}{t}\left(f_t-\I_2\right) = \psi
	\end{equation*}
	where $\psi:\Cc^*\longrightarrow\Rr^3$ is the immersion of a catenoid centered at $(0,0,1)$, of neck radius $1$ and of axis orientd by the positive $x$-axis in the direction from $z=0$ to $z=\infty$.
\end{proposition}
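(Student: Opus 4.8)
The plan is to obtain Proposition \ref{propBlowupCatenoid} as a direct application of Proposition \ref{propBlowUpDPW} to the Delaunay DPW data $\left(\Cc^*,\xi_t,1,\I_2\right)$ with $\xi_t(z)=A_tz^{-1}dz$ and $\Phi_t(z)=z^{A_t}$, and then to identify the resulting minimal immersion by computing its Weierstrass data. First I would check the three hypotheses of Proposition \ref{propBlowUpDPW}. With the convention $r\le s$, the parametrisation \eqref{eqrst} has $s(0)=\tfrac12$ and $r(t)=t/(4s(t)\sinh q)$, both smooth near $t=0$; hence $t\mapsto A_t$ is $\cC^1$, $\xi_t$ is a $\cC^1$ family of holomorphic $1$-forms on $\Cc^*$ valued in $\LsldeuxCrho$, and $\Phi_t(1)=\I_2$ is constant in $t$, giving hypothesis (1). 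Hypothesis (2), that the monodromy problem \eqref{eqMonodromyProblem} holds for all $t$, is exactly the computation recalled in Section \ref{sectionDelaunaySurfaces}: $iA_t\in\Lsurho$ forces $\mM(\Phi_t)=\exp(2i\pi A_t)\in\LSUdeuxrho$, and since $r,s$ satisfy \eqref{eqrs} one has $A_t(e^{-q})^2=\tfrac14\I_2$, whence $\mM(\Phi_t)(e^{-q})=-\I_2$. Finally, the constraint $r\le s$ forces $r(0)=0$, $s(0)=\tfrac12$, so $A_0=\tfrac12\sigma_1$ is independent of $\la$, and so is $\Phi_0(z)=z^{A_0}$, giving hypothesis (3).

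Next I would extract the Weierstrass data produced by Proposition \ref{propBlowUpDPW}. Using $\sigma_1^2=\I_2$,
\[
\Phi_0(z)=z^{A_0}=\cosh\!\left(\tfrac12\log z\right)\I_2+\sinh\!\left(\tfrac12\log z\right)\sigma_1,
\]
so in the notation of Proposition \ref{propBlowUpDPW}, $a(z)=d(z)=\tfrac12\!\left(z^{1/2}+z^{-1/2}\right)$ and $b(z)=c(z)=\tfrac12\!\left(z^{1/2}-z^{-1/2}\right)$, hence $g=a/c=\dfrac{z+1}{z-1}$. For $\omega$, note that $\xi_{t,12}^{(-1)}(z)=r(t)z^{-1}dz$, so $\partial_t\xi_{t,12}^{(-1)}|_{t=0}=r'(0)z^{-1}dz$ with $r'(0)=1/(2\sinh q)$ coming from $r(t)=t/(4s(t)\sinh q)$ and $s(0)=\tfrac12$; therefore
\[
\omega(z)=-4(\sinh q)\,c(z)^2\,r'(0)\,z^{-1}dz=-\tfrac12\,\frac{(z-1)^2}{z^2}\,dz.
\]
Since $g$ has only a simple pole and $\omega$ only a double zero, both at $z=1$, the induced metric $(1+|g|^2)|\omega|$ stays bounded away from $0$ and $\infty$ on $\Cc^*$, so the limit is a genuine (unbranched) immersion, as asserted.

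Finally I would substitute into the Weierstrass formula \eqref{eqWeierstrass}, with base point $z_0=1$ and $W(z_0)=\widetilde f(1)=0$ (because $\Phi_t(1)=\I_2$ gives $f_t(1)=\Sym_q(\I_2)=\I_2$ for every $t$). A short computation gives the integrand $\left(z^{-1},\,-\tfrac i2(1+z^{-2}),\,-\tfrac12(1-z^{-2})\right)dz$, and integrating from $1$ to $z$ yields
\[
W(z)=\left(\log|z|,\;\tfrac12\Im(z-z^{-1}),\;1-\tfrac12\Re(z+z^{-1})\right).
\]
Setting $z=e^{u+iv}$ turns this into $W(e^{u+iv})=\left(u,\;\cosh u\,\sin v,\;1-\cosh u\,\cos v\right)$, which is the usual parametrisation of a catenoid with axis the line $\{y=0,\,z=1\}$ (parallel to the $x$-axis, through $(0,0,1)$) and neck circle $\{u=0\}$ of radius $1$ centred at $(0,0,1)$; since $u=\log|z|\to-\infty$ as $z\to0$ and $\to+\infty$ as $z\to\infty$, the direction from the $z=0$ end to the $z=\infty$ end is the positive $x$-axis, so $\widetilde f=\psi$.

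The whole argument is a verification once Proposition \ref{propBlowUpDPW} is in hand, and I do not expect any conceptual obstacle. The one place that needs care is the sign and branch bookkeeping: pinning down $r(0)=0$, $s(0)=\tfrac12$ and $r'(0)=1/(2\sinh q)$ from the convention $r\le s$, and checking that the sign choices entering $g$, $c^2$ and $\omega$ remain mutually consistent so that the image is exactly the catenoid with the stated position, neck radius and orientation (a different branch choice would replace $W$ by a reparametrisation of the same catenoid, so one must argue that the orientation from $z=0$ to $z=\infty$ is the one claimed).
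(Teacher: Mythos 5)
Your proof is correct and follows essentially the same route as the paper's: apply Proposition \ref{propBlowUpDPW}, identify $g=\frac{z+1}{z-1}$ and $\omega=-\tfrac12\left(\frac{z-1}{z}\right)^2dz$ from $\Phi_0=z^{A_0}$ with $A_0=\tfrac12\sigma_1$ and $r'(0)=1/(2\sinh q)$, fix the base point at $z=1$, and integrate the Weierstrass representation to recognize the catenoid. You spell out the verification of the three hypotheses of Proposition \ref{propBlowUpDPW} and flag the branch and sign bookkeeping (in particular that $r(0)=0$, $s(0)=\tfrac12$ in the catenoidal convention, so that $A_0$ is $\la$-independent), which the paper leaves implicit, but the substance is the same.
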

\begin{proof}
	Compute
	\begin{equation*}
	\Phi_0(z,\la) = \begin{pmatrix}
	\cosh\left(\frac{\log z}{2}\right) & \sinh\left(\frac{\log z}{2}\right)\\
	\sinh\left(\frac{\log z}{2}\right) & \cosh\left(\frac{\log z}{2}\right)
	\end{pmatrix}
	\end{equation*}
	and 
	\begin{equation*}
	\frac{\partial \xi_{t,12}^{(-1)}(z)}{\partial t}\mid_{t=0} = \frac{z^{-1}dz}{2\sinh q}
	\end{equation*}
	in order to apply Proposition \ref{propBlowUpDPW} and get
	\begin{equation*}
	\widetilde{f}(z) = \widetilde{f}(1) + \Re  \int_{1}^{z} \left( \frac{1}{2}(1-g^2)\omega, \frac{i}{2}(1+g^2)\omega, g\omega \right)
	\end{equation*}
	where
	\begin{equation*}
	g(z) = \frac{z+1}{z-1}\qquad\text{and}\qquad \omega(z) = \frac{-1}{2}\left(\frac{z-1}{z}\right)^2dz.
	\end{equation*}
	Note that $\Phi_t(1) = \I_2$ for all $t$ to show that $\widetilde{f}(1) = 0$ and get
	\begin{equation*}
	\widetilde{f}(z) = \Re  \int_{1}^{z} \left( w^{-1}dw, \frac{-i}{2}(1+w^2)w^{-2}dw, \frac{1}{2}(1-w^2)w^{-2}dw \right).
	\end{equation*}
	Integrating gives for $(x,y)\in\Rr\times [0,2\pi]$:
	\begin{equation*}
	\widetilde{f}(e^{x+iy}) = \psi(x,y)
	\end{equation*}
	where
	\begin{equation*}
	\function{\psi}{\Rr\times [0,2\pi]}{\Rr^3}{(x,y)}{\left( x,\cosh(x)\sin(y), 1 - \cosh(x)\cos(y) \right)}
	\end{equation*}
	and hence the result.
\end{proof}

\subsection{The DPW data}\label{sectionDataMinoids}

In this Section, we introduce the DPW data inducing the surface $M_t$ of Theorem \ref{theoremConstructionMinoids}. The method is very similar to Section \ref{sectionNnoids} and to \cite{minoids}, which is why we omit the details.

\paragraph{The data.}

Let $(g,\omega)$ be the Weierstrass data (for the parametrisation defined in \eqref{eqWeierstrass}) of the minimal $n$-noid $M_0\subset \Rr^3$. 
If necessary, apply a M\"obius transformation so that $g(\infty)\notin\left\{0,\infty \right\}$, and write
\begin{equation*}
g(z) = \frac{A(z)}{B(z)}, \qquad \omega(z) = \frac{B(z)^2dz}{\prod_{i=1}^{n}(z-p_{i,0})^2}.
\end{equation*}
Let $H>1$, $q>0$ so that $H=\coth q$ and $\rho>e^{q}$. Consider $3n$ parameters $a_i,b_i,p_i\in\Lambda\Cc_\rho$ ($i\in[1,n]$) assembled into a vector $\varx$. Let 
\begin{equation*}
A_\varx(z,\la) = \sum_{i=1}^{n} a_i(\la)z^{n-1}, \qquad B_\varx(z,\la) = \sum_{i=1}^{n}b_i(\la)z^{n-1}
\end{equation*}
and
\begin{equation*}
g_\varx(z,\la) = \frac{A_\varx(z,\la)}{B_\varx(z,\la)}, \qquad \omega_\varx(z,\la) = \frac{B_\varx(z,\la)^2dz}{\prod_{i=1}^{n}(z-p_i(\la))^2}.
\end{equation*}
The vector $\varx$ is chosen in a neighbourhood of a central value $\varx_0\in\Cc^{3n}$ so that $A_{\varx_0}=A$, $B_{\varx_0} = B$ and $\omega_{\varx_0} = \omega$. Let $p_{i,0}$ denote the central value of $p_i$. Introduce a real parameter $t$ in a neighbourhood of $0$ and write
\begin{equation*}
\beta_t(\la) := \frac{t(\la-e^q)(\la-e^{-q})}{4\sinh q}.
\end{equation*}
The potential we use is
\begin{equation*}
\xi_{t,\varx}(z,\la) = \begin{pmatrix}
0 & \la^{-1}\beta_t(\la)\omega_\varx(z,\la)\\
d_zg_\varx(z,\la) & 0
\end{pmatrix}
\end{equation*}
defined for $(t,\varx)$ sufficiently close to $(0,\varx_0)$ on
\begin{equation*}
	\Omega = \left\{ z\in\Cc \mid \forall i\in\left[ 1,n \right], |z-p_{i,0}|>\epsilon \right\}\cup \left\{\infty \right\}
\end{equation*}
where $\epsilon>0$ is a fixed constant such that the disks $D(p_{i,0},2\epsilon)$ are disjoint. The initial condition is 
\begin{equation*}
	\phi(\la) = \begin{pmatrix}
	ig_\varx(z_0,\la) & i\\
	i & 0
	\end{pmatrix}
\end{equation*}
taken at $z_0\in\Omega$ away from the poles and zeros of $g$ and $\omega$. Let $\Phi_{t,\varx}$ be the holomorphic frame arising from the data $\left(\Omega,\xi_{t,\varx},z_0,\phi\right)$ via the DPW method and $f_{t,\varx}:=\Sym_q\left(\Uni \Phi_{t,\varx}\right)$.

Follow Section 6 of \cite{minoids} to show that the potential $\xi_{t,\varx}$ is regular at the zeros of $B_\varx$ and to solve the monodromy problem around the poles at $p_{i,0}$ for $i\in\left[1,n-1\right]$. The Implicit Function Theorem allows us to define $\varx = \varx(t)$ in a small neighbourhood $(-T,T)$ of $t=0$ satisfying $\varx(0) = \varx_0$ and such that the monodromy problem is solved for all $t$. We can thus drop from now on the index $\varx$ in our data. As in \cite{minoids}, $f_t$ descends to $\Omega$ and analytically extends to $\Cc\cup \{\infty\}\backslash\left\{ p_{1,0},\dots,p_{n,0} \right\}$. This defines a smooth family $(M_t)_{-T<t<T}$ of CMC $H$ surfaces of genus zero with $n$ ends in $\Hh^3$.

The convergence of $\frac{1}{t}\left( M_t-\I_2 \right)$ towards the minimal $n$-noid $M_0$ (point 2 of Theorem \ref{theoremConstructionMinoids}) is a straightforward application of Proposition \ref{propBlowUpDPW} together with
\begin{equation*}
\frac{\Phi_{0,11}(z)}{\Phi_{0,21}(z)} = g(z),\quad -4\left(\sinh q\right)\left(\Phi_{0,21}(z)\right)^2\frac{\partial\xi_{t,12}^{(-1)}(z)}{\partial t} = \omega(z).
\end{equation*}

\paragraph{Delaunay residue.}

To show that $\xi_t$ is a perturbed Delaunay potential around each of its poles, let $i\in[1,n]$ and follow Section \ref{sectionDelaunayEndsNnoids} with
\begin{equation*}
	\psi_{i,t,\la}(z) = g_t^{-1}\left( z+g_t(p_{i,t}(\la)) \right).
\end{equation*}
Define
\begin{equation*}
	\widetilde{\omega}_{i,t}(z,\la):=\psi_{i,t,\la}^*\omega_t(z)
\end{equation*}
and
\begin{equation*}
	\alpha_{i,t}(\la) := \Res_{z=0}(z\widetilde{\omega}_{i,t}(z,\la)).
\end{equation*}
Use Proposition 5, Claim 1 of \cite{minoids} to show that for $T$ small enough, $\alpha_{i,t}$ is real and does not depend on $\la$. Set
\begin{equation*}
\left\{
\begin{array}{l}
rs = \frac{t\alpha_{i,t}}{4\sinh q},\\
r^2+s^2+2rs\cosh q = \frac{1}{4},\\
r<s
\end{array}
\right.
\end{equation*}
and
\begin{equation*}
	G_{t}(z,\la) = \begin{pmatrix}
	\frac{\sqrt{r\la+s}}{\sqrt{z}} & \frac{-1}{2\sqrt{r\la+s}\sqrt{z}}\\
	0 & \frac{\sqrt{z}}{\sqrt{r\la+s}}
	\end{pmatrix}.
\end{equation*}
Define the gauged potential
\begin{equation*}
	\widetilde{\xi}_{i,t}(z,\la) := \left( (\psi_{i,t,\la}^*\xi_t)\cdot G_t \right)(z,\la)
\end{equation*}
and compute its residue to show that it is a perturbed Delaunay potential as in Definition \ref{defPerturbedDelaunayPotential}.

\paragraph{Applying Theorem \ref{theoremPerturbedDelaunay}.}

At $t=0$ and $z=1$, writing $\pi_i:=g(p_{i,0})$ to ease the notation,
\begin{equation*}
	\widetilde{\Phi}_{i,0}(1,\la) = \begin{pmatrix}
	i\left( 1+\pi_i \right) & i\\
	i&0
	\end{pmatrix}\begin{pmatrix}
	\frac{1}{\sqrt{2}} & \frac{-1}{\sqrt{2}} \\
	0 & \sqrt{2}
	\end{pmatrix} = \frac{i}{\sqrt{2}}\begin{pmatrix}
	1+\pi_i & 1-\pi_i\\
	1  & -1
	\end{pmatrix} =: M_i,
\end{equation*}
and thus, $\widetilde{\xi}_{i,0}(z) = M_iz^{A_0}$. Recall \eqref{eqHtDt} and let
\begin{equation*}
	H:= H_0 = \frac{1}{\sqrt{2}}\begin{pmatrix}
	1 & -1 \\
	1 & 1
	\end{pmatrix}\in\LSUdeuxrho
\end{equation*}
and $Q_i := \Uni\left( M_iH^{-1} \right)$. Using Lemma 1 in \cite{minoids}, $Q_i$ can be made explicit and one can find a change of coordinates $h$ and a gauge $G$ such that $\widehat{\Phi}_{i,t} := (Q_iH)^{-1}\left(h^* \widetilde{\Phi}_{i,t}\right)G$ solves  $d\widehat{\Phi}_{i,t} = \widehat{\Phi}_{i,t}\widehat{\xi}_{i,t}$ where $\widehat{\xi}_{i,t}$ is a perturbed Delaunay potential and $\widehat{\Phi}_{i,0}(z) = z^{A_0}$. One can thus apply Theorem \ref{theoremPerturbedDelaunay} on $\widehat{\xi}_{i,t}$ and $\widehat{\Phi}_{i,t}$, which proves the existence of the family $\left(M_t\right)_{-T<t<T}$ of CMC $H$ surfaces of genus zero and $n$ Delaunay ends, each of weight (according to Equation \eqref{eqPoidsDelaunayPure})
\begin{equation*}
w_{i,t} = 8\pi rs\sinh q = 2\pi t \alpha_{i,t},
\end{equation*}
which proves the first point of Theorem \ref{theoremConstructionMinoids}. Let $\widehat{f}_{i,t}:=\Sym_q\left(\Uni \widehat{\Phi}_{i,t}\right)$ and let $\widehat{f}_{i,t}^\dD$ be the Delaunay immersion given by Theorem \ref{theoremPerturbedDelaunay}.

\paragraph{Limit axis.}

In order to compute the limit axis of $f_t$ at the end around $p_{i,t}$, let $\widehat{\Delta}_{i,t}$ be the oriented axis of $\widehat{f}_{i,t}^\dD$ at $z=0$. Then, using Theorem \ref{theoremPerturbedDelaunay},
\begin{equation*}
\widehat{\Delta}_{i,0} =  \geod\left( \I_2, -\sigma_1 \right).
\end{equation*}
And using $\widehat{f}_{i,t}(z) = (Q_iH)^{-1}\cdot \left( h^*f_t(z) \right)$,
\begin{equation*}
\widehat{\Delta}_{i,0} = (Q_iH)^{-1}\cdot\Delta_{i,0},
\end{equation*}
and thus,
\begin{equation*}
\Delta_{i,0} = (QH) \cdot \geod(\I_2,-\sigma_1).
\end{equation*}
Compute $H\cdot (-\sigma_1) = \sigma_3$ and note that $M_iH^{-1} = \Phi_0(\pi_i)$ to get
\begin{equation*}
	\Delta_{i,0} = \geod\left( \I_2, N_0(p_{i,0}) \right)
\end{equation*}
where $N_0$ is the normal map of the minimal immersion.

\paragraph{Type of the ends.}

Suppose that $t$ is positive. Then the end at $p_{i,t}$ is unduloidal if, and only if its weight is positive; that is, $\alpha_{i,t}$ is positive. Use Proposition 5 of \cite{minoids} to show that if the normal map $N_0$ of $M_0$ points toward the inside, then $\alpha_{i,0}=\tau_i$ where $2\pi\tau_iN_0(p_{i,0})$ is the flux of $M_0$ around the end at $p_{i,0}$ ($\alpha_{i,0}=-\tau_i$ for the other orientation). Thus, if $M_0$ is Alexandrov-embedded, then the ends of $M_t$ are of unduloidal type for $t>0$ and of nodoidal type for $t<0$.

\subsection{Alexandrov-embeddedness}\label{sectionAlexEmbeddedMnoids}

In order to show that $M_t$ is Alexandrov-embedded for $t>0$ small enough, one can follow the proof of Proposition 6 in \cite{minoids}. Note that this proposition does not use the fact that $M_t$ is CMC $H$, but relies on the fact that the ambient space is $\Rr^3$. This enjoins us to lift $f_t$ to $\Rr^3$ via the exponential map at the identity, hence defining an immersion $\widehat{f}_t: \Sigma_t\longrightarrow\Rr^3$ which is not CMC anymore, but is Alexandrov-embedded if, and only if $f_t$ is  Alexandrov-embedded. Let $\psi:\Sigma_0\longrightarrow M_0\subset \Rr^3$ be the limit minimal immersion. In order to adapt the proof of \cite{minoids} and show that $M_t$ is Alexandrov-embedded, one will need the following Lemma.

\begin{lemma}
	Let $\widetilde{f}_t := \frac{1}{t}\widehat{f}_t$. Then $\widetilde{f}_t$ converges to $\psi$ on compact subsets of $\Sigma_0$.
\end{lemma}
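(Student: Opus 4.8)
The plan is to reduce the statement to the blow-up already available from the DPW machinery — namely that $\frac1t\left(f_t-\I_2\right)$ converges to $\psi$ uniformly in the $\cC^1$ topology on compact subsets of $\Sigma_0$ — and then to compare the two charts of $\Hh^3$ around $\I_2$ that are implicitly at play: the affine chart $X\mapsto X-\I_2$ coming from the matrix model $\Hh^3\subset\hH_2$, in which that convergence takes place, and the normal chart $\exp_{\I_2}^{-1}$ used to define $\widehat f_t=\exp_{\I_2}^{-1}\circ f_t$. Since both charts send $\I_2$ to $0$ and have the same differential there — the canonical identification $T_{\I_2}\Hh^3\cong\Rr^3$ — the two induced blow-ups must agree in the limit.

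Concretely, I would proceed as follows. First, record that Proposition \ref{propBlowUpDPW}, applied to the data of Section \ref{sectionDataMinoids} (whose Weierstrass data is precisely $(g,\omega)$, as checked there), gives $\frac1t\left(f_t-\I_2\right)\to\psi$ in $\cC^1$ on every compact $K\subset\Sigma_0$; in particular $f_t\to\I_2$ uniformly on $K$, and, using the identification $T_{\I_2}\Hh^3\cong\Rr^3\subset\hH_2$ fixed by $(\sigma_1,\sigma_2,\sigma_3)$, one may write
\[
f_t(z)=\I_2+t\,\psi(z)+t\,\varepsilon_t(z),\qquad \norm{\varepsilon_t}_{\cC^1(K)}\tendsto{t\to0}0,
\]
with $\psi(z),\varepsilon_t(z)\in\hH_2$. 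Second, extend $G:=\exp_{\I_2}^{-1}$ to a smooth map $\widetilde G$ on a neighbourhood $V$ of $\I_2$ in $\hH_2\cong\Rr^4$ (possible since $\Hh^3$ is an embedded submanifold), so that $\widehat f_t=\widetilde G\circ f_t$ on $\Sigma_0$. A first-order Taylor expansion of $\widetilde G$ at $\I_2$ then gives, uniformly for $z\in K$ and $|t|$ small,
\[
\widehat f_t(z)=\widetilde G(\I_2)+d\widetilde G_{\I_2}\!\left(f_t(z)-\I_2\right)+R_t(z),\qquad |R_t(z)|\le C\,|f_t(z)-\I_2|^2\le C' t^2,
\]
the constant $C$ being uniform on $K$ by compactness. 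Now $\widetilde G(\I_2)=0$ and the restriction of $d\widetilde G_{\I_2}$ to $T_{\I_2}\Hh^3$ equals $d(\exp_{\I_2}^{-1})_{\I_2}$, which under our identification is the identity of $\Rr^3$; hence $d\widetilde G_{\I_2}\left(f_t(z)-\I_2\right)=t\,\psi(z)+t\,d\widetilde G_{\I_2}(\varepsilon_t(z))$, and dividing by $t$,
\[
\widetilde f_t(z)=\tfrac1t\,\widehat f_t(z)=\psi(z)+d\widetilde G_{\I_2}(\varepsilon_t(z))+\tfrac1t R_t(z)\tendsto{t\to0}\psi(z)
\]
uniformly on $K$, since $d\widetilde G_{\I_2}$ is a fixed linear map and $\varepsilon_t\to0$, $\tfrac1t R_t\to0$ uniformly. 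The $\cC^1$ convergence follows the same way from $d(\widehat f_t)_z=dG_{f_t(z)}\circ d(f_t)_z$, using that $dG_{f_t(z)}\to dG_{\I_2}=\mathrm{id}$ uniformly on $K$ (continuity of $dG$ and $f_t\to\I_2$) and $\tfrac1t\,d(f_t)_z\to d\psi_z$ uniformly.

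The only step deserving care is the identification of $d(\exp_{\I_2}^{-1})_{\I_2}$ with the identity of $\Rr^3$: this is the classical fact that the differential of a Riemannian exponential map at the origin of the tangent space is the identity, together with the observation that the copy of $\Rr^3$ appearing in $\exp_{\I_2}:\Rr^3\to\Hh^3$ is the same as the one identified with $T_{\I_2}\Hh^3$ in Proposition \ref{propBlowUpDPW}, both via the orthonormal basis $(\sigma_1,\sigma_2,\sigma_3)$. Once this is granted, everything reduces to a routine first-order Taylor estimate, so I do not anticipate a genuine obstacle; the only bookkeeping is to keep the uniformity in $z\in K$ under control, which the compactness of $K$ and the uniform $\cC^1$ convergence supplied by Proposition \ref{propBlowUpDPW} handle.
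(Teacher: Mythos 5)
Your proof is correct and is essentially the paper's argument: you both reduce the statement to Proposition~\ref{propBlowUpDPW} together with the fact that $\widehat f_0\equiv 0$ and that $d\exp_{\I_2}$ at the origin is the identity of $T_{\I_2}\Hh^3\cong\Rr^3$. The only cosmetic difference is that you phrase it as a first-order Taylor expansion of $\exp_{\I_2}^{-1}$ while the paper runs the chain rule on $\exp_{\I_2}$, and you additionally spell out the $\cC^1$ uniform convergence, which the paper leaves implicit.
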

\begin{proof}
	For all $z$, $$\exp_{\I_2}(\widehat{f}_0(z)) = f_0(z) = \I_2,$$ so $\widehat{f}_0(z) = 0$. Thus
	\begin{equation*}
		\lim\limits_{t\to 0} \widetilde{f}_t(z) = \frac{d}{dt}\widehat{f}_t(z)\mid_{t=0}.
	\end{equation*}
	Therefore, using Proposition \ref{propBlowupCatenoid},
	\begin{align*}
		\psi(z) &= \lim\limits_{t\to 0} \frac{1}{t}\left( f_t(z) - \I_2 \right)\\
		&= \lim\limits_{t\to 0} \frac{1}{t}\left( \exp_{\I_2}(\widehat{f}_t(z)) - \exp_{\I_2}(\widehat{f}_0(z)) \right)\\
		&= \frac{d}{dt}\exp_{\I_2}(\widehat{f}_t(z))\mid_{t=0}\\
		&= d\exp_{\I_2}(0)\cdot \frac{d}{dt}\widehat{f}_t(z)\mid_{t=0}\\
		&= \lim\limits_{t\to 0} \widetilde{f}_t(z).
	\end{align*}
\end{proof}

\pagebreak
\appendix

\section{CMC surfaces of revolution in $\Hh^3$}
\label{appendixJleliLopez}

Following Sections 2.2 and 2.3 of \cite{jlelilopez},

\begin{proposition}\label{propJleliLopez}
	Let $X:\Rr\times\left[0,2\pi\right]\longrightarrow\Hh^3$ be a conformal immersion of revolution with metric $g^2(s)\left(ds^2+d\theta^2\right)$. If $X$ is CMC $H>1$, then $g$ is periodic and denoting by $S$ its period,
	\begin{equation*}
		\sqrt{H^2-1} \int_{0}^{S} g(s)ds = \pi \quad \text{and} \quad  \int_{0}^{S}\frac{ds}{g(s)} = \frac{2\pi^2}{|w|}
	\end{equation*}
	where $w$ is the weight of $X$, as defined in \cite{loopgroups}.
\end{proposition}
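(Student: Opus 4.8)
The plan is to reduce the CMC condition to a first-order autonomous ODE for $u:=\log g$, read off periodicity from a concavity argument in the phase plane, and then evaluate the two period integrals as elementary quasi-elliptic integrals.

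First I would put $\zeta=s+i\theta$, so the induced metric is $e^{2u}|d\zeta|^2$ with $u=u(s)$ by the rotational symmetry of $X$. That symmetry also forces the Hopf differential $Q\,d\zeta^2$, which is holomorphic since $X$ is CMC, to be invariant under $\zeta\mapsto\zeta+ic$ for every $c\in\Rr$; hence $Q$ is a constant, and comparing with the DPW formula \eqref{eqHopf} in the coordinate $z=e^{\zeta}$ (for which $z^{-2}dz^2=d\zeta^2$) together with the weight formula \eqref{eqPoidsDelaunayPure} gives $|Q|=|w|/(4\pi)$, which is nonzero in the case of interest. Writing the principal curvatures $k_1,k_2$ with $k_1+k_2=2H$ and $\bigl(\tfrac{k_1-k_2}{2}\bigr)^2=4|Q|^2e^{-4u}$, the Gauss equation $-e^{-2u}u''=-1+k_1k_2$ in $\Hh^3$ becomes
\begin{equation*}
u''+(H^2-1)e^{2u}-4|Q|^2e^{-2u}=0 .
\end{equation*}

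Next I would multiply by $u'$ and integrate once to get $(u')^2=2E-(H^2-1)e^{2u}-4|Q|^2e^{-2u}=:\phi(u)$ for a constant $E$. Because $\phi''<0$, the right-hand side is a strictly concave function of $u$ tending to $-\infty$ at both ends; the usual phase-plane analysis then shows that $u$ is either constant (trivially periodic) or oscillates monotonically and periodically between the two simple zeros $u_-<u_+$ of $\phi$. In every case $g=e^{u}$ is periodic of some period $S>0$, and on the increasing half of a period $ds=du/\sqrt{\phi(u)}$, so by symmetry $\int_0^S F(u(s))\,ds=2\int_{u_-}^{u_+}F(u)\,du/\sqrt{\phi(u)}$ for any continuous $F$. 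Substituting $y=e^{2u}=g^2$, so that $du=dy/(2y)$ and $\sqrt{\phi}=y^{-1/2}\sqrt{(H^2-1)(y-y_-)(y_+-y)}$ on $(y_-,y_+)$ with $y_\pm=e^{2u_\pm}$ and (by Vieta) $y_-y_+=4|Q|^2/(H^2-1)$, I would obtain
\begin{equation*}
\int_0^S g\,ds=\frac{1}{\sqrt{H^2-1}}\int_{y_-}^{y_+}\frac{dy}{\sqrt{(y-y_-)(y_+-y)}}=\frac{\pi}{\sqrt{H^2-1}}
\end{equation*}
and
\begin{equation*}
\int_0^S\frac{ds}{g}=\frac{1}{\sqrt{H^2-1}}\int_{y_-}^{y_+}\frac{dy}{y\sqrt{(y-y_-)(y_+-y)}}=\frac{1}{\sqrt{H^2-1}}\cdot\frac{\pi}{\sqrt{y_-y_+}}=\frac{\pi}{2|Q|}=\frac{2\pi^2}{|w|},
\end{equation*}
using $\int_a^b dy/\sqrt{(y-a)(b-y)}=\pi$ and $\int_a^b dy/\bigl(y\sqrt{(y-a)(b-y)}\bigr)=\pi/\sqrt{ab}$ (the latter via $y\mapsto 1/y$) and $|Q|=|w|/(4\pi)$.

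The main obstacle is the bookkeeping of normalization constants needed for the right-hand sides to come out exactly as $\pi$ and $2\pi^2/|w|$: one must pin down the coefficient of $e^{-2u}$ in the ODE (it equals $4|Q|^2$, from the convention $Q=\tfrac14(k_1-k_2)e^{2u}$ implicit in \eqref{eqHopf}), relate $|Q|$ to $w$ correctly via \eqref{eqHopf}--\eqref{eqPoidsDelaunayPure}, and combine the factor $2$ from the half-period with the values of the two elementary integrals. A lesser point is to dispose of the degenerate cases: if $u$ is constant the surface is a cylinder-type Delaunay surface and periodicity is immediate, while $Q=0$ (umbilic, hence a round sphere since $H>1$) does not arise here --- its conformal factor decays at infinity rather than being periodic, consistently with the second identity formally giving $+\infty$ when $w=0$.
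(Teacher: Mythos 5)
Your proof is correct and self-contained, whereas the paper takes a shortcut by citing Equation~(11) of \cite{jlelilopez} for the first-order ODE satisfied by the conformal factor (written in terms of $\tau=\sqrt{|w|/2\pi}$ and the sign $\iota$ of $w$) and then performs a slightly more roundabout chain of substitutions ($v=e^\sigma$, then $y=\sqrt{b-v^2}$, then a linear rescaling) to reduce the period integrals to $\arcsin$. Your route is cleaner in two respects. First, you re-derive the ODE $u''+(H^2-1)e^{2u}-4|Q|^2e^{-2u}=0$ from the Gauss--Codazzi equations, using the constancy of the Hopf differential for a CMC surface of revolution, which makes the proposition self-contained; you then translate $|Q|$ into $|w|$ via \eqref{eqHopf}--\eqref{eqPoidsDelaunayPure}, whereas the paper imports the $\tau$-normalized ODE wholesale. (A quick sanity check: with $\beta(z,0)=rz^{-1}$, $\gamma(z,0)=sz^{-1}$ the Hopf differential is $-2rs\sinh q\,d\zeta^2$ in $\zeta=\log z$, so $|Q|=2|rs|\sinh q=|w|/4\pi$, and $\tau^4=4|Q|^2$, which matches the paper's $ab=1/(H^2-1)$ once rescaled by $\tau^2$.) Second, the single substitution $y=g^2$ evaluates both integrals in one step via the two elementary identities $\int_a^b dy/\sqrt{(y-a)(b-y)}=\pi$ and $\int_a^b dy/(y\sqrt{(y-a)(b-y)})=\pi/\sqrt{ab}$, rather than the paper's nested changes of variable. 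The concavity/phase-plane argument for periodicity is a little more explicit than the paper's one-line appeal to \cite{jlelilopez}; both are fine. All constants check out, and your handling of the degenerate cases is appropriate.
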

\begin{proof}
	According to Equation (11) in \cite{jlelilopez}, writing $\tau = \frac{\sqrt{|w|}}{\sqrt{2\pi}}$ and $g = \tau e^\sigma$,
	\begin{equation}\label{eqDiffsigma}
		\left(\sigma'\right)^2 = 1-\tau^2\left( \left( He^{\sigma} + \iota e^{-\sigma} \right)^2 - e^{2\sigma} \right)
	\end{equation}
	where $\iota\in\left\{\pm 1\right\}$ is the sign of $w$. The solutions $\sigma$ are periodic with period $S>0$. Apply an isometry and a change of the variable $s\in\Rr$ so that 
	\begin{equation*}
		\sigma'(0)=0 \quad \text{and}\quad \sigma(0) = \min\limits_{s\in\Rr} \sigma(s).
	\end{equation*}
	By symmetry of Equation \eqref{eqDiffsigma}, one can thus define
	\begin{equation*}
		a := e^{2\sigma(0)}= \min\limits_{s\in\Rr} e^{2\sigma(s)} \quad \text{and}\quad b := e^{2\sigma\left(\frac{S}{2}\right)} = \max\limits_{s\in\Rr} e^{2\sigma(s)}.
	\end{equation*}
	With these notations, Equation \eqref{eqDiffsigma} can be written in a factorised form as
	\begin{equation}\label{eqDiffsigmafactorised}
		\left( \sigma' \right)^2 = \tau^2(H^2-1)e^{-2\sigma}\left( b - e^{2\sigma} \right)\left(e^{2\sigma} - a\right)
	\end{equation}
	with
	\begin{equation}\label{eqaPeriode}
		a = \frac{1-2\iota\tau^2 H - \sqrt{1-4\tau^2(\iota H - \tau^2)}}{2\tau^2 (H^2-1)}
	\end{equation}
	and
	\begin{equation*}
		b = \frac{1-2\iota\tau^2 H + \sqrt{1-4\tau^2(\iota H - \tau^2)}}{2\tau^2 (H^2-1)}.
	\end{equation*}
	In order to compute the first integral, change variables $v=e^\sigma$, $y=\sqrt{b-v^2}$ and $x= \frac{y}{\sqrt{b - a}}$ and use Equation \eqref{eqDiffsigmafactorised} to get
	\begin{align*}
		\sqrt{H^2-1} \int_{0}^{S} \tau e^{\sigma(s)}ds &= 2 \sqrt{H^2-1} \int_{\sqrt{a}}^{\sqrt{b}} \frac{\tau v dv}{\tau\sqrt{H^2-1}\sqrt{b-v^2}\sqrt{v^2-a}} \\
		&= -2\int_{\sqrt{b-a}}^{0}\frac{dy}{\sqrt{b-a-y^2}} \\
		&= 2\int_{0}^{1}\frac{dx}{\sqrt{1-x^2}} \\
		&= \pi.
	\end{align*}
	In the same manner with the changes of variables $v=e^{-\sigma}$, $y=\sqrt{a^{-1}-v^2}$ and $x = \frac{y}{\sqrt{a^{-1}-b^{-1}}}$,
	\begin{align*}
		\int_{0}^{S}\frac{ds}{\tau e^{\sigma(s)}} &= \frac{-2}{\tau\sqrt{H^2-1}}  \int_{a^{-1/2}}^{b^{-1/2}}\frac{ dv}{v\sqrt{b-v^{-2}}\sqrt{v^{-2}-a}}\\
		&= \frac{2}{\tau^2\sqrt{H^2-1}}\int_{0}^{\sqrt{a^{-1}-b^{-1}}}\frac{dy}{\sqrt{b-a-aby^2}}\\
		&= \frac{2}{\tau^2\sqrt{H^2-1}\sqrt{ab}}\int_{0}^{1}\frac{dx}{\sqrt{1-x^2}}\\
		&= \frac{\pi}{\tau^2}
	\end{align*}
	because $ab=\frac{1}{H^2-1}$.
\end{proof}

\begin{lemma}\label{lemmaTubularRadius}
	Let $\dD_t$ be a Delaunay surface in $\Hh^3$ of constant mean curvature $H>1$ and weight $2\pi t>0$ with Gauss map $\eta_t$. Let $r_t$ be the maximal value of $R$ such that the map
	\begin{equation*}
		\function{T}{\left(-R,R\right)\times \dD_t}{\Tub_{r_t}\subset\Hh^3}{(r,p)}{\geod(p,\eta_t(p))(r)}
	\end{equation*}
	is a diffeomorphism. Then $r_t\sim t$ as $t$ tends to $0$.
\end{lemma}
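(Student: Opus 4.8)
\emph{Plan.} The surface $\dD_t$ is rotationally symmetric about its axis, so the tubular map $T$ is entirely governed by the profile curve $\gamma_t$ of $\dD_t$ in a totally geodesic copy of $\Hh^2\subset\Hh^3$ containing the axis. Parametrising as in Proposition \ref{propJleliLopez} so that the induced metric on $\dD_t$ is $g_t^2(s)\,(ds^2+d\theta^2)$, the Fermi coordinates of the axis geodesic show that $g_t(s)=\sinh d_t(s)$, where $d_t(s)$ denotes the hyperbolic distance from $\gamma_t(s)$ to the axis. I set $\rho_t:=\min_s d_t(s)=\operatorname{arcsinh}\!\big(\min_s g_t(s)\big)$, the hyperbolic neck radius of $\dD_t$. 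The plan is to prove first that $\rho_t\sim t$ and then that $r_t\sim\rho_t$.

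\emph{Step 1 ($\rho_t\sim t$).} Running the proof of Proposition \ref{propJleliLopez} with $w=2\pi t>0$ gives $\tau^2=t$, $\iota=+1$ and $g_t=\tau e^{\sigma_t}$ with $\min_s e^{2\sigma_t}=a$ as in \eqref{eqaPeriode}. A Taylor expansion in $t$ shows that the numerator of \eqref{eqaPeriode} equals $2(H^2-1)t^2+O(t^3)$, so $a=t+O(t^2)$; hence $\min_s g_t=\tau\sqrt a=t+O(t^2)$ and $\rho_t=\operatorname{arcsinh}\!\big(t+O(t^2)\big)=t+O(t^2)\sim t$.

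\emph{Step 2 ($r_t\sim\rho_t$).} By rotational symmetry, $T$ is a diffeomorphism on $(-R,R)\times\dD_t$ exactly when $R$ is strictly smaller than every focal distance of $\dD_t$ along its normal geodesics and $T$ is injective there. Since the Gauss map $\eta_t$ is oriented by the mean curvature vector --- which for the limiting chain of spheres points towards the axis --- the flow $T(r,\cdot)$ with $r>0$ moves towards the axis. Along the neck circle the rotational principal curvature with respect to $\eta_t$ equals $\coth\rho_t$, and, $\dD_t$ being CMC $H$, the meridional one equals $2H-\coth\rho_t$; the standard space-form Jacobi-field analysis then gives that the smallest focal distance of $\dD_t$ equals $\rho_t$, attained in the rotational direction at the neck, while all other focal distances exceed $\rho_t+O(\rho_t^2)$ (using $d_t(s)\in[\rho_t,q+o(1)]$). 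In addition, the inward normal geodesics emanating from the neck circle all meet the axis at its centre after length exactly $\rho_t$, so $T$ is not injective for $R>\rho_t$; this yields $r_t\le\rho_t$. Conversely, rescaling $\dD_t$ near its neck by $\rho_t^{-1}$ --- equivalently, applying the blow-up of Proposition \ref{propBlowupCatenoid} --- produces in the limit a unit minimal catenoid in $\Rr^3$, whose tube of any radius $<1$ is embedded, while away from the necks $\dD_t$ converges to round spheres of radius $q$, whose tubes of radius $<q$ are embedded; patching these estimates, for $t$ small the tube of radius $R$ around $\dD_t$ is embedded whenever $R<\rho_t$, so $r_t\ge(1-o(1))\rho_t$. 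Combining the two bounds with Step 1, $r_t\sim\rho_t\sim t$ (one in fact expects $r_t=\rho_t$ for $t$ small).

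\emph{Main obstacle.} The delicate point is the lower bound for $r_t$, where one must control the reach of the extremely thin neck. Proposition \ref{propBlowUpDPW} only yields $\cC^1$ convergence after blow-up, whereas the comparison of tubular radii involves the second fundamental form; this is dealt with either by upgrading the blow-up to $\cC^2_{\mathrm{loc}}$ convergence (the DPW holomorphic frame is smooth in $(t,z)$, hence so are $df_t$ and one further derivative), or by working directly with the explicit Delaunay profile via the factorised equation \eqref{eqDiffsigmafactorised}, which gives $d_t(s)\in[\rho_t,q+o(1)]$ and $\kappa_{\mathrm{mer}}(s)=2H-\coth d_t(s)$, from which every focal distance other than the rotational one at the neck is seen to exceed $\rho_t$, and the only self-intersection of the tube at scale $\rho_t$ is the crossing of the axis at the neck.
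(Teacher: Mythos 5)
Your argument follows the same route as the paper's: identify $r_t$ with the hyperbolic neck radius $\rho_t$ (the minimal distance from the profile curve to the axis) via the focal-distance/injectivity analysis of the normal exponential, observing that the rotational Jacobi field from the neck vanishes exactly on the axis and that the meridional and off-neck focal distances are larger; then compute $\rho_t=\sinh^{-1}\!\big(\tau\sqrt{a(\tau)}\big)$ from the profile equation and expand \eqref{eqaPeriode} to get $\rho_t\sim t$. Where the paper asserts ``$r_t$ is the inverse of the maximal geodesic curvature, attained on the parallel circle at the neck, hence equals the neck radius'' in two sentences, you spell out the Jacobi-field comparison and the injectivity breakdown at the axis; this is the right way to make the paper's terse statement rigorous, and your caution about $\cC^1$ versus $\cC^2$ convergence of the blow-up is moot precisely because, like the paper, you fall back on the explicit factorised ODE \eqref{eqDiffsigmafactorised} for the profile curve rather than on Proposition \ref{propBlowUpDPW}. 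In short, same proof, more detail.
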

\begin{proof}
	The quantity $r_t$ is the inverse of the maximal geodesic curvature of the surface. This maximal curvature is attained for small values of $t$ on the points of minimal distance between the profile curve and the axis. Checking the direction of the mean curvature vector at this point, the maximal curvature curve is not the profile curve but the parallel curve. Hence $r_t$ is the minimal hyperbolic distance between the profile curve and the axis. A study of the profile curve's equation as in Proposition \ref{propJleliLopez} shows that
	\begin{equation*}
	r_t = \sinh^{-1}\left( \tau \exp\left( \sigma_{\min} \right) \right) = \sinh^{-1}\left( \tau \sqrt{a(\tau)} \right).
	\end{equation*}
	But using Equation \eqref{eqaPeriode}, as $\tau$ tends to $0$, $a\sim \tau^2 = |t|$, which gives the expected result.
\end{proof}

\begin{lemma}\label{lemmaCompNormalesDelaunay}
	Let $\dD_t$ be a Delaunay surface in $\Hh^3$ of weight $2\pi t>0$ with Gauss map $\eta_t$ and maximal tubular radius $r_t$.
	There exist $T>0$ and $\alpha<1$ such that for all $0<t<T$ and $p,q\in\dD_t$ satisfying $\distH{p}{q}<\alpha r_t$,
	\begin{equation*}
		\norm{\Gamma_p^q\eta_t(p) - \eta_t(q)}<1.
	\end{equation*}
\end{lemma}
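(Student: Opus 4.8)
The plan is to reduce the statement to a uniform estimate on the second fundamental form (equivalently, the geodesic curvature) of the Delaunay surfaces $\dD_t$, and to control how parallel transport distorts the Gauss map over short distances. First I would recall that the Gauss map of an embedded surface is Lipschitz with respect to the surface distance, with Lipschitz constant controlled by the operator norm of the shape operator: for $p,q\in\dD_t$ joined by a minimizing geodesic arc $c$ of $\dD_t$, one has $\norm{\Gamma_p^q\eta_t(p)-\eta_t(q)}_{T_q\Hh^3}\leq \int_c \norm{A_{c(\sigma)}}\,d\sigma$, where $A$ is the shape operator of $\dD_t$ and the parallel transport inside the integrand is along $\dD_t$; the difference between ambient parallel transport along the geodesic of $\Hh^3$ and parallel transport along $c$ introduces an additional term bounded by the hyperbolic area swept by the homotopy, which is $O(\distH{p}{q}^2)$ by Gauss--Bonnet as in Lemma \ref{lemmaTriangIneq}. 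So it suffices to bound the integrated shape operator by something strictly less than $1-o(1)$ when $\distH{p}{q}<\alpha r_t$.

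Next I would use the structure of Delaunay surfaces recalled in Section \ref{sectionDelaunaySurfaces} together with Proposition \ref{propJleliLopez} and Lemma \ref{lemmaTubularRadius}. The principal curvatures of $\dD_t$ are $\kappa_1=H+\kappa$ and $\kappa_2=H-\kappa$ for some function $\kappa$ on the surface, so $\norm{A}\leq H+|\kappa|_{\max}$; and since $r_t$ is the inverse of the maximal geodesic curvature of the profile curve (see the proof of Lemma \ref{lemmaTubularRadius}), the maximal principal curvature of $\dD_t$ is exactly $1/r_t$ up to the bounded contribution of the other principal curvature. Thus along any minimizing arc $c$ of length $\ell=\distH{p}{q}<\alpha r_t$,
\begin{equation*}
\int_c \norm{A_{c(\sigma)}}\,d\sigma \leq \ell\left(\frac{1}{r_t}+C\right) \leq \alpha + C\alpha r_t,
\end{equation*}
using $\norm{A}\leq 1/r_t + C$ with $C$ a bound (uniform for small $t$) on the smaller principal curvature, which stays close to $H$. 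Since $r_t\sim t\to 0$ by Lemma \ref{lemmaTubularRadius}, the term $C\alpha r_t$ is $o(1)$, and the Gauss--Bonnet correction $O(\ell^2)\leq O(\alpha^2 r_t^2)$ is likewise $o(1)$; hence for $t<T$ small enough and any fixed $\alpha<1$ the whole quantity is $\leq \alpha + o(1) < 1$. Choosing, say, $\alpha=\tfrac12$ and then $T$ small enough to absorb the error terms below $\tfrac12$ finishes the argument.

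The main obstacle, and the point that needs care rather than routine bookkeeping, is the identification of the maximal principal curvature of $\dD_t$ with $1/r_t$ (plus a bounded remainder) uniformly as $t\to 0$: one must check, from the profile-curve ODE \eqref{eqDiffsigmafactorised} and the asymptotics $a(\tau)\sim\tau^2$ from \eqref{eqaPeriode}, that the geodesic curvature of the profile curve blows up like $1/r_t$ precisely at the neck and that the ambient (not just intrinsic) shape operator has the claimed bound there, with the orientation of the mean curvature vector handled as in the proof of Lemma \ref{lemmaTubularRadius}. Once that asymptotic is in hand, the remaining estimates — Lipschitz bound for the Gauss map via the shape operator, and the Gauss--Bonnet control of the parallel-transport discrepancy — are standard and give the result with room to spare.
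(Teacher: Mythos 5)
Your overall strategy is right and matches the paper's: bound the variation of the Gauss map along a surface path by the length of the path times the sup of the shape operator, use the asymptotic $\sup\norm{II_t}\sim \coth r_t\sim r_t^{-1}$, and conclude that for $t$ small the product is less than $1$. The holonomy correction you invoke (Gauss--Bonnet, referencing Lemma~\ref{lemmaTriangIneq}) is a legitimate and even slightly more careful version of what the paper does implicitly; it is indeed $O(\ell^2)$ and hence lower order.

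There is, however, a genuine gap, and it is a different one than you flag. You set $\ell=\distH{p}{q}$ and call this the length of a minimizing arc $c\subset\dD_t$. But $\distH{p}{q}$ is the \emph{ambient} distance in $\Hh^3$; the length of a path on $\dD_t$ joining $p$ to $q$ is bounded below by the \emph{intrinsic} distance on the surface, which is in general strictly larger. Since the final bound $\int_c\norm{A}\,d\sigma$ is, to leading order, $\ell/r_t$, the ratio between intrinsic and ambient distances enters as a multiplicative constant and cannot be treated as an $o(1)$ error. The paper handles exactly this point: it takes $\gamma_t$ to be the normal projection $\pi_t\circ\sigma_t$ of the ambient geodesic $\sigma_t$ onto $\dD_t$, and bounds $\ell(\gamma_t)\leq\sup\norm{d\pi_t}\cdot\distH{p}{q}$ with $\sup_{\Tub_{\alpha r_t}}\norm{d\pi_t}\leq\frac{\tanh r_t}{\tanh r_t-\tanh(\alpha r_t)}$, which tends to $\frac{1}{1-\alpha}$ as $t\to 0$. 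The resulting estimate is $\frac{C\alpha}{1-\alpha}+o(1)$, which is less than $1$ only if $\alpha<(1+C)^{-1}$. So the conclusion that ``any fixed $\alpha<1$ works, say $\alpha=\tfrac12$, once $T$ is small'' is not correct: $\alpha$ must be taken strictly below $(1+C)^{-1}$, where $C$ is the constant in $\sup\norm{II_t}<C\coth r_t$, and since the lemma only claims existence of some $\alpha<1$ this is still fine, but it does need the projection argument to get a bound on $\ell(\gamma_t)$ in terms of $\distH{p}{q}$ at all.

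The other point you worry about---the identification of the maximal principal curvature with $\coth r_t$ up to a factor close to $1$---is real, but both your argument and the paper's take it as given from Lemma~\ref{lemmaTubularRadius} (the statement ``$\kappa_t\sim\coth r_t$'' there), so it is not the bottleneck. Fix the intrinsic-versus-ambient length step and the proof closes, with $\alpha$ constrained as above.
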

\begin{proof}
	Let $t>0$. Then for all $p,q\in\dD_t$,
	\begin{equation*}
		\norm{\Gamma_p^q\eta_t(p) - \eta_t(q)} \leq \supp{s\in\gamma_t}\norm{II_t(s)}\times \ell(\gamma_t)
	\end{equation*}
	where $II_t$ is the second fundamental form of $\dD_t$, $\gamma_t\subset \dD_t$ is any path joining $p$ to $q$ and $\ell(\gamma_t)$ is the hyperbolic length of $\gamma_t$. Using the fact that the maximal geodesic curvature $\kappa_t$ of $\dD_t$ satisfies $\kappa_t\sim \coth r_t$ as $t$ tends to zero, there exists a uniform constant $C>0$ such that  
	\begin{equation*}
		\supp{s\in\dD_t}\norm{II_t(s)}<C\coth r_t.
	\end{equation*}
	Let $0<\alpha < (1+C)^{-1}<1$ and suppose that $\distH{p}{q}<\alpha r_t$. Let $\sigma_t: [0,1]\rightarrow\Hh^3$ be the geodesic curve of $\Hh^3$ joining $p$ to $q$. Then $\sigma_t([0,1])\subset \Tub_{\alpha r_t}$ and thus the projection $\pi_t: \sigma_t([0,1])\rightarrow \dD_t$ is well-defined. Let $\gamma_t:=\pi_t\circ \sigma_t$. Then
	\begin{align*}
		\norm{\Gamma_p^q\eta_t(p) - \eta_t(q)} &\leq C\coth r_t \times \supp{s\in \sigma_t}\norm{d\pi_t(s)}\\
		&\leq C\coth r_t \times \supp{s\in \Tub_{\alpha r_t}}\norm{d\pi_t(s)}\times \distH{p}{q}\\
		&\leq C\coth r_t \times \frac{\tanh r_t}{\tanh r_t - \tanh(\alpha r_t)}\times \alpha r_t\\
		&\leq \frac{C\alpha r_t}{\tanh r_t - \tanh(\alpha r_t)}\sim \frac{C\alpha}{1-\alpha}<1
	\end{align*}
	as $t$ tends to zero.
\end{proof}

\section{Remarks on the polar decomposition}\label{appendixDecompositionPolaire}

Let $\SLdeuxCplusplus$ be the subset of $\SL(2,\Cc)$ whose elements are hermitian positive definite. Let
\begin{equation*}
	\function{\Pol}{\SL(2,\Cc)}{\SLdeuxCplusplus\times \SU(2)}{A}{\left(\Pol_1(A),\Pol_2(A)\right)}
\end{equation*}
be the polar decomposition on $\SL(2,\Cc)$. This map is differentiable and satisfies the following proposition.
\begin{proposition}\label{propDiffPolar2}
	For all $A\in\SL(2,\Cc)$, $\norm{d\Pol_2\left(A\right)} \leq |A|$.
\end{proposition}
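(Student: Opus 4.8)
The plan is to differentiate the polar decomposition explicitly at an arbitrary point and bound the operator norm of the unitary component of the derivative. Since $\SL(2,\Cc)$ and $\SU(2)$ are embedded submanifolds of $\mM(2,\Cc)$, I would work with the ambient norm and the characterization $A = \Pol_1(A)\Pol_2(A)$ with $\Pol_1(A) = \sqrt{AA^*} =: S$ hermitian positive-definite and $\Pol_2(A) = S^{-1}A =: Q \in \SU(2)$. Differentiating $A = SQ$ in a direction $\dot A \in T_A\SL(2,\Cc)$ gives $\dot A = \dot S Q + S \dot Q$, hence $\dot Q = S^{-1}(\dot A - \dot S Q) = S^{-1}\dot A - S^{-1}\dot S Q$. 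The first task is therefore to control $\dot S = d\Pol_1(A)\cdot \dot A$.

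For the hermitian part, I would use the relation $S^2 = AA^*$, so $\dot S S + S \dot S = \dot A A^* + A \dot A^*$. Since $S$ is positive-definite, the Lyapunov-type equation $\dot S S + S \dot S = Y$ (with $Y := \dot A A^* + A\dot A^*$ hermitian) has the unique solution $\dot S = \int_0^\infty e^{-\sigma S} Y e^{-\sigma S}\, d\sigma$, from which one reads off $\abs{\dot S} \le \abs{Y}/(2\lambda_{\min}(S)) \le \abs{\dot A}\,\abs{A}/\lambda_{\min}(S)$, using $\abs{A^*} = \abs{A}$ and submultiplicativity of the Fröbenius norm. Combined with $\abs{S^{-1}} = 1/\lambda_{\min}(S)$ and $\abs{Q} = \sqrt2$ (unit quaternion), this yields $\abs{\dot Q} \le \abs{S^{-1}}\abs{\dot A} + \abs{S^{-1}}\abs{\dot S}\abs{Q} \le \abs{\dot A}/\lambda_{\min}(S)\bigl(1 + \sqrt2\,\abs{A}/\lambda_{\min}(S)\bigr)$. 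This is a bound, but not the clean $\abs{\dot A}$ the proposition claims — the dependence on $\lambda_{\min}(S)$ is spurious and must be eliminated.

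The key simplification — and the main obstacle — is that the assertion concerns the \emph{operator} norm $\norm{d\Pol_2(A)}$, i.e. the supremum of $\norm{\dot Q}$ over $\dot A$ with $\norm{\dot A} = 1$ in the \emph{same} ambient norm, and one should exploit the invariance $\Pol_2(UAV) = U\,\Pol_2(A)\,V$ for $U, V \in \SU(2)$ together with the singular value decomposition $A = U D V$, $D = \mathrm{diag}(x, x^{-1})$. By this invariance it suffices to bound $\norm{d\Pol_2(D)}$, and then $\dot Q = D^{-1}\dot A - D^{-1}\dot S Q$ can be computed entrywise: writing $\dot A = (\dot a_{ij})$ one gets $\dot S$ from the $2\times 2$ Lyapunov equation (diagonal $S = D$, so $\dot s_{ij} = (\dot A A^* + A \dot A^*)_{ij}/(d_i + d_j)$ with $d_1 = x$, $d_2 = x^{-1}$), and a direct computation shows the entries of $\dot Q$ are linear combinations of the $\dot a_{ij}$ with coefficients that are bounded by $1$ in modulus — the $x$-dependence cancels because $Q = \I_2$ for $A = D$ real diagonal and the tangency constraint $\trace(D^{-1}\dot A) = 0$ kills the bad term. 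I would finish by verifying $\sum_{ij}\abs{\dot q_{ij}}^2 \le \sum_{ij}\abs{\dot a_{ij}}^2$, i.e. $\abs{\dot Q} \le \abs{\dot A}$, which gives $\norm{d\Pol_2(D)} \le 1$; applying this at $D$ and using $\abs{D} = \abs{A}$ only if needed, one concludes $\norm{d\Pol_2(A)} \le \abs{A}$ in general (the factor $\abs{A}$ appearing because $\abs{A} \ge \sqrt2 \ge 1$, so the bound $\le 1$ is a fortiori $\le \abs{A}$). The delicate point throughout is keeping track of which norm (Fröbenius vs. operator) is meant and exploiting the $\SU(2)$-biinvariance to reduce to the diagonal case where the cancellation is transparent.
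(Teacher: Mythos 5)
Your plan is correct and takes a genuinely different route from the paper's. The paper computes $\pol_2:=d\Pol_2(\I_2)$ explicitly as the projection onto the skew-hermitian part, verifies $\abs{\pol_2(M)}\le\abs{M}$, and then transports this to a general base point $A=S_0Q_0$ via the maps $\phi(A)=S_0AQ_0$ and $\psi(Q)=QQ_0$, yielding the estimate $\abs{d\Pol_2(S_0Q_0)\cdot M}=\abs{\pol_2(S_0^{-1}MQ_0^{-1})Q_0}\le\abs{S_0}\,\abs{M}\le\abs{A}\,\abs{M}$. You instead use the genuine two-sided $\SU(2)$-bi-invariance $\Pol_2(UAV)=U\Pol_2(A)V$ together with the singular value decomposition $A=UDV$, $D=\mathrm{diag}(d_1,d_2)$ with $d_1d_2=1$, to reduce \emph{exactly} to bounding $\norm{d\Pol_2(D)}$ (the Fröbenius norm being unitary bi-invariant), and compute there directly: at $A_0=D$ the Lyapunov equation gives $\dot q_{ij}=(m_{ij}-\overline{m_{ji}})/(d_i+d_j)$, from which the Fröbenius comparison $\abs{\dot Q}\le\abs{M}$ follows, i.e.\ the \emph{sharper} conclusion $\norm{d\Pol_2(A)}\le 1$; the proposition then drops out because $\abs{A}^2=\sigma_1^2+\sigma_2^2\ge 2\sigma_1\sigma_2=2$. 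Your reduction is, if anything, more robust than the paper's: the two-sided $\SU(2)$ action genuinely commutes with $\Pol_2$, whereas left-multiplication by a positive definite $S_0$ does not in general (a product of positive definite matrices need not be positive definite), so the paper's identity $\psi\circ\Pol_2\circ\phi^{-1}=\Pol_2$ must be read with care. The one spot to tighten in your writeup: the individual coefficients in $\dot q_{ij}$ are not all $\le 1$ in modulus (the diagonal entry carries a factor $1/d_1$), and it is the Fröbenius \emph{sum} inequality $\sum\abs{\dot q_{ij}}^2\le\sum\abs{m_{ij}}^2$ — which your last step correctly targets — that closes the argument, via $(d_1+d_2)^2\ge 4d_1d_2=4$ for the off-diagonal entries and the tangency constraint $m_{22}=-m_{11}/d_1^2$ together with $2/d_1^2\le 1+1/d_1^4$ for the diagonal ones.
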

\begin{proof}
	We first write the differential of $\Pol_2$ at the identity in an explicit form. Writing
	\begin{equation*}
		\function{d\Pol_2(\I_2)}{\slfrak(2,\Cc)}{\su(2)}{M}{\pol_2(M)}
	\end{equation*}
	gives
	\begin{equation*}
		\pol_2\begin{pmatrix}
		a & b \\
		c & -a
		\end{pmatrix} = \begin{pmatrix}
		i\Im a & \frac{b-\conj{c}}{2} \\
		\frac{c-\bar{b}}{2} & -i\Im a
		\end{pmatrix}.
	\end{equation*}
	Note that for all $M\in\slfrak(2\Cc)$,
	\begin{align*}
	\left|\pol_2(M)\right|^2 &= 2\left(\Im a\right)^2 + \frac{1}{4}\left( \left| b-\conj{c} \right|^2 + \left|c-\conj{b}\right|^2 \right)\\
	&\leq \left|M\right|^2 - \frac{1}{2}\left|b+\conj{c}\right|^2\\
	&\leq \left|M\right|^2.
	\end{align*}
	
	We then compute the differential of $\Pol_2$ at any point of $\SL(2,\Cc)$. Let $(S_0,Q_0)\in\SLdeuxCplusplus\times \SU(2)$. Consider the differentiable maps
	\begin{equation*}
		\function{\phi}{\SL(2,\Cc)}{\SL(2,\Cc)}{A}{S_0AQ_0} \quad \text{and}\quad \function{\psi}{\SU(2)}{\SU(2)}{Q}{QQ_0.}
	\end{equation*}
	Then $\psi\circ \Pol_2 \circ \phi^{-1} = \Pol_2$ and for all $M\in T_{S_0Q_0}\SL(2,\Cc)$,
	\begin{equation*}
		d\Pol_2\left(S_0Q_0\right)\cdot M = \pol_2\left( S_0^{-1}MQ_0^{-1} \right)Q_0.
	\end{equation*}
	
	Finally, let $A\in\SL(2,\Cc)$ with polar decomposition $\Pol( A) = (S,Q)$. Then for all $M\in T_A\SL(2,\Cc)$,
	\begin{align*}
		\left|d\Pol_2(A)\cdot M\right| = \left|\pol_2\left( S^{-1}MQ^{-1} \right)Q\right|\leq \left|S\right|\times \left|M\right|
	\end{align*}
	and thus using 
	\begin{equation*}
		S = \exp\left( \frac{1}{2}\log\left(AA^*\right) \right)
	\end{equation*}
	gives
	\begin{equation*}
		\norm{d\Pol_2(A)} \leq \left|S\right|\leq \left|A\right|.
	\end{equation*}
\end{proof}

\begin{corollary}\label{corQ2v-Q1v}
	Let $0<q<\log \rho$ and $F_1,F_2\in\LSUdeuxrho$ with unitary parts $Q_i = \Pol_2(F_i(e^{-q}))$. Let $\epsilon>0$ such that
	\begin{equation*}
		\norm{F_2^{-1}F_1-\I_2}_\rho < \epsilon.
	\end{equation*}
	If $\epsilon$ is small enough, then there exists a uniform $C>0$ such that for all $v\in T_{\I_2}\Hh^3$,
	\begin{equation*}
		\norm{Q_2\cdot v - Q_1\cdot v}_{T_{\I_2}\Hh^3}\leq C\norm{F_2}_{\rho}^2\epsilon.
	\end{equation*}
\end{corollary}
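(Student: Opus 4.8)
\emph{Proof strategy.} The plan is to reduce the estimate to a bound on $|Q_2-Q_1|$ in the Fröbenius norm, and then to obtain that bound by integrating the differential of the polar projection $\Pol_2$ along a suitable path, using the operator-norm control provided by Proposition \ref{propDiffPolar2}. For the reduction, recall that $Q_1,Q_2\in\SU(2)$ act on $T_{\I_2}\Hh^3\subset\slfrak(2,\Cc)$ by $Q\cdot v = QvQ^*$, so I would write
\begin{equation*}
Q_2\cdot v - Q_1\cdot v = (Q_2-Q_1)\,v\,Q_2^{\,*} + Q_1\,v\,(Q_2-Q_1)^{\,*},
\end{equation*}
and, using submultiplicativity of $\norm{\cdot}$ together with $|Q_1|=|Q_2|=\sqrt{2}$, deduce $\norm{Q_2\cdot v-Q_1\cdot v}_{T_{\I_2}\Hh^3}\leq 2\sqrt{2}\,|Q_2-Q_1|\,|v|$. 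For a unit vector $v$ it therefore suffices to produce a uniform $C>0$ with $|Q_2-Q_1|\leq C\norm{F_2}_\rho^2\,\epsilon$.

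To estimate $|Q_2-Q_1|$, I would set $M := F_2(e^{-q})^{-1}F_1(e^{-q})\in\SL(2,\Cc)$, which equals $(F_2^{-1}F_1)(e^{-q})$. Since $0<q<\log\rho$ gives $e^{-q}\in\Aa_\rho$, the pointwise bound $|A(\lambda)|\leq\norm{A}_\rho$ yields $|M-\I_2|\leq\norm{F_2^{-1}F_1-\I_2}_\rho<\epsilon$. For $\epsilon$ small enough $\log M$ is defined and $|\log M|\leq C|M-\I_2|\leq C\epsilon$, so the path $\gamma(s):=F_2(e^{-q})\exp(s\log M)$, $s\in[0,1]$, stays in $\SL(2,\Cc)$, joins $\gamma(0)=F_2(e^{-q})$ to $\gamma(1)=F_1(e^{-q})$, and satisfies $\gamma'(s)=\gamma(s)\log M$. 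Because $\exp(s\log M)$ is uniformly close to $\I_2$ we get $|\gamma(s)|\leq C\,|F_2(e^{-q})|\leq C\norm{F_2}_\rho$, hence $|\gamma'(s)|\leq C\norm{F_2}_\rho\,\epsilon$. Since $Q_1=\Pol_2(\gamma(1))$ and $Q_2=\Pol_2(\gamma(0))$, the fundamental theorem of calculus gives
\begin{equation*}
Q_1-Q_2 = \int_0^1 d\Pol_2(\gamma(s))\cdot\gamma'(s)\,ds,
\end{equation*}
and Proposition \ref{propDiffPolar2} bounds $\norm{d\Pol_2(\gamma(s))}\leq|\gamma(s)|\leq C\norm{F_2}_\rho$, whence $|Q_1-Q_2|\leq C\norm{F_2}_\rho^2\,\epsilon$. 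Combining with the first step concludes the argument.

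The point requiring care is precisely the use of Proposition \ref{propDiffPolar2}: $\SL(2,\Cc)$ is not convex, so one cannot integrate $d\Pol_2$ along the straight segment from $F_2(e^{-q})$ to $F_1(e^{-q})$. Replacing it by the one-parameter-subgroup path $\gamma$ repairs this, but forces the smallness hypothesis on $\epsilon$ (so that $\log M$ exists and obeys $|\log M|\leq C|M-\I_2|$), and one must keep track of the factor $|\gamma(s)|$, which enters once through $\norm{d\Pol_2(\gamma(s))}$ and once through $|\gamma'(s)|=|\gamma(s)\log M|$, in order to see that it yields exactly the stated power $\norm{F_2}_\rho^2$.
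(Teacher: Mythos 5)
Your proof is correct and follows essentially the same route as the paper's: reduce the estimate to $|Q_2-Q_1|$ and then control that difference by integrating $d\Pol_2$ along the one-parameter-subgroup path $\gamma(s)=F_2(e^{-q})\exp(s\log M)$, using Proposition~\ref{propDiffPolar2} for the operator norm of $d\Pol_2$ and the smallness of $\epsilon$ to make $\log M$ well defined and comparable to $M-\I_2$; the two factors of $\norm{F_2}_\rho$ arise exactly as you describe. The only departure is in the easy first step, where you use the telescoping identity $Q_2 v Q_2^* - Q_1 v Q_1^* = (Q_2-Q_1)vQ_2^* + Q_1 v(Q_2-Q_1)^*$ together with $|Q_i|=\sqrt 2$ to get an explicit $2\sqrt 2\,|Q_2-Q_1|\,|v|$, whereas the paper invokes a mean-value bound plus compactness of $\SU(2)$ — your variant is a touch more elementary and makes the (implicit, $|v|$-dependent) constant explicit, but the substance of the argument is the same.
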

\begin{proof}
	Let $v\in T_{\I_2}\Hh^3$ and consider the following differentiable map
	\begin{equation*}
		\function{\phi}{\SU(2)}{T_{\I_2}\Hh^3}{Q}{Q\cdot v.}
	\end{equation*}
	Then
	\begin{align*}
		\norm{Q_2\cdot v - Q_1\cdot v}_{T_{\I_2}\Hh^3} &= \norm{\phi(Q_2) - \phi(Q_1)}_{T_{\I_2}\Hh^3} \\
		&\leq \supp{t\in\left[0,1\right]}\norm{d\phi(\gamma(t))}\times \int_{0}^{1}\left| \dot{\gamma}(t) \right|dt
	\end{align*}
	where $\gamma : \left[0,1\right]\longrightarrow\SU(2)$ is a path joining $Q_2$ to $Q_1$. Recalling that $\SU(2)$ is compact gives
	\begin{equation}\label{eqQ2v-q1v}
		\norm{Q_2\cdot v - Q_1\cdot v}_{T_{\I_2}\Hh^3} \leq C\left|Q_2-Q_1\right|
	\end{equation}
	where $C>0$ is a uniform constant. But writing $A_i=F_i(e^{-q})\in\SL(2,\Cc)$,
	\begin{align*}
		\left|Q_2-Q_1\right| &= \left|\Pol_2(A_2) - \Pol_2(A_1)\right|\\
		&\leq \supp{t\in\left[0,1\right]}\norm{d\Pol_2(\gamma(t))}\times \int_{0}^{1}\left| \dot{\gamma}(t) \right|dt
	\end{align*}
	where $\gamma : \left[0,1\right]\longrightarrow \SL(2,\Cc)$ is a path joining $A_2$ to $A_1$. Take for example
	\begin{equation*}
		\gamma(t) := A_2\times \exp\left( t\log \left( A_2^{-1}A_1 \right) \right).
	\end{equation*}
	Suppose now that $\epsilon$ is small enough for $\log$ to be a diffeomorphism from $D(\I_2,\epsilon)\cap \SL(2,\Cc)$ to $D(0,\epsilon')\cap \slfrak(2,\Cc)$. Then
	\begin{equation*}
		\norm{A_2^{-1}A_1-\I_2}\leq \norm{F_2^{-1}F_1-\I_2}_{\rho}<\epsilon
	\end{equation*}
	implies
	\begin{equation*}
		\left|\gamma(t)\right|\leq \widetilde{C}\left|A_2\right|\quad \text{and}\quad \left|\dot{\gamma}(t)\right|\leq \widetilde{C}\widehat{C}\left|A_2\right|\epsilon
	\end{equation*}
	where $\widetilde{C},\widehat{C}>0$ are uniform constants. Using Proposition \ref{propDiffPolar2} gives
	\begin{equation*}
		\left|Q_2-Q_1\right|\leq \widehat{C}\widetilde{C}^2\left|A_2\right|^2\epsilon
	\end{equation*}
	and inserting this inequality into \eqref{eqQ2v-q1v} gives
	\begin{equation*}
		\norm{Q_2\cdot v - Q_1\cdot v}_{T_{\I_2}\Hh^3} \leq C\widehat{C}\widetilde{C}^2\left|A_2\right|^2\epsilon \leq C\widehat{C}\widetilde{C}^2\norm{F_2}_{\rho}^2\epsilon.
	\end{equation*}
\end{proof}

\pagebreak

\providecommand{\bysame}{\leavevmode\hbox to3em{\hrulefill}\thinspace}
\providecommand{\MR}{\relax\ifhmode\unskip\space\fi MR }
\providecommand{\MRhref}[2]{%
	\href{http://www.ams.org/mathscinet-getitem?mr=#1}{#2}
}
\providecommand{\href}[2]{#2}

\noindent
Thomas Raujouan\\
Institut Denis Poisson\\
Universit\'e de Tours, 37200 Tours, France\\
\verb$raujouan@univ-tours.fr$

\end{document}